\documentclass[11pt,letterpaper]{article}
\usepackage[utf8]{inputenc}
\usepackage[left=1in, right=1in, bottom = 0.9in, top = 0.9in]{geometry}
\usepackage[T1]{fontenc}
\usepackage[english]{babel}
\usepackage{amsmath}
\usepackage{amsfonts}
\usepackage{amssymb}
\usepackage{amsthm}
\usepackage{listings}
\usepackage{xcolor}
\usepackage{cite}
\usepackage{xypic}
\usepackage{pgf,tikz,pgfplots}
\pgfplotsset{compat=1.15}
\pgfplotsset{ticks = none}
\usepackage{mathrsfs}
\usetikzlibrary{arrows, decorations,calc}
\usepackage{verbatim}
\usepackage{mathtools}
\usepackage{multicol} 
\usepackage{hyperref}
\usepackage{abstract}
\usepackage{appendix}
\usepackage{graphicx}
\usepackage{caption}
\usepackage{subcaption}
\usepackage[shortlabels]{enumitem}

\usepackage{dsfont}

\newcommand*{\N}{\mathbb{N}}
\newcommand*{\Z}{\mathbb{Z}}

\newcommand*{\R}{\mathbb{R}}

\newcommand*{\II}{\mathcal{I}}
\newcommand*{\RR}{\mathcal{R}}
\newcommand*{\DD}{\mathcal{D}}
\newcommand*{\SG}{\mathsf{SG}}
\newcommand*{\uu}{\mathsf{u}}

\newcommand{\En}{\mathcal{E}}
\newcommand{\level}{\mathrm{level}}
\newcommand{\doubleTriangle}{{\triangle\!\triangle}}
\newcommand{\Dom}{\mathrm{Dom}}

\newcommand{\E}{\mathds{E}}
\newcommand{\domTriangle}{\mathsf{Tr}}
\newcommand{\domTriangleBoundary}{\partial\domTriangle}
\newcommand{\greenFunction}{\mathsf{G}}
\newcommand{\greenFunctionDiscrete}{\mathsf{g}}
\newcommand{\greenOperator}{\boldsymbol{G}}
\newcommand{\greenOperatorDiscrete}{\boldsymbol{g}}
\newcommand{\greenFunctionUnscaled}{\mathrm{g}}
\newcommand{\tridots}{{\mathrel{\raisebox{0pt}{.}}\!\mathrel{\raisebox{2pt}{.}}\!\mathrel{\raisebox{0pt}{.}}}}
\newcommand{\dirac}[2]{\mathds{1}_{\{#1\}}(#2)}

\renewcommand{\phi}{\varphi}
\renewcommand{\epsilon}{\varepsilon}
\renewcommand{\theta}{\vartheta}

\DeclareMathOperator{\supp}{supp}

\newtheorem{lem}{Lemma}[section]
\newtheorem{prop}{Proposition}[section]
\newtheorem{thm}{Theorem}[section]
\newtheorem{defn}{Definition}[section]
\newtheorem{corollary}{Corollary}[section]

\pgfdeclaredecoration{quasi-sirpinski}{do}{%
    \state{do}[width=\pgfdecoratedinputsegmentlength, next state=do]{%
        \pgfpathmoveto{\pgfpointpolar{-60}{\pgfdecoratedinputsegmentlength/2}}%
        \pgfpathlineto{\pgfpointorigin}%
        \pgfpathlineto{\pgfpoint{\pgfdecoratedinputsegmentlength/2}{0pt}}%
        \pgfpathclose%
    }
}
\usetikzlibrary{fadings}
\usetikzlibrary{patterns}

\setlength{\parindent}{0cm}
\setlength{\parskip}{1.5ex}


\title{Internal aggregation models with multiple sources and obstacle problems on Sierpi\'nski gaskets}

\selectlanguage{english}
\date{\today}
\author{Uta Freiberg, Nico Heizmann, Robin Kaiser, Ecaterina Sava-Huss}
\begin{document}
\maketitle
\begin{abstract}
We consider the doubly infinite Sierpi\'nski gasket graph $\mathsf{SG}_0$, rescale it by factor $2^{-n}$, and on the rescaled 
graphs $\mathsf{SG}_n=2^{-n}\mathsf{SG}_0$, for every $n\in\N$, we investigate the limit shape of three aggregation models with  initial configuration $\sigma_n$ of particles supported on multiple vertices.  The models under consideration are:
{\it divisible sandpile} in which the excess mass is distributed among the vertices until each vertex is stable and has mass less or equal to one, {\it internal DLA} in which particles do random walks until finding an empty site, and {\it rotor aggregation} in which particles perform deterministic counterparts of random walks until finding an empty site.
We denote by $\mathsf{SG}=cl\left(\cup_{n=0}^{\infty}\mathsf{SG}_n\right)$ the infinite Sierpi\'nski gasket, which is a closed subset of $\mathbb{R}^2$, for which $\mathsf{SG}_n$ represents the level-$n$ approximating graph, and we consider a continuous function $\sigma:\mathsf{SG}\to\mathbb{N}$. For $\sigma$ we solve the obstacle problem and we describe the noncoincidence set $D\subset \mathsf{SG}$ as the solution of a free boundary problem on the fractal $\mathsf{SG}$. If the discrete particle configurations $\sigma_n$ on the approximating graphs $\mathsf{SG}_n$ converge pointwise to the continuous function $\sigma$ on the limit set $\mathsf{SG}$, we prove that, as $n\to\infty$, the scaling limits of the three aforementioned models on $\mathsf{SG}_n$ starting with initial particle configuration $\sigma_n$ converge to the deterministic solution $D$ of the free boundary problem on the limit set $\mathsf{SG}\subset\mathbb{R}^2$. For $D$ we also investigate boundary regularity properties.
\end{abstract}
\vspace{-0.3cm}
\textit{2020 Mathematics Subject Classification.} 60J10, 60J45, 31A05, 31C20, 31E05, 28A80, 35R35.\\
\textit{Keywords}:  divisible sandpile, obstacle problem, internal DLA, rotor aggregation, Sierpi\'nski gasket, Green function, Hausdorff measure, convergence of domains.
\section{Introduction}

{\it Internal DLA} (shortly IDLA) on $\Z^d$, as a special case of the Diaconis-Fulton smash sum\cite{diaconis_general_idla}, was introduced in \cite{idla_lawler}, and it is a random cluster growth model in which $m$ particles are initially located at the origin $o\in\Z^d$, and each of them performs a simple random walk until arriving at a site that has not been visited before, at which site the particle stops and occupies that position forever. The resulting random subset of occupied sites in $\Z^d$ is called \textit{internal DLA} cluster and has, for $m\to\infty$, a deterministic limit shape as shown in \cite{idla_lawler}. 
{\it Divisible sandpile} is a deterministic growth model introduced in \cite{spherical_asymptotics_rotor} as a potential theoretical tool to approach internal DLA, and relaxes the integrability condition in the similar model, {\it the Abelian sandpile}. Divisible sandpile uses a continuous amount of mass, and any site with mass more than one is unstable and topples by keeping mass one for itself and distributing the rest equally among the neighbors. As proved in \cite{spherical_asymptotics_rotor}, as time goes to infinity, the sequence of mass configurations converges pointwise to a limit configuration in which each site has mass $\leq 1$. By starting with mass $m$ in $o\in\Z^d$ and zero everywhere else, the subset  of toppled sites in $\Z^d$ in the limit configuration, called {\it the divisible sandpile cluster}, has as limit shape an Euclidean ball, like in the internal DLA case. Finally, in the deterministic counterpart of internal DLA, called {\it rotor aggregation} \cite{rotor-aggreg-lev-peres}, $m$ particles starting at the origin $o\in\mathbb{Z}^d$ perform rotor walks (deterministic analogues of random walks) until finding sites unvisited previously, where they settle forever. The set of occupied sites is called {\it rotor cluster} and has on $ \Z^d$ the same limit shape as internal DLA and divisible sandpile as shown in \cite{spherical_asymptotics_rotor}. All  three  models are Abelian, in the sense that, for the resulting final configuration or  for the set of occupied sites, it does not matter in which order particles move or in which order sites are being stabilized.

For single point sources of $m$ particles starting at a fixed vertex $o$ in the underlying state space, there has been intensive work that concerns mainly the limit shape and the fluctuations of the cluster around the limit shape. While it is believed that the three models share the same limit shape on any state space, so far the available limit results rely very much on the geometry and the growth of those spaces, and in particular on the long term behavior of random and rotor walks on them. Other than $\Z^d$, we mention here a few of the state spaces where the limit shapes for the above models and particles starting at the same vertex have been investigated: on {\it trees} internal DLA \cite{idla-trees} and rotor aggregation \cite{rr-trees-lionel}; on {\it comb lattices} \cite{wilfried-eca-comb, rr-comb}; internal DLA  on {\it cylinder graphs} in \cite{idla-cylinders}; on {\it Sierpi\'nski gasket graphs} internal DLA in \cite{idla_cati},  divisible sandpile in \cite{Div-Sandpile-SG}, and  rotor aggregation together with Abelian sandpile in \cite{limit-shape-rotor-div-sandpile}. In particular, the  Sierpi\'nski gasket graph is the only non-trivial state space (other than $\Z$) where even a fourth model, {\it the Abelian sandpile} shares the same limit shape with the other three above introduced models. We do not focus on this model, but we would like to emphasize that on $\Z^d$ the scaling limit for the Abelian sandpile exists but a precise description is still an open problem.  

While the  above results focus on initial mass or particle configurations supported on a single vertex, it is natural to extend these configurations to general functions with bounded support, and to describe the limit shapes and fluctuations. A first step in this direction has been done in \cite{LP-mult-idla}, where the authors investigate scaling limits for the aggregation models with multiple sources on lattices $\delta_n\Z^d$, with $\delta_n\downarrow 0$, so the processes are run on finer and finer lattices. Given a function $\sigma$ on $\R^d$, there is a natural way to construct a corresponding obstacle $\gamma$ for $\sigma$ and to investigate its smallest superharmonic majorant $s$. Then the {\it noncoincidence set for the obstacle problem} $D$ is the set $D:=\{x\in\R^d:\ s(x)>\gamma (x)\}$; for more details on obstacle problems and free boundary problems in $\R^d$ we refer the reader to \cite{free-pde-book}. In \cite{LP-mult-idla}, the authors consider the three models of cluster growth on  lattices $\delta_n\Z^d$ with starting initial configurations $\sigma_n$ on $\delta_n \Z^d$ converging in an appropriate way to $\sigma $ on $\R^d$ and they prove that the resulting sets of occupied sites converge in the Hausdorff metric to the noncoincidence set $D\subset\mathbb{R}^d$, for reasonable assumptions on the function $\sigma$. Their approach is based on potential theoretical analysis in $\R^d$ where most of the needed tools and estimates are well understood.

\textbf{Our contribution.} While it is not clear how to extend such an analysis on any arbitrary state space and how to pass from discrete potential theory to its continuous counterpart on some limiting object similar to $\R^d$ in order to get a precise understanding of the scaling limits for the aggregation models, the focus of this work is on a particular class of fractals: {\it Sierpi\'nski gaskets} $\mathsf{SG}$ as closed subsets of $\mathbb{R}^2$. On the discrete graph approximations $\mathsf{SG}_n$ of $\mathsf{SG}$, we have a good understanding of the random walk and its limiting process Brownian motion, and they are a rich source of objects with scaling-invariance and self-similar properties on which potential theory is well understood.
The limiting object $\mathsf{SG}$ which takes the role of $\mathbb{R}^d$ from \cite{LP-mult-idla}, is from the potential analytical point of view also well understood, even if the analysis on fractals is way different than on $\mathbb{R}^d$.  
In order to explore the scaling invariance of the graphs $\mathsf{SG}_n$ and the decimation invariance of random walks on them, unlike the case of $\mathbb{Z}^d$, we cannot rescale the graphs by any arbitrary sequence $\delta_n$ converging to $0$, and we take $\delta_n=2^{-n}$; the choice of the scaling constant will be made clear below. The main goal of the underlying work is to study obstacle problems for functions $\sigma$ on the limit gasket $\mathsf{SG}$ and to describe the noncoincidence set $D\subset \mathsf{SG}$ and its main properties, by a careful analysis on fractals. On the approximating graphs $\mathsf{SG}_n$ with discrete particle configurations $\sigma_n$ that converge to $\sigma$ on $\SG$ in a way to be described below, we show that the divisible sandpile cluster, the internal DLA cluster, and the rotor aggregation cluster share the same scaling limit that converges to the noncoincidence set $D\subset  \mathsf{SG}$, similar to $\Z^d$ and $\R^d$ from \cite{LP-mult-idla}. We would like to emphasize that the analysis from $\R^d$ does not immediately carry over to fractals, and there are several differences and difficulties to deal with on $\mathsf{SG}$; for instance on the Sierpi\'nski gasket, there is no known globally defined Green function and this makes the investigation of obstacle problems more subtle. One can overcome this issue on the gasket by defining the solution of the obstacle problem as a limit of solutions of obstacle problems on approximating graphs and showing that this is indeed well-defined.

\textbf{Main results.} 
 In order to state our  results, we first refer the reader to Section \ref{sec:sierp-gas}, where the level-$n$ approximations $\mathsf{SG}_n$ (infinite graphs) as rescalings of the doubly infinite  Sierpi\'nski gasket graph $\mathsf{SG}_0$ by the factor $2^{-n}$ and the ''continuous limiting object'' $\mathsf{SG}$ as a closed subset of $\R^2$, are  introduced.
Let $B(0,2^l)$ be the closed Euclidean ball of radius $2^l$  around $0$ in $\SG$, i.e. two compact triangles with side length $2^l$ joined together at $0$, where $l\in\N$, and set $\domTriangle_l=B(0,2^l)$. If we define the Green function  $\greenFunction_{\domTriangle_l} $ on $\SG\cap \domTriangle_l$ as in \eqref{eq:green-fc-sg}, and for a function $f:\domTriangle_l\to  \R$  we set
\begin{align*}
\greenOperator_{\domTriangle_l}f(x)=\int_{\domTriangle_l}\greenFunction_{\domTriangle_l}(x,y)f(y)d\mu(y),
\end{align*}

where $\mu$ is a multiple of the Hausdorff measure on $\domTriangle_l$, then we first prove the following. 
\begin{prop}\label{prop:obst1}
Let $\sigma:\mathsf{SG}\rightarrow[0,\infty)$ be bounded and continuous almost everywhere, such that $\supp(\sigma)$ is also bounded. We define the obstacles on $\domTriangle_l\subset\SG$ by 
\vspace{-0.2cm}
\begin{align*}
    \gamma_{\domTriangle_l}:=-\greenOperator_{\domTriangle_l}(\sigma - 1),
\end{align*}
and the superharmonic majorant of $\gamma_{\domTriangle_l}$ by
\begin{align*}
    s_{\domTriangle_l}:=\inf\{f(x):\ f\text{ is continuous, superharmonic and }f\geq \gamma_{\domTriangle_l}\}.
\end{align*}
Then for the functions $\uu_{\domTriangle_l}=s_{\domTriangle_l}-\gamma_{\domTriangle_l}$, the pointwise limit $u=\lim_{l\rightarrow\infty}\uu_{\domTriangle_l}$ exists and is well-defined. Furthermore, the set $D=\{u>0\}\subset \mathsf{SG}$ is bounded.
\end{prop}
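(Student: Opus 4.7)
The plan is to realize $u$ as the pointwise limit of a monotone nondecreasing and uniformly bounded sequence obtained by extending each $\uu_{\domTriangle_l}$ by zero to $\SG\setminus\domTriangle_l$. The two key claims I would establish are (i) monotonicity $\uu_{\domTriangle_l}\leq\uu_{\domTriangle_{l'}}$ on $\SG$ whenever $l\leq l'$, and (ii) uniform containment of the noncoincidence sets $D_l:=\{\uu_{\domTriangle_l}>0\}$ in a common ball on which the $\uu_{\domTriangle_l}$ are bounded independently of $l$.

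For (i), I would fix $l<l'$ and use that for each $y\in\domTriangle_l$ the difference $x\mapsto\greenFunction_{\domTriangle_{l'}}(x,y)-\greenFunction_{\domTriangle_l}(x,y)$ is the harmonic extension to $\domTriangle_l$ of $\greenFunction_{\domTriangle_{l'}}(\cdot,y)\restriction_{\partial\domTriangle_l}$, a standard decomposition for nested Dirichlet Green functions. Integrating against $(1-\sigma)(y)\,d\mu(y)$ shows that $h:=\gamma_{\domTriangle_{l'}}-\gamma_{\domTriangle_l}$ is harmonic on $\domTriangle_l$. Therefore $s_{\domTriangle_{l'}}-h$ is continuous and superharmonic on $\domTriangle_l$, and majorizes $\gamma_{\domTriangle_l}$ there because $s_{\domTriangle_{l'}}\geq\gamma_{\domTriangle_{l'}}=\gamma_{\domTriangle_l}+h$. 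Minimality of $s_{\domTriangle_l}$ then forces $s_{\domTriangle_l}\leq s_{\domTriangle_{l'}}-h$, which rearranges to $\uu_{\domTriangle_l}\leq\uu_{\domTriangle_{l'}}$ on $\domTriangle_l$; extending by zero propagates the inequality to $\SG$.

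For (ii), I would combine the obstacle-problem characterization of $\uu_{\domTriangle_l}$ as the continuous divisible sandpile odometer on $\domTriangle_l$ (the smallest $\uu\geq 0$ with $\uu=0$ on $\partial\domTriangle_l$ and $-\Delta\uu=(\sigma-1)\mathbf{1}_{D_l}$ on $\domTriangle_l$) with mass conservation. Integrating $-\Delta\uu_{\domTriangle_l}$ over $\domTriangle_l$ gives the bound $\mu(D_l)\leq\int_{\SG}\sigma\,d\mu<\infty$; combining this with the abelian property of the sandpile and the single-source cluster containment estimates on $\SG$ available from \cite{Div-Sandpile-SG} and the preceding sections yields a ball $B_{R_0}$, with radius depending only on $\supp(\sigma)$, $\|\sigma\|_\infty$ and $\int\sigma\,d\mu$, such that $D_l\subset B_{R_0}$ for every $l\geq L_0$. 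Since $\uu_{\domTriangle_l}$ is then supported in $B_{R_0}$ and satisfies $-\Delta\uu_{\domTriangle_l}\leq\|\sigma\|_\infty$ with zero boundary values on $\partial B_{R_0}$, a maximum-principle comparison yields the uniform bound $\uu_{\domTriangle_l}\leq\|\sigma\|_\infty\,\greenOperator_{B_{R_0}}(\mathbf{1})$.

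Combining (i) and (ii), the limit $u(x)=\lim_{l\to\infty}\uu_{\domTriangle_l}(x)$ exists and is finite everywhere, vanishes outside $B_{R_0}$, and hence $D=\{u>0\}\subset B_{R_0}$ is bounded, as required. The main difficulty is the uniform containment $D_l\subset B_{R_0}$ in (ii): in $\R^d$ the single-source sandpile cluster is an explicit Euclidean ball and the containment follows routinely by superposition, whereas on $\SG$ the absence of a globally defined Green function and the fractal geometry of the gasket force one to combine the scale-$2^{-n}$ cluster-size estimates with careful control of how mass behaves near the Dirichlet boundary $\partial\domTriangle_l$ in order to rule out thin, far-reaching excursions of $D_l$ as $l\to\infty$.
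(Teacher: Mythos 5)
Your step (i) is essentially the paper's own key computation: the proof of Lemma \ref{lem:odometer-stays-same} rests on exactly the observation that $\gamma_{\domTriangle_{l'}}-\gamma_{\domTriangle_l}$ is harmonic on the smaller domain (both obstacles have Laplacian $\sigma-1$ there), and your deduction $s_{\domTriangle_l}\le s_{\domTriangle_{l'}}-h$, hence $\uu_{\domTriangle_l}\le\uu_{\domTriangle_{l'}}$, is the first half of that argument. The paper pushes the same computation in both directions to conclude that the odometers are eventually \emph{equal} (not merely monotone) once the noncoincidence set is compactly contained in $\domTriangle_l$, so the limit exists because the sequence stabilizes; your monotone-plus-bounded route also yields existence, but the stabilization statement is what the paper actually uses later, and it comes for free from the ingredients you already have.

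The genuine gap is in step (ii), the uniform containment $D_l\subset B_{R_0}$ independent of $l$, which you correctly identify as the crux but justify by appeal to ``the abelian property of the sandpile and the single-source cluster containment estimates \ldots from \cite{Div-Sandpile-SG}.'' Those results concern the \emph{discrete} divisible sandpile on the graphs $\SG_n$ (and the abelian property is a statement about toppling orders in that discrete model); they do not apply directly to the continuous obstacle problem on the fractal, and transferring them would require the discrete-to-continuous convergence of odometers, which in the paper is itself set up only after a uniform containing triangle has been fixed --- so this route risks circularity. The volume bound $\mu(D_l)\le\int_{\SG}\sigma\,d\mu$ from mass conservation does not by itself exclude far-reaching excursions. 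The paper closes this step differently (proof of Lemma \ref{lem:existence-of-global-sandpile}): it dominates $\sigma$ by the explicit configuration $\sigma'=3^{L-l}\mathds{1}_{B(0,2^l)}$, computes the odometer of $\sigma'$ in closed form in Appendix \ref{sec:appendixB} (showing its noncoincidence set is exactly $B(0,2^L)\subsetneq B(0,2^{L+1})$), and then invokes monotonicity in the initial density (Lemma \ref{lem:monotonicity-if-div-sandpile}, the continuous analogue of the comparison you want) to conclude $\uu_{\doubleTriangle^{-k}(0)}=0$ outside $B(0,2^{L+1})$ for all large $k$. Replacing your appeal to discrete cluster estimates by this explicit supersolution comparison would complete your argument.
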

Starting with an almost everywhere continuous function $\sigma$ on $\mathsf{SG}$ as in Proposition \ref{prop:obst1}, we consider discrete initial particle densities $\sigma_n$ on $\SG_n$ that converge to $\sigma$, and for $\sigma_n$ we perform divisible sandpile, internal DLA, and rotor aggregation on $\mathsf{SG}_n$. We denote the respective clusters by $\DD_n$, $\II_n$, and $\RR_n$, where the subscript $n$ indicates that we are on the approximating graph $\mathsf{SG}_n$ (i.e. we perform divisible sandpile, IDLA and rotor router aggregation with initial density $\sigma_n$ on $\mathsf{SG}_n$).
The next result shows that the scaling limit of these three models converges in the Hausdorff metric to the noncoincidence set $\tilde{D} = D \cup \{ \sigma \geq 1 \}^\circ$, where $D$ ist the set from Proposition \ref{prop:obst1}. 
We say that the sequence $(A_n)_{n\in\mathbb{N}}$ of sets $A_n\subseteq \mathsf{SG}_n$ converges to the set $A\subseteq \mathsf{SG}$, if for every $\varepsilon>0$, there exists $n_0\in \N$ such that for all $n\geq n_0$ it holds 
$$ A_{\varepsilon} \cap \SG_n \subseteq A_n\subseteq A^{\varepsilon},$$
where $A_{\varepsilon}$ and $A^\varepsilon$
are the inner and outer $\epsilon$-neighborhoods of $A$, respectively. A special case of the densities $\sigma_n$ is as in the result below, where $\sigma_n(x)$ can be obtained by averaging $\sigma$ over  a neighborhood of $x$ in $\SG$.
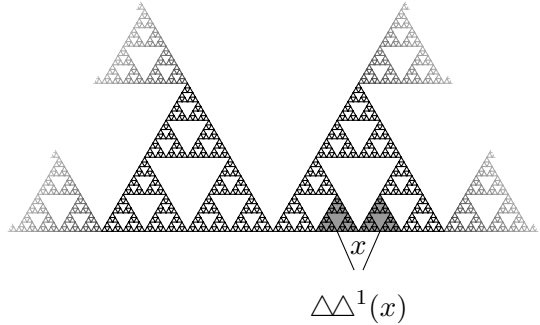
\begin{figure}
    \centering
    \pgfmathsetmacro{\siz}{3.5}
\pgfmathsetmacro{\fad}{1}

\begin{tikzpicture}[scale=0.65, decoration=quasi-sirpinski] 
      \draw decorate{ decorate { decorate { decorate{ decorate{ 
        (0,0) -- ++(60:\siz) -- ++(-60:\siz) -- cycle 
        } } } } };
    \draw decorate{ decorate { decorate { decorate{ decorate{ 
        (-\siz,0) -- ++(60:\siz) -- ++(-60:\siz) -- cycle 
        } } } } };
        	
        	\node (c) at (\siz/2,0) {};
        	\node[below] at (c) {$x$};
    	\fill[opacity = 0.4] (\siz/2,0) -- ++(60:\siz/4) -- ++(-60:\siz/4) -- cycle ;
    	\fill[opacity = 0.4] (\siz/2-\siz/4,0) -- ++(60:\siz/4) -- ++(-60:\siz/4) -- cycle ;
    	
    	\node (t) at (\siz/2,-1) {};
    	\node[below, node distance = 1pt] at (t) {$\doubleTriangle^1(x)$};
    	\draw (\siz/2-\siz/8,0) -- (t) -- (\siz/2+\siz/8,0);
    	
    \begin{scope}[shift = {(\siz/2,\siz/2*1.7320508076)}]
	\path[scope fading = east, fading angle = 27] (-\fad,-\fad) -- ++(\siz/2,\siz/2*1.7320508076) -- ++ (\siz/2,-\siz/2*1.7320508076);
			\draw decorate{ decorate { decorate { decorate{ decorate{ 
        (0,0) -- ++(60:\siz) -- ++(-60:\siz) -- cycle 
        } } } } };
	
	\end{scope}
	\begin{scope}[shift = {(\siz,0*1.7320508076)}]
	\path[scope fading = east, fading angle = 27] (-\fad,-\fad) -- ++(\siz/2,\siz/2*1.7320508076) -- ++ (\siz/2,-\siz/2*1.7320508076);
			\draw decorate{ decorate { decorate { decorate{ decorate{ 
        (0,0) -- ++(60:\siz) -- ++(-60:\siz) -- cycle 
        } } } } };
	
	\end{scope}
	\begin{scope}[shift = {(-2*\siz,0*1.7320508076)}]
	\path[scope fading = west, fading angle = -27] (\fad,-\fad) -- ++(\siz/2,\siz/2*1.7320508076) -- ++ (\siz/2,-\siz/2*1.7320508076);
			\draw decorate{ decorate { decorate { decorate{ decorate{ 
        (0,0) -- ++(60:\siz) -- ++(-60:\siz) -- cycle 
        } } } } };
	
	\end{scope}
	\begin{scope}[shift = {(-1.5*\siz,\siz/2*1.7320508076)}]
	\path[scope fading = west, fading angle = -27] (\fad,-\fad) -- ++(\siz/2,\siz/2*1.7320508076) -- ++ (\siz/2,-\siz/2*1.7320508076);
			\draw decorate{ decorate { decorate { decorate{ decorate{ 
        (0,0) -- ++(60:\siz) -- ++(-60:\siz) -- cycle 
        } } } } };
	
	\end{scope}
\end{tikzpicture}
    \caption{The double sided infinite Sierpi\'nski gasket and $\doubleTriangle^1(x)$ for $x=(1,0)$.}
    \label{fig:gasket+doubleTriangle}
\end{figure}
\begin{thm}\label{thm:uniform_scaling_limit}
Let $\sigma: \mathsf{SG}\rightarrow\N_0$ be bounded with compact support and continuous almost everywhere, such that $\{\sigma\geq 1\}=\overline{\{\sigma\geq 1\}^\circ}$. Let $\DD_n$, $\RR_n$ and $\II_n$ be the final set of occupied sites (resp. toppled sites in the sandpile) formed by divisible sandpile, rotor  aggregation and internal DLA respectively,  started from initial density
\begin{align}\label{eq:sigma-n}
    \sigma_n(x)=\Big\lfloor \mu(\doubleTriangle^{n+1}(x))^{-1}\int_{\doubleTriangle^{n+1}(x)}\sigma(y)d\mu(y)\Big\rfloor
\end{align}
on $\mathsf{SG}_n$, for every $n\in \N$. Then $\DD_n,\RR_n\to \widetilde{D}$ and $\II_n\to \widetilde{D}$ with probability one, as $n\to\infty$.
\end{thm}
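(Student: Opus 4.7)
My plan is to handle the divisible sandpile cluster $\DD_n$ first by identifying its odometer with the solution of a discrete obstacle problem on $\SG_n$, pass to the limit using Proposition \ref{prop:obst1}, and then transfer the conclusion to $\II_n$ and $\RR_n$ via comparison arguments. For the divisible sandpile with initial mass $\sigma_n$, let $u_n$ denote its odometer function. By Abelianness, on any $\SG_n \cap \domTriangle_l$ large enough to contain the cluster we can write $u_n = s_{n,l} - \gamma_{n,l}$, where $\gamma_{n,l} = -\greenOperatorDiscrete_{n,l}(\sigma_n - 1)$ is the discrete obstacle built from the discrete Dirichlet Green function on $\SG_n \cap \domTriangle_l$ and $s_{n,l}$ is its least discrete superharmonic majorant; the cluster then decomposes as $\DD_n = \{u_n > 0\} \cup \{\sigma_n \geq 1\}$.

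The first main step is to prove uniform convergence $u_n \to \uu_{\domTriangle_l}$ on $\domTriangle_l$ for each fixed $l$. Two ingredients drive this. First, after rescaling by the appropriate walk-dimension factor $5^n$, the discrete Green function $\greenFunctionDiscrete_{n,l}$ converges uniformly to $\greenFunction_{\domTriangle_l}$, using decimation invariance of simple random walk on $\SG_n$ and the self-similar structure of $\greenFunction_{\domTriangle_l}$. Second, the averaged densities $\sigma_n$ defined in \eqref{eq:sigma-n} converge to $\sigma$ in $L^1(\mu)$ on compact sets by Lebesgue differentiation on the Ahlfors-regular space $(\SG,\mu)$, at every point where $\sigma$ is continuous. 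Combining these, $\gamma_{n,l} \to \gamma_{\domTriangle_l}$ uniformly, and by monotonicity and stability of the obstacle problem under uniform perturbations of the obstacle, $s_{n,l} \to s_{\domTriangle_l}$ uniformly, yielding $u_n \to \uu_{\domTriangle_l}$ uniformly on $\domTriangle_l$.

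From uniform convergence of the odometers on compacts, Hausdorff convergence $\DD_n \to \widetilde{D}$ follows by checking two inclusions. For the outer bound $\DD_n \subseteq \widetilde{D}^\varepsilon$: at a point outside $\widetilde{D}^\varepsilon$ one has $u = 0$ and $\sigma < 1$, so continuity of $u$ and of $\sigma$ (at almost every point) together with $u_n \to u$ and $\sigma_n \to \sigma$ force the point to lie outside $\DD_n$ for $n$ large. For the inner bound $\widetilde{D}_\varepsilon \cap \SG_n \subseteq \DD_n$: such a point lies either in the interior of $\{\sigma \geq 1\}$, where $\sigma_n \geq 1$ for large $n$ thanks to the assumption $\{\sigma \geq 1\} = \overline{\{\sigma \geq 1\}^\circ}$, or in $D$ at definite distance from $\bound D$, where $u \geq c > 0$ and hence $u_n > 0$ for large $n$.

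For $\RR_n$ and $\II_n$ I would invoke comparison estimates that have been developed on $\SG_n$ in \cite{idla_cati, limit-shape-rotor-div-sandpile} for point sources and can be adapted to the present multi-source setting. The rotor odometer differs from the sandpile odometer by a vertex-wise discrepancy that is $O(1)$ in the discrete model (hence vanishes at the fractal scale $2^{-n}$), while the IDLA odometer concentrates around the sandpile odometer with exponentially small failure probability via martingales built from the discrete Green function. In both cases the inner and outer inclusions above transfer from $\DD_n$ to $\RR_n,\II_n$, yielding Hausdorff convergence to $\widetilde{D}$, and almost sure convergence for IDLA follows by Borel-Cantelli summation in $n$. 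The principal obstacle I anticipate is the absence of a globally defined Green function on $\SG$: since $\uu_{\domTriangle_l}$ is itself only defined as a limit of problems on truncated domains, we must establish a priori that $\DD_n$, $\RR_n$, $\II_n$ are contained in a single $\domTriangle_L$ independent of $n$, so that the truncation does not distort the cluster. This confinement follows from boundedness of $D$ in Proposition \ref{prop:obst1} together with total-mass and isoperimetric estimates on $\SG_n$, but making it uniform in $n$ across all three models is the delicate part of the argument.
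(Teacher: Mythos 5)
Your overall architecture (realize the sandpile odometer as the solution of a discrete obstacle problem, prove uniform convergence of Green operators, obstacles and superharmonic majorants, then transfer to rotors and IDLA by comparison) is the same as the paper's, and the confinement issue you flag at the end is indeed the one handled by Lemmas \ref{lem:ball_contains_cluster_sandpile} and \ref{lem:contained_in_ball_rotor}. However, there are genuine gaps. First, your outer inclusion $\DD_n\subseteq\widetilde{D}^\varepsilon$ does not follow from ``$u=0$ outside $\widetilde{D}^\varepsilon$ plus uniform convergence $u_n\to u$'': uniform convergence only makes $u_n$ \emph{small} there, and the set $\DD_n=\{u_n>0\}$ is not stable under small perturbations of $u_n$. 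The paper closes this with a non-degeneracy estimate (Lemma \ref{lem:sandpileConvergenceDomains:odometerbound}): if $x\in\DD_n\setminus\widetilde{D}^\varepsilon$, comparing $u_n$ with the exit-time function $\phi_{x,r}$ on a ball where $\sigma_n\leq\lambda<1$ produces a point $y$ with $|x-y|\leq\varepsilon/2+\delta_n$ and $u_n(y)\geq u_n(x)+c(1-\lambda)\varepsilon^\beta$, and it is this definite growth that contradicts the uniform smallness of $u_n$. Analogous quantitative lemmas are needed (and proved) for the rotor and IDLA outer bounds; without them the ``continuity forces the point outside the cluster'' step fails.

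Second, your treatment of the rotor model is too optimistic. The discrepancy between rotor and sandpile does not ``vanish at the fractal scale'': by Lemma \ref{lem:divergence_of_odometer_rotor} the rescaled rotor odometer satisfies $\Delta_n u_n=\mathds{1}_{\RR_n}-\sigma_n+\delta_n^{\beta/2}\mathrm{div}\,\rho$ with $|\rho|\leq 6$, and the error term $\delta_n^{\beta/2}\mathrm{div}\,\rho=\tfrac14\sum_{y\sim x}\rho(x,y)$ is $O(1)$ \emph{after} rescaling, uniformly in $n$; feeding it through the Green operator yields an $O(1)$ discrepancy between the odometers, not an $o(1)$ one. The paper removes it by smoothing with $\kappa(n)$ steps of the lazy walk and proving the total-variation decay of Proposition \ref{prop:total_variation_lazy_walk}, which turns the error into $O(\kappa(n)^{-1/4+\delta\alpha/2})$; some such mechanism is indispensable, and it is precisely why conditions \eqref{cond:RROdometerConvergence1} on smoothed densities enter the hypotheses. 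Finally, for the IDLA outer estimate you will also need $\mu(\partial\widetilde{D})=0$ (Proposition \ref{prop:boundary_conservation_of_mass}) to show that only $O(\varepsilon^\alpha\delta_n^{-\alpha})$ particles exit $\widetilde{D}$ before settling; this boundary-regularity input, proved in Appendix \ref{sec:appendixC}, does not appear in your proposal.
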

For the definition of $\doubleTriangle^{n+1}(x)$ see equation \eqref{eq:triang-double}. Theorem \ref{thm:uniform_scaling_limit} is a summary of Theorem \ref{theo:convergenceSandpileDomains} (divisible sandpile scaling limit), Theorem \ref{thm:conv_domain_rotor} (rotor aggregation scaling) and Theorem 
\ref{thm:IDLA} (internal DLA scaling limit), for the choice \eqref{eq:sigma-n} of the discrete particle densities $\sigma_n$.

\textbf{Structure of the paper.} In Section \ref{sec:prelim}  we introduce the three aggregation models (divisible sandpile, internal DLA, rotor aggregation), the Sierpi\'nski gasket $\mathsf{SG}$ as well as its discrete graph approximations $\mathsf{SG}_n$ for $n\in\N$, and the potential theoretical tools on fractals. 
In Section \ref{sec:div-sand}  we prove the limit shape result for the divisible sandpile model, by defining the limiting process on $\mathsf{SG}$ as a limit of processes on compact subsets of $\mathsf{SG}$. 
Section \ref{sec:rotor} investigates the limit shape for rotor aggregation while Section \ref{sec:idla} deals with the internal DLA model.
Finally, in Appendices \ref{sec:appendixB} and \ref{sec:appendixC} we show how to calculate the solution of the free boundary problem for a specific starting configuration $\sigma$. We also show here that the boundary of the non-coincidence set of the free boundary problem is a null set with respect to the Hausdorff measure on $\mathsf{SG}$, as is needed in the proof of the scaling limit for  internal DLA.

\section{Preliminaries}\label{sec:prelim}

\subsection{Internal aggregation models}
Let $G=(V,E)$ be an infinite, locally finite and regular graph, i.e.~every vertex has degree  $\mathsf{d}\in\mathbb{N}$. If $x$ and $y$ are neighbors in $G$ we write $x\sim y$, and we denote by $d(x,y)$ the graph distance in $G$.

\begin{figure}
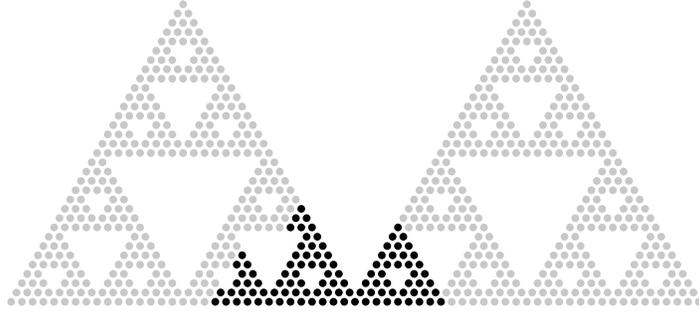

     \centering
     \begin{subfigure}[b]{0.8\textwidth}
         \centering
          \include{tikz_div}~\\[-1cm]
     \end{subfigure}
        \caption{Divisible sandpile for initial configuration  $\sigma=\mathds{1}_{B((-2,0),1)}+\mathds{1}_{B((-1/2,0),2)}$ on $\mathsf{SG}_2$.}
        \label{fig:aggregation_models_sim}
\end{figure} 
\textbf{Divisible sandpile.} A function $\nu_0:V\rightarrow\mathbb{R}$ is a sand distribution on $G$ if $\nu_0$ is non-negative and has finite support. The toppling operator at $x$ is
\begin{align*}
    T_x \nu_0=\nu_0+\max\{\nu_0(x)-1,0\}\Delta_G \mathds{1}_{\{x\}},
\end{align*}
where $\mathds{1}_{\{x\}}(\cdot)$ is the function on $V$ taking value $0$ everywhere except a single $1$ at the vertex $x$, and $\Delta_G$ is {\it the graph Laplacian of $G$}, i.e. the operator defined on real functions on $V$ given by
\begin{align*}
    \left(\Delta_G f\right) (x)= \frac{1}{\mathsf{d}}\sum_{z\sim x} f(z) - f(x).
\end{align*}
The toppling operator $T_x$ takes the mass exceeding $1$ at $x$ and distributes it equally among its neighbors.
For a sequence of vertices $(x_k)_{k\in\N}$,  define the  associated  toppling process by
$$\nu_n=T_{x_n}...T_{x_1}\nu_0,$$
where $\nu_n$ is the sand distribution after the first $n$ topplings have been performed. We remark that the total amount of mass is conserved during the toppling procedure. An important tool that will be used in the sequel is the  {\it odometer function}. For an initial sand distribution $\nu_0$ and a toppling sequence $(x_k)_{k\in\N}$ we define the $k$-th odometer function by
\begin{align*}
    u_k(y)=\sum_{j\leq k;x_j=y}\big(\nu_{j}(y)-\nu_{j-1}(y)\big),
\end{align*}
which keeps track of how much mass has been sent out of each vertex. If we topple along any sequence of vertices that contains each vertex in $V$ infinitely many times, then the sand distributions $(\nu_k)_{k\in\N}$ converge pointwise to some limit distribution $\nu$ with $\nu\leq 1$ on $G$. Also the sequence of odometer functions $(u_k)_{k\in\N}$ converges to a limit function $u$ called the odometer function of the divisible sandpile started from initial configuration $\nu_0$. It holds that $\nu= \nu_0 + \Delta_G u$ and the so-called {\it least action principle} states that $u$ is actually the minimal non-negative function that satisfies 
$$\nu_0 + \Delta_G u \leq 1.$$
See \cite{spherical_asymptotics_rotor}  for proofs of the pointwise convergence results for the odometer functions and mass distributions, and \cite[Lemma 3.2]{LP-mult-idla} for the least action principle. 
The {\it divisible sandpile cluster} is defined as the set of toppled sites $\mathcal{D}:=\{x\in G:u(x)>0\}$. In the literature, it is also common to define the divisible sandpile cluster as the set 
$\DD':= \{ x \in G : \nu(x) = 1 \}$ of fully occupied sites. The sets $\mathcal{D}$ and $\mathcal{D'}$ are related in that $u(x)>0$ implies $\nu(x)=1$ so $\mathcal{D}\subseteq\mathcal{D}'$, and $\nu(x)=1$ implies $u(x)>0$ or $\nu_0(x)=1$ or there exists a neighboring vertex $y\sim x$ with $u(y)>0$. Throughout this paper we will use the definition $\DD$ of the sandpile cluster, and the scaling limit of $\DD'$ follows immediately from that of $\DD$ by the above mentioned relation between the two sets.

\textbf{Internal DLA.}
For the infinite graph $G=(V,E)$, an initial configuration of particles is a function $\nu_0:V\rightarrow\mathbb{N}$ that is bounded and finitely supported. Denote by $m:=\sum_{v\in V}\nu_0(v)$ the total number of particles, and label the particles by integers $i=1,...,m$. Denote by $x_i$ the initial location of the particle labeled $i$, so  $ \#\{i|x_i=v\}=\nu_0(v)$ represents the number of particles located at $v$. For each $i=1,...,m$, let $\big(X_t(i)\big)_{t\in\N}$ be a simple random walk on $G$ starting at position $x_i$, and we construct inductively the internal DLA cluster  as follows. We set the cluster at time $0$ to be the empty set, and for $i\geq 1$ if the cluster at time $i-1$ has already been built, let the particle labeled $i$ start a random walk at its initial position $x_i$ and stop it after first exiting the previous cluster at time $i-1$. So the cluster at time $i$ is the union between the cluster at time $i-1$ and $\{X_{\tau_i}(i)\}$, where $\tau_i$ denotes the first exit time of the random walk $(X_t(i))_{t\in\mathbb{N}}$ from cluster at time $i-1$.
The stopping time $\tau_i$ represents the number of steps taken by particle labeled $i$, which is initially located at $x_i\in G$, until stopping at random location $X_{\tau_i}(i)$. Proceeding in this way for all $i=1,...,m$, we obtain the random subset of occupied sites in $G$ that we call {\it internal DLA} cluster. 
In the literature sometimes the notation $\II_n$ is used to denote the internal DLA cluster after $n$ particles settled, but throughout this paper, this notation is reserved for internal DLA cluster on the discrete graphs $\SG_n$ with particle configuration $\sigma_n$. 

\textbf{Rotor aggregation} (or rotor-router aggregation) is a deterministic counterpart of internal DLA and it is based on particles performing deterministic walks (rotor walks) instead of random walks. Rotor walks, also called rotor-router walks, were first studied by Priezzhev et al. in \cite{Priezzhev_1996} under the name {\it Eulerian Walkers}, and can be defined by means of a rotor configuration and a fixed cyclic ordering of the neighbors for each vertex. Denote the set of neighbors of $v\in V$ by $A_v$, and fix for any $v\in V$ a cyclic ordering $\mathsf{cyc}(v)$ of the neighbors of $v$. A rotor configuration is a function $\rho:V\rightarrow V$ such that for all $v\in V$,  $\rho(v)\in A_v$, so one can think 
of $\rho(v)$ as an arrow or a rotor that points to one of the neighbors of $v$. 
A particle starting at $v$ performs a {\it rotor walk} as follows: first it turns the rotor at $v$  from $\rho(v)$ to point to the next neighbor in the cyclic ordering $\mathsf{cyc}(v)$ of the neighbors of $v$, and then the particle follows this new direction. 

For rotor aggregation, we start with the same initial particle configuration $\nu_0:V\to\N$ which is bounded and finitely supported  as in the internal DLA. Denoting again by $m$ the total number of available particles and labeling the particles by $i=1,\ldots, m$, and also denoting the starting location of the particle labeled $i$ by $x_i$, we define rotor aggregation recursively exactly in the same way as internal DLA, with the difference that particles perform rotor walks instead of random walks until finding sites unvisited previously, where they stop. After all $m$ particles have found new sites, we obtain a deterministic subset of occupied sites that we call {\it the rotor cluster}. While in internal DLA particles perform independent random walks, in rotor aggregation the particle movements are deterministic but there is interaction among them. If one particle passed through some vertex until finding its final position, on the way there it modified the direction of several rotors and subsequent particles passing through the same vertex will use then the rotor direction left behind by previous visits. If $(Y_t{(i)})_{t\in \N}$ is a rotor walk starting from position $x_i$, then the rotor cluster with $m$ particles can be defined as follows: let the rotor cluster at time $0$ to be the empty set, and for $i\geq 1$ if the cluster at time $i-1$ has already been built, let the particle labeled $i$ start a rotor walk  with rotor configuration left behind by the previous particles, at its initial position $x_i$ and stop it after first exiting the previous rotor cluster at time $i-1$. If $\tau_i$ denotes the first exit time of the rotor walk $(Y_t{(i)})_{t\in\N}$ from the previous rotor cluster, then the rotor cluster at time $i$ is the union between the rotor cluster at time $i-1$ with $\{Y_{\tau_i}{(i)}\}$.
So $\tau_i$ represents the number of rotor steps taken by particle labeled $i$ initially located at $x_i\in G$, which stops at deterministic location $Y_{\tau_i}(i)$.

\subsection{Sierpi\'nski gasket}\label{sec:sierp-gas}

We recall the construction of the Sierpi\'nski gasket as in \cite{barlow-perkins-BM}.
Let $a_0=(0,0)$, $a_1=(1,0)$ and $a_2=(1/2,\sqrt{3}/2)$ be the corners of an equilateral triangle with side length $1$ and let $F_0=\{a_0,a_1,a_2\}$. Further let $J_0$ be the closed convex equilateral triangle with vertices $F_0$.  Inductively, for $n\geq 1$, we define
\begin{align*}
    F_{n+1}:=F_n\cup(2^n a_1+F_n)\cup(2^n a_2+F_n),
\end{align*}
so $F_n\subseteq 2^n J_0.$  Let $\mathsf{SG}_+=\bigcup_{n=0}^\infty F_n$
and denote by $\mathsf{SG}_0$ the union of $\mathsf{SG}_+$ with its reflection along the y-axis $\mathsf{SG}_-$. For $n\in\Z$, let
\begin{align*}
\delta_n:=2^{-n}, \quad    \mathsf{SG}_n:=\delta_n \mathsf{SG}_0, \quad \mathsf{SG}_\infty:=\bigcup_{n=0}^\infty \mathsf{SG}_n, \quad \mathsf{SG}_{-\infty}=\{0\}, \quad \mathsf{SG}:=\text{cl}(\mathsf{SG}_\infty).
\end{align*}
$\mathsf{SG}$ is a closed connected subset of $\mathbb{R}^2$ called {\it the double sided infinite Sierpi\'nski gasket} and the graphs $\mathsf{SG}_n$ represent the level-$n$ approximating graphs of $\mathsf{SG}$, where $x,y\in\SG_n$ are connected by an edge if and only if $|x-y|=\delta_n$ and the line connecting $x$ to $y$ is contained in $\mathsf{SG}$. For $x\in \mathsf{SG}_\infty$, we define its level as 
$$\level(x) := \inf\{n\in \Z:\ x \in \mathsf{SG}_n\}.$$ Clearly $0$ is the only vertex with $\level(0) = -\infty$. 
We recall below several properties of $\mathsf{SG}$ and the sequence of approximating graphs $(\mathsf{SG}_n)_{n\in \N}$. The {\it Hausdorff dimension $\alpha$} and the {\it walk dimension} $\beta$ of $\SG$ are given by
\begin{align*}
\alpha =\frac{\log3}{\log2}\approx 1.58496 \quad \text{and} \quad 
\beta =\frac{\log5}{\log2}\approx 2.32193.
\end{align*}
\textbf{Laplacian on $\SG_n$.} Recalling that by $\Delta_{\SG_n}$ we have denoted the graph Laplacian of the infinite graph $\SG_n$, we define 
the Laplacian on $\mathsf{SG}_n$ as a scaled version of $\Delta_{\mathsf{SG}_n}$ by
\begin{align*}
\Delta_n:=\delta_n^{-\beta}\Delta_{\mathsf{SG}_n}.
\end{align*}
We remark that by our choice of $\delta_n=2^{-n}$, we have $\delta_n^{-\beta}=5^n$, which is the scaling used in \cite{strichartz-differential-equations-fractals}. We call a function $h:\SG_n\to\R$ \textit{harmonic} if $\Delta_n h=0$, \textit{superharmonic} if $\Delta_n h\leq 0$ and \textit{subharmonic} if $\Delta_n h\geq 0$. For a function $\gamma:\SG_n\to\R$ its \textit{least superharmonic majorant} $s:\SG_n\to\R$ is defined as
$$s(x):=\inf\{f(x)|\ f:\SG_n\to\R \text{ is superharmonic and } f\geq \gamma\}.$$
For $n\in \N$,  denote by $B^n(x,r):=\{y\in\SG_n:\ d(x,y)\leq r\}$ the ball of radius $r\geq 0$ around $x\in\SG_n$ in the graph metric $d(\cdot,\cdot)$ of $\SG_n$. If $(X_t^n)_{t\in\N}$ is a simple random walk on $\SG_n$, and $B\subset \SG_n$, denote by $\tau_B^n(x)$ its first exit time from $B$:
\begin{align}\label{definition:firFprovidst-exit-time}
\tau_B^n:=\inf\{t\geq 0:\ X_t^n\notin B\}.
\end{align}
For simplicity of notation, if $B=B^n(x,r)$, then we write $\tau_r^n(x)$ instead of $\tau^n_{B^n(x,r)}$.
We will use for the rest absolute constants $C,c>0$ that might change from line to line.

\textbf{Properties of $\SG_n$, $n\in \N$.} We gather here several important features of $\SG_n$.
\vspace{-0.3cm}
\begin{enumerate}[(i)]
\setlength\itemsep{0em}
\item {\bf [Volume growth]} $B^n(x,r)$ has growth of order $\alpha$:
there exists $C>0$ such that
\begin{align}
C^{-1}r^\alpha\leq|B^n(x,r)|\leq Cr^\alpha. \tag{$V_\alpha$} \label{property:volume_growth}
\end{align}
See \cite{diffusion_fractals} and \cite{heat_kernels_and_sets_with_fractal_structures} for a proof and more details.
\item  {\bf [Elliptic Harnack inequality]} There exists $C>0$ such that for all $x\in \SG_n$, all $r>0$, and all functions $h>0$ with $\Delta_nh=0$ on $B^n(x,2r)$ we have
\begin{align}
    \sup_{y\in B^n(x,r)}h(y)\leq C\inf_{y\in B^n(x,r)}h(y) \tag{$EHI$}.\label{property:EHI}
\end{align}
See \cite{aof_kigami} for details.
\item {\bf [Expected exit time]} For every $x\in \SG_n$ and $r\geq 0$, there exists $C>0$ such that
\begin{align}
C^{-1}r^\beta\leq\mathbb{E}_y\big[\tau^n_r(x)\big]\leq Cr^\beta,\tag{$E_\beta$}\label{property:exit_time}
\end{align}
for the random walk $(X^n_t)_{t\in\mathbb{N}}$ starting at $y\in B^n(x,r)$; see \cite[Corollary 2.3a]{heat_kernels_and_sets_with_fractal_structures} for details.
\end{enumerate}

\textbf{Green functions and operators on $\SG_n$.} For any finite subset $\mathsf{T}\subseteq \mathsf{SG}_n$,  we denote its inner boundary by $\partial \mathsf{T} :=\{ v\in \mathsf{T}:\ \exists z \in \mathsf{T}^C \text{ with } z\sim v\} $. Consider again the simple random walk $(X_t^n)_{t\in \N}$ on $\SG_n$, and for $x,y\in \mathsf{T}$, we define the rescaled stopped Green function as 
\begin{align}\label{eq:green-visits}
    \greenFunctionDiscrete^n_{\mathsf{T}}(x,y)\!:= \!\Big(\frac{3}{5}\Big)^n\mathbb{E}_x\!\left[\#\text{ of visits of } X_t^n \text{ to }y\text{ before hitting }\partial \mathsf{T}\right]
    \!=\!\delta_n^{\beta-\alpha}\mathbb{E}_x\bigg[\sum_{k=0}^{\tau_{\mathsf{T}\backslash\partial \mathsf{T}}^n-1}\!\!\!\!\!\!\mathds{1}_{\{X^n_k=y\}}\!\bigg],
\end{align}
where $\tau_{\mathsf{T}\backslash\partial \mathsf{T}}^n$ is defined as in \eqref{definition:firFprovidst-exit-time} and $X^n_0=x$. Clearly $\greenFunctionDiscrete^n_{\mathsf{T}}(\cdot,y)=0$ for all $y\in \partial \mathsf{T}$. 
For any function $f:\mathsf{T}\rightarrow \R$  we define the Green operator $\greenOperatorDiscrete^n_{\mathsf{T}}$ as
$$ \greenOperatorDiscrete^n_{\mathsf{T}} f (x) := \delta_n^{\alpha} \sum_{y\in \mathsf{T}} \greenFunctionDiscrete^n_{\mathsf{T}}(x,y) f(y). $$
Since $\Delta_{\mathsf{SG}_n}\big( \greenFunctionDiscrete^n_{\mathsf{T}}(\cdot,y)\big) = -\delta_n^{\beta-\alpha}\dirac{y}{\cdot}$ we have $$ \Delta_n \greenOperatorDiscrete^n_{\mathsf{T}} f (x)= \greenOperatorDiscrete^n_{\mathsf{T}} \Delta_n f (x)=    -f (x), \quad \text{for all } x\in \mathsf{T}\backslash\partial\mathsf{T}.$$

\paragraph{Green functions on the Sierpi\'nski gasket $\SG$.}
As for its approximating counterpart, there is also no known globally defined Green function on $\mathsf{SG}$. 
Denote the ball in $\SG$ of radius $r\geq 0$ around $x$  in the Euclidean distance by $B(x,r):=\{y\in\SG: \ |x-y|\leq r\}$. We consider 
$$\domTriangle\subset\SG \quad \text{ as } \quad \domTriangle = B(0, 2^l),$$
where the radius $2^l$  of the ball will be specified later, with $l\in\N$. We want to emphasize here that in  introduction we kept the subscript $\domTriangle_l$, indicating the dependence on the radius $2^l$ of the ball. As we shall see below, the radius will not be important as long as it is big enough, reason for which we will mostly drop the subscript $l$. 
On $\domTriangle$ we define an energy form, a Laplacian, and a corresponding Green function.
We first define on $\domTriangle$ a Laplacian weakly via the bilinear energy form $\En_\domTriangle$, which itself is defined via Dirichlet forms on the approximating graphs $\SG_n$. More precisely, for functions $f,g:\domTriangle\cap \mathsf{SG}_n \rightarrow \R$ let
$$ \En^n_\domTriangle(f,g) := \Big( \frac{5}{3} \Big)^{n} \frac{1}{2}\sum_{x\sim_n y} \frac{1}{4}\big(f(x)-f(y)\big)\big(g(x)-g(y)\big),$$
where $x\sim_n y$ indicates that $x$ and $y$ are neighbors in $\mathsf{SG}_n$.
For the limiting object $\mathsf{SG}$ we first define an energy form on the dense subset $\SG_\infty$ by
\begin{align*}
	\Dom(\En^\infty_\domTriangle) &:= \left\{v:\mathsf{SG}_\infty\cap \domTriangle \to\R \text{ with }\ \lim_{n\rightarrow \infty} \En^n_\domTriangle(v|_{\mathsf{SG}_n\cap \domTriangle},v|_{\mathsf{SG}_n\cap \domTriangle}) < \infty \right\}, \\
	\En^\infty_\domTriangle(f,g) &:= \lim_{n\rightarrow\infty} \En^n_\domTriangle(f|_{\mathsf{SG}_n\cap \domTriangle},g|_{\mathsf{SG}_n \cap \domTriangle}) \ \text{for} \ f,g \in \Dom(\En^\infty_\domTriangle). 
\end{align*}
Note that $\Dom(\En^\infty_\domTriangle) \subseteq \mathcal{C}(\domTriangle)$, where $\mathcal{C}(\domTriangle)$ is the set of continuous functions on $\domTriangle$. We extend this definition to an energy form $\En_\domTriangle$ with domain $\Dom (\En_\domTriangle) = \{v\in \mathcal{C}(\domTriangle) :\  \En_\domTriangle(v,v) < \infty \}$ in a natural way.

Denote by 
\begin{align*}
    \Dom_0(\En_\domTriangle):=\{f\in \Dom(\En_\domTriangle):\ f|_{\partial\domTriangle}=0 \}
\end{align*} 
the set of all functions with finite energy and vanishing at the four boundary points of $\domTriangle$.
We define the set of double sided neighboring triangles for $v \in (\mathsf{SG}_\infty \cap \domTriangle)\backslash \partial\domTriangle$ of size $2^{-k}$ by 
\begin{align}\label{eq:triang-double}
    \doubleTriangle^k(v) := B(v,2^{-k}) \text{ for }k\geq \level(v).
\end{align}

Let $\mu$ be the rescaled $\alpha$-dimensional Hausdorff measure on $\mathsf{SG}$ such that $\mu(\domTriangle) = 3^{l}$. For the existence see \cite[Lemma 1.1]{barlow-perkins-BM}, with a slightly different rescaling adapted to our setup. The rescaling is chosen such that $\mu(\doubleTriangle^{0}(0))=1$ and therefore $$\mu(\doubleTriangle^{n}(0)) = \delta_n^\alpha.$$ 
\begin{defn}
Let $u\in \Dom(\En_\domTriangle)$ and $f:\mathsf{SG}\to\R$ be continuous. Then we define $u \in \Dom(\Delta_\domTriangle)$ with $\Delta_\domTriangle u = f$ if and only if
$$ \En_\domTriangle(u,v) = - \int_{\domTriangle} f v \ d\mu \text{ for all } v\in \Dom_0 (\En_\domTriangle). $$
We write $u\in \Dom_{L^2}(\Delta_\domTriangle)$ if we replace the assumption on the continuity of $f$ by $f\in L^2(\domTriangle, \mu)$. 
\end{defn}
See \cite[Definition 2.1.1]{strichartz-differential-equations-fractals} for more details on the definition of the weak Laplacian $\Delta_\domTriangle$.
\begin{lem}\label{lem:Conv-Laplacians}
	Let $u\in\Dom(\Delta_\domTriangle)$. Then the limit holds uniformly across $(\mathsf{SG}_\infty\cap \domTriangle) \backslash \partial\domTriangle$:
	\begin{align}
		\Delta_\domTriangle u(x) = \frac{3}{4}\lim_{n\to\infty} \Delta_n u(x).\label{Eq:LaplaceApprox}
	\end{align}
Conversely, if $u$ is a continuous function and the right hand side of \eqref{Eq:LaplaceApprox} converges uniformly to a continuous function on $(\mathsf{SG}_\infty\cap \domTriangle) \backslash \domTriangleBoundary$, then $u\in\Dom(\Delta_\domTriangle)$ and (\ref{Eq:LaplaceApprox}) holds.
\end{lem}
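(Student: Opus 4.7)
The plan is to prove the forward implication by testing the defining identity of the weak Laplacian against a family of piecewise $n$-harmonic ``tent'' functions localized at $x$, and then to recover the pointwise limit of $\Delta_n u$ by matching the two expressions for the energy. The converse is obtained by approximating an arbitrary test function in $\Dom_0(\En_\domTriangle)$ by its piecewise $n$-harmonic interpolant and passing to the limit.

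For the forward direction, for every $x\in(\SG_\infty\cap\domTriangle)\setminus\domTriangleBoundary$ and every $n\geq\level(x)$, I would introduce $\psi_n^x\in\Dom_0(\En_\domTriangle)$, defined as the Kigami piecewise harmonic extension of the function on $\SG_n\cap\domTriangle$ that takes value $1$ at $x$ and $0$ at every other vertex of $\SG_n\cap\domTriangle$, so that $\supp\psi_n^x\subseteq\doubleTriangle^n(x)$. Two structural facts are then central: first, because $\psi_n^x$ is piecewise $n$-harmonic, the infinite-level energy collapses to the finite-level one, $\En_\domTriangle(u,\psi_n^x)=\En^n_\domTriangle(u,\psi_n^x)$; second, $\int_\domTriangle\psi_n^x\,d\mu$ equals a fixed positive constant times $\mu(\doubleTriangle^n(x))=\delta_n^{\alpha}$, with the constant determined by the values of the harmonic extension of $\psi_0^0$ on $\doubleTriangle^0(0)$ by self-similarity.

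Next, I would evaluate $\En_\domTriangle(u,\psi_n^x)$ in two ways. Using the weak Laplacian definition and continuity of $\Delta_\domTriangle u$,
\begin{align*}
\En_\domTriangle(u,\psi_n^x)=-\int_\domTriangle \Delta_\domTriangle u\cdot\psi_n^x\,d\mu = -\Delta_\domTriangle u(x)\int_\domTriangle\psi_n^x\,d\mu + o\!\left(\int_\domTriangle\psi_n^x\,d\mu\right),
\end{align*}
with the error controlled by the modulus of continuity of $\Delta_\domTriangle u$ on the shrinking supports $\doubleTriangle^n(x)$. On the other hand, a direct computation of $\En^n_\domTriangle(u,\psi_n^x)$ using $\psi_n^x(x)=1$ and $\psi_n^x\equiv 0$ on the other vertices of $\SG_n\cap\domTriangle$ reduces, after the $(5/3)^n$ rescaling and the degree-$4$ local structure, to a multiple of $\delta_n^{\alpha-\beta}\,\Delta_n u(x)$. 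Dividing by $\int\psi_n^x\,d\mu$, passing to the limit $n\to\infty$, and keeping track of the rescalings $\mu(\doubleTriangle^n(x))=\delta_n^\alpha$, $\Delta_n=\delta_n^{-\beta}\Delta_{\SG_n}$, and $\En^n_\domTriangle\sim (5/3)^n$ produces the desired identity with the precise factor $\tfrac{3}{4}$. Uniformity in $x$ follows from the uniform continuity of $\Delta_\domTriangle u$ on the compact set $\domTriangle$ together with the fact that all auxiliary estimates depend on $x$ only through its level.

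For the converse, assuming $\frac{3}{4}\Delta_n u\to f$ uniformly with $f\in\mathcal C(\domTriangle)$, I would verify $\En_\domTriangle(u,v)=-\int_\domTriangle f\,v\,d\mu$ for every $v\in\Dom_0(\En_\domTriangle)$. To this end, let $v_n$ be the piecewise $n$-harmonic interpolant of $v|_{\SG_n\cap\domTriangle}$; then $\En_\domTriangle(u,v_n)=\En^n_\domTriangle(u,v_n)$, and discrete summation by parts expresses this as $-\sum_{y\in(\SG_n\cap\domTriangle)\setminus\domTriangleBoundary}\delta_n^{\alpha}\cdot\tfrac{3}{4}\Delta_n u(y)\cdot v(y)$ after inserting the scalings. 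The right-hand side is a Riemann-type sum against $\mu$ that converges to $-\int_\domTriangle f v\,d\mu$ by uniform convergence of $\tfrac{3}{4}\Delta_n u$ to $f$ and continuity of $v$, while the left-hand side converges to $\En_\domTriangle(u,v)$ by the definition of $\En_\domTriangle$ as the increasing limit of $\En^n_\domTriangle$ on the interpolants.

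The main obstacle, and the only place where more than routine calculation is needed, is the combinatorial identification of the universal constant in $\En^n_\domTriangle(u,\psi_n^x)=c_1\,\delta_n^{\alpha-\beta}\,\Delta_n u(x)$ together with $\int\psi_n^x\,d\mu=c_2\,\delta_n^\alpha$ in such a way that $c_1/c_2=\tfrac{4}{3}$. This is a known consequence of the self-similar harmonic structure on $\SG$, but it must be verified for the doubly-infinite setup used here and is the one step that does not reduce to the weak-Laplacian definition alone.
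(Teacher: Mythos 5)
Your overall strategy is exactly the standard proof of the result the paper is invoking: the paper's own ``proof'' of this lemma is a one-line citation of \cite[Theorem 2.2.1]{strichartz-differential-equations-fractals} plus a remark about adjusting constants, and what you propose (test against the level-$n$ harmonic splines $\psi_n^x$, use the collapse $\En_\domTriangle(u,\psi_n^x)=\En^n_\domTriangle(u,\psi_n^x)$, evaluate the energy two ways, and run the converse via harmonic interpolants and Riemann sums) is precisely the argument behind that citation. All of those structural steps are sound, so in substance you are reconstructing the proof the paper outsources.

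The genuine gap is the one step you explicitly defer: the identification of the constant. This is not a routine bookkeeping afterthought --- it is the entire quantitative content of the lemma --- and when one actually carries it out with the paper's stated normalizations it does \emph{not} produce $\tfrac34$. Concretely: the paper itself records (in the proof of the lemma on $\Delta_n\greenFunction_\domTriangle$) that $\En_\domTriangle(u,\psi_n^x)=\En^n_\domTriangle(u,\psi_n^x)=-\delta_n^{\alpha}\Delta_n u(x)$, while $\int_\domTriangle\psi_n^x\,d\mu=\tfrac13\delta_n^\alpha$, since $\doubleTriangle^n(x)$ consists of two $n$-cells of $\mu$-measure $\tfrac12\delta_n^\alpha$ each and the integral of a harmonic function over a cell is the average of its three boundary values times the cell's measure. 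Dividing the two expressions and letting $n\to\infty$ yields $\Delta_\domTriangle u(x)=3\lim_n\Delta_n u(x)$, not $\tfrac34\lim_n\Delta_n u(x)$; the factor of $4$ is exactly the discrepancy between the probabilistic graph Laplacian $\tfrac14\sum_{y\sim x}\bigl(u(y)-u(x)\bigr)$ entering the definition of $\Delta_n$ and the combinatorial Laplacian $\sum_{y\sim x}\bigl(u(y)-u(x)\bigr)$ appearing in Strichartz's statement. (Your converse argument has the same issue hidden in the Riemann sum: $\delta_n^\alpha\sum_{y\in\SG_n\cap\domTriangle}g(y)\to 3\int_\domTriangle g\,d\mu$ because each $n$-cell is covered three times by the supports of the splines.) Also note that your parametrization $\En^n_\domTriangle(u,\psi_n^x)=c_1\delta_n^{\alpha-\beta}\Delta_n u(x)$ is miscalibrated: the correct identity is $-\delta_n^{\alpha-\beta}\Delta_{\SG_n}u(x)=-\delta_n^{\alpha}\Delta_n u(x)$. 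So you cannot simply assert that the computation ``produces the desired identity with the precise factor $\tfrac34$''; you must actually perform it, and doing so either forces you to a different constant or forces a re-examination of which normalization of the graph Laplacian the lemma is really using.
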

\begin{proof}
The claim follows from \cite[Theorem 2.2.1]{strichartz-differential-equations-fractals}, with the minor difference that in our case we have the factor $3/4$ instead of $3/2$, due to our choice of $\mu$, which allocates only half the mass on a unit copy of the gasket $\SG$ compared with the unit measure on this copy. Notice that in our definition of $\En_{\domTriangle}$ we have an additional factor of $1/4$ compared to \cite{strichartz-differential-equations-fractals} which comes from the fact that we use the probabilistic Laplacian throughout this paper, i.e. this factor is also resembled in $\Delta_n$.
\end{proof}

For $x,y \in \domTriangle = B(0, 2^l)\subset \SG$ we define the Green function $\greenFunction_\domTriangle:\domTriangle\rightarrow\R$ as in \cite{strichartz-differential-equations-fractals}. For $n\in\N$, define the level $n$ Green approximation $\greenFunction^n_\domTriangle$ on $\SG \cap \domTriangle$ by
\begin{align}
	\greenFunction^n_\domTriangle(x,y) &:=  r^{-l} \psi_0^{(-l)}(x)\psi_0^{(-l)}(y) + \sum_{m=-l}^{n-1} \ \sum_{z,z'\in \domTriangle\cap \mathsf{SG}_{m+1}\backslash \mathsf{SG}_m} g_m(z,z') \ \psi_z^{(m+1)}(x) \ \psi^{(m+1)}_{z'}(y) \\\label{eq:green-fc-sgn}
	&\text{for } g_m(z,z') = \begin{cases}
		\frac{18}{25} r^m \text{ if } z=z'\in \mathsf{SG}_{m+1}\backslash \mathsf{SG}_m \\
		\frac{6}{25} r^m \text{ if } z\sim_{m+1} z'\in \mathsf{SG}_{m+1}\backslash \mathsf{SG}_m \\
		0 \text{ otherwise}
	\end{cases}\nonumber
	\end{align}
and the Green function $ \greenFunction_\domTriangle $ on $\SG\cap \domTriangle$ as:
\begin{equation}\label{eq:green-fc-sg}
\greenFunction_\domTriangle(x,y):= \lim_{n\rightarrow\infty} \greenFunction^n_\domTriangle(x,y)
\end{equation}
where $r := 3/5$ is the energy scaling factor and $\psi_z^{(m)}(x)$ for $z\in \mathsf{SG}_m$ is called a harmonic spline and defined by the unique piecewise harmonic function on $\SG$ given by
$$\psi_z^{(m)}(x) = \begin{cases}
	1, \text{ if } x=z \\
	0, \text{ if } x\in \mathsf{SG}_m\backslash\{z\}.
\end{cases} $$
Notice that the constants in $g(z,z')$ slightly differ from the ones  \cite{strichartz-differential-equations-fractals} and \cite{kigami1989}, due to us working with the probabilistic Laplacian and the corresponding factor $1/4$ in the definition of $\En_\domTriangle$.

\begin{lem}\label{lem:green-func-equality}
For $x,y \in \mathsf{SG}_{n} \cap \domTriangle$ and for all $m\geq n$ it holds $$\greenFunction^n_\domTriangle(x,y) = \greenFunction^m_\domTriangle(x,y) = \greenFunction_\domTriangle(x,y).$$
\end{lem}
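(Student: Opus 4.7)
The plan is to show the equality by a direct inspection of the terms that appear in $\greenFunction^{m}_\domTriangle(x,y) - \greenFunction^{n}_\domTriangle(x,y)$ and showing that each such term vanishes under the hypothesis $x,y\in \mathsf{SG}_n\cap \domTriangle$.

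First I would write, for $m\geq n$,
\begin{align*}
\greenFunction^{m}_\domTriangle(x,y) - \greenFunction^{n}_\domTriangle(x,y) = \sum_{k=n}^{m-1} \sum_{z,z' \in \domTriangle\cap \mathsf{SG}_{k+1}\setminus \mathsf{SG}_k} g_k(z,z') \,\psi_z^{(k+1)}(x)\,\psi_{z'}^{(k+1)}(y),
\end{align*}
using the telescoping structure of the defining formula for $\greenFunction^{n}_\domTriangle$. The key observation is then that each factor $\psi_z^{(k+1)}(x)$ with $k\geq n$ and $z\in \mathsf{SG}_{k+1}\setminus \mathsf{SG}_k$ vanishes at $x$. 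Indeed, since the approximating graphs are nested with $\mathsf{SG}_n\subseteq \mathsf{SG}_k\subseteq \mathsf{SG}_{k+1}$ for $n\leq k$, the point $x\in \mathsf{SG}_n$ also lies in $\mathsf{SG}_{k+1}$, while $z\notin \mathsf{SG}_k$ forces $x\neq z$. By the definition of the harmonic spline, $\psi_z^{(k+1)}$ vanishes on $\mathsf{SG}_{k+1}\setminus\{z\}$, whence $\psi_z^{(k+1)}(x)=0$. The same argument applies to $\psi_{z'}^{(k+1)}(y)$.

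Hence every summand in the displayed difference is zero, so $\greenFunction^{m}_\domTriangle(x,y) = \greenFunction^{n}_\domTriangle(x,y)$ for all $m\geq n$. Letting $m\to\infty$ and invoking \eqref{eq:green-fc-sg} yields the full chain of equalities
\begin{equation*}
\greenFunction^{n}_\domTriangle(x,y) = \greenFunction^{m}_\domTriangle(x,y) = \greenFunction_\domTriangle(x,y).
\end{equation*}

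There is no real obstacle here; the only subtle point is to notice that even though $\psi_z^{(k+1)}$ is only prescribed at vertices of $\mathsf{SG}_{k+1}$ and extended by piecewise harmonicity elsewhere, we evaluate it at $x\in \mathsf{SG}_n\subseteq \mathsf{SG}_{k+1}$, i.e.\ at a prescribed vertex, so its value is read off directly from the defining conditions without invoking the harmonic extension.
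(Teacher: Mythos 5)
Your proof is correct and follows essentially the same route as the paper's: both telescope the defining sum and observe that every extra term $g_k(z,z')\,\psi_z^{(k+1)}(x)\,\psi_{z'}^{(k+1)}(y)$ with $z,z'\in\mathsf{SG}_{k+1}\backslash\mathsf{SG}_k$, $k\geq n$, vanishes because the spline factors are zero at $x,y\in\mathsf{SG}_n\subseteq\mathsf{SG}_{k+1}$. Your justification (evaluating the spline at a prescribed vertex distinct from $z$) is a clean, slightly more direct phrasing of the paper's support argument, but it is the same idea.
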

\begin{proof}
	The key insight is that $\psi_z^{(m+1)}(x) \ \psi^{(m+1)}_{z'}(y) \neq 0$ only if $x\in \doubleTriangle^{m+1}(z)$ and $y\in \doubleTriangle^{m+1}(z')$. But this cannot hold for $z, z' \in \mathsf{SG}_{m+1}\backslash \mathsf{SG}_m$ where $m\geq n$, and this yields
	\begin{align*}
		\greenFunction_\domTriangle(x,y) = \greenFunction^n_\domTriangle(x,y) + \sum_{m=n}^\infty \ \sum_{z,z'\in \SG_{m+1}\backslash \SG_m} g_m(z,z') \ \underbrace{\psi_z^{(m+1)}(x) \ \psi^{(m+1)}_{z'}(y)}_{=0}.
	\end{align*}
	
\end{proof}
\begin{lem}
For any $x,y\in \mathsf{SG}_{n}\cap \domTriangle \backslash \domTriangleBoundary$ and $n\in\N$ it holds
	$$ \Delta_{n} \greenFunction_\domTriangle(x,\cdot) (y) = - \delta_{n}^{-\alpha} \dirac{x}{y}. $$
\end{lem}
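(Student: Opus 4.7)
The plan is to test the reproducing-kernel identity satisfied by $\greenFunction_\domTriangle$ against the level-$n$ harmonic spline $\psi_y^{(n)}$, and then convert the resulting energy inner product into the discrete Laplacian $\Delta_n$ via summation by parts. First, I would invoke the reproducing-kernel property
\begin{align*}
\En_\domTriangle(\greenFunction_\domTriangle(x,\cdot), v) = v(x), \qquad v \in \Dom_0(\En_\domTriangle),
\end{align*}
which follows from Strichartz's construction of $\greenFunction_\domTriangle$ via the harmonic-spline series \eqref{eq:green-fc-sgn}; see \cite[Sec.~2.6]{strichartz-differential-equations-fractals}. Taking $v := \psi_y^{(n)}$ is legitimate because it is continuous, has finite energy, and vanishes on $\partial\domTriangle$ (the four boundary vertices lie in $\SG_{-l}\subseteq \SG_n$ and are distinct from $y$); evaluating at $x\in\SG_n$ gives $\psi_y^{(n)}(x) = \dirac{x}{y}$.

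The next step is the key reduction: for every $u\in\Dom(\En_\domTriangle)$,
\begin{align*}
\En_\domTriangle(u, \psi_y^{(n)}) = \En_\domTriangle^n(u, \psi_y^{(n)}).
\end{align*}
I would decompose $u = \tilde u + w$, where $\tilde u$ is the level-$n$ harmonic spline determined by $u|_{\SG_n}$ and $w := u - \tilde u$ vanishes on $\SG_n$. The Dirichlet-principle orthogonality of level-$n$ harmonic splines to functions vanishing on $\SG_n$ yields $\En_\domTriangle(\psi_y^{(n)}, w) = 0$; and the renormalization property of the Dirichlet form on $\SG$ gives $\En_\domTriangle(\tilde u, \psi_y^{(n)}) = \En_\domTriangle^n(\tilde u, \psi_y^{(n)}) = \En_\domTriangle^n(u, \psi_y^{(n)})$, since both factors are level-$n$ splines and $\En_\domTriangle^n$ depends only on values on $\SG_n$.

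Finally, because $\psi_y^{(n)}$ vanishes on $\partial\domTriangle$ and restricts to $\dirac{y}{\cdot}$ on $\SG_n$, a direct summation by parts on the $4$-regular interior of $\SG_n\cap \domTriangle$ gives
\begin{align*}
\En_\domTriangle^n(u, \psi_y^{(n)}) = -\delta_n^\alpha \sum_{z \in \SG_n \cap \domTriangle \setminus \partial\domTriangle} \psi_y^{(n)}(z)\, \Delta_n u(z) = -\delta_n^\alpha\, \Delta_n u(y),
\end{align*}
where the prefactor collects the factor $(5/3)^n$ from $\En_\domTriangle^n$, the $\tfrac14$ matching the graph degree, and the rescaling $\Delta_n = 5^n \Delta_{\SG_n}$, noting $3^{-n} = \delta_n^\alpha$. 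Setting $u := \greenFunction_\domTriangle(x,\cdot)$ and comparing with the reproducing-kernel identity produces $\dirac{x}{y} = -\delta_n^\alpha\, \Delta_n \greenFunction_\domTriangle(x,\cdot)(y)$, which is the claim. The delicate point is the reproducing-kernel identity itself: since the Dirac source $\delta_x$ does not fit the continuous-source definition of $\Delta_\domTriangle$ adopted in the paper, one must either appeal directly to the explicit harmonic-spline representation \eqref{eq:green-fc-sgn} or approximate $\delta_x$ by a continuous bump supported on a shrinking cell and pass to the limit.
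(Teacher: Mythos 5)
Your proposal is correct and follows essentially the same route as the paper: both pair $\greenFunction_\domTriangle(x,\cdot)$ against the spline $\psi_y^{(n)}$ in the energy form, reduce the energy to the level-$n$ discrete form via piecewise harmonicity, convert it to $-\delta_n^\alpha\,\Delta_n$ by summation by parts, and close with the reproducing identity (the paper cites the level-$n$ version, Strichartz's Equation (2.6.17) for $\greenFunction^n_\domTriangle$, which neatly avoids the Dirac-source subtlety you flag at the end). No substantive difference.
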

\begin{proof}
The rescaled discrete Laplacian $\Delta_n$ is connected to the energy in the  following way: since $\psi_y^{(m)}$ is piecewise harmonic we get
$$ \En_\domTriangle(u,\psi^{(m)}_y) = \En^{m}_\domTriangle(u,\psi^{(m)}_y) = -\delta_m^{\alpha-\beta}\sum_{x\in\SG_m\cap\domTriangle}\Delta_{\mathsf{SG}_m}u(x) \psi_y^{(m)}(x) = -\delta_m^\alpha \Delta_m u (y), $$
which together with \cite[Equation (2.6.17)]{strichartz-differential-equations-fractals} implies
	\begin{align*}
		\En_\domTriangle(\greenFunction^n_\domTriangle(x,\cdot),v) = \sum_{z\in \mathsf{SG}_{n}\cap \domTriangle \backslash \domTriangleBoundary} v(z) \psi^{(n)}_z(x) \quad \text{ for any } v\in \Dom_0(\En_\domTriangle).
	\end{align*} 
Applying the above two equations to $u=\greenFunction_\domTriangle(x,\cdot)$ and $v = \psi^{(n)}_y$ we finally obtain
	\begin{align*}
		\Delta_{n}\greenFunction_\domTriangle(x,\cdot) (y) &= - \delta_{n}^{-\alpha} \En_\domTriangle\left(G_\domTriangle(x,\cdot),\psi^{(n)}_y\right)
		= -  \delta_{n}^{-\alpha} \sum_{z\in \mathsf{SG}_{n}\backslash \domTriangleBoundary} \psi^{(n)}_y(z) \psi^{(n)}_z(x) = -  \delta_{n}^{-\alpha} \dirac{x}{y}.
	\end{align*}
\end{proof}
The last claim of this section relates the stopped Green function $\greenFunctionDiscrete_{\domTriangle}^n(x,y)$ defined in \eqref{eq:green-visits} with the Green functions defined in \eqref{eq:green-fc-sg} in terms of harmonic splines.
\begin{lem}\label{lem:connectionGreenFunctions}
For any $x,y \in \mathsf{SG}_n$ it holds
	\begin{align*}
		\greenFunctionDiscrete_{\domTriangle}^n(x,y) = \greenFunction_\domTriangle(x,y).
	\end{align*}
\end{lem}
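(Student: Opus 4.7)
The plan is to show that both functions, $\greenFunctionDiscrete_{\domTriangle}^n(\cdot,y)$ and $\greenFunction_\domTriangle(\cdot,y)\restriction_{\mathsf{SG}_n\cap\domTriangle}$, solve the same Dirichlet problem on the finite graph $\mathsf{SG}_n\cap\domTriangle$, and then invoke uniqueness. Concretely, fix $y\in\mathsf{SG}_n\cap\domTriangle\setminus\domTriangleBoundary$; I would verify separately that

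(i) $\Delta_n \greenFunctionDiscrete_{\domTriangle}^n(\cdot,y)(x) = -\delta_n^{-\alpha}\dirac{y}{x}$ for all $x\in\mathsf{SG}_n\cap\domTriangle\setminus\domTriangleBoundary$ and $\greenFunctionDiscrete_{\domTriangle}^n(\cdot,y)\equiv 0$ on $\domTriangleBoundary$;

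(ii) the same two identities hold for $\greenFunction_\domTriangle(\cdot,y)$ in place of $\greenFunctionDiscrete_{\domTriangle}^n(\cdot,y)$.

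For (i), the interior equation is exactly the remark made just after the definition of $\greenFunctionDiscrete^n_{\domTriangle}$: from $\Delta_{\mathsf{SG}_n}\greenFunctionDiscrete^n_{\domTriangle}(\cdot,y)=-\delta_n^{\beta-\alpha}\dirac{y}{\cdot}$ and $\Delta_n=\delta_n^{-\beta}\Delta_{\mathsf{SG}_n}$ one obtains the claimed Poisson equation, and the boundary condition is built into the definition through the stopping at $\partial\domTriangle$. For (ii), the interior equation is precisely the previous lemma. The vanishing on $\domTriangleBoundary$ is the only place where I expect to spend some care: I need to unwind the harmonic spline definition and observe that $\psi_0^{(-l)}$ vanishes at the four corners of $\domTriangle$ (since those corners lie in $\mathsf{SG}_{-l}\setminus\{0\}$), and that for each $m\geq -l$ every $z\in\mathsf{SG}_{m+1}\setminus\mathsf{SG}_m$ satisfies $z\notin\domTriangleBoundary$, so the spline $\psi_z^{(m+1)}$ vanishes at each point of $\domTriangleBoundary\subset\mathsf{SG}_{m+1}$. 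Summing over $m$ in the defining series for $\greenFunction_\domTriangle$ gives the boundary vanishing. (Alternatively, this is part of the standard construction of $\greenFunction_\domTriangle$ in \cite{strichartz-differential-equations-fractals}.)

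Once (i) and (ii) are in hand, the difference $h:=\greenFunctionDiscrete_{\domTriangle}^n(\cdot,y)-\greenFunction_\domTriangle(\cdot,y)\restriction_{\mathsf{SG}_n\cap\domTriangle}$ is a function on the finite graph $\mathsf{SG}_n\cap\domTriangle$ which is discretely harmonic ($\Delta_n h\equiv 0$) on the interior and vanishes on $\domTriangleBoundary$. By the discrete maximum principle on a finite connected graph, $h\equiv 0$, which is the desired equality. The main obstacle is just the careful bookkeeping for the boundary-vanishing of $\greenFunction_\domTriangle$ through the harmonic spline expansion; the interior identification is an immediate consequence of the Laplacian lemma established just above.
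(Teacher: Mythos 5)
Your proposal is correct and follows essentially the same route as the paper: the paper likewise observes that both $\greenFunctionDiscrete^n_{\domTriangle}(x,\cdot)$ and $\greenFunction_\domTriangle(x,\cdot)$ solve the Dirichlet problem $\Delta_n u(y)=-\delta_n^{-\alpha}\dirac{x}{y}$ on $\mathsf{SG}_n\cap\domTriangle\setminus\domTriangleBoundary$ with $u=0$ on $\domTriangleBoundary$, and concludes by uniqueness. Your extra care about the boundary-vanishing of $\greenFunction_\domTriangle$ via the spline expansion is a detail the paper leaves implicit, but it does not change the argument.
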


\begin{proof}
Since $\Delta_{n} \greenFunctionDiscrete_{\domTriangle}^n(x,y) = -\delta_n^{-\alpha}\dirac{x}{y} $ for all $x,y\in \mathsf{SG}_{n}\cap \domTriangle\backslash \domTriangleBoundary$ and $ \greenFunctionDiscrete^n_{\domTriangle}(x,z) = 0$ for $z\in \domTriangleBoundary$, it follows that both functions solve the following  Dirichlet problem
	\begin{align*}
		\begin{cases}
			\Delta_n u(y) = -\delta_n^{-\alpha}\dirac{x}{y}\text{, if } y\in \mathsf{SG}_{n}\cap \domTriangle \backslash \domTriangleBoundary \\
			u(y) = 0 \text{, if } y \in \domTriangleBoundary
		\end{cases}
	\end{align*}
which together with the uniqueness principle implies $\greenFunctionDiscrete^n_{\domTriangle}(x,y) =  \greenFunction_\domTriangle(x,y).$
\end{proof}

\section{Scaling limit for the divisible sandpile}\label{sec:div-sand}

In this section we investigate odometer functions and obstacle problems on the limiting fractal object $\SG$. There are several
difficulties appearing on $\SG$.
We could define the pointwise Laplacian on $\SG$ and demand the odometer function $u:\SG\to\mathbb{R}$ to be the smallest function that fulfills $ \sigma +\Delta u\leq 1$,
where $\sigma$ is the initial mass on $\mathsf{SG}$. However, this definition is not very handy and it is not clear how to obtain any meaningful properties of it. We choose a different way to define {\it the continuous divisible sandpile on the gasket $\SG$} and motivate our choice below. 

\textbf{Passing from $\SG_n$ to $\SG$ and back: some useful notation.}
Recall that by $\doubleTriangle^{k}(x) $ we have denoted the double sided triangles of length $2^{-k}$ with midpoint $x\in \mathsf{SG}_n$. Similarly, the notations provided in Table \ref{tab:Notation} might be useful when working with points, sets and functions. However, they will only be used when it is absolutely clear on which approximating graph we are working. In the ambiguous case $x\in \mathsf{SG}_{n+1}\backslash \mathsf{SG}_n$ we define $x^\tridots$ to be rightmost neighbor.

\begin{table}[]
    \centering
    \renewcommand{\arraystretch}{1.3}
    \begin{tabular}{r|ll}
     & $\mathsf{SG}_n$ & $\mathsf{SG}$ \\ \hline
    points & $x^\tridots = {B\left(x,2^{-(n+1)}\right)}\cap \mathsf{SG}_n$ & $x^\doubleTriangle = \doubleTriangle^{n+1}(x)$ \\
    sets & $A^\tridots = A\cap \mathsf{SG}_n$ & $A^\doubleTriangle = \bigcup_{x\in A} x^\doubleTriangle$ \\
    functions & $f^\tridots = f|_{\mathsf{SG}_n}$ & $f^\doubleTriangle(x) = f(x^\tridots)$
    \end{tabular}
    \caption{Notation for transitioning between gasket $\SG$ and approximating graphs $\SG_n$.}
    \label{tab:Notation}
\end{table}
In order to motivate a meaningful definition, we would like to comment on what is needed when solving the divisible sandpile and the corresponding obstacle problem on $\mathsf{SG}_n$, for any $n\in\N$. We consider initial configurations $\sigma:\mathsf{SG}\rightarrow[0,\infty)$ and $\sigma_n:\mathsf{SG}_n\rightarrow[0,\infty)$ that fulfill the following properties: for a number $M\geq 0$ and a compact subset $\mathsf{T}\subseteq \mathsf{SG}$ it holds
\begin{align}
    &0 \leq \sigma, \sigma_n \leq M, \label{cond:Starting1} \\ 
    &\mu(DC(\sigma)) = 0, \label{cond:Starting2} \\ 
    &\sigma_n^{\doubleTriangle}(x) \rightarrow \sigma(x), \ \forall x\notin DC(\sigma)\text{ pointwise}, 
    \label{cond:Starting3}\\
    &\supp(\sigma),\ \supp(\sigma_n)\subseteq \mathsf{T}\label{cond:Starting4},
\end{align}
where by $DC(\sigma)$ we denote the set of discontinuity points of $\sigma$, and $\mu$ is the
($\alpha$-rescaled) Hausdorff measure on $\SG$.
For the rest of this section we denote the rescaled odometer function
$u_n$ of the divisible sandpile on $\SG_n$ from initial distribution $\sigma_n$ by
\begin{equation}\label{eq:rescaled-odom-n}
    u_n(x)=\delta_n^{\beta}\cdot(\text{total mass emitted from $x$ in the stabilization of }\sigma_n).
\end{equation}
We remark that the rescaled odometer function $u_n$ fulfills the {\it least action principle} as well, i.e. it is the smallest positive function such that $\sigma_n+\Delta_n u_n\leq 1$.
The following lemma provides a ball large enough containing the supports of all $u_n$, $n\in\N$.
\begin{lem}\label{lem:ball_contains_cluster_sandpile}
Let $\DD_n=\{u_n>0\}$ be the divisible sandpile cluster on $\SG_n$ started from initial density $\sigma_n$, for all $n\in\N$. Then there exists a triangle $\domTriangle= B(0,2^L)\subset \SG, L\in \N $ such that 
\begin{align*}
    \bigcup_{n\geq 0}\DD_n^{\doubleTriangle}\subseteq \domTriangle.
\end{align*}
\end{lem}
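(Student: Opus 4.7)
The plan is to invoke the least action principle: the rescaled odometer $u_n$ on $\SG_n$ defined in \eqref{eq:rescaled-odom-n} is the minimal nonnegative function satisfying $\sigma_n + \Delta_n u_n \leq 1$. To bound its support uniformly, I will construct a test function $\phi_n\geq 0$ on $\SG_n$ with $\sigma_n + \Delta_n \phi_n \leq 1$ whose support lies inside a fixed triangle $\domTriangle=B(0,2^L)$; then the least action principle yields $u_n \leq \phi_n$ and hence $\DD_n \subseteq \domTriangle$.

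\textbf{Step 1 (uniform mass bound).} Using $0\leq \sigma_n \leq M$ and $\supp(\sigma_n)\subseteq \mathsf{T}$ together with the volume growth property $(V_\alpha)$ applied to a ball covering $\mathsf{T}$, I obtain
\[
\delta_n^\alpha \sum_{x\in\SG_n} \sigma_n(x) \;\leq\; M\,\delta_n^\alpha\,|\mathsf{T}\cap \SG_n|\;\leq\; C_0,
\]
with $C_0=C_0(M,\mathsf{T})$ independent of $n$.

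\textbf{Step 2 (test function).} Fix $L$ large (to be chosen), with $\mathsf{T}\subseteq B(0,2^{L-2})$, and set $\domTriangle := B(0,2^L)$. Define
\[
\phi_n(x) := \begin{cases} \greenOperatorDiscrete^n_{\domTriangle}\sigma_n(x), & x \in \domTriangle\cap \SG_n, \\ 0, & x\notin \domTriangle. \end{cases}
\]
Then $\phi_n\geq 0$, and by the identity $\Delta_n \greenOperatorDiscrete^n_{\domTriangle} f = -f$ on $\domTriangle\setminus\partial\domTriangle$, the constraint $\sigma_n+\Delta_n \phi_n = 0 \leq 1$ holds at every interior vertex.

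\textbf{Step 3 (boundary estimate, the main step).} On $\partial \domTriangle$ and outside, $\sigma_n\equiv 0$, so I only need $\Delta_n \phi_n(x) = \mathsf{d}^{-1}\sum_{y\sim x}\phi_n(y)\leq 1$. Starting from
\[
\phi_n(y) \;=\; \delta_n^\beta \,\mathbb{E}_y\!\left[\sum_{k<\tau^n_{\domTriangle}}\sigma_n(X_k^n)\right] \;\leq\; M\,\delta_n^\beta \,\mathbb{E}_y\!\left[\text{time in }\mathsf{T}\text{ before } \tau^n_{\domTriangle}\right],
\]
and applying the strong Markov property at the hitting time of $\mathsf{T}$, the right-hand side factorizes into the harmonic measure of $\mathsf{T}$ seen from $y$ times a local Green-function integral against $\sigma_n$ (controlled by $C_0$). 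For $y$ within graph-distance one of $\partial\domTriangle$, the walk started at $y$ is at $\SG$-distance $\gtrsim 2^L$ from $\mathsf{T}$; the harmonic measure then decays to $0$ uniformly in $n$ as $L\to\infty$, via the standard capacity/exit-time estimates on $\SG_n$ built from the volume growth $(V_\alpha)$, the elliptic Harnack inequality $(EHI)$ and the exit-time estimate $(E_\beta)$. Choosing $L$ sufficiently large (depending only on $M$ and $\mathsf{T}$ through $C_0$) makes $\phi_n(y)\leq 1$ for every such $y$ and every $n$, so $\Delta_n\phi_n\leq 1$ on $\partial\domTriangle$.

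\textbf{Step 4 (conclusion).} By the least action principle, $u_n\leq \phi_n$ on $\SG_n$. Thus $\DD_n=\{u_n>0\}\subseteq\{\phi_n>0\}\subseteq \domTriangle$. Since $\DD_n^\doubleTriangle \subseteq \DD_n + B(0,\delta_{n+1}) \subseteq B(0,2^L+\tfrac12) \subseteq B(0,2^{L+1})$, the triangle $B(0,2^{L+1})$ does the job.

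\textbf{Main obstacle.} The crux is Step 3: obtaining an estimate on $\phi_n$ near $\partial \domTriangle$ that is uniform in $n$. This requires harmonic-measure / Green-function decay on the approximating graphs $\SG_n$, produced by combining $(V_\alpha)$, $(EHI)$, and $(E_\beta)$ in a way that does not degenerate when $n$ is small. Large $n$ is the easy regime (one is nearly in the continuous limit), while small $n$ has to be handled by the same uniform estimate.
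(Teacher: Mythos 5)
Your route differs genuinely from the paper's: the paper never builds a competitor for the least action principle, but instead dominates $\sigma_n$ by $M\mathds{1}_{B}$ and then, via the Abelian property and monotonicity, by a single point source of comparable total mass at the origin, after which the known inner and outer bounds for the one-source divisible sandpile on $\SG_n$ from \cite{Div-Sandpile-SG} place the cluster inside a fixed ball of volume $3M\mu(B)$. Your alternative would be attractive if it worked, but Step 3 contains a fatal gap. First a normalisation slip: at a vertex $x$ of the (four-point) inner boundary of $\domTriangle\cap\SG_n$ one has $\Delta_n\phi_n(x)=\delta_n^{-\beta}\,\tfrac14\sum_{y\sim x}\phi_n(y)$, not $\tfrac14\sum_{y\sim x}\phi_n(y)$, so admissibility of $\phi_n$ forces $\sum_{y\sim x}\phi_n(y)\le 4\delta_n^{\beta}$ at each corner. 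No choice of $L$ achieves this. Since $\phi_n$ is finitely supported and $\SG_n$ is $4$-regular, $\sum_{x\in\SG_n}\Delta_{\mathsf{SG}_n}\phi_n(x)=0$; since $\Delta_{\mathsf{SG}_n}\phi_n=-\delta_n^{\beta}\sigma_n$ at interior vertices of $\domTriangle$ and $\Delta_{\mathsf{SG}_n}\phi_n=0$ outside $\domTriangle$ (the exterior neighbours of $\domTriangle$ only touch corners, where $\phi_n=0$), all of the mass must flow out through the four corners:
\begin{align*}
\sum_{x\in\domTriangleBoundary}\Delta_{\mathsf{SG}_n}\phi_n(x)\;=\;\delta_n^{\beta}\sum_{y\in\SG_n}\sigma_n(y)\;=\;\delta_n^{\beta-\alpha}\Big(\delta_n^{\alpha}\sum_{y\in\SG_n}\sigma_n(y)\Big).
\end{align*}
The bracketed quantity is the total mass, bounded below by a positive constant uniformly in $n$ and $L$, so at some corner $\Delta_n\phi_n(x)\ge \tfrac{c}{4}\,\delta_n^{-\alpha}\to\infty$; the constraint $\sigma_n+\Delta_n\phi_n\le 1$ fails there for all large $n$ and the least action principle gives you nothing. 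The harmonic-measure heuristic is misleading because the decay $(\delta_n/2^L)^{\beta-\alpha}$ of the hitting probability of $\mathsf{T}$ from a corner is exactly cancelled by the growth $(2^L/\delta_n)^{\beta-\alpha}$ of the interior Green function, which is why the flux through the corners is independent of $L$.

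Conceptually, $\{\greenOperatorDiscrete^n_{\domTriangle}\sigma_n>0\}$ is the entire interior of $\domTriangle\cap\SG_n$, so a truncated potential of $\sigma_n$ alone can never localise the cluster: boundedness of $\DD_n$ comes from the superharmonic majorant touching the obstacle $-\greenOperatorDiscrete^n_{\domTriangle}(\sigma_n-1)$ outside a fixed set, i.e.\ from the $-1$ term, not from decay of $\greenOperatorDiscrete^n_{\domTriangle}\sigma_n$ near $\domTriangleBoundary$. A repairable version of your plan is to take as competitor the odometer of a dominating configuration whose cluster is already known to be uniformly bounded (all the mass $M\mathds{1}_{B}$ concentrated at the origin); that is precisely what the paper does.
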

\begin{proof}
	Let $B=B(0,2^l)\subset \SG$ and choose $l\in \N $ large enough such that $B$ contains the supports of all $\sigma_n$.
Let $\mathcal{A}_n$ be the divisible sandpile cluster on $\mathsf{SG}_n$ from initial density $\sigma_n' =2|B\cap \SG_n|\mathds{1}_{\{0\}}$, which, by the inner bound of \cite{Div-Sandpile-SG}, contains $B$.
Let $\mathcal{B}_n$ be the divisible sandpile cluster on $\mathsf{SG}_n$  resulting from initial density $\tau = M\mathds{1}_{B}$. By the Abelian property we have $\DD_n\subseteq \mathcal{B}_n$, for all $n\in\N$ and also that the clusters $\mathcal{C}_n$ resulting from initial density $M\sigma_n'$ contain $\mathcal{B}_n$. Finally, choosing $L\in\N$ large enough such that $\domTriangle = B(0,2^L)$  has volume at least $3M \mu(B)$, the outer bound of \cite{Div-Sandpile-SG} yields that $\mathcal{C}_n\subseteq \domTriangle$, for all $n\in \N$.
\end{proof}

For the initial particle configuration $\sigma_n$ on $\mathsf{SG}_n$, we denote by $\nu_n$ the limit mass configuration obtained by stabilizing $\sigma_n$, and by $u_n$ the corresponding rescaled odometer function as defined in \eqref{eq:rescaled-odom-n} on $\SG_n$. Let $\domTriangle$ be a double sided triangle around $0$ as in Lemma \ref{lem:ball_contains_cluster_sandpile}. We define the obstacle function $\gamma_n:\SG_n\to\R$  and its least superharmonic majorant $s_n:\SG_n\to\R$  as
\begin{align}
    &\gamma_n(x) := - \sum_{y \in \mathsf{SG}_n \cap\domTriangle} \greenFunctionDiscrete^n_\domTriangle(x,y) (\sigma_n(y)-1), \text{ and } \label{eq:obst-div-sand} \\
    &s_n(x) := \inf\{f_n(x)|\ f_n:\mathsf{SG}_n\rightarrow\R \text{ is superharmonic on $\domTriangle$ and } f\geq \gamma_n \}. \nonumber
\end{align}
Then \cite[Lemma 3.9]{Div-Sandpile-SG} gives for the odometer function $u_n$  on $\SG_n$, $u_n := s_n - \gamma_n$,
and the sandpile cluster $\DD_n$ on $\SG_n$ starting from the initial configuration $\sigma_n$ 
 $$ \DD_n = \{x \in \mathsf{SG}_n:\  s_n(x) > \gamma_n(x) \},\quad \text{for all } n\in\N,$$
which motivates the following definition of divisible sandpile  on the Sierpi\'nski gasket $\SG$, by using the ball $\domTriangle$ (double sided triangle around $0$) of radius big enough containing all the divisible sandpile clusters $\DD_n$.
\begin{defn}[continuous divisible sandpile on $\domTriangle$]\label{def:contSandpile}
 Let $\sigma:\mathsf{SG} \rightarrow [0,\infty)$ be a bounded function with compact support and continuous $\mu$-a.e~such that $\textnormal{supp}(\sigma)\subseteq \domTriangle$, where $\domTriangle$ is a double sided triangle around $0$. We define the obstacle  $\gamma_{\domTriangle}:\SG\to\R$ by $\gamma_\domTriangle^{}=-\greenOperator_\domTriangle(\sigma-1)$:
 $$\gamma_{\domTriangle}(x) := - \int_\domTriangle \greenFunction_\domTriangle(x,y) (\sigma(y)-1) d\mu(y)$$
and the corresponding  odometer function $\uu_{\domTriangle}:\SG\to\R$ by
    \begin{align*}
        \uu_{\domTriangle} := s_{\domTriangle} - \gamma_{\domTriangle}, \text{ with } s_{\domTriangle}(x) := \inf\{ f(x) :\  f \text{ continuous, superharmonic on $\domTriangle$ and } f\geq \gamma_{\domTriangle} \}
    \end{align*}
being the smallest superharmonic majorant of $\gamma_{\domTriangle}$. Furthermore we define the noncoincidence set for the obstacle problem with obstacle $\gamma_{\domTriangle}$ as the domain
    $$D_{\domTriangle} = \{x \in \mathsf{SG}:\  s_{\domTriangle}(x) > \gamma_{\domTriangle}(x) \} .$$
\end{defn}
\textbf{Remark.} It is not a priori clear that the smallest superharmonic majorant $s_{\domTriangle}$ of $\gamma_{\domTriangle}$ as defined above is itself again superharmonic, or even in the domain of the Laplacian. However, using the fact that $\Dom_0(\En_\domTriangle)$ can be equipped with an inner product such that it becomes a Hilbert space ($\langle u, v \rangle_{\En}=\En_{\domTriangle}(u,v)+\langle u, v\rangle_{L^2(\domTriangle)}$) together with the Poincaré-type inequality
\begin{align*}|u(x)-u(y)|^2\leq C\En_\domTriangle(u)|x-y|^{\beta/\alpha},\end{align*}
for some constant $C>0$ as in \cite{strichartz-differential-equations-fractals}, we can deduce the superharmonicity of the superharmonic majorant as on $\R^d$. We can then use minimization theorems for coercive  functionals on Hilbert spaces in order to complete the claim, but since this is very similar to the case of $\R^d$, we have decided not to include it here. For more details we refer the interested reader to  \cite{sup-harm-maj} and \cite{free-pde-book}.

We aim at taking the limit $u=\lim_{k\rightarrow\infty}\uu_{\doubleTriangle^{-k}(0)}$ as the odometer function of the divisible sandpile on the infinite gasket $\SG$, but it is not straightforward why this limit should be well-defined. We show below that this is indeed the case. We first need some auxiliary results, the first one concerning the monotonicity of the model: by starting with more mass, one obtains a larger odometer and a larger noncoincidence set.

\begin{lem}\label{lem:monotonicity-if-div-sandpile}
Let $\sigma_1,\sigma_2:\SG\to[0,\infty)$ be bounded with compact support and $\sigma_1\leq \sigma_2$.
Let $\domTriangle$ be a double sided triangle with $\supp(\sigma_1),\supp(\sigma_2) \subseteq \domTriangle$ and, for $i=1,2$ consider the obstacle
\begin{align*}
    \gamma_\domTriangle^{(i)}=-\greenOperator_\domTriangle(\sigma_i-1).
\end{align*}
Let $s_\domTriangle^{(i)}$ be the least superharmonic majorant of $\gamma_\domTriangle^{(i)}$ and $\uu_\domTriangle^{(i)}=s_\domTriangle^{(i)}-\gamma_\domTriangle^{(i)}$. Then $\uu_\domTriangle^{(1)}\leq \uu_\domTriangle^{(2)}$.
\end{lem}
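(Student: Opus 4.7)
My plan is to show directly that $s^{(2)}_\domTriangle+f$ is an admissible competitor in the variational definition of $s^{(1)}_\domTriangle$, where $f:=\greenOperator_\domTriangle(\sigma_2-\sigma_1)$ is the ``correction term'' that converts the obstacle $\gamma^{(2)}_\domTriangle$ into $\gamma^{(1)}_\domTriangle$. The pointwise inequality for the odometers will then fall out algebraically. This mirrors the standard trick on $\Z^d$/$\R^d$ for least superharmonic majorants; the Sierpi\'nski gasket structure enters only via the positivity and linearity of $\greenOperator_\domTriangle$.

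\textbf{Main steps.} First I would note the linear identity
\[
\gamma_\domTriangle^{(1)}-\gamma_\domTriangle^{(2)}=-\greenOperator_\domTriangle(\sigma_1-1)+\greenOperator_\domTriangle(\sigma_2-1)=\greenOperator_\domTriangle(\sigma_2-\sigma_1)=:f,
\]
so that $\gamma_\domTriangle^{(1)}=\gamma_\domTriangle^{(2)}+f$ on $\domTriangle$. Since $\sigma_2-\sigma_1\geq 0$ and $\greenFunction_\domTriangle\geq 0$ one has $f\geq 0$, and continuity of $f$ on $\domTriangle$ follows from the integrability of $\greenFunction_\domTriangle(x,\cdot)$ together with boundedness of $\sigma_2-\sigma_1$ (this is the same input used to show $\gamma_\domTriangle^{(i)}$ is a valid obstacle). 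Applying $\Delta_\domTriangle$ to $f$ gives $\Delta_\domTriangle f=-(\sigma_2-\sigma_1)\leq 0$, so $f$ is superharmonic on $\domTriangle$.

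Next, consider the function $h:=s_\domTriangle^{(2)}+f$. It is continuous (sum of continuous functions) and superharmonic on $\domTriangle$ (sum of superharmonic functions). Moreover, using $s_\domTriangle^{(2)}\geq\gamma_\domTriangle^{(2)}$,
\[
h=s_\domTriangle^{(2)}+f\geq\gamma_\domTriangle^{(2)}+f=\gamma_\domTriangle^{(1)},
\]
so $h$ is a continuous superharmonic majorant of $\gamma_\domTriangle^{(1)}$. By the defining infimum of $s_\domTriangle^{(1)}$, this yields $s_\domTriangle^{(1)}\leq s_\domTriangle^{(2)}+f$. Subtracting $\gamma_\domTriangle^{(1)}$ from both sides,
\[
\uu_\domTriangle^{(1)}=s_\domTriangle^{(1)}-\gamma_\domTriangle^{(1)}\leq s_\domTriangle^{(2)}+f-\gamma_\domTriangle^{(1)}=s_\domTriangle^{(2)}-\gamma_\domTriangle^{(2)}=\uu_\domTriangle^{(2)},
\]
which is the desired inequality.

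\textbf{Where to be careful.} The only non-routine point is checking that $h$ actually lies in the admissible class for the infimum defining $s_\domTriangle^{(1)}$; that is, continuity of $f$ and the (weak) superharmonicity $\Delta_\domTriangle f\leq 0$. Continuity is straightforward from $\greenFunction_\domTriangle\in L^1(\domTriangle\times\domTriangle,\mu\otimes\mu)$ and boundedness of $\sigma_2-\sigma_1$; superharmonicity follows from the identity $\Delta_\domTriangle\greenOperator_\domTriangle g=-g$ for continuous $g$, which is the analogue on $\SG$ of the identity established in the discrete setting (and which the preceding lemmas in the paper supply). Everything else is purely algebraic, so no approximation-by-$\SG_n$ argument is needed here.
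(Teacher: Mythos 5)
Your argument is correct and is exactly the standard one the paper invokes by citing \cite[Lemma 2.8]{LP-mult-idla}: write $\gamma_\domTriangle^{(1)}-\gamma_\domTriangle^{(2)}=\greenOperator_\domTriangle(\sigma_2-\sigma_1)\geq 0$ as a superharmonic correction, check that $s_\domTriangle^{(2)}$ plus this correction is an admissible competitor for $s_\domTriangle^{(1)}$, and subtract. The only inputs beyond algebra are the positivity of $\greenFunction_\domTriangle$, the identity $\Delta_\domTriangle\greenOperator_\domTriangle g=-g$, and the superharmonicity of $s_\domTriangle^{(2)}$, all of which the paper supplies.
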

The proof is similar to  \cite[Lemma 2.8]{LP-mult-idla}, applied to our choice of odometers.

\begin{lem}\label{lem:odometer-stays-same}
Let $\sigma:\SG\to[0,\infty)$ as in Definition \ref{def:contSandpile}, and let $\domTriangle\subseteq \tilde{\domTriangle}$  be double sided triangles. Denote by $u:=\uu_{\domTriangle}$, $\tilde{u}:=\uu_{\tilde{\domTriangle}}$   the corresponding odometer functions  as in Definition \ref{def:contSandpile}. If there exists $\epsilon> 0 $ such that $D^\epsilon_{{\domTriangle}} \subsetneq \domTriangle$, then $\tilde{u} = u$.
\end{lem}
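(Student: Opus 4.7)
The strategy is to establish a continuous least-action principle characterising the odometer intrinsically, and then to sandwich $\tilde u$ between the zero extension $\bar u$ of $u$ to $\tilde\domTriangle$ from above and from below, using the hypothesis $D^{\epsilon}_\domTriangle\subsetneq\domTriangle$ in both directions.

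First, I would prove the following least-action principle: for any double-sided triangle $\domTriangle'$ with $\supp(\sigma)\subseteq\domTriangle'$, the odometer $\uu_{\domTriangle'}$ is the pointwise smallest element of
\begin{align*}
\mathcal{F}_{\domTriangle'}:=\bigl\{v\in\Dom(\Delta_{\domTriangle'})\cap\mathcal{C}(\domTriangle'):\ v\geq 0,\ v|_{\partial\domTriangle'}=0,\ \sigma+\Delta_{\domTriangle'} v\leq 1\ \text{on}\ \domTriangle'\setminus\partial\domTriangle'\bigr\}.
\end{align*}
The key identity is $\Delta_{\domTriangle'}\gamma_{\domTriangle'}=\sigma-1$ on $\domTriangle'\setminus\partial\domTriangle'$, inherited from $\greenOperator_{\domTriangle'}$ being a weak inverse of $-\Delta_{\domTriangle'}$. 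Together with the weak superharmonicity of $s_{\domTriangle'}$ (guaranteed by the remark after Definition~\ref{def:contSandpile}) and $s_{\domTriangle'}=\gamma_{\domTriangle'}=0$ on $\partial\domTriangle'$, this yields $\uu_{\domTriangle'}\in\mathcal{F}_{\domTriangle'}$ since $\sigma+\Delta_{\domTriangle'}\uu_{\domTriangle'}=1+\Delta_{\domTriangle'}s_{\domTriangle'}\leq 1$. Conversely, if $v\in\mathcal{F}_{\domTriangle'}$, then $v+\gamma_{\domTriangle'}$ is weakly superharmonic (its Laplacian is $\Delta_{\domTriangle'} v+\sigma-1\leq 0$), dominates $\gamma_{\domTriangle'}$ because $v\geq 0$, and agrees with $\gamma_{\domTriangle'}$ on $\partial\domTriangle'$; minimality of $s_{\domTriangle'}$ forces $v+\gamma_{\domTriangle'}\geq s_{\domTriangle'}$, i.e.\ $v\geq\uu_{\domTriangle'}$. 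This is the fractal analogue of \cite[Lemma~3.2]{LP-mult-idla}.

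With the principle in hand, I would extend $u$ by $0$ to $\bar u:\tilde\domTriangle\to\R$ and show $\bar u\in\mathcal{F}_{\tilde\domTriangle}$. The hypothesis forces $u\equiv 0$ on a Euclidean $\epsilon$-band inside $\domTriangle$ around $\partial\domTriangle$, so $\bar u$ is continuous on $\tilde\domTriangle$ and vanishes on an open neighbourhood $N$ of $\partial\domTriangle\cup(\tilde\domTriangle\setminus\domTriangle)$, on which $\Delta_{\tilde\domTriangle}\bar u=0$. Since $\supp(\sigma)\subseteq\domTriangle$ and $\{\sigma>1\}^\circ\subseteq D_\domTriangle$ (any interior point where $\sigma>1$ must topple, else $\sigma+\Delta u=\sigma>1$ there), we have $\sigma\leq 1$ on $N$, so the required inequality $\sigma+\Delta_{\tilde\domTriangle}\bar u\leq 1$ holds on all of $\tilde\domTriangle\setminus\partial\tilde\domTriangle$. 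Hence $\tilde u\leq\bar u$, which forces $\{\tilde u>0\}\subseteq D_\domTriangle$ and in turn that $\tilde u$ vanishes on $N$, in particular on $\partial\domTriangle$. Therefore $\tilde u|_\domTriangle\in\mathcal{F}_\domTriangle$, and a second application of the principle gives $u\leq\tilde u|_\domTriangle$. Combining yields $\tilde u=\bar u$, which proves the claim.

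The main obstacle will be the first step: one has to justify that $\Delta_\domTriangle\gamma_\domTriangle=\sigma-1$ in the weak $L^2$-sense used throughout the paper, and that the weak superharmonicity of $s_\domTriangle$ (coming from the Hilbert-space construction of the least superharmonic majorant outlined after Definition~\ref{def:contSandpile}) is strong enough to validate the pointwise inequality $1+\Delta_\domTriangle s_\domTriangle\leq 1$ on the interior. Once these technical checks are carried out, the sandwiching is purely a monotonicity argument, parallel to the one carried out in \cite[Section~2]{LP-mult-idla} for $\R^d$.
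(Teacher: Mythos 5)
Your proposal is correct, but it takes a genuinely different route from the paper. The paper never leaves the obstacle-problem formulation: it observes that $\Delta_{\domTriangle}\tilde{\gamma}=\Delta_{\domTriangle}\gamma$ on $\domTriangle$, so $h=\tilde{\gamma}|_\domTriangle-\gamma|_\domTriangle$ is harmonic, shows that $s'=s+\tilde{\gamma}-\gamma$ is a superharmonic majorant of $\tilde{\gamma}$ on $\tilde{\domTriangle}$ (whence $\tilde{s}\leq s'$, and $\tilde{s}=\tilde{\gamma}$ off $\domTriangle$ by the hypothesis $D^\epsilon_\domTriangle\subsetneq\domTriangle$), and then sandwiches $\tilde{s}=s+h$ on $\domTriangle$ by applying the minimality of $s$ and of $\tilde{s}$ once each; the identity $\tilde{u}=u$ then drops out by subtracting the obstacles. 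You instead pass to the odometers themselves via a continuous least-action principle and a zero extension. The two arguments are close in substance — your map $v\mapsto v+\gamma$ identifies your class $\mathcal{F}_{\domTriangle'}$ with the superharmonic majorants of $\gamma_{\domTriangle'}$ vanishing on $\partial\domTriangle'$, and both proofs invoke minimality exactly twice and use the hypothesis in the same way (to force the odometer to vanish on a band near $\partial\domTriangle$, which is what makes the gluing/extension superharmonic). What your route buys is a reusable intrinsic characterisation of $\uu_{\domTriangle}$; what it costs is precisely the technical work you flag, since the paper never states a least-action principle for the continuum model (only for the discrete sandpile), and you must additionally check that $s_{\domTriangle'}=\gamma_{\domTriangle'}$ on $\partial\domTriangle'$ so that $\uu_{\domTriangle'}\in\mathcal{F}_{\domTriangle'}$, and that the constraint $\sigma+\Delta v\leq 1$ is read $\mu$-a.e. (your inclusion $\{\sigma>1\}^\circ\subseteq D_\domTriangle$ controls $\sigma$ on $N$ only up to the null set $\{\sigma>1\}\setminus\{\sigma>1\}^\circ\subseteq DC(\sigma)$). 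The paper's version avoids all of this by never differentiating $u$ at all.
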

\begin{proof}
Let us denote by $\gamma:=\gamma_{\domTriangle}$ and $s:=s_{\domTriangle}$ (resp.  $\tilde{\gamma}:=\gamma_{\tilde{\domTriangle}}$ and  $\tilde{s}:=s_{\tilde{\domTriangle}}$) the obstacle and its superharmonic majorant on $\domTriangle$ (resp $\tilde{\domTriangle}$). We have $\Delta_\domTriangle\tilde{\gamma}=(1-\sigma)=\Delta_\domTriangle\gamma$. Notice that $\gamma$ and $s$ vanish on $\tilde{\domTriangle}\backslash\domTriangle$ and the function $s'=s+\tilde{\gamma}-\gamma$ is superharmonic on $\tilde{\domTriangle}$. Since, $s'\geq\tilde{\gamma}$, we obtain that $\tilde{s}\leq s'$ on $\tilde{\domTriangle}$.
From $D^\varepsilon_{{\domTriangle}}\subset\domTriangle$, it then follows $s'=\tilde{\gamma}$ in $\tilde{\domTriangle}\backslash\domTriangle$, which implies $\tilde{s}=\tilde{\gamma}$ on $\tilde{\domTriangle}\backslash\domTriangle$, which in turn gives $\tilde{u} = 0$ on $\tilde{\domTriangle}\backslash\domTriangle$. 
Inside $\domTriangle$ we can define the harmonic function $h=\tilde{\gamma}|_\domTriangle - \gamma|_\domTriangle$. By the definition of $s$ and $\tilde{s}$ we have for $x\in \domTriangle$
\begin{align*}
    \tilde{s}-h = \tilde{s} - \tilde{\gamma} + \gamma \geq \gamma \quad \text{and} \quad
    s + h = s + \tilde{\gamma} -\gamma \geq \tilde{\gamma},
\end{align*}
which implies $\tilde{s}(x) \geq s(x) + h(x) \geq \tilde{s}(x)$. Therefore, on $\domTriangle$ it holds
$\tilde{u} = \tilde{s}-\tilde{\gamma} = s - \gamma = u$,
and this proves the claim.
\end{proof}

\begin{lem}\label{lem:existence-of-global-sandpile}
Let $\sigma:\mathsf{SG}\rightarrow[0,\infty)$ be a bounded function with compact support and continuous $\mu$-almost everywhere. Denote by $\uu_{\doubleTriangle^{-k}(0)}$ the odometer of $\sigma$ on the set $\domTriangle=\doubleTriangle^{-k}(0)$ as  in Definition \ref{def:contSandpile}. Then the following limit is well-defined: 
\vspace{-0.25cm}
\begin{align*}
 u=\lim_{k\rightarrow\infty}\uu_{\doubleTriangle^{-k}(0)}.
\end{align*}
\end{lem}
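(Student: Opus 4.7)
My strategy is to apply Lemma \ref{lem:odometer-stays-same} directly: if I can locate some $k_0\in\N$ such that the noncoincidence set of $\uu_{\doubleTriangle^{-k_0}(0)}$ satisfies the hypothesis $D_{\doubleTriangle^{-k_0}(0)}^{\epsilon}\subsetneq \doubleTriangle^{-k_0}(0)$ for some $\epsilon>0$, then for every $k\geq k_0$ (using that $\doubleTriangle^{-k_0}(0)\subseteq \doubleTriangle^{-k}(0)$ are double-sided triangles with increasing size) the lemma yields $\uu_{\doubleTriangle^{-k}(0)}=\uu_{\doubleTriangle^{-k_0}(0)}$. The sequence is then eventually constant, so the pointwise limit exists trivially and equals $\uu_{\doubleTriangle^{-k_0}(0)}$.

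\textbf{Main step: uniform boundedness of the noncoincidence set.} The heart of the argument is to produce a fixed double-sided triangle $\domTriangle_*=\doubleTriangle^{-L}(0)$ with $D_{\doubleTriangle^{-k}(0)}\subseteq \domTriangle_*$ for every $k$ large enough. I would proceed by domination: let $M:=\sup\sigma<\infty$ and pick $k_1$ so large that $\supp(\sigma)\subseteq \doubleTriangle^{-k_1}(0)=:B_0$. Since $\sigma\leq M\mathds{1}_{B_0}$, Lemma \ref{lem:monotonicity-if-div-sandpile} gives
\begin{align*}
\uu_{\doubleTriangle^{-k}(0)}^{(\sigma)} \;\leq\; \uu_{\doubleTriangle^{-k}(0)}^{(M\mathds{1}_{B_0})},
\end{align*}
hence it suffices to bound the noncoincidence set for the simpler, constant-on-a-triangle initial density $\sigma'=M\mathds{1}_{B_0}$. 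For $\sigma'$, one bounds $D_{\doubleTriangle^{-k}(0)}^{(\sigma')}$ by combining two ingredients: the mass-balance identity $\int \sigma' \, d\mu=\int \nu'\, d\mu \leq \mu(\{\nu'=1\})$, which forces $\mu(\{\nu'=1\})\geq M\mu(B_0)$, together with symmetry of $\sigma'$ under the natural automorphisms of $\SG$ fixing $0$; by the maximum principle on $\domTriangle$ (the obstacle $\gamma'=-\greenOperator_\domTriangle(\sigma'-1)$ is harmonic off $B_0$, decreasing away from it, and eventually dips below the harmonic majorant) one concludes that the set $\{\uu^{(\sigma')}>0\}$ sits inside a double-sided triangle $\domTriangle_*=\doubleTriangle^{-L}(0)$ whose volume is a fixed multiple of $M\mu(B_0)$, uniformly in $k$. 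Alternatively, this uniform containment can be deduced by comparison with the discrete bound from Lemma \ref{lem:ball_contains_cluster_sandpile} together with the least-action characterization $\uu_{\doubleTriangle^{-k}(0)}=s_{\doubleTriangle^{-k}(0)}-\gamma_{\doubleTriangle^{-k}(0)}$, by observing that the continuous obstacle problem is dominated by a discrete one on a sufficiently fine level $\SG_n$.

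\textbf{Conclusion.} Once $\domTriangle_*$ is fixed, choose $k_0>L$ so that $\domTriangle_*$ has positive Euclidean distance from $\partial \doubleTriangle^{-k_0}(0)$. Then
\begin{align*}
D_{\doubleTriangle^{-k_0}(0)}^{(\sigma)} \;\subseteq\; \domTriangle_* \;\Subset\; \doubleTriangle^{-k_0}(0),
\end{align*}
so there exists $\epsilon>0$ with $D_{\doubleTriangle^{-k_0}(0)}^{\epsilon}\subsetneq \doubleTriangle^{-k_0}(0)$. Lemma \ref{lem:odometer-stays-same} then gives $\uu_{\doubleTriangle^{-k}(0)}=\uu_{\doubleTriangle^{-k_0}(0)}$ for all $k\geq k_0$, so $u:=\lim_{k\to\infty}\uu_{\doubleTriangle^{-k}(0)}$ exists pointwise (as an eventually constant sequence) and is well-defined.

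\textbf{Main obstacle.} The technically delicate point is the uniform containment $D_{\doubleTriangle^{-k}(0)}^{(M\mathds{1}_{B_0})}\subseteq \domTriangle_*$ for all $k$: the ambient Green function $\greenFunction_{\doubleTriangle^{-k}(0)}$ itself grows with $k$, so the obstacle $\gamma_{\doubleTriangle^{-k}(0)}$ is not monotone in $k$ in a way that would give an immediate bound. The argument must exploit that the differences $\gamma_{\doubleTriangle^{-k'}(0)}-\gamma_{\doubleTriangle^{-k}(0)}$ are harmonic on $\doubleTriangle^{-k}(0)$, so that the noncoincidence set is determined by the local behavior of $\gamma$ near $\supp(\sigma)$ once the ambient triangle is large enough; this is where the Poincaré-type energy inequality on $\SG$ and the harmonic-spline representation of $\greenFunction_\domTriangle$ in \eqref{eq:green-fc-sgn} do the real work.
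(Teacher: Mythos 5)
Your overall architecture is exactly the paper's: dominate $\sigma$ by a constant density on a ball, invoke Lemma \ref{lem:monotonicity-if-div-sandpile} to reduce to that dominating density, show its noncoincidence set sits in a fixed compact triangle uniformly in $k$, and then conclude via Lemma \ref{lem:odometer-stays-same} that the sequence $\uu_{\doubleTriangle^{-k}(0)}$ is eventually constant. The reduction and the conclusion are correct and match the paper.

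The gap is in the one step you yourself flag as delicate: the uniform containment $D^{(M\mathds{1}_{B_0})}_{\doubleTriangle^{-k}(0)}\subseteq \domTriangle_*$. The mechanism you offer does not establish it. The mass-balance identity $\int\sigma'\,d\mu\leq\mu(\{\nu'=1\})$ gives only a \emph{lower} bound on the occupied region, not the upper containment you need; and the phrase ``the obstacle eventually dips below the harmonic majorant'' is not meaningful as stated, since $s_\domTriangle\geq\gamma_\domTriangle$ everywhere by definition — what must actually be shown is that $s_\domTriangle=\gamma_\domTriangle$ outside a fixed ball, and symmetry plus the maximum principle alone do not produce this. The paper closes this gap by an explicit construction (Appendix \ref{sec:appendixB}): for $\sigma'=3^{L-l}\mathds{1}_{B(0,2^l)}$ on $\domTriangle=B(0,2^{L+1})$ it builds the obstacle $\gamma=3^{L-l}\eta-\xi$ from a function with $\Delta\eta=\mathds{1}_{B(0,r)}$ glued along matching normal derivatives, exhibits an explicit candidate majorant $s$ that is constant on $B(0,2^L)$ and equals $\gamma$ outside, and verifies $s\geq\gamma$ by computing $\gamma$ at the junction points $x_k^j,y_k^j,z_k^j$ (Lemma \ref{lem:example_maximum_superharmonic}); this is what pins down $D=B(0,2^L)\subsetneq\domTriangle$. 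Note also that your fallback route through Lemma \ref{lem:ball_contains_cluster_sandpile} is circular as the paper is organized: transferring the discrete containment to the continuous obstacle problem requires the convergence results of Section \ref{sec:div-sand}, which themselves fix their ambient triangle using the present lemma. So your proposal correctly identifies the skeleton and the hard point, but the hard point is asserted rather than proved.
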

\begin{proof}
We choose $l\in\N$ large enough such that $\text{supp}(\sigma)\subseteq B(0,2^l)$ and $L\in \N$ large enough such that $\sigma(x)\leq 3^{L-l}$, for all $x\in \SG$. Defining the initial configuration $\sigma':\SG\to[0,\infty)$ by
\begin{align*}
    \sigma'=\frac{\mu(B(0,2^L))}{\mu(B(0,2^l)}\mathds{1}_{B(0,2^l))}=3^{L-l}\mathds{1}_{B(0,2^l)},
\end{align*}
we then have $\sigma\leq\sigma'$. Let us denote by $\uu_{\doubleTriangle^{-k}(0)}$ the odometer of $\sigma$ in $\doubleTriangle^{-k}(0)$ and by $\uu_{\doubleTriangle^{-k}(0)}'$ the odometer of $\sigma'$ in $\doubleTriangle^{-k}(0)$. Appendix \ref{sec:appendixB} shows the existence of a compact set $\domTriangle=B(0,2^{L+1})$ such that for all $k\in \N$ with $\domTriangle\subseteq\doubleTriangle^{-k}(0)$ we have $\uu_{\doubleTriangle^{-k}(0)}'=0$ on $\doubleTriangle^{-k}(0)\setminus\domTriangle$, which together with Lemma \ref{lem:monotonicity-if-div-sandpile} implies that also $\uu_{\doubleTriangle^{-k}(0)}=0$ on  $\doubleTriangle^{-k}(0)\setminus\domTriangle$. The claim now follows from Lemma \ref{lem:odometer-stays-same}.
\end{proof}
We remark that Lemma \ref{lem:existence-of-global-sandpile} is actually a refined 
version of Proposition \ref{prop:obst1}.
We now  define the {\it continuous divisible sandpile} on  $\SG\subset\R^2$.
\begin{defn}[continuous divisible sandpile on $\mathsf{SG}$]\label{defn:global-cont-div-sand}
Let $\sigma:\mathsf{SG}\rightarrow[0,\infty)$ be bounded with compact support and continuous $\mu$-a.e. Denote by $\uu_{\doubleTriangle^{-k}(0)}=s_{\doubleTriangle^{-k}(0)}-\gamma_{\doubleTriangle^{-k}(0)}$ the odometer function of the continuous divisible sandpile in $\doubleTriangle^{-k}(0)$ as in Definition \ref{def:contSandpile}. Then the odometer function of the global continuous divisible sandpile is given by
\begin{align*}
    u=\lim_{k\rightarrow\infty}\uu_{\doubleTriangle^{-k}(0)},
\end{align*}
and we define the corresponding noncoincidence set $D$ by $D:=\{x\in \mathsf{SG}:\ u(x)>0\}$.
\end{defn}

\subsection{Convergence of odometers in divisible sandpile}
We now investigate the convergence of the odometer functions $u$ and $u_n$ with initial distributions $\sigma:\mathsf{SG}\rightarrow\R$ and $\sigma_n:\mathsf{SG}_n\rightarrow \R$, respectively, fulfilling the properties (\ref{cond:Starting1})-(\ref{cond:Starting4}). In view of Lemma \ref{lem:existence-of-global-sandpile} and Lemma \ref{lem:ball_contains_cluster_sandpile}, we can fix a large enough double sided triangle $\domTriangle$ that contains the supports of the odometer functions $u$ and $u_n$ for all $n\in\N$.
For the rest, in order to simplify the notation we drop the double sided triangle $\domTriangle$ and write 
\begin{align}
    \greenFunction := \greenFunction_\domTriangle \quad \greenFunctionDiscrete^n := \greenFunctionDiscrete^n_\domTriangle
\end{align}
for the Green function $\greenFunction$ on $\SG$, and for the Green function $\greenFunctionDiscrete$ stopped when exiting the set $\domTriangle$ in $\SG_n$, for all $n\in\N$.
\begin{lem}\label{lem:greenOperatorConvergence}
    Define $$\greenOperator \sigma (x):= \int_\domTriangle \greenFunction(x,y) \sigma(y) d\mu(y) \quad \text{and}\quad  \greenOperatorDiscrete^n \sigma_n (x):= \delta_n^\alpha \sum_{y\in \mathsf{SG}_n \cap \domTriangle} \greenFunctionDiscrete^n(x,y) \sigma_n(y).$$
If $\sigma,\sigma_n$ satisfy equations \eqref{cond:Starting1}-\eqref{cond:Starting4}, then
    $$ (\greenOperatorDiscrete^n\sigma_n)^{\doubleTriangle} \rightarrow \greenOperator\sigma \text{ uniformly as } n\to\infty.$$  
\end{lem}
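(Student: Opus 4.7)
The plan is to reduce the claim to a Riemann-sum-type approximation of an integral by the weighted discrete sum, then split the error into a discretization term and a function-approximation term, and finally upgrade pointwise convergence to uniform convergence via equicontinuity. By Lemma \ref{lem:connectionGreenFunctions}, $\greenFunctionDiscrete^n(x,y)=\greenFunction(x,y)$ for all $x,y\in\SG_n\cap\domTriangle$, so both sides of the claim are expressed in terms of the single continuous Green function $\greenFunction$. This $\greenFunction$ is continuous (in fact H\"older continuous, via the Poincar\'e-type inequality mentioned in the remark following Definition \ref{def:contSandpile}) on the compact product $\domTriangle\times\domTriangle$, hence uniformly continuous; let $\omega$ denote its modulus of continuity.

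Define $\sigma_n^\doubleTriangle:\SG\to\R$ by $\sigma_n^\doubleTriangle(y):=\sigma_n(y^\tridots)$, the cellwise-constant extension of $\sigma_n$ from $\SG_n$ to $\SG$, bounded by $M$. Split
\begin{align*}
(\greenOperatorDiscrete^n\sigma_n)^\doubleTriangle(x) - \greenOperator\sigma(x) = \underbrace{\left[(\greenOperatorDiscrete^n\sigma_n)^\doubleTriangle(x) - \greenOperator(\sigma_n^\doubleTriangle)(x)\right]}_{E_1(x)} + \underbrace{\greenOperator(\sigma_n^\doubleTriangle-\sigma)(x)}_{E_2(x)}.
\end{align*}
For $E_1$, use that $\domTriangle$ is partitioned (up to a $\mu$-null set) by the cells $\doubleTriangle^{n+1}(y)$, $y\in\SG_n\cap\domTriangle$, whose $\mu$-mass matches the discrete weight $\delta_n^\alpha$ (after accounting for the multiplicity of shared corner vertices). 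On the cell of $y$, $\sigma_n^\doubleTriangle$ equals $\sigma_n(y)$, and $\greenFunction(x,z)$ differs from its discrete counterpart $\greenFunction(x^\tridots,y)$ by at most $\omega(C\delta_n)$, since $|x-x^\tridots|,|z-y|\le C\delta_n$. Summing over cells yields $|E_1(x)|\le M\mu(\domTriangle)\omega(C\delta_n)\to 0$ uniformly in $x$.

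For $E_2$, the hypotheses \eqref{cond:Starting2}--\eqref{cond:Starting3} give $\sigma_n^\doubleTriangle\to\sigma$ pointwise $\mu$-a.e.\ with uniform bound $M$, so dominated convergence together with boundedness of $\greenFunction$ yields $E_2(x)\to 0$ for each fixed $x$. Since the family $\{\greenOperator g:\|g\|_\infty\le 2M\}$ is equicontinuous on $\domTriangle$ (because $|\greenOperator g(x)-\greenOperator g(x')|\le 2M\mu(\domTriangle)\omega(|x-x'|)$), an Arzel\`a--Ascoli argument upgrades pointwise convergence of $E_2$ to uniform convergence on the compact set $\domTriangle$. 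The main technical obstacle is the discretization estimate for $E_1$: carefully matching the uniform discrete weights $\delta_n^\alpha$ with $\mu$ on the cells $\doubleTriangle^{n+1}(y)$, in particular handling shared cell boundaries and vertices near $\partial\domTriangle$ so that no cumulative error survives. Once this bookkeeping is in place, the remainder is a routine dominated-convergence plus equicontinuity package.
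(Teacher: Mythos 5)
Your proposal is correct and uses the same skeleton as the paper's proof: the identical two-term decomposition via the triangle inequality (discretization error plus $\greenOperator(\sigma_n^\doubleTriangle-\sigma)$), the identification $\greenFunctionDiscrete^n=\greenFunction$ on $\SG_n$ from Lemma~\ref{lem:connectionGreenFunctions}, and dominated convergence for the second term (your Arzel\`a--Ascoli upgrade is actually unnecessary, since $|E_2(x)|\le\norm{\greenFunction}_\infty\int_\domTriangle|\sigma_n^\doubleTriangle-\sigma|\,d\mu$ is already independent of $x$). The one place you genuinely diverge is the discretization term $E_1$. The paper does not use a modulus of continuity on the whole product: for each cell with $y\neq x^\tridots$ it observes that $z\mapsto\greenFunctionDiscrete^n(x^\tridots,y)-\greenFunction(x^\tridots,z)$ is \emph{harmonic} on $y^\doubleTriangle$ and vanishes at the center $y$, so its integral over the cell is exactly zero by the mean-value property; the only surviving errors are the replacement of $x^\tridots$ by $x$ in the second argument (controlled by uniform continuity of $\greenFunction$ in one variable) and the single diagonal cell $y=x^\tridots$, bounded by $C\delta_n^\alpha$. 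Your cruder bound $|E_1(x)|\le M\mu(\domTriangle)\,\omega(C\delta_n)$ via joint uniform continuity of $\greenFunction$ on $\domTriangle\times\domTriangle$ is nevertheless valid here, because on the gasket $\beta>\alpha$ forces $\greenFunction$ to be bounded and continuous up to the diagonal (it is a geometrically convergent series of continuous splines), so it is uniformly continuous on the compact product. What the paper's cancellation buys is robustness: it isolates the diagonal cell and would survive in settings where the Green function blows up near the diagonal (as in $\R^d$, $d\ge2$, the model for \cite{LP-mult-idla}); what your argument buys is brevity, at the price of relying on a feature specific to low-dimensional fractals. Do make sure, when you carry out the weight-matching you flag as the ``main technical obstacle,'' that you reconcile the discrete weight $\delta_n^\alpha$ with the cell masses $\mu(y^\doubleTriangle)$ --- the paper silently equates $\delta_n^\alpha\greenFunctionDiscrete^n(x^\tridots,y)$ with $\int_{y^\doubleTriangle}\greenFunctionDiscrete^n(x^\tridots,y)\,d\mu(z)$, and this normalization is exactly where a stray constant would hide.
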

\begin{proof}
By the triangle inequality we have
$$ |  (\greenOperatorDiscrete^n\sigma_n)^{\doubleTriangle} - \greenOperator\sigma| \leq | (\greenOperatorDiscrete^n\sigma_n)^{\doubleTriangle}- \greenOperator\sigma_n^{\doubleTriangle} | + | \greenOperator \sigma_n^{\doubleTriangle} - \greenOperator \sigma |.$$
The second term above converges uniformly to zero by the dominated convergence theorem since $\greenFunction$ is bounded and by assumptions (\ref{cond:Starting2}) and  (\ref{cond:Starting3}) $\sigma_n^{\doubleTriangle}(x) \rightarrow \sigma(x)$, $\mu$-almost everywhere as $n\to\infty$. The first term is at most
\begin{align}
     \big| (\greenOperatorDiscrete^n\sigma_n)^{\doubleTriangle} (x)- \greenOperator\sigma_n^{\doubleTriangle} (x)\big| 
    =   \bigg| \sum_{y\in \mathsf{SG}_n\cap \domTriangle} \sigma_n(y) \int_{y^\doubleTriangle} \Big(\greenFunctionDiscrete^n(x^\tridots,y) - \greenFunction(x,z)  \Big)d\mu(z) \bigg| \label{eq:proof:greenOperatorConv:1}
\end{align}
For $y\neq x^\tridots$ the function $f(z):= \greenFunctionDiscrete_n(x^\tridots,y)-\greenFunction(x^\tridots,z)$ is harmonic on $y^\doubleTriangle$, and Lemma \ref{lem:connectionGreenFunctions} gives $f(y)=0$ which yields
$$ \int_{y^\doubleTriangle}  \big(\greenFunctionDiscrete^n(x^\tridots,y) - \greenFunction(x^\tridots,z)  \big)d\mu(z) = \int_{y^\doubleTriangle} f(z) d\mu(z) = f(y) = 0.$$
So the sum over all $y\neq x^\tridots$ in \eqref{eq:proof:greenOperatorConv:1} can be bounded by
\begin{align*}
    &\Big| \sum_{y\in \mathsf{SG}_n\cap \domTriangle \backslash\{x^\tridots\}} \sigma_n(y) \int_{y^\doubleTriangle} \big( \greenFunctionDiscrete^n(x^\tridots,y) - \greenFunction(x,z) \big) d\mu(z) \Big| \\
    &= \bigg| \sum_{y\in \mathsf{SG}_n\cap \domTriangle \backslash\{x^\tridots\}} \sigma_n(y) \int_{y^\doubleTriangle} (\greenFunction(x^\tridots,z) - \greenFunction(x,z)) d\mu(z) \bigg| < M \epsilon,
\end{align*}
for any $\epsilon>0$ and $n\in\N$ large, where the last inequality is due to the uniform continuity of $\greenFunction$ together with (\ref{cond:Starting1}).
Finally, for $n$ large the summand $y=x^\tridots$ in \eqref{eq:proof:greenOperatorConv:1} can be bounded by
\begin{align*}
    \Big| \sigma_n(x^\tridots) \int_{(x^\tridots)^\doubleTriangle} \Big( \greenFunctionDiscrete^n(x^\tridots,x^\tridots) - \greenFunction(x,z) \Big) d\mu(z) \Big| \leq C \mu(y^\doubleTriangle) \leq C \delta_n^\alpha < c \epsilon
\end{align*}
because $\greenFunction$ is bounded and $\greenFunctionDiscrete^n(y,y) \sim G(y,y)$ by Lemma \ref{lem:connectionGreenFunctions}, and this completes the proof.
\end{proof}

\begin{corollary}\label{cor:obstacleConvergence}
If $\sigma,\sigma_n$ satisfy (\ref{cond:Starting1})-(\ref{cond:Starting4}), then
$ \gamma_n^\doubleTriangle \rightarrow \gamma \text{ uniformly}$ as $n\to\infty$.
\end{corollary}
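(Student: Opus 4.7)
The plan is to reduce Corollary \ref{cor:obstacleConvergence} directly to Lemma \ref{lem:greenOperatorConvergence} by exploiting the linearity of both Green operators. Writing out the definitions,
\begin{align*}
\gamma &= -\greenOperator\sigma + \greenOperator\mathds{1}_\domTriangle, \\
\gamma_n &= -\greenOperatorDiscrete^n\sigma_n + \greenOperatorDiscrete^n\mathds{1}_{\SG_n\cap\domTriangle},
\end{align*}
so that the triangle inequality immediately gives
$$ \big\|\gamma_n^\doubleTriangle - \gamma\big\|_\infty \leq \big\|(\greenOperatorDiscrete^n\sigma_n)^\doubleTriangle - \greenOperator\sigma\big\|_\infty + \big\|(\greenOperatorDiscrete^n\mathds{1}_{\SG_n\cap\domTriangle})^\doubleTriangle - \greenOperator\mathds{1}_\domTriangle\big\|_\infty. $$

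First, the first summand on the right-hand side tends to zero by applying Lemma \ref{lem:greenOperatorConvergence} to the pair $(\sigma,\sigma_n)$, which by assumption satisfies \eqref{cond:Starting1}--\eqref{cond:Starting4}.

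Next, I apply the same lemma a second time to the constant pair $(\mathds{1}_\domTriangle,\mathds{1}_{\SG_n\cap\domTriangle})$ to control the second summand. Checking the four hypotheses is routine: both functions are bounded by $M=1$, so \eqref{cond:Starting1} holds; the discontinuity set of $\mathds{1}_\domTriangle$ is contained in $\partial\domTriangle$, which consists of finitely many points and is therefore $\mu$-null, giving \eqref{cond:Starting2}; for any $x\in\SG$ not on $\partial\domTriangle$, the nearest $\SG_n$-approximant $x^\tridots$ eventually lies inside $\domTriangle$ (if $x$ is in the interior) or outside $\domTriangle$ (if $x\notin\overline{\domTriangle}$), so $\mathds{1}_{\SG_n\cap\domTriangle}^\doubleTriangle(x)\to \mathds{1}_\domTriangle(x)$, which is \eqref{cond:Starting3}; and the support condition \eqref{cond:Starting4} is immediate. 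Hence the lemma yields the uniform convergence of the second summand as well, and summing the two bounds concludes the proof.

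I do not expect any real obstacle here: the corollary is essentially a linearity corollary of the preceding lemma, combined with the trivial observation that the constant function on $\domTriangle$ fits the framework of \eqref{cond:Starting1}--\eqref{cond:Starting4}.
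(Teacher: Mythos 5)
Your proof is correct and is essentially the argument the paper leaves implicit: the corollary follows from Lemma \ref{lem:greenOperatorConvergence} by linearity of the Green operators, applied once to $(\sigma,\sigma_n)$ and once to the indicator of $\domTriangle$, whose hypotheses you rightly verify (in particular that $\partial\domTriangle$ consists of finitely many points, so \eqref{cond:Starting2} and \eqref{cond:Starting3} hold for the indicator). The only caveat is notational: as printed, \eqref{eq:obst-div-sand} omits the factor $\delta_n^\alpha$ appearing in the definition of $\greenOperatorDiscrete^n$, so your identity $\gamma_n=-\greenOperatorDiscrete^n\sigma_n+\greenOperatorDiscrete^n\mathds{1}_{\SG_n\cap\domTriangle}$ relies on reading that formula as $\gamma_n=-\greenOperatorDiscrete^n(\sigma_n-1)$, which is clearly the intended (and elsewhere used) normalization.
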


\begin{lem}\label{lem:convergenceSuperharmonicMajorant}
    If $\sigma,\sigma_n$ satisfy (\ref{cond:Starting1})-(\ref{cond:Starting4}), then
    $ s_n^\doubleTriangle \rightarrow s$  as  $n\rightarrow\infty$  uniformly on  $\domTriangle$.
\end{lem}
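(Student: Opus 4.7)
By Corollary \ref{cor:obstacleConvergence} we already have $\gamma_n^\doubleTriangle \to \gamma$ uniformly on $\domTriangle$, so proving $s_n^\doubleTriangle \to s$ uniformly is equivalent to proving that the odometers $u_n := s_n - \gamma_n$ satisfy $u_n^\doubleTriangle \to u$ uniformly on $\domTriangle$, where $u := s - \gamma$. The strategy is to establish the two one-sided bounds $\limsup_n \sup_\domTriangle (s_n^\doubleTriangle - s) \le 0$ and $\liminf_n \inf_\domTriangle (s_n^\doubleTriangle - s) \ge 0$ separately, the first via an explicit comparison function and the second via compactness plus characterization of the obstacle problem on $\SG$.

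For the upper bound I would fix $\epsilon>0$ and construct a discrete superharmonic majorant of $\gamma_n$ obtained by slightly perturbing $s$. Let $h := \greenOperator_\domTriangle \mathbf{1}$, which is continuous, bounded, nonnegative, vanishes on $\domTriangleBoundary$, and satisfies $\Delta_\domTriangle h \equiv -1$. Taking a smooth approximation $\tilde s_\epsilon \in \Dom(\Delta_\domTriangle)$ with $\tilde s_\epsilon \ge s$, continuous Laplacian, and $\|\tilde s_\epsilon - s\|_\infty \le \epsilon$, set $f := \tilde s_\epsilon + \epsilon h$. Then $\Delta_\domTriangle f \le -\epsilon$ in the weak sense, and by the converse direction of Lemma \ref{lem:Conv-Laplacians} we get $\Delta_n f \to \tfrac{4}{3}\Delta_\domTriangle f$ uniformly on $(\SG_\infty \cap \domTriangle)\setminus\domTriangleBoundary$; hence $\Delta_n f \le 0$ on $\SG_n \cap \domTriangle \setminus \domTriangleBoundary$ for all $n$ large. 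Since $f \ge s \ge \gamma$ and $\gamma_n^\doubleTriangle \to \gamma$ uniformly, we also have $f|_{\SG_n} \ge \gamma_n$ for $n$ large. The least-superharmonic-majorant property of $s_n$ therefore yields $s_n \le f|_{\SG_n}$, and transporting this through the $\doubleTriangle$-extension gives $s_n^\doubleTriangle \le s + \epsilon + \epsilon\|h\|_\infty + o(1)$ as $n\to\infty$, using uniform continuity of $s$ and $h$ to compare values at $x$ and $x^\tridots$.

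For the lower bound I would argue by compactness. The sequence $\{s_n^\doubleTriangle\}$ is uniformly bounded, since $0\le \sigma_n \le M$ together with $\supp \sigma_n \subseteq \mathsf{T}$ give a uniform bound on $\gamma_n$ (via Lemma \ref{lem:connectionGreenFunctions} and boundedness of $\greenFunction_\domTriangle$), and $s_n$ is sandwiched between $\gamma_n$ and the discrete majorant constructed from the upper bound. Equicontinuity of the $s_n^\doubleTriangle$ follows from the elliptic Harnack inequality \eqref{property:EHI} on $\DD_n = \{u_n>0\}$ (where $s_n$ is discretely harmonic), together with the Hölder estimate $|u_n(x)-u_n(y)|^2 \le C\,\En_\domTriangle^n(u_n) |x-y|^{\beta/\alpha}$ coming from the Poincaré-type inequality quoted after Definition \ref{def:contSandpile}, the energy $\En_\domTriangle^n(u_n)$ being uniformly bounded by minimization. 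By Arzelà–Ascoli, every subsequence admits a further uniformly convergent subsequence $s_{n_k}^\doubleTriangle \to s^\ast$. Passing to the limit in $s_{n_k} \ge \gamma_{n_k}$ gives $s^\ast \ge \gamma$; passing to the limit in the discrete superharmonicity (expressed as $\En_\domTriangle^{n_k}(s_{n_k}, v) \ge 0$ for nonnegative $v \in \Dom_0(\En_\domTriangle)$, via the definition of $\En_\domTriangle$ as the limit of $\En_\domTriangle^{n}$) yields that $s^\ast$ is weakly superharmonic on $\domTriangle$. By continuity of $s^\ast$ and minimality of $s$, we get $s^\ast \ge s$, and since every convergent subsequence gives the same lower bound, $\liminf_n s_n^\doubleTriangle \ge s$ uniformly.

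The main obstacle is in the upper bound: $s$ is only known to be continuous and weakly superharmonic, with the distributional Laplacian $\Delta_\domTriangle s$ jumping across $\partial D$, so Lemma \ref{lem:Conv-Laplacians} cannot be applied directly to $s$ itself. The resolution is the two-step cushion $\tilde s_\epsilon + \epsilon h$: the smoothing $\tilde s_\epsilon$ (obtainable via heat-semigroup mollification on $\SG$, or via solving an auxiliary Dirichlet problem on a slightly enlarged contact set) puts us in the regime where Lemma \ref{lem:Conv-Laplacians} applies, while the strictly superharmonic term $\epsilon h$ absorbs all three error sources—discrete vs.~continuous Laplacian approximation, the smoothing error $\tilde s_\epsilon - s$, and the obstacle convergence $\gamma_n^\doubleTriangle - \gamma$—with a uniform margin. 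Once this cushion is set up, the rest of the argument is the direct adaptation of the $\R^d$ scheme of \cite{LP-mult-idla} to the Dirichlet-form setup on $\domTriangle \subset \SG$.
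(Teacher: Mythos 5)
Your overall architecture (two one-sided bounds, each closed by a minimality argument) matches the paper, but the upper bound has a genuine gap and misses the key structural fact. You identify "Lemma \ref{lem:Conv-Laplacians} cannot be applied to $s$" as the main obstacle and resolve it with a mollification $\tilde s_\epsilon \ge s$ having continuous Laplacian plus a strict-superharmonicity cushion $\epsilon\greenOperator\mathbf{1}$. But the approximant $\tilde s_\epsilon$ is only asserted to exist; on $\SG$ neither of your two suggested constructions obviously delivers all of $\tilde s_\epsilon\ge s$, $\Delta_\domTriangle\tilde s_\epsilon$ continuous and $\le 0$, and $\norm{\tilde s_\epsilon-s}_\infty\le\epsilon$ simultaneously — note in particular that semigroup smoothing of a superharmonic function produces something \emph{below} $s$, not above it, so the inequality $f\ge\gamma$ you need is not automatic. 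More importantly, the entire detour is unnecessary: the discrete Laplacian at level $n$ is, up to a positive constant, the energy pairing with the level-$n$ harmonic spline, $\Delta_n s(x) = -\delta_n^{-\alpha}\En_\domTriangle(s,\psi_x^{(n)})$, and since $\psi_x^{(n)}\ge 0$ and $s$ is weakly superharmonic, this is $\le 0$ \emph{exactly, for every} $n$. Hence $s|_{\SG_n}+\varepsilon$ is already a discrete superharmonic majorant of $\gamma_n$ as soon as $\gamma_n^\doubleTriangle-\varepsilon\le\gamma$ (Corollary \ref{cor:obstacleConvergence}), and minimality of $s_n$ gives $s_n^\doubleTriangle\le s+\varepsilon$ with no smoothing and no appeal to Lemma \ref{lem:Conv-Laplacians}. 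This exact heredity of superharmonicity down to each discrete level is the special feature of the gasket's Dirichlet form that the paper exploits.

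Your lower bound via Arzel\`a--Ascoli is a genuinely different route from the paper's, but it also rests on two steps that are only sketched and would each need real work: (i) equicontinuity of $s_n^\doubleTriangle$, for which you invoke Harnack plus a H\"older estimate requiring a uniform bound on $\En^n_\domTriangle(u_n)$ that you never establish (and \eqref{property:EHI} controls ratios of positive harmonic functions, not the oscillation of $s_n$ across $\partial\DD_n$ where $s_n$ stops being harmonic); and (ii) passing to the limit in $\En^{n_k}_\domTriangle(s_{n_k},v)\ge 0$, which requires convergence in the form sense rather than mere uniform convergence. The paper avoids both by noting $-\Delta_n s_n=\phi_n\in[0,1]$, writing $s_n=\greenOperatorDiscrete^n\phi_n$, and comparing with the continuous potential $\greenOperator\phi_n^\doubleTriangle$, which is automatically continuous, superharmonic, and within $\epsilon$ of $s_n^\doubleTriangle$ by the same estimates already proved in Lemma \ref{lem:greenOperatorConvergence}; since it then dominates $\gamma-2\epsilon$, minimality of $s$ closes the bound. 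If you want to keep your compactness route you must supply the uniform energy bound and the Mosco-type limit passage; the faster repair is to adopt the Green-potential representation.
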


\begin{proof}

For $x\in\SG_n$, denote by $\psi_x^n$ the piecewise harmonic spline of level $n$ which is $1$ at $x$. We then have for the superharmonic majorant $s$ that
\begin{align*}
\Delta_n s(x)=\mathcal{E}_\domTriangle(s,\psi_x^{n})=-\int_\domTriangle \Delta_\domTriangle s\cdot \psi_x^n d\mu \leq 0,
\end{align*}
where in the last inequality we have used the superharmonicity of $s$ together with the non-negativity of $\psi_x^n$. This shows that $s$ restricted to $\SG_n\cap\domTriangle$ is also superharmonic. By Corollary \ref{cor:obstacleConvergence} we can find for any $\varepsilon>0$ an $n_0\in \N$ such that for all $n\geq n_0$ we have
\begin{align*}
\gamma_n^{\doubleTriangle}-\varepsilon\leq \gamma.
\end{align*}
We thus obtain that $s|_{\SG_n}+\varepsilon$ is superharmonic and
\begin{align*}
s|_{\SG_n}+\varepsilon\geq \gamma|_{\SG_n} + \varepsilon \geq \gamma_n,
\end{align*}
for all $n\geq n_0$. Thus, by the definition of $s_n$ as the least superharmonic majorant of $\gamma_n$ on $\SG_n$, we obtain for large enough $n$ that
\begin{align*}
s+\varepsilon \geq s_n^{\doubleTriangle}.
\end{align*}

For the reverse inequality consider $\nu_n$ the final sandpile distribution on $\mathsf{SG}_n$ for the initial density $\sigma_n$. Recall that $$\Delta_n s_n = \Delta_n u_n + \Delta_n \gamma_n = (\nu_n - \sigma_n ) + (\sigma_n -1 ) = \nu_n -1 $$
and therefore $0 \leq \Delta_n s_n \leq 1$.
By denoting $\phi_n := -\Delta_n s_n$ and $\tilde{s} = \greenOperator \phi_n^\doubleTriangle(x)$, we obtain for large enough $n\in \N$
\begin{align*}
    &|\tilde{s} - s_n^\doubleTriangle(x)| = |\greenOperator \phi_n^\doubleTriangle(x) - \greenOperatorDiscrete^n \phi_n(x^\tridots)| \\
    &\leq \sum_{y\in \SG_n\cap \domTriangle} |\phi_n(y)| \  \Big|\int_{\doubleTriangle^n(y)} \mu(\doubleTriangle^n(y))^{-1} \delta_n^\alpha\greenFunctionDiscrete(x^\tridots,y)- \greenFunction(x,z) d\mu(z)\Big| < \epsilon,
\end{align*}
where in the last step we have used  the same arguments as in the proof of Lemma \ref{lem:greenOperatorConvergence} in order to show that (\ref{eq:proof:greenOperatorConv:1}) goes to zero together with the fact $0 \leq \phi_n \leq 1$.
Thus
\begin{align*}
\tilde{s} + 2 \epsilon > s_n^\doubleTriangle(x) + \epsilon > \gamma_n^\doubleTriangle(x) + \epsilon > \gamma 
\end{align*}
and by construction $\tilde{s}$ is superharmonic, from which we can finally conclude that
$$ s_n^\doubleTriangle + 3 \epsilon \geq \tilde{s} + 2 \epsilon> s,$$
which finishes the proof.
\end{proof}

\begin{lem}\label{theo:convergenceSandpileOdometer}
    Let $u_n$ be the odometer function for the divisible sandpile on $\mathsf{SG}_n$ with initial density $\sigma_n$. If $\sigma,\sigma_n$ satisfy \eqref{cond:Starting1}-\eqref{cond:Starting4}, then
    \begin{align}
        u_n^\doubleTriangle \rightarrow s-\gamma = u \text{ uniformly on compacts, as } n\to\infty.
    \end{align}
\end{lem}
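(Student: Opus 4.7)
The plan is to reduce the claim to the two previously established convergences of the obstacle and the superharmonic majorant. The key algebraic identity is that on $\SG_n$ the divisible sandpile odometer admits the obstacle-problem decomposition $u_n = s_n - \gamma_n$ (this is exactly the content of \cite[Lemma 3.9]{Div-Sandpile-SG} applied to $\sigma_n$ with obstacle \eqref{eq:obst-div-sand}), while on $\SG$ the continuous odometer is defined as $u = s - \gamma$ by Definition \ref{def:contSandpile} together with Definition \ref{defn:global-cont-div-sand}. Therefore
\begin{align*}
u_n^{\doubleTriangle} - u = (s_n^{\doubleTriangle} - s) - (\gamma_n^{\doubleTriangle} - \gamma).
\end{align*}

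First I would apply Corollary \ref{cor:obstacleConvergence} to conclude $\gamma_n^{\doubleTriangle} \to \gamma$ uniformly on $\domTriangle$, and then Lemma \ref{lem:convergenceSuperharmonicMajorant} to conclude $s_n^{\doubleTriangle} \to s$ uniformly on $\domTriangle$. Combining these via the triangle inequality yields $u_n^{\doubleTriangle} \to u$ uniformly on $\domTriangle$.

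To upgrade "uniform convergence on $\domTriangle$" to "uniform convergence on compacts," I would use the support control provided by Lemmas \ref{lem:ball_contains_cluster_sandpile} and \ref{lem:existence-of-global-sandpile}: the triangle $\domTriangle$ was chosen large enough so that $\DD_n \subseteq \domTriangle$ for all $n$ and $D \subseteq \domTriangle$. Consequently, both $u_n^{\doubleTriangle}$ and $u$ vanish identically on $\SG \setminus \domTriangle$, so for any compact $K \subseteq \SG$ one has
\begin{align*}
\sup_{x\in K} |u_n^{\doubleTriangle}(x) - u(x)| = \sup_{x\in K\cap \domTriangle} |u_n^{\doubleTriangle}(x) - u(x)| \longrightarrow 0.
\end{align*}

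There is essentially no hard step left: the real work went into Lemma \ref{lem:convergenceSuperharmonicMajorant}, whose two directions required (i) using harmonicity of splines to restrict $s$ to $\SG_n$ as a superharmonic function, and (ii) constructing the auxiliary superharmonic $\tilde s = \greenOperator \phi_n^{\doubleTriangle}$ from the discrete Laplacian $\phi_n = -\Delta_n s_n \in [0,1]$. Once those two ingredients are in hand, the present lemma is merely the difference of two uniform limits, so I would present it as a three-line corollary of Corollary \ref{cor:obstacleConvergence} and Lemma \ref{lem:convergenceSuperharmonicMajorant}.
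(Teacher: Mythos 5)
Your proposal is correct and follows exactly the paper's own argument: the paper likewise writes $u_n = s_n - \gamma_n$ and $u = s - \gamma$ and combines Corollary \ref{cor:obstacleConvergence} with Lemma \ref{lem:convergenceSuperharmonicMajorant}. Your additional remark on passing from uniform convergence on $\domTriangle$ to uniform convergence on compacts via the support control of Lemmas \ref{lem:ball_contains_cluster_sandpile} and \ref{lem:existence-of-global-sandpile} is a fair (and slightly more explicit) account of what the paper leaves implicit.
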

\begin{proof}
By Corollary \ref{cor:obstacleConvergence} and Lemma \ref{lem:convergenceSuperharmonicMajorant} we have $\gamma_n^\doubleTriangle \rightarrow \gamma$ and $s_n^\doubleTriangle \rightarrow s$ uniformly on $\domTriangle$, as $n\to\infty$. Using $u_n = s_n - \gamma_n$ and $u = s - \gamma$ gives the result.
\end{proof}

\subsection{Convergence of domains in divisible sandpile}

Following \cite{LP-mult-idla}, we assume here additional conditions on the initial density $\sigma:\SG\to[0,\infty)$:
\begin{align}
    &\text{ For all $x\in \mathsf{SG}$ either } \sigma(x) \leq \lambda \text{ or } \sigma(x)\geq 1 \text{ for some $\lambda < 1$,} \label{cond:additionalStartingCondition1}\\
    &\{\sigma \geq 1 \} = \overline{\{\sigma \geq 1 \}^\circ}.\label{cond:additionalStartingCondition2}
\end{align}
Moreover, we assume that for any $\epsilon>0$ there exists $N(\epsilon)$ such that
\begin{align}
    \text{if } x\in \{\sigma \geq 1 \}_\epsilon, \text{ then } \sigma_n(x) \geq 1 \text{ for all } n\geq N(\epsilon), \label{cond:additionalStartingCondition3}\\
    \text{if } x\notin \{\sigma \geq 1 \}^\epsilon, \text{ then } \sigma_n(x) \leq \lambda \text{ for all } n\geq N(\epsilon). \label{cond:additionalStartingCondition4}
\end{align}
We also write $$\widetilde{D} := D\cup \{ x\in \mathsf{SG} :\  \sigma(x) \geq 1\}^\circ. $$
The aim of this section is to prove the scaling limit for the divisible sandpile cluster $\DD_n$.
\begin{thm}\label{theo:convergenceSandpileDomains}
 Let $\sigma$ and $\sigma_n$ satisfy \eqref{cond:Starting1}-\eqref{cond:Starting4} and \eqref{cond:additionalStartingCondition1}-\eqref{cond:additionalStartingCondition4}. For $n\geq 1$, let $\DD_n$ be the divisible sandpile cluster in $\mathsf{SG}_n$ started from initial density $\sigma_n$. For any $\epsilon>0$ and $n$ large enough, we have
\begin{align*}
\tilde{D}_\epsilon \cap \mathsf{SG}_n \subseteq \DD_n \subseteq \tilde{D}^\epsilon \cap \mathsf{SG}_n.
\end{align*}
\end{thm}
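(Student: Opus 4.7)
The plan is to combine the uniform convergence $u_n^\doubleTriangle \to u$ from Lemma~\ref{theo:convergenceSandpileOdometer} with the obstacle representations $u_n = s_n - \gamma_n$ and $u = s - \gamma$, and to treat the two set inclusions separately.

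For the inner inclusion $\tilde{D}_\epsilon \cap \SG_n \subseteq \DD_n$, I would first decompose (after slightly shrinking $\epsilon$) $\tilde{D}_\epsilon$ as $D_{\epsilon/2} \cup (\{\sigma \geq 1\}^\circ)_{\epsilon/2}$ and handle each piece. For $x \in D_{\epsilon/2}$, the set $\overline{D_{\epsilon/2}}$ is a compact subset of the open set $D = \{u > 0\}$, on which the continuous odometer $u$ attains a strictly positive minimum $c > 0$; the uniform convergence $u_n^\doubleTriangle \to u$ then yields $u_n(x) \geq c/2 > 0$ for $x \in D_{\epsilon/2} \cap \SG_n$ and $n$ large, placing $x$ in $\DD_n$. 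For $x \in (\{\sigma \geq 1\}^\circ)_{\epsilon/2} \cap \SG_n$, condition~\eqref{cond:additionalStartingCondition3} ensures $\sigma_n \geq 1$ on an entire discrete neighborhood of $x$ in $\SG_n$; combined with the mass-balance inequality $\sum_{C} \sigma_n \geq |C|$ valid for every connected component $C$ of $\DD_n$ (final mass on $C$ is exactly $|C|$ since $\nu_n = 1$ on $\DD_n$, while the only mass source for $C$ is $\sigma_n$, its neighbors outside being non-toppled) and the gap assumption~\eqref{cond:additionalStartingCondition1}, this forces $x$ to lie in a toppling component, hence $x \in \DD_n$.

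For the outer inclusion $\DD_n \subseteq \tilde{D}^\epsilon$ I would argue by contradiction: suppose $x_0 \in \DD_n$ with $d(x_0, \tilde{D}) > \epsilon$. Then $B(x_0, \epsilon) \cap \tilde{D} = \emptyset$, so $u \equiv 0$ on $B(x_0, \epsilon)$, and by~\eqref{cond:additionalStartingCondition1}--\eqref{cond:additionalStartingCondition2} one can find $\epsilon' < \epsilon$ with $\sigma \leq \lambda < 1$ on $B(x_0, \epsilon')$. Condition~\eqref{cond:additionalStartingCondition4} then gives $\sigma_n \leq \lambda$ on $B(x_0, \epsilon') \cap \SG_n$ for $n$ large, whence $\Delta_n \gamma_n = \sigma_n - 1 \leq \lambda - 1 < 0$ there, i.e., $\gamma_n$ is uniformly strictly superharmonic on this ball, and equivalently $u_n$ is strictly subharmonic with $\Delta_n u_n \geq 1 - \lambda$ on $\DD_n \cap B(x_0, \epsilon')$. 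I would leverage this against the uniform smallness of $u_n$ on $B(x_0, \epsilon')$ supplied by Lemma~\ref{lem:convergenceSuperharmonicMajorant} and Corollary~\ref{cor:obstacleConvergence}, through a local barrier built from the rescaled expected exit time $\tilde{\psi}_n(y) = \delta_n^\beta \mathbb{E}_y[\tau_{B'}^n]$ from a subball $B' \subset B(x_0, \epsilon')$, which satisfies $\Delta_n \tilde{\psi}_n = -1$ and is of order $(\epsilon')^\beta$ by $(V_\alpha)$ and $(E_\beta)$; the resulting pointwise bound on $u_n(x_0)$ in terms of $(\epsilon')^\beta$ plus the uniform-convergence error on $\partial B'$, together with the subharmonic maximum principle, should rule out $x_0 \in \DD_n$.

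The hard part will be upgrading the soft uniform smallness of $u_n$ near $x_0$ to the sharp conclusion $u_n(x_0) = 0$, as the barrier argument alone gives only $u_n(x_0) \lesssim (\epsilon')^\beta + \eta_n$, which decays with $n$ but need not vanish for fixed $n$. I expect this gap to be closed by supplementing the barrier with the mass-conservation/connectedness analysis: every connected component of $\DD_n$ has no toppled neighbors outside itself and must contain a site with $\sigma_n > \lambda$, which by~\eqref{cond:additionalStartingCondition4} lies within $\epsilon$ of $\tilde{D}$; this anchors the component of $\DD_n$ containing $x_0$ inside $\tilde{D}^\epsilon$ and, combined with the subharmonic comparison inside the obstacle-free ball $B(x_0, \epsilon')$, forces the required contradiction.
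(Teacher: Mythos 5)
Your inner estimate follows the paper's proof essentially verbatim: split $\tilde D_\epsilon$ into a piece $D_\eta$ compactly contained in $D=\{u>0\}$, where the uniform convergence $u_n^\doubleTriangle\to u$ from Lemma \ref{theo:convergenceSandpileOdometer} and the strict positive minimum of $u$ give $u_n>0$ for large $n$, and a piece of $\{\sigma\geq1\}^\circ$ handled by \eqref{cond:additionalStartingCondition3}. The paper invokes \eqref{cond:additionalStartingCondition3} directly without your mass-balance detour; note that your component argument does not actually establish $\{\sigma_n\geq1\}\subseteq\DD_n$ either (mass balance constrains components of $\DD_n$, and a region with $\sigma_n\equiv1$ receiving no mass never topples, so it lies in $\DD_n'$ but not in $\DD_n=\{u_n>0\}$), so the extra machinery buys nothing here.

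The outer estimate is where your plan has a genuine gap. You construct the right barrier (a rescaled expected exit time with constant Laplacian) but point it in the wrong direction: used as an upper bound it only yields $u_n(x_0)\lesssim(\epsilon')^\beta+\eta_n$, as you correctly observe, and the patch you propose — anchoring the component of $\DD_n$ containing $x_0$ at a site with $\sigma_n>\lambda$ — does not close the gap, because mass balance only shows that each component of $\DD_n$ \emph{meets} $\{\sigma_n>\lambda\}\subseteq\tilde D^\epsilon$, and a component may be arbitrarily elongated, so this gives no control on how far it protrudes beyond $\tilde D^\epsilon$. The paper's resolution (Lemma \ref{lem:sandpileConvergenceDomains:odometerbound}) runs the same barrier in the contrapositive, as a growth estimate: if $x\in\DD_n$ with $x\notin\tilde D^\epsilon$, then $w=u_n-(1-\lambda)\delta_n^\beta\phi_{x,\epsilon/2\delta_n}$, with $\phi_{x,r}(y)=\E_{x}\big[\tau^n_r(x)\big]-\E_y\big[\tau^n_r(x)\big]$, is subharmonic on $\DD_n\cap B^n(x,\epsilon/2\delta_n)$ and positive at the center (where $\phi$ vanishes), so its maximum is attained on $\partial B^n$ rather than on $\partial\DD_n$ where $u_n=0$; unwinding this with \eqref{property:exit_time} produces a point $y$ with $|x-y|\leq\epsilon/2+\delta_n$ and $u_n(y)\geq u_n(x)+c(1-\lambda)\epsilon^\beta$. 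This \emph{lower} bound is then contradicted by the soft uniform smallness $u_n<c(1-\lambda)\epsilon^\beta$ on $B(x,\epsilon)$ that you already extracted from Lemma \ref{theo:convergenceSandpileOdometer}; no sharp conclusion $u_n(x)=0$ is ever required. Using the difference of exit times (vanishing at the center and of order $\epsilon^\beta$ on the sphere) instead of the exit time itself is also what avoids losing the constant between the upper and lower bounds in \eqref{property:exit_time}, which would otherwise spoil your comparison.
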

\begin{lem}\label{lem:sandpileConvergenceDomains:odometerbound}
Fix $\epsilon >0$ and $x\in \DD_n$ with $x\notin \widetilde{D}^\epsilon$. For $n$ large enough, there is a vertex $y\in \mathsf{SG}_n$ with $|x-y|\leq \epsilon/2 + \delta_n$, such that for some constant $c>0$ it holds
$$ u_n(y) \geq u_n(x) + c (1-\lambda)\epsilon^\beta. $$

\end{lem}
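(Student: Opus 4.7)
The plan is to adapt to the gasket the standard $\mathbb{R}^d$ argument that compares $u_n$ with a quadratic $c|y-x|^2$, replacing the Euclidean $|y-x|^2$ growth by the walk-dimension growth $|y-x|^\beta$ supplied by the expected exit time. First, set $R := \lfloor \epsilon/(2\delta_n) \rfloor$ and let $B := B^n(x,R)$, so that $B \subseteq \{y:\ |y-x| \le \epsilon/2\}$. Because $x \notin \widetilde{D}^\epsilon$, every $y \in B$ satisfies $d(y,\widetilde{D}) > \epsilon/2$; combining this with $\{\sigma \ge 1\} \subseteq \overline{\widetilde{D}}$ (a consequence of \eqref{cond:additionalStartingCondition2}) and \eqref{cond:additionalStartingCondition4} yields $\sigma_n(y) \le \lambda$ on $B$ for every $n \ge N(\epsilon/2)$. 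In particular, on $\DD_n' \cap (B\setminus\partial B)$ the crucial inequality $\Delta_n u_n = 1 - \sigma_n \ge 1 - \lambda$ holds.

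Introduce the test function $\phi_n(y) := \delta_n^\beta\, \mathbb{E}_y[\tau^n_{B\setminus\partial B}]$, which solves $\Delta_n \phi_n = -1$ on $B \setminus \partial B$, vanishes on $\partial B$, satisfies $\phi_n \ge 0$ on $B$ and, by property $(E_\beta)$, $\phi_n(x) \asymp \delta_n^\beta R^\beta \asymp \epsilon^\beta$. Choosing $B$ to respect the natural fractal symmetries at $x$ (for instance, a doubled triangle $\doubleTriangle^k(x) \cap \SG_n$ of the appropriate scale), the strong maximum principle for super-harmonic functions forces $\phi_n$ to attain its maximum at $x$. Then the shifted function $\tilde{\phi}_n := \phi_n(x) - \phi_n$ is non-negative on $B$, vanishes at $x$, equals $\phi_n(x)$ on $\partial B$, and satisfies $\Delta_n \tilde{\phi}_n = +1$ on $B\setminus\partial B$.

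Define $v_n := u_n - (1-\lambda)\tilde{\phi}_n$ and the positivity set $A := \{y \in B\setminus\partial B :\ v_n(y) > 0\}$. Because $\tilde{\phi}_n \ge 0$, every $y \in A$ has $u_n(y) > 0$, hence $A \subseteq \DD_n \subseteq \DD_n'$; there $\Delta_n v_n = (1 - \sigma_n) - (1-\lambda) = \lambda - \sigma_n \ge 0$, so $v_n$ is subharmonic on $A$. Since $v_n(x) = u_n(x) > 0$, we have $x \in A$. Green's identity (discrete optional stopping) applied to the simple random walk $X^n$ with first exit time $\tau_A$ from $A$ now gives
\[
u_n(x) = v_n(x) \le \mathbb{E}_x\bigl[v_n(X^n_{\tau_A})\bigr].
\]
At the exit point, either $X^n_{\tau_A} \in (B\setminus\partial B) \setminus A$, where $v_n \le 0$ by the definition of $A$, or $X^n_{\tau_A} \in \partial B$, where $\tilde{\phi}_n = \phi_n(x)$, so that $v_n(z) = u_n(z) - (1-\lambda)\phi_n(x)$. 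Discarding the non-positive contribution from the first case produces a vertex $y \in \partial B$ with
\[
u_n(y) \ge u_n(x) + (1-\lambda)\phi_n(x) \ge u_n(x) + c(1-\lambda)\epsilon^\beta.
\]
Since $y \in \partial B$ has graph distance $R$ from $x$, it satisfies $|y - x| \le R\,\delta_n \le \epsilon/2 + \delta_n$, as required.

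The main obstacle is a potential-theoretic subtlety absent in the Euclidean case: on an arbitrary graph ball $B^n(x,R) \subset \SG_n$ the super-harmonic function $\phi_n$ need not attain its maximum exactly at the centre $x$, so $\tilde{\phi}_n$ may fail to be non-negative and the key inclusion $A \subseteq \DD_n$ may break. One must therefore either commit to a symmetric choice of $B$ adapted to the gasket (the doubled triangles $\doubleTriangle^k(x)$, whose symmetry group about $x$ is inherited from $\SG$) and use this symmetry explicitly, or instead control the ratio $\max_B \phi_n / \phi_n(x)$ via the elliptic Harnack inequality $(EHI)$ and absorb the resulting multiplicative constant into the final constant $c$.
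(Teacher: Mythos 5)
Your argument is essentially identical to the paper's: the paper sets $\phi_{x,r}(y)=\E_x[\tau^n_r(x)]-\E_y[\tau^n_r(x)]$ (your $\tilde{\phi}_n$ up to the $\delta_n^\beta$ normalization) and shows that $w=u_n-(1-\lambda)\delta_n^\beta\phi_{x,\epsilon/2\delta_n}$ is subharmonic on $B\cap\DD_n$, concluding via the maximum principle exactly as you do via optional stopping, with the lower bound $\phi_{x,r}\asymp r^\beta$ on $\partial B$ coming from \eqref{property:exit_time}. The centre-maximality subtlety you flag (whether $\E_y[\tau^n_r(x)]\leq\E_x[\tau^n_r(x)]$ throughout $B$, which is needed so that $w\leq 0$ where $u_n$ vanishes) is implicitly relied upon but not discussed in the paper's proof, so your explicit treatment of it is, if anything, the more careful one.
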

\begin{proof}
From  (\ref{cond:additionalStartingCondition2}) we have $\{\sigma \geq 1\} \subseteq \overline{\widetilde{D}}$. Thus if $x\notin \widetilde{D}^\epsilon$, the ball $B= B(x,\frac{\epsilon}{2})^\tridots$, with $B(x,\frac{\epsilon}{2})\subset\SG$ is disjoint from $\{\sigma \geq 1 \}^{\epsilon/2}$. Taking $n\geq N(\frac{\epsilon}{2})$ gives  $\sigma_n \leq \lambda$ on $B$ by assumption (\ref{cond:additionalStartingCondition4}).
We define for $y\in \mathsf{SG}_n$
\begin{equation}\label{eqref:phi-expec}
\phi_{x,r}(y) := \E_x[\tau^n_r(x)]- \E_y[\tau^n_r(x)].
\end{equation}
We then have $\Delta_{\mathsf{SG}_n} \phi_{x,r}(y) = 1$ for $y\in B^n(x,r)$ and $\phi_{x,r}(x) = 0$. Since  $r$ is the radius in the graph metric, when translating to $B$ we have to rescale it by $\delta_n$. Define $w:\SG_n\to\R$ by
    $$ w(y) = u_n(y) - (1-\lambda) \delta_n^\beta \phi_{x,\epsilon/2\delta_n}(y), $$
which is subharmonic on $B\cap \DD_n$ and therefore attains its maximum on the boundary. By the choice of $\phi_{x,\epsilon/2\delta_n}$, $w(x) > 0$, so the maximum cannot be attained at $\partial \DD_n$, where $u_n$ vanishes. Therefore the maximum is attained at some $y\in \partial B$ and
    $$ u_n(y) \geq w(y) + (1-\lambda) \delta_n^\beta \phi_{x,\epsilon/2\delta_n}(y) \geq w(y) + (1-\lambda) c \delta_n^\beta \Big(\frac{\epsilon}{\delta_n}\Big)^\beta $$
    where the last inequality follows from \eqref{property:exit_time} together with $\E_y[\tau^n_r(x)] = 0$ for $y\in \partial B^n(x,r)$.
\end{proof}

\begin{proof}[Proof of Theorem \ref{theo:convergenceSandpileDomains}]
Fix $\epsilon >0$. By the definition of $\widetilde{D}_\epsilon $ we have for some $\eta>0$
    $$ \widetilde{D}_\epsilon \subseteq D_\eta \cup \{\sigma \geq 1 \}_\eta.$$
    Since the closure of $D_\eta$ is compact and contained in $D$, we have $u\geq m_\eta$ on $D_\eta$ for some $m_\eta > 0$. From Lemma \ref{theo:convergenceSandpileOdometer} we get $$u_n > u^\tridots - \frac{1}{2} m_\eta > 0 \text{ on } D_\eta \cap \mathsf{SG}_n \text{ for large enough }n,$$ hence $D_\eta \cap \mathsf{SG}_n \subseteq \DD_n$, which together with \eqref{cond:additionalStartingCondition3} gives $\{ \sigma \geq 1\}_\eta\cap \mathsf{SG}_n \subseteq \DD_n$, and this implies that $\tilde{D}_\epsilon \cap \mathsf{SG}_n \subseteq \DD_n$.
For the upper bound, fix $x\in \SG_n$ with $x\notin \widetilde{D}^\epsilon$. Since $u$ vanishes on $B=B(x,\frac{\epsilon}{2})$ we get from the convergence of $u_n$ in Lemma \ref{theo:convergenceSandpileOdometer} that $$ u_n < c(1-\lambda) \epsilon^\beta \text{ on } B\cap \mathsf{SG}_n \text{ for large enough } n,$$
which together with Lemma \ref{lem:sandpileConvergenceDomains:odometerbound} yields $x\notin \DD_n$ and therefore $\DD_n \subseteq \tilde{D}^\epsilon \cap \mathsf{SG}_n$.
\end{proof}
\section{Scaling limit for rotor aggregation}\label{sec:rotor}

For rotor aggregation, since particles are moving according to a rotor walk on $\SG_n$, the initial densities have to be given as integer valued functions. 
Similar to \cite{LP-mult-idla}, we adapt the notion of convergence of densities and we define the appropriate notions of odometer functions.

\textbf{Smoothing operation.} For more details on the smoothing operation and why it is appropriate to use it in order to define a notion of convergence, we refer the reader to \cite[Section 4]{LP-mult-idla}. For any $n\in\N$, denote in this section by $(Y^n_t)_{t\in\N}$ the lazy random walk on $\SG_n$, which at each time step stays at the current position with probability $1/2$ and with probability $1/8$ chooses one of the four possible neighbors and moves there.
For any function $f:\mathsf{SG}_n\rightarrow\R$ and $k\in\N$, we define its {\it $k$-th smoothing} by
\begin{align*}
    S^{n}_k f(x)=\sum_{y\in \mathsf{SG}_n}\mathbb{P}_x[Y^{n}_k=y]f(y)=\mathbb{E}_x[f(Y^{n}_k)],
\end{align*}
so the smoothing operation fulfills $S_k S_l=S_{k+l}$ for all $k,l\in\N$ due to the Markov property of the lazy random walk. Furthermore, we have
$\Delta_n=2\delta_n^{-\beta}(S_1-S_0)$,
which implies that the Laplacian and the smoothing operator commute. 

\textbf{Lazy random walk on Sierpi\'nski graphs $\SG_n$.}  
According to \cite[Proposition 5.7]{markov_and_mixing}, 
the total variation distance between the distribution $P^{t+1}$ at time  $t+1$ and the distribution $P^t$ at time $t$ of the lazy random walk $(Y^n_t)_{t\in\SG_n}$ can be bounded by: 
\begin{align}\label{eq:tv-pt}
    ||P^{t+1}(x,\cdot)-P^t(x,\cdot)||_{\text{TV}}\leq 1/\sqrt{t}.
\end{align}
Next we collect some properties of the lazy random walk on $\mathsf{SG}_n$.
\begin{lem}\label{lem:BoundTransitionProbabilities}
Let $P$ be the transition matrix of the lazy random walk  $(Y^n_t)_{t\in\N}$ on $\SG_n$. Then
\begin{enumerate}[(1)]
\setlength\itemsep{0em}
    \item $\Delta_{\mathsf{SG}_n} P^t(x,y)=2P^{t+1}(x,y)-2P^{t}(x,y)$.
    \item $P^t(x,y)\leq \frac{C}{t^{\alpha/\beta}}\exp\left(-c\big(d(x,y)^\beta/t\big)^{1/(\beta-1)}\right)$.
    \item $P^t(x,x)\geq \frac{C'}{t^{\alpha/\beta}}$.
\end{enumerate}
where $C',C,c>0$ are suitable constants.
\end{lem}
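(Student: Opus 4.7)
For (1), I would start from the identity $P = \tfrac{1}{2}(I + Q)$, where $Q(x,y) = 1/4$ for $y \sim x$ is the transition matrix of the simple random walk on the $4$-regular graph $\mathsf{SG}_n$. The definition of $\Delta_{\mathsf{SG}_n}$ given in the preliminaries reads $\Delta_{\mathsf{SG}_n} = Q - I$, hence $P - I = \tfrac{1}{2}\Delta_{\mathsf{SG}_n}$. Applying this operator identity to $P^t(\cdot, y)$ and using the semigroup property of $P^t$ yields
\begin{align*}
P^{t+1}(x,y) - P^t(x,y) = (P-I)P^t(x,y) = \tfrac{1}{2}\Delta_{\mathsf{SG}_n}P^t(x,y),
\end{align*}
which is (1) after multiplying by $2$.

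For (2) and (3) I would invoke the sub-Gaussian heat kernel estimates for the simple random walk on the Sierpi\'nski gasket graph as established by Jones, Barlow--Perkins and Barlow--Bass (fitting also the general framework of Kumagai for strongly recurrent fractal graphs). Since the graphs $\mathsf{SG}_n$ are mutually isomorphic as abstract unweighted graphs to $\mathsf{SG}_0$, and the metric $d(\cdot,\cdot)$ appearing in the statement is the intrinsic graph metric on $\mathsf{SG}_n$ (not rescaled), it suffices to prove the bounds on $\mathsf{SG}_0$ with absolute constants. Those references supply exactly the on-diagonal polynomial decay $Q^t(x,x) \gtrsim t^{-\alpha/\beta}$ and the off-diagonal sub-Gaussian tail with exponent $1/(\beta-1)$ for iterates of $Q$.

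To transfer these estimates from the simple random walk $Q$ to the lazy walk $P$, I would use the binomial representation
\begin{align*}
P^t = \sum_{k=0}^{t} 2^{-t}\binom{t}{k} Q^k,
\end{align*}
which follows from $P = \tfrac{1}{2}(I + Q)$. The binomial weights concentrate on $k \asymp t/2$; combined with the fact that $\mathsf{SG}_0$ contains triangles and is therefore not bipartite (so no parity obstruction arises), this gives the on-diagonal lower bound (3) with exponent $\alpha/\beta$. For the off-diagonal upper bound (2) one truncates the sum at $k \geq d(x,y)$ (since $Q^k(x,y) = 0$ for $k<d(x,y)$) and inserts the sub-Gaussian estimate for $Q^k$; the dominant contribution comes from $k \asymp t$, so the exponent $1/(\beta-1)$ is preserved, and the prefactor $t^{-\alpha/\beta}$ emerges from summing $k^{-\alpha/\beta}$ over $k$ in that range.

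The principal obstacle here is not conceptual but bibliographic: the hard work of proving sub-Gaussian heat kernel estimates on fractal graphs is done in the cited literature, and the only genuine step left is verifying that the Bernoulli smoothing $P^t = \sum_{k} 2^{-t}\binom{t}{k} Q^k$ preserves the shape of the bound. This reduces to the observation that, over the range $k \asymp t$ where the binomial mass sits, the function $k \mapsto \exp(-c(d(x,y)^{\beta}/k)^{1/(\beta-1)})$ varies only by a multiplicative constant, so that the optimization over $k$ does not deteriorate the exponent.
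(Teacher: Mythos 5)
Your proposal is correct and follows essentially the same route as the paper: part (1) via the operator identity $P-I=\tfrac12\Delta_{\mathsf{SG}_n}$ (the paper writes out the same computation with explicit sums), and parts (2)–(3) by combining the sub-Gaussian bounds for the simple random walk from the cited literature with the binomial representation $P^t=\sum_k 2^{-t}\binom{t}{k}Q^k$. The only cosmetic difference is that the paper controls the resulting sum $\sum_k 2^{-t}\binom{t}{k}k^{-\alpha/\beta}$ by an explicit expectation bound $\E[t/Z]\lesssim 1$ for $Z\sim\mathrm{Binom}(t,1/2)$, and for the exponential factor simply uses monotonicity in $k\le t$ rather than your concentration-on-$k\asymp t$ observation.
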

\begin{proof}
\textit{(1)} Using
\begin{align*}
    P^t(x,y)=\frac{1}{2}P^{t-1}(x,y)+\frac{1}{8}\sum_{y'\sim y}P^{t-1}(x,y'),
\end{align*}
we get for the first claim:
\begin{align*}
& \Delta_{\mathsf{SG}_n} P^t(x,y)=\frac{1}{4}\sum_{y'\sim y}P^t(x,y') - P^t(x,y)\\
 &=2\Big(\frac12 P^{t}(x,y)+\frac{1}{8}\sum_{y'\sim y}P^t(x,y')\Big) - 2P^t(x,y)
    =2P^{t+1}(x,y) - 2P^t(x,y).
\end{align*}
\textit{(2)} For the second and third claim we use the following bounds on the transition probabilities for the simple random walk $(X^n_t)_{t\in\N}$ from \cite[Theorem 17]{jones_transition_1996}:
\begin{align*}
\mathds{1}_{\{x=y\}}C_2\frac{1}{t^{\alpha/\beta}}\leq \mathbb{P}[X_t^n=y|X^n_0=x]\leq \frac{C_1}{t^{\alpha/\beta}}\exp\Big(-c_1\Big(\frac{d(x,y)^\beta}{t}\Big)^{1/(\beta-1)}\Big),
\end{align*}
for $C_1,c_1,C_2>0$. The $t$-steps transition probabilities of the lazy random walk $(Y^n_t)_{t\in\N}$ are then
\begin{align*}
    P^t(x,y)&=\sum_{k=0}^t{t \choose k}2^{-t}\mathbb{P}_x[X^n_k=y] = 2^{-t}\mathds{1}_{\{x=y\}} + \sum_{k=1}^t{t \choose k}2^{-t}\mathbb{P}_x[X^n_k=y] \\
    &\leq 2^{-t} + \exp\Big(-c_1\Big(\frac{d(x,y)^\beta}{t}\Big)^{1/(\beta-1)}\Big)\sum_{k=1}^t{t \choose k}2^{-t}\frac{C_1}{k^{\alpha/\beta}} .
\end{align*}
On the other hand
\begin{align*}
   \sum_{k=1}^t{t \choose k}2^{-t}\frac{t^{\alpha/\beta}}{k^{\alpha/\beta}} = \E\Big[\frac{t^{\alpha/\beta}}{Z^{\alpha/\beta}} \mathds{1}_{\{Z \geq 1\}}  \Big] \leq \E\Big[\frac{t}{Z} \mathds{1}_{\{Z \geq 1\}} \Big]  
    \leq 2 t \E\Big[ \frac{1}{Z+1} \Big] = 2 t \frac{2(1-2^{-{t+1}})}{t+1} \leq 4,
\end{align*}
where $Z\sim \text{Binom}(t,1/2)$, and this proves the second assertion. 

{\it (3)} Using the lower bound on the transition probabilities $\mathbb{P}_x[X^n_t=x]$ we get 
\begin{align*}
    P^t(x,x)=&\mathbb{P}_x[Y^n_t=x]=\sum_{k=0}^t{t \choose k}2^{-t}\mathbb{P}_x[X^n_k=x]
    \geq C_2\frac{1}{t^{\alpha/\beta}}\sum_{k=0}^t{t \choose k}2^{-t}
    =C_2\frac{1}{t^{\alpha/\beta}},
\end{align*}
which proves the final assertion.
\end{proof}
\begin{prop}\label{prop:total_variation_lazy_walk}
For the lazy random walk $(Y_t^n)_{t\geq 0}$ on $\mathsf{SG}_n$, for every $x\in\mathsf{SG}_n$ and $0<\delta<1/2\alpha$,
we have
\begin{align*}
\sum_y\sum_{y'\sim y}|P^t(x,y)-P^t(x,y')|\leq \frac{C}{t^{1/4-\delta\alpha/2}}.
\end{align*}
\end{prop}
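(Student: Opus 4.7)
The plan is to rewrite the target $\ell^1$-sum of edge-gradients as a Dirichlet form of $f=P^t(x,\cdot)$, use Lemma \ref{lem:BoundTransitionProbabilities}(1) and reversibility to identify it with the single diagonal heat-kernel difference $P^{2t}(x,x)-P^{2t+1}(x,x)$, control this difference via a convexity argument, and balance a Cauchy--Schwarz estimate on a ball $B^n(x,r)$ against the sub-Gaussian tail from Lemma \ref{lem:BoundTransitionProbabilities}(2).

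In detail, combining the Dirichlet-form identity $\langle f,-\Delta_{\SG_n}f\rangle_{\ell^2}=\frac{1}{8}\sum_y\sum_{y'\sim y}(f(y)-f(y'))^2$ (valid on the $4$-regular graph $\SG_n$) with $-\Delta_{\SG_n}P^t(x,\cdot)=2(P^t(x,\cdot)-P^{t+1}(x,\cdot))$ from Lemma \ref{lem:BoundTransitionProbabilities}(1) and the Chapman--Kolmogorov/reversibility relation $\langle P^t(x,\cdot),P^s(x,\cdot)\rangle_{\ell^2}=P^{t+s}(x,x)$, I obtain the identity
\begin{align*}
\sum_y\sum_{y'\sim y}\bigl(P^t(x,y)-P^t(x,y')\bigr)^2=16\bigl(P^{2t}(x,x)-P^{2t+1}(x,x)\bigr).
\end{align*}
Since the lazy operator $P$ is self-adjoint with spectrum in $[0,1]$, the operator $(I-P)^2P^s$ is positive for every integer $s\geq 0$, so $s\mapsto P^s(x,x)$ is a convex sequence and the forward differences $a_s:=P^s(x,x)-P^{s+1}(x,x)$ are monotonically decreasing in $s$. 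Telescoping on $[t,2t]$ and using the diagonal upper bound $P^t(x,x)\leq Ct^{-\alpha/\beta}$ (the $y=x$ specialisation of Lemma \ref{lem:BoundTransitionProbabilities}(2)) then yields
\begin{align*}
a_{2t}\leq \frac{1}{t+1}\sum_{s=t}^{2t}a_s=\frac{P^t(x,x)-P^{2t+1}(x,x)}{t+1}\leq \frac{C}{t^{1+\alpha/\beta}}.
\end{align*}

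Splitting the target sum at $B^n(x,r)$ for a radius $r$ to be chosen, Cauchy--Schwarz combined with the volume growth \eqref{property:volume_growth} bounds the near-diagonal contribution by $\sqrt{4|B^n(x,r)|}\cdot\sqrt{16\,a_{2t}}\leq Cr^{\alpha/2}t^{-(1+\alpha/\beta)/2}$. For the tail I use $|P^t(x,y)-P^t(x,y')|\leq P^t(x,y)+P^t(x,y')$ and sum the sub-Gaussian estimate of Lemma \ref{lem:BoundTransitionProbabilities}(2) over dyadic annuli $\{y\in\SG_n:2^kr\leq d(x,y)<2^{k+1}r\}$, giving a tail contribution of order $\exp(-c(r^\beta/t)^{1/(\beta-1)})$, which is super-polynomially small whenever $r=t^{1/\beta+\eta}$ with $\eta>0$. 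Choosing $\eta=\delta+1/(2\alpha)$ makes the near-diagonal bound equal to $Ct^{-1/4+\delta\alpha/2}$, exactly the claim. The main obstacle is precisely the convexity-based estimate on $a_{2t}$: the naive TV bound \eqref{eq:tv-pt} alone gives only $a_{2t}\leq 2/\sqrt{2t}$, which combined with any admissible $r\geq t^{1/\beta}$ produces a near-part of order $r^{\alpha/2}t^{-1/4}\geq t^{\alpha/(2\beta)-1/4}$, unbounded in $t$ because $\alpha/(2\beta)>1/4$; the extra factor $t^{-\alpha/\beta}$ supplied by convexity is what renders the final estimate polynomial in $1/t$.
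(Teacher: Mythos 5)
Your proof is correct, and it keeps the paper's overall skeleton (Cauchy--Schwarz over a ball of radius slightly larger than $t^{1/\beta}$, plus the sub-Gaussian estimate of Lemma \ref{lem:BoundTransitionProbabilities}(2) for the tail), but it replaces the paper's key energy estimate by a genuinely different and in fact sharper one. The paper bounds the Dirichlet energy by summation by parts, $|\mathcal{E}_{\SG_n}(P^t(x,\cdot),P^t(x,\cdot))| = |\sum_y P^t(x,y)\Delta_{\SG_n}P^t(x,y)| \leq 2\sup_y P^t(x,y)\cdot\sum_y|P^{t+1}(x,y)-P^t(x,y)| \leq C t^{-\alpha/\beta-1/2}$, importing the $\ell^1$-smoothness estimate \eqref{eq:tv-pt}. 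You instead identify the energy exactly with the diagonal increment $2\bigl(P^{2t}(x,x)-P^{2t+1}(x,x)\bigr)$ via reversibility and Chapman--Kolmogorov, and bound that increment by $Ct^{-1-\alpha/\beta}$ using positivity of $(I-P)^2P^s$ for the lazy (hence nonnegative, self-adjoint) operator $P$ together with the on-diagonal upper bound; this is self-contained (no appeal to \eqref{eq:tv-pt}) and gains an extra factor $t^{-1/2}$, which you then spend by enlarging the ball radius from the paper's $t^{1/\beta+\delta}$ to $t^{1/\beta+\delta+1/(2\alpha)}$ so as to land on exactly the stated exponent $1/4-\delta\alpha/2$. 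All the computations check out: the Dirichlet-form identity is valid on the $4$-regular graph $\SG_n$, the sequence $a_s=P^s(x,x)-P^{s+1}(x,x)$ is nonnegative and decreasing so that telescoping over $[t,2t]$ gives $a_{2t}\leq P^t(x,x)/(t+1)\leq Ct^{-1-\alpha/\beta}$, the near-diagonal part is $Cr^{\alpha/2}t^{-(1+\alpha/\beta)/2}=Ct^{\delta\alpha/2-1/4}$ with your choice of $r$, and the tail is super-polynomially small since $r^\beta/t\to\infty$. Two minor observations: your sharper energy bound would in fact support any decay exponent below $1/2$ by taking the radius closer to $t^{1/\beta}$, so your argument proves slightly more than the proposition asserts; and the bookkeeping in your closing remark is a little loose (convexity improves the naive $t^{-1/2}$ bound on the increment to $t^{-1-\alpha/\beta}$, a gain of $t^{-1/2-\alpha/\beta}$ rather than $t^{-\alpha/\beta}$), but that remark is purely motivational and nothing in the proof depends on it.
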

\begin{proof}
Fix any $\delta>0$ with $\delta<1/2\alpha$. We bound the sum by considering it for elements inside and outside the ball $B^n(x,t^{1/\beta+\delta})$ separately. We have

\begin{align*}
 &\sum_{y\in B^n(x,t^{1/\beta+\delta})}\sum_{y'\sim y}|P^t(x,y)-P^t(x,y')|\\
 & \leq\Big| B^n(x,t^{1/\beta+\delta})\Big|^{1/2}\Big(\sum_{y\in B(x,t^{1/\beta+\delta})}\sum_{y'\sim y}|P^t(x,y)-P^t(x,y')|^2\Big)^{1/2}\\
 & \leq\Big| B^n(x,t^{1/\beta+\delta})\Big|^{1/2}\left(\mathcal{E}_{\SG_n}(P^t(x,\cdot),P^t(x,\cdot))\right)^{1/2},
\end{align*}
where the first inequality holds by Cauchy-Schwarz and the (unscaled) graph energy form $\mathcal{E}_{\SG_n}$ on $\SG_n$ is defined as $\mathcal{E}_{\SG_n}(f,g) = 1/2 \sum_{x\sim_n y} 1/4 (f(x)-f(y))(g(x)-g(y))$. By equation \eqref{eq:tv-pt} and Lemma \ref{lem:BoundTransitionProbabilities} we get
\begin{align*}
    |\mathcal{E}_{\SG_n}(P^t(x,\cdot),P^t(x,\cdot))|&=\Big|\sum_{y}P^t(x,y)\Delta_{\SG_n} P^t(x,y)\Big|
    \leq \frac{2C}{t^{\alpha/\beta}}\sum_{y}|P^t(x,y)-P^{t+1}(x,y)|
    \leq \frac{2C}{t^{\alpha/\beta+1/2}}.
\end{align*}
Because $\left|B^n(x,r)\right| \leq c r^{\alpha}$ 
\begin{align*}
    \sum_{y\in B^n(x,t^{1/\beta+\delta})}\sum_{y'\sim y}|P^t(x,y)-P^t(x,y')|&\leq \frac{C''}{t^{-\alpha/2\beta+1/4+\alpha/2\beta-\delta\alpha/2}}
    =\frac{C''}{t^{-\delta\alpha/2+1/4}}.
\end{align*}
For the remaining terms we have
\begin{align*}
    \sum_{y\notin B^n(x,t^{1/\beta+\delta})}\sum_{y'\sim y}|P^t(x,y)-P^t(x,y')|
    \leq C'''\left|B^n(x,t)\backslash B^n(x,t^{1/\beta+\delta})\right|\frac{1}{t^{\alpha/\beta}}\exp\left(-c(t^{\delta\beta})^{1/(\beta-1)}\right).
\end{align*}
Since $\left|B^n(x,t)\backslash B^n(x,t^{1/\beta+\delta})\right|$ grows polynomially in $t$, the dominating term is the exponential part in $t$, which goes to $0$ for $t$ large, and this completes the proof.
\end{proof}

\subsection{Convergence of odometers in rotor aggregation}\label{sec:RR-odometer}

Consider $\sigma:\SG\to[0,\infty)$ and $\sigma_n:\SG_n\to \N$ that satisfy equations
\eqref{cond:Starting1},\eqref{cond:Starting2} and \eqref{cond:Starting4}. As a replacement for \eqref{cond:Starting3}, in order to define a suitable convergence of $\sigma_n$ to $\sigma$, suppose there exist integers $\kappa(n)\rightarrow \infty$ with $\kappa(n)\cdot\delta_n\rightarrow 0$ such that
\begin{align}
    \left( S^{n}_{\kappa(n)}\sigma_n\right)^\doubleTriangle (x)\rightarrow \sigma(x), \label{cond:RROdometerConvergence1}
\end{align}
for all $x\notin DC(\sigma)$.
Since we write $\kappa(n)$ for the smoothing steps on $\mathsf{SG}_n$ we drop the superscript and write only $S_{\kappa(n)} = S^{n}_{\kappa(n)}$. Similar to \cite[Section 4]{LP-mult-idla}, we define {\it the odometer function for the rotor aggregation on $\SG_n$ starting from density $\sigma_n$} as the function $u_n:\SG_n\to\N_0$ with
\begin{align}\label{cond:RROdometerConvergence2}
 u_n(x)=\delta_n^{\beta}\cdot\#(\text{particles emitted from }x),
\end{align}
if $\sigma_n(y)$ particles start at each site $y$, and we remind that particles perform rotor walks until each of them has found a previously unvisited site where it stops.
\begin{thm}\label{thm:conv_odometer_rotor}
Let $u_n$ be the odometer function for rotor aggregation on $\mathsf{SG}_n$ with initial density $\sigma_n$ as defined in \eqref{cond:RROdometerConvergence2}. If $\sigma,\sigma_n$ satisfy \eqref{cond:Starting1},\eqref{cond:Starting2}, \eqref{cond:Starting4} and \eqref{cond:RROdometerConvergence1}, then $u_n\rightarrow u$ uniformly as $n\to\infty$, where $u$ is the odometer for the divisible sandpile on $\mathsf{SG}$ starting with $\sigma$.
\end{thm}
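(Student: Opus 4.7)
The plan is to sandwich the rotor odometer $u_n$ between two divisible sandpile odometers whose initial densities both satisfy the hypotheses \eqref{cond:Starting1}–\eqref{cond:Starting4} and converge pointwise $\mu$-a.e.\ to the same limit $\sigma$. Theorem \ref{theo:convergenceSandpileOdometer} then forces the sandwich to collapse and yields the claimed uniform convergence to $u$.

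\textbf{Step 1: a discrete least-action inequality for rotor aggregation.} Following the rotor–random walk comparison of Holroyd–Propp (adapted to $\SG_n$ via the degree $\mathsf{d}=4$), the rotor odometer satisfies a ``least action principle with error'': there exists a function $\xi_n:\SG_n\to\R$ with $\|\xi_n\|_\infty \le C\delta_n^{\beta}$ (coming from rotor discrepancies of size $O(1)$ per vertex, rescaled by $\delta_n^{\beta}$ as in \eqref{cond:RROdometerConvergence2}) such that
\begin{align*}
\sigma_n + \Delta_n u_n \le 1 + \xi_n \quad \text{and} \quad \sigma_n + \Delta_n u_n \ge 1 - \xi_n \ \text{on}\ \DD_n^{\mathrm{rot}},
\end{align*}
where $\DD_n^{\mathrm{rot}}=\{u_n>0\}$. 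The upper bound here plays the role of the divisible-sandpile inequality $\sigma+\Delta u \le 1$.

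\textbf{Step 2: smoothing the error.} Apply the smoothing operator $S_{\kappa(n)}$ from \eqref{cond:RROdometerConvergence1}. Since $\Delta_n$ and $S_{\kappa(n)}$ commute, we get
\begin{align*}
S_{\kappa(n)}\sigma_n + \Delta_n \bigl(S_{\kappa(n)} u_n\bigr) \le 1 + S_{\kappa(n)} \xi_n.
\end{align*}
By Proposition \ref{prop:total_variation_lazy_walk} together with the uniform pointwise bound on $\xi_n$, a summation-by-parts argument shows $\|S_{\kappa(n)}\xi_n\|_\infty \to 0$ as $n\to\infty$, provided $\kappa(n)\to\infty$ with $\kappa(n)\delta_n\to 0$ (which is our hypothesis). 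Hence $S_{\kappa(n)} u_n$ is, up to an $o(1)$ additive error, the odometer of a divisible sandpile with initial density $S_{\kappa(n)}\sigma_n$. The analogous lower bound gives the opposite comparison.

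\textbf{Step 3: sandwich and pass to the limit.} Let $\widetilde{u}_n^{\pm}$ be the divisible sandpile odometers on $\SG_n$ with initial densities $\sigma_n^{\pm}:=S_{\kappa(n)}\sigma_n \pm \epsilon_n$, where $\epsilon_n:=\|S_{\kappa(n)}\xi_n\|_\infty\to 0$. The monotonicity Lemma \ref{lem:monotonicity-if-div-sandpile} (in its discrete counterpart) together with Step 2 gives
\begin{align*}
\widetilde{u}_n^{-}(x) \le S_{\kappa(n)} u_n(x) \le \widetilde{u}_n^{+}(x), \qquad x\in \SG_n.
\end{align*}
By \eqref{cond:RROdometerConvergence1}, both $(\sigma_n^{\pm})^{\doubleTriangle}\to\sigma$ pointwise at all continuity points of $\sigma$, and they continue to satisfy \eqref{cond:Starting1}, \eqref{cond:Starting2}, \eqref{cond:Starting4}. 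Theorem \ref{theo:convergenceSandpileOdometer} therefore yields $(\widetilde{u}_n^{\pm})^{\doubleTriangle}\to u$ uniformly on $\domTriangle$, so $(S_{\kappa(n)} u_n)^{\doubleTriangle}\to u$ uniformly.

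\textbf{Step 4: removing the smoothing.} Finally, we need $u_n^{\doubleTriangle}\to u$ in place of $(S_{\kappa(n)} u_n)^{\doubleTriangle}\to u$. This follows from equicontinuity: the Poincaré-type inequality $|v(x)-v(y)|^2 \le C\En_{\domTriangle}(v)|x-y|^{\beta/\alpha}$ (invoked in the remark after Definition \ref{def:contSandpile}) together with a uniform energy bound on $u_n$ (obtained from $u_n \le \greenOperatorDiscrete^n_\domTriangle(1-\nu_n)$ and boundedness of $u_n$ on the compact set $\domTriangle$) forces $\|S_{\kappa(n)} u_n - u_n\|_\infty \to 0$ since $\kappa(n)\delta_n\to 0$.

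\textbf{Main obstacle.} The delicate step is Step 1, namely the rotor–sandpile comparison on the fractal with the correct rescaling. On $\Z^d$ this is the Holroyd–Propp lemma, whose proof only uses the Abelian property and the $O(1)$-per-vertex discrepancy between rotor and random walks; the same proof transfers to $\SG_n$ verbatim, but bookkeeping of the $\delta_n^{\beta}$ factor (dictated by our normalization $\Delta_n=\delta_n^{-\beta}\Delta_{\SG_n}$ and the rescaled odometer) must be done carefully so that the error indeed becomes $o(1)$ after smoothing on the scale $\kappa(n)$ chosen in \eqref{cond:RROdometerConvergence1}.
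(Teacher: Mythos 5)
Your overall architecture (smooth the rotor odometer, compare it with a divisible-sandpile odometer, then remove the smoothing) matches the paper's, but Step 1 contains an error that undermines the plan, and Step 3 glosses over the genuinely hard direction. The bound $\norm{\xi_n}_\infty\le C\delta_n^{\beta}$ in Step 1 is false. With the paper's normalizations one has $\Delta_n u_n=\mathds{1}_{\RR_n}-\sigma_n+\delta_n^{\beta/2}\,\text{div}\,\rho$ with $|\rho|\le 6$ on directed edges (Lemma \ref{lem:divergence_of_odometer_rotor}); since $\text{div}\,\rho(x)=\frac{1}{4\delta_n^{\beta/2}}\sum_{y\sim x}\rho(x,y)$, the error term equals $\frac14\sum_{y\sim x}\rho(x,y)$ and is only $O(1)$ — the factor $\delta_n^{\beta}$ in the rescaled odometer is exactly cancelled by the factor $\delta_n^{-\beta}$ in $\Delta_n$. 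If your bound were correct, no smoothing would be needed at all. The reason $S_k$ of this error is small is not any pointwise bound but its structure as the divergence of a bounded \emph{antisymmetric} edge function: summation by parts turns $S_k\,\text{div}\,\rho$ into sums of differences $P^k(x,y)-P^k(x,y')$ over adjacent $y\sim y'$, which Proposition \ref{prop:total_variation_lazy_walk} controls, giving $\norm{S_k(\delta_n^{\beta/2}\text{div}\,\rho)}_\infty\le C_0k^{-1/4+\delta\alpha/2}$ (Lemma \ref{lem:Laplacian-of-smoothed-odometer}) — small in $k$, with no help from $\delta_n$.

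Second, the upper half of your sandwich, $S_{\kappa(n)}u_n\le\widetilde{u}_n^{+}$, does not follow from a least-action principle: the inequality $\sigma_n+\Delta_n u_n\ge 1-\xi_n$ holds only on the rotor cluster $\RR_n$ (where $\mathds{1}_{\RR_n}=1$), and after smoothing only on the deflated cluster $\RR_n^{\kappa(n)}$. Outside that set there is no subharmonicity and $S_{\kappa(n)}u_n$ must be controlled directly; the paper does this with the growth estimate near $\partial\RR_n$ (Lemma \ref{lem:growth_odometer_rotor}), which produces the extra additive term $C_1\kappa(n)^{\beta}\delta_n^{\beta}$ in the final two-sided bound, followed by a maximum-principle argument with an auxiliary function of constant Laplacian. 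Your phrase ``the analogous lower bound gives the opposite comparison'' hides exactly this step; the lower half of the sandwich is fine and is essentially Lemma \ref{lem:smoothed_odometer_comparison_to_DS}. Finally, Step 4 does not need equicontinuity: the paper proves $\norm{S_ku_n-u_n}_\infty\le\delta_n^{\beta}\big(\tfrac12(M+6)k+C_0k^{3/4+\delta\alpha/2}\big)$ by telescoping $S_{j+1}-S_j=\tfrac{\delta_n^{\beta}}{2}\Delta_nS_j$ and using the bound on $\Delta_nS_ju_n$; moreover your proposed inequality $u_n\le\greenOperatorDiscrete^n_{\domTriangle}(1-\nu_n)$ is a sandpile identity with no rotor analogue, so the uniform energy bound you rely on is not justified.
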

For a function $f:\mathsf{SG}_n\to\R$ and a directed edge $(x,y)$
we write
$ \nabla f(x,y)=\delta_n^{-\beta/2}(f(y)-f(x))$
and for a function $\theta$ defined on directed edges in $\mathsf{SG}_n$ we write
\begin{align*}
    \text{div}\theta(x)=\frac{1}{4\delta_n^{\beta/2}}\sum_{y\sim x}\theta(x,y).
\end{align*}
With this in mind, the rescaled discrete Laplacian on $\mathsf{SG}_n$ is then given by
\begin{align*}
    \Delta_n f(x)=\text{div}\nabla f(x).
\end{align*}
\begin{lem}\label{lem:divergence_of_odometer_rotor}
Denote by $\theta(x,y)$ the net number of crossings from $x$ to $y$ performed by particles during a sequence of rotor moves, where $(x,y)$ is some directed edge in $\mathsf{SG}_n$, and let $u$ be the odometer function for this sequence of rotor moves. Then
\begin{align*}
    \nabla u(x,y)=\delta_n^{\beta/2}(-4\theta(x,y)+\rho(x,y)),
\end{align*}
for some function $\rho$ on directed edges of $\mathsf{SG}_n$ satisfying $|\rho(x,y)|\leq 6$
for all directed edges.
\end{lem}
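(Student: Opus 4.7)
The statement is essentially a local conservation identity: rotor walks distribute emitted particles nearly evenly among the $4$ neighbors of each vertex, so the discrete gradient of the odometer is determined (up to a bounded rotor defect) by the net edge flow $\theta$. Accordingly, I would let $e(x)$ denote the total number of particles ever emitted from $x$ during the rotor sequence, so by the definition \eqref{cond:RROdometerConvergence2} of the rescaled rotor odometer we have $u(x) = \delta_n^{\beta}\, e(x)$. Write $c(x,y)$ for the total number of times a particle crosses the directed edge $(x,y)$; then by construction $\theta(x,y) = c(x,y) - c(y,x)$ and $\sum_{y\sim x} c(x,y) = e(x)$.

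The key step is a deterministic observation about the rotor mechanism at a single vertex. Since every vertex of $\mathsf{SG}_n$ has degree $\mathsf{d}=4$, and the rotor at $x$ cycles through the four neighbors in the fixed order $\mathsf{cyc}(x)$, writing $e(x) = 4k + s$ with $s\in\{0,1,2,3\}$ means that after $e(x)$ emissions exactly $s$ of the four neighbors have received $k+1$ particles and the remaining $4-s$ have received $k$. Therefore for each neighbor $y$ of $x$,
\begin{equation*}
\left| c(x,y) - \frac{e(x)}{4} \right| \leq \frac{3}{4},
\end{equation*}
since the worst case is $s\in\{1,3\}$, where one value is either $3/4$ or $-3/4$ away from the mean $e(x)/4$.

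Applying this bound at both endpoints of the edge and subtracting gives
\begin{equation*}
\left| \theta(x,y) - \frac{e(x)-e(y)}{4} \right| \leq \frac{3}{4} + \frac{3}{4} = \frac{3}{2},
\end{equation*}
so setting $\rho(x,y) := 4\theta(x,y) + (e(y)-e(x))$ yields a function on directed edges with $|\rho(x,y)|\leq 6$. Multiplying by $\delta_n^{\beta}$ and rearranging,
\begin{equation*}
u(y) - u(x) = \delta_n^{\beta}(e(y)-e(x)) = \delta_n^{\beta}\bigl(-4\theta(x,y) + \rho(x,y)\bigr),
\end{equation*}
and dividing by $\delta_n^{\beta/2}$ recovers $\nabla u(x,y) = \delta_n^{\beta/2}(-4\theta(x,y) + \rho(x,y))$.

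There is no genuine obstacle in this argument; it is entirely combinatorial and local. The only subtlety worth flagging explicitly is getting the constant $6$ sharp: a naive bound $|c(x,y)-e(x)/4|\leq 1$ would only yield $|\rho|\leq 8$, so one must observe that a rotor cycling through exactly four positions can never deposit more than $\lceil e(x)/4\rceil$ at any single neighbor and never fewer than $\lfloor e(x)/4\rfloor$, which gives the improved slack $3/4$ (rather than $1$) per vertex. Everything else is bookkeeping: $\theta$ and $e$ depend only on the final rotor configuration, not on the order of moves, so the identity holds for an arbitrary legal rotor sequence of the type considered in Section \ref{sec:RR-odometer}.
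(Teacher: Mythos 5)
Your proof is correct and follows essentially the same route as the paper's: the paper derives the key bound in the form $\delta_n^{-\beta}u(x)-3\leq 4N(x,y)\leq \delta_n^{-\beta}u(x)+3$ from $|N(x,y)-N(x,z)|\leq 1$, which is exactly your estimate $|c(x,y)-e(x)/4|\leq 3/4$ obtained via the $e(x)=4k+s$ decomposition, and the assembly into the bound $|\rho|\leq 6$ is identical. The only cosmetic difference is that you make the $3/4$ slack explicit case by case, whereas the paper gets it from the ``discrepancy at most one between neighbors'' formulation.
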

\begin{proof}
If $N(x,y)$ is the number of particles routed over the edge $(x,y)$ in direction $y$, we have for any $y,z\sim x$
\begin{align*}
    \theta(x,y) = N(x,y)-N(y,x)\quad \text{and}\quad  |N(x,y)-N(x,z)|\leq 1.
\end{align*}
By definition $u(x)=\delta_n^{\beta}\sum_{y\sim x}N(x,y)$ and we obtain
\begin{align*}
    \delta_n^{-\beta}u(x)-3\leq 4N(x,y) \leq \delta_n^{-\beta}u(x)+3.
\end{align*}
Finally, this yields
\begin{align*}
    |\nabla u(x,y)+4\delta_n^{\beta/2}\theta(x,y)|&=\delta_n^{\beta/2}|\delta_n^{-\beta}u(y)-\delta_n^{-\beta}u(x)+4N(x,y)-4N(y,x)|
    \leq 6\delta_n^{\beta/2}.
\end{align*}
\end{proof}
Let $\theta$ as in Lemma \ref{lem:divergence_of_odometer_rotor} and denote by $\RR_n$ the rotor cluster on $\SG_n$ with initial particles density $\sigma_n$. Clearly, since $\sigma_n(x)$ particles start and $\mathds{1}_{\RR_n}(x)$ particles end up in $x\in \SG_n$, we have 
\begin{align*}
	4\delta_n^{\beta/2}\text{div}\theta=\sigma_n-\mathds{1}_{\RR_n}.
\end{align*}
Using Lemma \ref{lem:divergence_of_odometer_rotor} and taking the divergence of $\nabla u_n$ gives
\begin{align*}
	\Delta_n u_n=\mathds{1}_{\RR_n}-\sigma_n+\delta_n^{\beta/2}\text{div}\rho.
\end{align*}
Having in mind that $0\leq \sigma_n \leq M$ and $|\rho|\leq 6$, we obtain
\begin{align}
	|\Delta_n u_n|\leq M+6 \quad \text{ on } \SG_n. \label{eq:RR-bound-odometer-Laplacian}
\end{align}
The next lemma gives an estimate on the Laplacian of the smoothed odometer $S_k u_n$, and it can be seen as the smoothed counterpart of the sandpile odometer having Laplacian $1-\sigma_n$.
\begin{lem}\label{lem:Laplacian-of-smoothed-odometer}
For the lazy random walk $(Y^n_t)_{t\in\N}$  on $\SG_n$, there is a constant $C_0>0$ such that for every $n\in\N$
for any $0<\delta<1/2\alpha$ it holds:
\begin{align*}
    |\Delta_n S_ku_n(x)-\mathbb{P}_x[Y^n_k\in \RR_n]+S_k\sigma_n(x)|\leq \frac{C_0}{k^{1/4-\delta\alpha/2}}.
\end{align*}
\end{lem}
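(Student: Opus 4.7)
The starting point is the identity that was already recorded just before the statement of the lemma, namely
\begin{align*}
\Delta_n u_n = \mathds{1}_{\RR_n} - \sigma_n + \delta_n^{\beta/2}\,\mathrm{div}\,\rho,
\end{align*}
where $\rho$ is the edge-function arising in Lemma \ref{lem:divergence_of_odometer_rotor} and satisfies $|\rho|\le 6$ and, crucially, the antisymmetry $\rho(x,y)=-\rho(y,x)$ (this follows because both $\nabla u_n$ and $\theta$ are antisymmetric under edge reversal). Since $\Delta_n = 2\delta_n^{-\beta}(S_1-S_0)$ and $S_k S_l = S_{k+l}$ by the Markov property of the lazy walk, $S_k$ commutes with $\Delta_n$. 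Applying $S_k$ to the identity above gives
\begin{align*}
\Delta_n S_k u_n(x) = \mathbb{P}_x[Y^n_k\in\RR_n] - S_k\sigma_n(x) + \delta_n^{\beta/2}\, S_k(\mathrm{div}\,\rho)(x),
\end{align*}
so the task reduces to bounding the last term by $C_0\,k^{-(1/4-\delta\alpha/2)}$.

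Unpacking the definition of divergence, $\delta_n^{\beta/2}S_k(\mathrm{div}\,\rho)(x) = \tfrac{1}{4}\sum_y\sum_{z\sim y} P^k(x,y)\rho(y,z)$. This is exactly where I use antisymmetry: grouping each undirected edge $\{y,z\}$ with its two orientations and using $\rho(z,y)=-\rho(y,z)$ rewrites the sum as a sum over undirected edges,
\begin{align*}
\delta_n^{\beta/2}\,S_k(\mathrm{div}\,\rho)(x) = \frac{1}{4}\sum_{\{y,z\}}\bigl(P^k(x,y)-P^k(x,z)\bigr)\rho(y,z).
\end{align*}
This is the discrete analogue of integration by parts, and it turns what would be a bound of order $\|P^k(x,\cdot)\|_{1}$ (which is not small) into a bound that sees only \emph{differences} of $P^k$ along edges.

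Using $|\rho|\le 6$ and then returning to a sum over directed edges (each undirected edge is counted twice) yields
\begin{align*}
\bigl|\delta_n^{\beta/2}\,S_k(\mathrm{div}\,\rho)(x)\bigr| \le \frac{3}{4}\sum_y\sum_{z\sim y}\bigl|P^k(x,y)-P^k(x,z)\bigr|.
\end{align*}
Finally, Proposition \ref{prop:total_variation_lazy_walk} bounds the right-hand side by $\tfrac{3C}{4}\,k^{-(1/4-\delta\alpha/2)}$ uniformly in $n$ and $x$, and setting $C_0 := 3C/4$ completes the proof. The only delicate point is the antisymmetry-based reorganization; without it the naive estimate loses the decay in $k$, and Proposition \ref{prop:total_variation_lazy_walk} would be useless. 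Note that the estimate is uniform in $n$ because the rescaling factor $\delta_n^{\beta/2}$ cancels the $\delta_n^{-\beta/2}$ hidden in $\mathrm{div}$, and the edge total-variation bound from Proposition \ref{prop:total_variation_lazy_walk} is itself uniform in $n$.
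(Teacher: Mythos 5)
Your proposal is correct and follows essentially the same route as the paper: commute $S_k$ with $\Delta_n$ using $\Delta_n=2\delta_n^{-\beta}(S_1-S_0)$, exploit the antisymmetry of $\rho$ to perform the discrete summation by parts turning $\sum_y\sum_{z\sim y}P^k(x,y)\rho(y,z)$ into a sum of differences $P^k(x,y)-P^k(x,z)$ along edges, and then invoke $|\rho|\le 6$ together with Proposition \ref{prop:total_variation_lazy_walk}. The paper phrases the symmetrization as averaging the two equal expressions (yielding the factor $1/8$) rather than regrouping over undirected edges, but this is the same computation and gives the same constant $C_0=3C/4$.
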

\begin{proof}
Let $\theta$ and $\rho$ as in Lemma \ref{lem:divergence_of_odometer_rotor}. Since $S_k$ and $\Delta_n$ commute:
\begin{align*}
    \Delta_n S_ku_n(x)=\mathbb{P}_x[Y_k\in \RR_n]-S_k\sigma_n(x)+\delta_n^{\beta/2}S_k\text{div}\rho(x).
\end{align*}
Because $\theta$ and $\nabla u$ are antisymmetric, so is $\rho$ by Lemma \ref{lem:divergence_of_odometer_rotor}. For $(Y^n_t)$ on $\SG_n$, we have
\begin{align*}
    \delta_n^{\beta/2}S_k\text{div}\rho(x)&=\frac{1}{4}\sum_{y\in\SG_n}\sum_{z\sim y}\mathbb{P}_x[Y^n_k=y]\rho(y,z)
    =-\frac{1}{4}\sum_{y\in\SG_n}\sum_{z\sim y}\mathbb{P}_x[Y^n_k=z]\rho(y,z)\\
    &=\frac{1}{8}\sum_{y\in\SG_n}\sum_{z\sim y}(\mathbb{P}_x[Y^n_k=y]-\mathbb{P}_x[Y^n_k=z])\rho(y,z).
\end{align*}
Taking the absolute value and using the triangle inequality as well as the fact that $\rho$ is bounded, together with the bound in Proposition \ref{prop:total_variation_lazy_walk}, the claim follows.
\end{proof}
\begin{lem}\label{lem:smoothing_odometer_error_bound}
	For $0<\delta<1/2\alpha$ and the odometer function $u_n$ on $\SG_n$ as in \eqref{cond:RROdometerConvergence2}, we have
	\begin{align*}
	|S_k u_n-u_n|\leq \delta_n^{\beta}\Big(\frac{1}{2}(M+6)k+C_0k^{3/4+\delta\alpha/2}\Big).
	\end{align*}
\end{lem}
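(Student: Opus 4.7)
The plan is to convert the difference $S_k u_n - u_n$ into a telescoping sum of Laplacians of smoothed odometers, and then bound each term using Lemma \ref{lem:Laplacian-of-smoothed-odometer}. Starting from $S_k u_n - u_n = \sum_{j=0}^{k-1}(S_{j+1}-S_j)u_n$ and using the operator identity $\Delta_n = 2\delta_n^{-\beta}(S_1 - S_0)$ already recorded in the paper, together with the commutation $S_j \Delta_n = \Delta_n S_j$, one obtains
\begin{align*}
S_k u_n - u_n \;=\; \tfrac{1}{2}\delta_n^{\beta} \sum_{j=0}^{k-1} \Delta_n S_j u_n,
\end{align*}
so that pointwise $|S_k u_n - u_n| \leq \tfrac{1}{2}\delta_n^{\beta} \sum_{j=0}^{k-1} |\Delta_n S_j u_n|$.

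The next step is to estimate each summand. For $j\geq 1$ I would invoke Lemma \ref{lem:Laplacian-of-smoothed-odometer} and bound
\begin{align*}
|\Delta_n S_j u_n(x)| \;\leq\; \bigl|\Prob_x[Y^n_j \in \RR_n]\bigr| + |S_j \sigma_n(x)| + \frac{C_0}{j^{1/4 - \delta\alpha/2}} \;\leq\; (M+1) + \frac{C_0}{j^{1/4 - \delta\alpha/2}},
\end{align*}
using $0\leq \sigma_n \leq M$ from \eqref{cond:Starting1} and that probabilities are bounded by $1$. The $j=0$ term I would handle separately by the trivial bound $|\Delta_n u_n| \leq M+6$ coming from \eqref{eq:RR-bound-odometer-Laplacian}; this is what forces the slightly enlarged constant $M+6$ in the stated bound.

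Summing contributes two pieces. The constant-per-term part sums to at most $(M+6)k$ after enlarging $M+1$ to $M+6$ to absorb the $j=0$ term. The error part is controlled by comparison with an integral: since the hypothesis $0<\delta<1/(2\alpha)$ ensures $\delta\alpha/2 - 1/4 \in (-1/4,0)$, the sum $\sum_{j=1}^{k-1} j^{\delta\alpha/2 - 1/4}$ is bounded by
\begin{align*}
\int_0^k t^{\delta\alpha/2 - 1/4}\, dt \;=\; \frac{k^{3/4 + \delta\alpha/2}}{3/4 + \delta\alpha/2} \;\leq\; C\, k^{3/4 + \delta\alpha/2},
\end{align*}
and the constant $1/(3/4+\delta\alpha/2)$ can be absorbed into $C_0$ by the paper's convention that absolute constants may change from line to line. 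Multiplying the total by $\tfrac{1}{2}\delta_n^{\beta}$ yields exactly the claimed bound. I do not anticipate any serious obstacle: the telescoping and operator identity are essentially bookkeeping, the elementary integral comparison is the only analytic ingredient beyond Lemma \ref{lem:Laplacian-of-smoothed-odometer}, and that lemma already packages all of the nontrivial heat-kernel content.
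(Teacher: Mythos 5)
Your proposal is correct and follows essentially the same route as the paper: telescope $S_k u_n - u_n$ into $\tfrac{\delta_n^\beta}{2}\sum_{j=0}^{k-1}\Delta_n S_j u_n$, bound the $j\geq 1$ terms by $M+1+C_0 j^{-1/4+\delta\alpha/2}$ via Lemma \ref{lem:Laplacian-of-smoothed-odometer}, handle $j=0$ with $|\Delta_n u_n|\leq M+6$, and sum. You merely make explicit the integral comparison that the paper leaves as "summing over $j$ yields the result," and your remark that the resulting harmless factor $1/(3/4+\delta\alpha/2)$ is absorbed into the constant is consistent with the paper's stated convention on constants.
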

\begin{proof}
	It holds
	\begin{align*}
	    |S_ku_n-u_n|&\leq \sum_{j=0}^{k-1}|S_{j+1}u_n-S_ju_n|=\frac{\delta_n^\beta}{2}\sum_{j=0}^{k-1}|\Delta_n S_j u_n|.
	\end{align*}
	Lemma \ref{lem:Laplacian-of-smoothed-odometer} for $0<\delta<1/2\alpha$ and $j\geq 1$ gives
	\begin{align*}
	    |\Delta_n S_j u_n|\leq M+1+\frac{C_0}{j^{1/4-\delta\alpha/2}},
	\end{align*}
	and for $j=0$ we have $|\Delta_n S_j u_n|=|\Delta_n u_n|\leq M+6$. Summing over $j$ yields the result.
\end{proof}
Now let $\domTriangle = B(0,2^L)\subset \SG$ be the double triangle from Lemma \ref{lem:ball_contains_cluster_sandpile} that contains the final domains of occupied sites for the divisible sandpiles started from $S_{\kappa(n)}\sigma_n$ on $\mathsf{SG}_n$. In the following, when referring to the Green function, we mean the Green function stopped when exiting $\domTriangle^\tridots$. Consider the following obstacle $\gamma_n:\SG_n\to\R$ in the rotor aggregation model, similar to \eqref{eq:obst-div-sand} for the divisible sandpile model
\begin{equation}\label{eq:obst-rotor}
	\gamma_n(x)=-\greenOperatorDiscrete^n (S_{\kappa(n)}\sigma_n(x)-1)
\end{equation}
and let $s_n$ be the least superharmonic majorant of $\gamma_n$. Then the difference $s_n-\gamma_n$ is the odometer function for the divisible sandpile started from initial density $S_{\kappa(n)}\sigma_n$ on $\SG_n$. In what follows we compare the smoothed odometer for rotor aggregation started from $\sigma_n$ with the odometer for the divisible sandpile started from initial configuration $S_{\kappa(n)}\sigma_n$.

\begin{lem}\label{lem:smoothed_odometer_comparison_to_DS}
	Let $\domTriangle=B(0,2^L)\subset \SG$ be a double triangle centered at the origin that contains the divisible sandpile clusters started from initial density $S_{\kappa(n)}\sigma_n$ on $\SG_n$. Then there exists a constant $m_\domTriangle$ depending on $\domTriangle$ (or $L$ resp.) such that for any $n\in\N$
	\begin{align*}
	    S_{\kappa(n)}u_n\geq s_n-\gamma_n-C_0 m_\domTriangle \kappa(n)^{-1/4+\delta\alpha/2} \quad \text{on } \domTriangle^\tridots, \quad \text{for } 0<\delta<1/2\alpha.
	\end{align*}
\end{lem}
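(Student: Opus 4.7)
The plan is to realize $S_{\kappa(n)} u_n$, up to a small non-negative correction of order $\epsilon_n := C_0 \kappa(n)^{-1/4 + \delta\alpha/2}$, as an admissible competitor in the infimum that defines the least superharmonic majorant $s_n$ of $\gamma_n$, and then read off the desired inequality. The starting point is Lemma \ref{lem:Laplacian-of-smoothed-odometer} together with $\mathbb{P}_x[Y^n_{\kappa(n)} \in \RR_n] \leq 1$, which yields
\begin{align*}
\Delta_n S_{\kappa(n)} u_n(x) + S_{\kappa(n)} \sigma_n(x) \leq 1 + \epsilon_n \qquad \text{on } \SG_n,
\end{align*}
so $S_{\kappa(n)} u_n$ violates the sandpile super-solution inequality $\Delta_n v + S_{\kappa(n)}\sigma_n \leq 1$ by only the constant amount $\epsilon_n$, which I will absorb into an explicit Poisson-type correction.

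The correction I propose is
\begin{align*}
\phi_n(x) := \epsilon_n\, \delta_n^{\beta}\, \mathbb{E}_x\!\bigl[\tau^n_{\domTriangle^\tridots\setminus \partial \domTriangle^\tridots}\bigr],
\end{align*}
where the exit time refers to the simple random walk on $\SG_n$. By the standard mean-value identity, $\Delta_{\SG_n} \phi_n \equiv -\epsilon_n\delta_n^\beta$ on the interior of $\domTriangle^\tridots$, hence $\Delta_n \phi_n \equiv -\epsilon_n$ there, while $\phi_n$ vanishes on $\partial \domTriangle^\tridots$. Since $\domTriangle^\tridots$ is contained in a graph-metric ball of radius $O(2^L \delta_n^{-1})$, the expected exit time bound \eqref{property:exit_time} yields $\mathbb{E}_x[\tau^n_{\domTriangle^\tridots\setminus\partial \domTriangle^\tridots}] \leq C\,2^{L\beta} \delta_n^{-\beta}$, so $0 \leq \phi_n \leq m_\domTriangle\,\epsilon_n$ on $\domTriangle^\tridots$ with $m_\domTriangle := C\, 2^{L\beta}$ depending only on $\domTriangle$.

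Now set $v_n := S_{\kappa(n)} u_n + \phi_n \geq 0$. On $(\domTriangle^\tridots)^\circ$ the two Laplacian bounds combine to $\Delta_n v_n + S_{\kappa(n)}\sigma_n \leq (1+\epsilon_n) + (-\epsilon_n) = 1$. Since $\Delta_n \gamma_n = S_{\kappa(n)}\sigma_n - 1$, this is equivalent to $\Delta_n(v_n + \gamma_n) \leq 0$ on $(\domTriangle^\tridots)^\circ$, while $v_n + \gamma_n \geq \gamma_n$ on all of $\SG_n$. Thus $v_n + \gamma_n$ is a superharmonic majorant of $\gamma_n$ on $\domTriangle$, and the infimum definition of $s_n$ yields $s_n \leq v_n + \gamma_n$, i.e.
\begin{align*}
S_{\kappa(n)} u_n \geq s_n - \gamma_n - \phi_n \geq s_n - \gamma_n - m_\domTriangle\, C_0\, \kappa(n)^{-1/4+\delta\alpha/2}  \quad \text{on } \domTriangle^\tridots.
\end{align*}

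The main technical point is the balance of scalings in the construction of $\phi_n$: the factor $\delta_n^\beta$ in its definition is forced if we want $\Delta_n \phi_n$ to be of order $\epsilon_n$, and then the fact that the expected exit time from $\domTriangle^\tridots$ grows like $\delta_n^{-\beta}$ (via the walk dimension $\beta$ in \eqref{property:exit_time}) is exactly what makes the sup-norm of $\phi_n$ collapse to a constant multiple of $\epsilon_n$ independent of $n$. A secondary verification, that $v_n + \gamma_n$ is admissible in the infimum defining $s_n$, is automatic once the interior Laplacian inequality holds and $\phi_n|_{\partial \domTriangle^\tridots} = \gamma_n|_{\partial \domTriangle^\tridots} = 0$; everything else in the argument is purely algebraic.
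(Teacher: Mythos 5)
Your proof is correct and follows essentially the same route as the paper: both arguments combine the Laplacian estimate of Lemma \ref{lem:Laplacian-of-smoothed-odometer} with an $\epsilon_n$-multiple of a non-negative function that has constant Laplacian $-1$ on $\domTriangle^\tridots$ and is bounded by a constant depending only on $\domTriangle$, so that $S_{\kappa(n)}u_n+\gamma_n$ plus this correction becomes an admissible competitor for $s_n$. The only (cosmetic) difference is your choice of that correction: you use the rescaled expected exit time from $\domTriangle^\tridots$, bounded uniformly via \eqref{property:exit_time}, whereas the paper uses $m_\domTriangle-\Phi_\domTriangle$ for the explicit function $\Phi_\domTriangle$ with constant Laplacian constructed in Appendix \ref{sec:appendixB}, which yields the explicit value $m_\domTriangle=5^L$.
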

\begin{proof}
	Let $\Phi_\domTriangle:\domTriangle\rightarrow[0,\infty)$ be the unique function with $\Delta_\domTriangle \Phi_\domTriangle = 3/4$ and $\Phi_{\domTriangle}(0) = 0$. Since $\Phi$ is subharmonic, it attains its maximum in $\partial \domTriangle$.
	One can construct $\Phi_\domTriangle$ as in Appendix \ref{sec:appendixB} by inductively choosing values such that $\Delta_{n} \Phi^\tridots_\domTriangle= 1$ for all $n\in\N$.
	We then set
	\begin{align*}
		m_\domTriangle:= \sup_{x\in \domTriangle} \Phi_\domTriangle(x) = 5^{L},
	\end{align*}
again in view of Appendix \ref{sec:appendixB}.
	On $\domTriangle^\tridots$ we have $\Delta_n \gamma_n=-1+S_{\kappa(n)}\sigma_n$,
	so by Lemma \ref{lem:Laplacian-of-smoothed-odometer}
	\begin{align*}
	    f(x)=S_{\kappa(n)}u_n(x)+\gamma_n(x)+C_0\kappa(n)^{-1/4+\delta\alpha/2}(m_\domTriangle-\Phi_\domTriangle(x)), \quad x\in \domTriangle^\tridots
	\end{align*}
	is superharmonic on $\domTriangle^\tridots$. Since $f\geq\gamma_n$, we have $f \geq s_n$ on $\domTriangle$ and the claim holds on $\domTriangle$.
\end{proof}
\begin{lem}\label{lem:contained_in_ball_rotor}
	For $n\in\N$, let $\RR_n\subset \SG_n$ be the rotor cluster in $\mathsf{SG}_n$ with initial density $\sigma_n:\SG_n\to\N$. Then there exists a double triangle $\domTriangle = B(0,2^L)\subseteq \SG$ such that $\bigcup_{n\geq 0} \RR_n\subseteq\domTriangle$.
\end{lem}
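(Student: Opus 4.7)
The plan is to mirror the strategy of Lemma \ref{lem:ball_contains_cluster_sandpile}, replacing the role of the divisible sandpile outer bound from \cite{Div-Sandpile-SG} by the corresponding outer bound for rotor aggregation on the Sierpi\'nski gasket graph from \cite{limit-shape-rotor-div-sandpile}. The Abelian property of rotor aggregation yields monotonicity of the cluster in the initial density, which allows us to dominate $\RR_n$ by a cluster produced from a single point source.

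First I would pick $l\in\N$ large enough such that $\supp(\sigma_n)\subseteq B(0,2^l)=:B$ for every $n\in\N$, which is possible by condition \eqref{cond:Starting4}, and use $\sigma_n\leq M$ from condition \eqref{cond:Starting1}. By the Abelian property of rotor aggregation, one then obtains $\RR_n\subseteq\mathcal{B}_n$, where $\mathcal{B}_n$ is the rotor cluster on $\SG_n$ with initial density $M\mathds{1}_{B\cap \SG_n}$. A second application of the Abelian property, collapsing all starting positions to the origin, yields $\mathcal{B}_n\subseteq\mathcal{C}_n$, where $\mathcal{C}_n$ is the rotor cluster on $\SG_n$ arising from $m_n:=M\,|B\cap\SG_n|$ particles placed at $0$. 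By the volume growth estimate \eqref{property:volume_growth}, $m_n\leq C\,M\,2^{l\alpha}\delta_n^{-\alpha}$.

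Next I would apply the outer bound for rotor aggregation from a single source on the doubly infinite Sierpi\'nski gasket graph (see \cite{limit-shape-rotor-div-sandpile}): there exists a constant $c>0$ such that the rotor cluster on $\SG_0$ formed by $m$ particles at $0$ is contained in the graph ball of radius $c\,m^{1/\alpha}$ around $0$. Transferring this to $\SG_n=\delta_n\SG_0$ and substituting the bound on $m_n$, the cluster $\mathcal{C}_n$ is contained in the Euclidean ball of radius
\[
\delta_n\cdot c\,m_n^{1/\alpha}\ \leq\ c\,(CM)^{1/\alpha}\,2^l,
\]
which is uniform in $n$. Choosing $L\in\N$ with $2^L\geq c\,(CM)^{1/\alpha}\,2^l$ then gives $\bigcup_{n\geq 0}\RR_n\subseteq \domTriangle=B(0,2^L)$.

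The only nontrivial ingredient is the uniform outer bound for rotor aggregation from a point source on $\SG_n$; the self-similarity of the gasket graph guarantees that the constant $c$ can be chosen independently of $n$, so this step is essentially a direct reference to \cite{limit-shape-rotor-div-sandpile}. Everything else is standard monotonicity from the Abelian property combined with the volume estimate.
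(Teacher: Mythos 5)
Your overall strategy (domination by a point source via the Abelian property, then the outer bound of \cite{limit-shape-rotor-div-sandpile}) is the same as the paper's, and your first and last steps are fine. The gap is in your second invocation of the ``Abelian property'': the claim that collapsing all starting positions to the origin yields $\mathcal{B}_n\subseteq\mathcal{C}_n$. The Abelian property only says that the final cluster is independent of the order in which particles move; it says nothing about changing the particles' initial positions, and relocating particles to the origin does not in general enlarge the cluster. Already on $\Z$, one particle at each of the sites $-k,0,k$ settles immediately and occupies $\{-k,0,k\}$, while three particles at the origin occupy $\{-1,0,1\}$. In your setting the same problem occurs for small $M$: with $M=1$ the cluster $\mathcal{B}_n$ is all of $B\cap\SG_n$, whereas $|B\cap\SG_n|$ particles at the origin need not cover $B\cap\SG_n$ because of the constants in the inner bound.

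The paper closes exactly this gap by first invoking the \emph{inner} bound of \cite[Theorem 1]{limit-shape-rotor-div-sandpile}: it starts $N=\lfloor 5\delta_n^{-\alpha}\mu(B)\rfloor\ \geq |B\cap\SG_n|$ particles at the origin, so that the resulting single-source cluster is guaranteed to contain $B\cap\SG_n$, and only then dominates the cluster from $M\mathds{1}_{B\cap\SG_n}$ by the cluster from $MN$ particles at the origin. This coverage step is what makes the comparison between the spread-out configuration and the point source legitimate. To repair your proof you should (i) replace $m_n=M\,|B\cap\SG_n|$ by $M\lfloor 5\delta_n^{-\alpha}\mu(B)\rfloor$, or any count large enough for the inner bound to guarantee that the point-source cluster covers $B\cap\SG_n$, and (ii) justify the domination by the smash-sum monotonicity \emph{after} coverage has been established, rather than by ``collapsing to the origin''. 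The remainder of your argument (the volume estimate for $m_n$ and the application of the outer bound, uniform in $n$ by self-similarity) then goes through unchanged.
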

\begin{proof}
 By assumption \eqref{cond:Starting4} there is a double triangle $B = B(0,2^l)\subseteq \mathsf{SG}$ containing the supports of  all $\sigma_n$ for all $n$. Let $\mathcal{A}_n$ be the rotor   cluster in $\mathsf{SG}_n$ resulting from initial density $\tau(x)=M\mathds{1}_{B^\tridots}(x)$, where $M$ is the upper bound of $\sigma,\sigma_n$ from equation \eqref{cond:Starting1}. By the Abelian property of the rotor model  $\RR_n\subseteq \mathcal{A}_n$. By the inner bound \cite[Theorem 1]{limit-shape-rotor-div-sandpile}, if we start $\lfloor5\delta_n^{-\alpha}\mu(B)\rfloor \geq |B^\tridots|$  particles at the origin in $\mathsf{SG}_n$, then the resulting cluster contains $B^\tridots$. So once again, in view of the Abelian property, if one starts $M\lfloor5\delta_n^{-\alpha}\mu(B\cap \SG_n)\rfloor$  particles at the origin, the resulting set $\domTriangle_n$ of fully occupied sites contains $\mathcal{A}_n$. Together with the outer bound \cite[Theorem 1]{limit-shape-rotor-div-sandpile}, $\domTriangle_n$ is contained in a double triangle in $\SG$ of volume $6M\mu(B)$. Now choose $L$ large enough to deduce the result. The last factor 6 comes from the fact that the outer bound is slightly bigger than the initial radius (radius + 1). 
\end{proof}
For $k\geq 0$, define
\begin{align*}
    \RR_n^k=\{x\in \RR_n:\  B^n(x,k) \subseteq \RR_n\}.
\end{align*}
 Similar to \cite[Lemma 2.7]{LP-mult-idla}, we  investigate the growth of the odometer function in rotor aggregation near the boundary of the rotor cluster $\RR_n$.
\begin{lem}\label{lem:growth_odometer_rotor}
	Let $k\geq 0$. There exists a constant $c>0$ such that for all $x\in \SG_n \backslash\RR_n^{k}$
	\begin{align*}
	    u_n(x)\leq c(M+6)k^\beta\delta_n^\beta.
	\end{align*}
\end{lem}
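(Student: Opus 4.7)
My plan is to combine the pointwise bound $|\Delta_n u_n|\leq M+6$ from \eqref{eq:RR-bound-odometer-Laplacian} with the observation that $u_n$ vanishes off the cluster $\RR_n$, via optional stopping for a simple random walk on $\SG_n$.

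I would first dispose of the case $x\notin \RR_n$: since any rotor-walk particle reaching an unoccupied site must stop there, a site outside $\RR_n$ has never been visited by any particle, so $u_n(x)=0$. Hence we may assume $x\in \RR_n\setminus \RR_n^k$, in which case there exists $y\in B^n(x,k)$ with $y\notin \RR_n$ by the very definition of $\RR_n^k$; by the same reasoning $u_n$ vanishes identically on $\SG_n\setminus \RR_n$, in particular at $y$.

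Next, let $(X_t)_{t\in\N}$ be the simple random walk on $\SG_n$ started at $X_0=x$, and set $\tau := \tau_{\RR_n}^n$. Because $\RR_n$ is finite (Lemma~\ref{lem:contained_in_ball_rotor}) and the walk on $\SG_n$ is recurrent (spectral dimension $2\alpha/\beta<2$), $\tau$ is almost surely finite and integrable, while $u_n$ is bounded on the finite set $\RR_n$. The process $M_t := u_n(X_t)-u_n(x)-\sum_{s=0}^{t-1}\Delta_{\SG_n}u_n(X_s)$ is a martingale; applying optional stopping at $\tau\wedge T$ and letting $T\to\infty$ via dominated convergence yields
\[
u_n(x) = \E_x[u_n(X_\tau)] - \E_x\!\left[\sum_{s=0}^{\tau-1}\Delta_{\SG_n}u_n(X_s)\right] \leq (M+6)\,\delta_n^\beta\,\E_x[\tau],
\]
where the boundary term vanishes because $X_\tau\notin \RR_n$, and we used $|\Delta_{\SG_n}u_n|\leq (M+6)\delta_n^\beta$, which is just \eqref{eq:RR-bound-odometer-Laplacian} rescaled.

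The remaining ingredient, and the main obstacle of the proof, is the exit-time estimate $\E_x[\tau]\leq c k^\beta$. Since $y\in B^n(x,k)\cap(\SG_n\setminus\RR_n)\subseteq B^n(x,2k)$, the walk from $x$ has, uniformly in $n,k,x,y$, a positive probability of hitting $y$ (and hence exiting $\RR_n$) before first leaving $B^n(x,2k)$; this should follow from the elliptic Harnack inequality \eqref{property:EHI} applied to the harmonic function $z\mapsto \P_z[\tau_y<\tau_{B^n(x,2k)}^n]$ on $B^n(x,2k)\setminus\{y\}$. Combining such a uniform lower bound with $\E_x[\tau_{B^n(x,2k)}^n]\leq c k^\beta$ from \eqref{property:exit_time} and iterating across successive excursions out of $B^n(x,2k)$ via the strong Markov property then gives $\E_x[\tau]\leq c' k^\beta$, completing the argument. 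Here the work is entirely geometric/probabilistic; the sandpile-specific content of the lemma has already been absorbed into the Laplacian bound used above.
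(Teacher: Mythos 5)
Your reduction to an exit-time estimate is clean up to its last step: the observations that $u_n$ vanishes off $\RR_n$, that some $y\in B^n(x,k)$ has $u_n(y)=0$, and the optional-stopping identity
\begin{align*}
u_n(x)=\mathbb{E}_x\Big[\textstyle\sum_{s=0}^{\tau-1}\big(-\Delta_{\mathsf{SG}_n}u_n\big)(X_s)\Big]\leq (M+6)\,\delta_n^\beta\,\mathbb{E}_x[\tau^n_{\RR_n}]
\end{align*}
are all correct. The gap is the final claim $\mathbb{E}_x[\tau^n_{\RR_n}]\leq c\,k^\beta$, which is false in general: the expected time to exit a large finite set scales with the \emph{global} size of that set, not with the distance to the nearest hole. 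Concretely, if $\RR_n$ were a ball $B^n(0,R)$ with $R\gg k$ and $x$ at graph distance $k$ from its boundary, the function $z\mapsto\mathbb{E}_z[\tau^n_{\RR_n}]$ solves $\Delta_{\mathsf{SG}_n}g=-1$ with zero exterior data and near the boundary is of order $k\cdot R^{\beta-1}$ rather than $k^\beta$ (the one-dimensional analogue is $\mathbb{E}_k[\tau_{[0,2R]}]=k(2R-k)$, not $k^2$). Your iteration does not repair this: after an excursion that leaves $B^n(x,2k)$ without hitting $y$, the walk sits at some $z\in\partial B^n(x,2k)$ which may well lie in $\RR_n^{k}$, so there need be no empty site within distance $O(k)$ of $z$; the "next attempt" then has neither a uniformly positive success probability nor an $O(k^\beta)$ expected duration, and the geometric-trials bound collapses. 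Any argument routed through the global exit time from $\RR_n$ will pick up the diameter of $\RR_n$ and cannot yield a bound depending on $k$ alone.

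The paper avoids this by keeping the entire argument local to $B^n(y,2k)$. It forms the barriers $f_\pm=u_n\pm(M+6)\delta_n^\beta\phi_{y,2k}$ with $\phi_{y,2k}$ as in \eqref{eqref:phi-expec}, so that $f_+$ is subharmonic and $f_-$ superharmonic there, and lets $h_\pm$ be the harmonic functions on $B^n(y,2k)$ with boundary data $f_\pm$. Then $f_+\leq h_+$, $h_-\leq f_-$, and $h_+-h_-$ is harmonic with boundary values $2(M+6)\delta_n^\beta\phi_{y,2k}\leq c(M+6)\delta_n^\beta(2k)^\beta$ by \eqref{property:exit_time}; evaluating at $y$, where $f_-(y)=u_n(y)=0$, gives $h_+(y)\leq c'(M+6)\delta_n^\beta k^\beta$, and the elliptic Harnack inequality \eqref{property:EHI} spreads this bound over $B^n(y,k)\ni x$, whence $u_n(x)\leq f_+(x)\leq h_+(x)\leq c''(M+6)\delta_n^\beta k^\beta$. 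Note that your Harnack idea does appear here, but applied to the local harmonic majorant $h_+$ rather than to a hitting probability; the global geometry of $\RR_n$ never enters. To fix your write-up you would need to replace the exit-time step by some such local comparison.
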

\begin{proof}
	Let $x\notin \RR_n^{k}$.
	The odometer  $u_n$ defined in \eqref{cond:RROdometerConvergence2} has Laplacian bounded by $M+6$ due to (\ref{eq:RR-bound-odometer-Laplacian})  and by the definition of $\RR_n^k$ we know that there exists $y\in B^n(x,k)$ with $u_n(y)=0$. 
	Define the functions $f_{\pm}:\SG_n\to\R$  as
	\begin{align*}
	    f_{\pm}(z)=u_n(z)\pm  (M+6) \delta_n^\beta \phi_{y,2k}(z),
	\end{align*}
	where  $\phi_{y,2k}(z)$ is as in \eqref{eqref:phi-expec}. 
	It is easy to check that $\Delta_{\mathsf{SG}_n} \phi_{y,2k}= 1$  and  $\Delta_n \phi_{y,2k} = \delta_n^{-\beta}$ on $B^n(y,2k)$. Therefore $f_+ \geq u_n$ is subharmonic, while $f_- \leq u_n$ is superharmonic. Let $B^n=B^n(y,2 k)$ and $h_\pm$ be functions defined on $B^n$ agreeing with $f_\pm$ on $\partial B^n$ and harmonic on $B^n\backslash \partial B^n$. Then by construction and \eqref{property:exit_time} it holds
	\begin{align*}
	    f_+ \leq h_+ &\leq h_- + 2(M+6) \delta_n^\beta \max_{z\in\partial B^n} \phi_{y,2k}(z) \\
	    &\leq h_- + 2 (M+6) \delta_n^\beta c (2k)^\beta 
	    \leq f_-  + 2 (M+6) \delta_n^\beta c (2k)^\beta,
	\end{align*}
	which implies, for  $c' = c 2^{\beta+1}$
	$$ h_+(y) \leq f_-(y) + c' (M+6) \delta_n^\beta k^\beta = c' (M+6) \delta_n^\beta k^\beta.$$
	Equation \eqref{property:EHI}  implies the existence of $c''>0$ with
	$ h_+(z) \leq c'' h_+(y)$ for all $z\in B^n(y, k)$:
	$$ f(z) \leq f_+(z) \leq h_+(z) \leq c'' h_+(y) \leq c'' c' (M+6) \delta_n^\beta k^\beta.$$
\end{proof}
\begin{proof}[Proof of Theorem \ref{thm:conv_odometer_rotor}]
	By Lemma \ref{lem:contained_in_ball_rotor} there exists a double triangle $\domTriangle\subset\SG$
	 containing the cluster of occupied sites for the rotor aggregation started from $\sigma_n$ and the divisible sandpile cluster started from initial density $S_{\kappa(n)}\sigma_n$ for all $n$. We  enlarge $\domTriangle$ to contain the support of $S_{\kappa(n)}u_n$ for all $n$, and we define the function
	\begin{align*}
	    \psi(x)=S_{\kappa(n)}u_n(x)-s_n(x)+\gamma_n(x)+C_0\kappa(n)^{-1/4+\delta\alpha/2}\xi_n(x),
	\end{align*}
	which in view of Lemma \ref{lem:Laplacian-of-smoothed-odometer} is subharmonic for $0<\delta<1/2\alpha$ on the set $\RR_n^{\kappa(n)}$. Above $\xi_n$ is the function with constant Laplacian $1$ on $\domTriangle$; see Appendix \ref{sec:appendixB} for details on how to calculate such a function. For  $y\notin \RR_n^{\kappa(n)}$, by Lemma \ref{lem:growth_odometer_rotor} we have $u_n(y)\leq c(M+6)\kappa(n)^\beta\delta_n^\beta$, which together with
	 Lemma \ref{lem:smoothing_odometer_error_bound} gives
	\begin{align*}
	    S_{\kappa(n)}u_n(y)\leq \delta_n^\beta\left(c(M+6)\kappa(n)^\beta+(M+6)\kappa(n)/2+C_0\kappa(n)^{3/4+\delta\alpha/2}\right).
	\end{align*}
	The RHS above is at most $C_1\kappa(n)^\beta\delta_n^\beta$ for some $C_1>0$. Since $s_n\geq\gamma_n$, for all $y\in\domTriangle\backslash \RR_n^{\kappa(n)}$ 
	\begin{align*}
	    \psi(y)\leq C_1\kappa(n)^\beta\delta_n^\beta+C_0\kappa(n)^{-1/4+\delta\alpha/2}r,
	\end{align*}
	where $r$ is a uniform bound for all $\xi_n$ on $\domTriangle$, $n\in\N$. By the maximum principle, this bound holds on $\domTriangle$. Using Lemma \ref{lem:smoothed_odometer_comparison_to_DS} we finally get
	\begin{align*}
	    -C_0r\kappa(n)^{-1/4+\delta\alpha/2}\leq S_{\kappa(n)}u_n-s_n+\gamma_n
	    \leq C_1\kappa(n)^\beta\delta_n^\beta+C_0r\kappa(n)^{-1/4+\delta\alpha/2}
	\end{align*}
	on  $\domTriangle\subset\SG$. By Corollary \ref{cor:obstacleConvergence} and Lemma \ref{lem:convergenceSuperharmonicMajorant}, $\gamma_n$ and $s_n$ converge uniformly to $\gamma$ and $s$ on $\domTriangle$ as $n\to\infty$. Since $\kappa(n)\rightarrow\infty$ and $\kappa(n)\delta_n\rightarrow 0$ as $n\to\infty$, we can conclude that $S_{\kappa(n)}u_n$ converges uniformly to $s-\gamma$ on $\domTriangle$. Because both $S_{\kappa(n)}u_n$ and the odometer function for the divisible sandpile vanish outside $\domTriangle$, this implies that $S_{\kappa(n)}u_n$ has the same limit as the odometer functions of the divisible sandpile on $\mathsf{SG}_n$. Lemma \ref{lem:smoothing_odometer_error_bound} implies then that the odometer functions $u_n$ for rotor aggregation as defined in \eqref{cond:RROdometerConvergence2} converge uniformly to the same odometer $u$ of the divisible sandpile on $\SG$ with initial mass  $\sigma$.
\end{proof}

Once we have shown the uniform convergence of the odometer functions for rotor aggregation on $\SG_n$ to the odometer function $u$ of the divisible sandpile on $ \SG$, it remains to show that the same holds for the corresponding domains of occupied sites by particles doing rotor walks.

\subsection{Convergence of domains in rotor aggregation}\label{sec:RR-Domains}

In addition to the assumptions from Section \ref{sec:RR-odometer}, we require for $\sigma:\SG\to\R$ that for all $x\in \mathsf{SG}$ either $\sigma(x)\geq1$ or $\sigma(x)=0$ and
\begin{align}
    \{\sigma\geq1\}=\overline{\{\sigma\geq 1\}^\circ}.\label{cond:RRconditionOdometer1}
\end{align}
Moreover, we assume that for any $\varepsilon>0$ there exists $N(\varepsilon)$ such that
\begin{align}
     \text{if }x\in\{\sigma\geq 1\}_\varepsilon,\text{ then }\sigma_n(x)\geq1\text{ for all }n\geq N(\varepsilon),\label{cond:RRconditionOdometer2}
\end{align}
and
\begin{align}
    \text{if } x\notin\{\sigma\geq1\}^\varepsilon,\text{ then }\sigma_n(x)=0\text{ for all }n\geq N(\varepsilon).\label{cond:RRconditionOdometer3}
\end{align}
\begin{thm}\label{thm:conv_domain_rotor}
	Let $\sigma$ and $\sigma_n$ fulfill \eqref{cond:RROdometerConvergence1} and \eqref{cond:RRconditionOdometer1}-\eqref{cond:RRconditionOdometer3} as well as \eqref{cond:Starting1}, \eqref{cond:Starting2} and \eqref{cond:Starting4}. For $n\geq1$, let $\RR_n$ be the cluster of occupied sites by rotor particles started from initial particle configuration $\sigma_n:\mathsf{SG}_n\to\N$. For any $\varepsilon>0$ and $n$ large enough we have
	\begin{align*}
	   \widetilde{D}_\varepsilon\cap\SG_n\subseteq \RR_n\subseteq  \widetilde{D}^\varepsilon,
	\end{align*}
	where $D$ is the noncoincidence set on $\mathsf{SG}$ started from $\sigma:\SG\to\R$
	and $ \widetilde{D}=D\cup\{\sigma\geq 1\}^{\circ}.$
\end{thm}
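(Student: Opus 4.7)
The plan is to mirror the proof of Theorem \ref{theo:convergenceSandpileDomains}, using the uniform rotor odometer convergence from Theorem \ref{thm:conv_odometer_rotor} in place of the sandpile convergence of Lemma \ref{theo:convergenceSandpileOdometer}, and replacing Lemma \ref{lem:sandpileConvergenceDomains:odometerbound} by a rotor analogue built from the smoothed odometer.

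For the inner bound $\widetilde{D}_\varepsilon \cap \SG_n \subseteq \RR_n$ I will proceed exactly as in the sandpile case. By continuity of $u = s - \gamma$ and compactness, $\widetilde{D}_\varepsilon \subseteq D_\eta \cup \{\sigma \geq 1\}_\eta$ for some $\eta > 0$ with $u \geq m_\eta > 0$ on $\overline{D_\eta}$. Theorem \ref{thm:conv_odometer_rotor} then yields $u_n > m_\eta/2$ on $D_\eta \cap \SG_n$ for $n$ large; since $u_n(x) > 0$ means that a rotor particle was emitted from $x$, and in rotor aggregation a particle can leave $x$ only after $x$ has been occupied by a previously settled particle (the first visit to an empty site always settles), this forces $x \in \RR_n$. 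For $x \in \{\sigma \geq 1\}_\eta \cap \SG_n$, assumption \eqref{cond:RRconditionOdometer2} gives $\sigma_n(x) \geq 1$, and when the first such particle is released either $x$ is already in the cluster or the particle settles there immediately, so in either case $x \in \RR_n$.

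For the outer bound $\RR_n \subseteq \widetilde{D}^\varepsilon$, fix $x \notin \widetilde{D}^\varepsilon$ and set $B = B(x, \varepsilon/2) \subset \SG$. Assumption \eqref{cond:RRconditionOdometer3} gives $\sigma_n \equiv 0$ on $B^\tridots$ for $n$ large, while $u \equiv 0$ on $B$ combined with Theorem \ref{thm:conv_odometer_rotor} gives $u_n < \delta_0$ on $B^\tridots$ for any chosen $\delta_0 > 0$. Assuming $x \in \RR_n$, I will exhibit a nearby $y$ with $u_n(y) \geq c'\varepsilon^\beta$, yielding a contradiction. The rotor odometer itself does not satisfy a clean subharmonicity-type bound because the error term $\delta_n^{\beta/2}\text{div}\,\rho$ in the identity for $\Delta_n u_n$ is only bounded by a constant, so I instead work with the smoothed odometer $U_n := S_{\kappa(n)} u_n$. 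Choose $k_n = \kappa(n)^{1/\beta + \delta_1}$ for some small $\delta_1 > 0$, so that by the heat-kernel bound in Lemma \ref{lem:BoundTransitionProbabilities} a lazy walk of length $\kappa(n)$ starting in $\RR_n^{k_n}$ remains in $\RR_n$ with probability $1 - o(1)$. Then Lemma \ref{lem:Laplacian-of-smoothed-odometer} gives $\Delta_n U_n \geq 1 - C\kappa(n)^{-1/4+\delta\alpha/2}$ on $B^\tridots \cap \RR_n^{k_n}$, and the function
\begin{align*}
w(y) := U_n(y) - \bigl(1 - C\kappa(n)^{-1/4+\delta\alpha/2}\bigr)\,\delta_n^\beta\,\phi_{x,\varepsilon/(2\delta_n)}(y),
\end{align*}
with $\phi_{x,r}$ as in \eqref{eqref:phi-expec}, is subharmonic on $B^\tridots \cap \RR_n^{k_n}$. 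By the maximum principle, $w$ attains its maximum either on $\partial B^\tridots$ or on $B^\tridots \cap \partial \RR_n^{k_n}$; Lemma \ref{lem:growth_odometer_rotor} bounds $u_n \leq c(M+6)k_n^\beta \delta_n^\beta = o(1)$ on the latter set, and Lemma \ref{lem:smoothing_odometer_error_bound} transfers the same smallness to $U_n$, so the maximum must be attained on $\partial B^\tridots$. The exit-time estimate \eqref{property:exit_time} then gives some $y \in \partial B^\tridots$ with $U_n(y) \geq (1-o(1))\,c(\varepsilon/2)^\beta$, and a final application of Lemma \ref{lem:smoothing_odometer_error_bound} produces $u_n(y) \geq c'\varepsilon^\beta/2 > \delta_0$, contradicting $u_n < \delta_0$ on $B^\tridots$.

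The main obstacle is the joint balancing of scales: $\kappa(n)$ must diverge fast enough for the smoothed-Laplacian error $\kappa(n)^{-1/4+\delta\alpha/2}$ to vanish, yet the boundary-layer thickness $k_n$ and the smoothing error in Lemma \ref{lem:smoothing_odometer_error_bound} must both be $o(\varepsilon^\beta)$ on the macroscopic scale, which constrains $\kappa(n)\delta_n \to 0$. The regime $\kappa(n)\to\infty$ with $\kappa(n)\delta_n \to 0$ already imposed by hypothesis \eqref{cond:RROdometerConvergence1} (and used in Theorem \ref{thm:conv_odometer_rotor}) is calibrated precisely for this, so the same choice suffices here and no new parameter tuning is needed.
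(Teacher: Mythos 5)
Your inner bound is correct and matches the paper's argument. The outer bound, however, has a genuine gap at its core. Your plan is: assume $x\in\RR_n$ with $x\notin\widetilde{D}^\varepsilon$, build the subharmonic function $w = U_n - (1-o(1))\delta_n^\beta\phi_{x,\varepsilon/(2\delta_n)}$ on $B^\tridots\cap\RR_n^{k_n}$, observe that $w$ is $o(\varepsilon^\beta)$ on $B^\tridots\cap\partial\RR_n^{k_n}$, and conclude that the maximum must therefore sit on $\partial B^\tridots$, yielding a point $y$ with $U_n(y)\gtrsim\varepsilon^\beta$. That last inference is a non sequitur: the maximum principle says the maximum of $w$ over the closure is attained \emph{somewhere} on the boundary, and knowing $w$ is small on one boundary piece does not push the maximum onto the other piece — it is perfectly consistent that $\max w$ is attained on $\partial\RR_n^{k_n}$ with value $O(k_n^\beta\delta_n^\beta)$, in which case you learn nothing. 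To rule this out you need an interior point where $w$ (equivalently $U_n$, since $\phi_{x,r}(x)=0$) exceeds the boundary-layer bound $c(M+6)k_n^\beta\delta_n^\beta$ from Lemma \ref{lem:growth_odometer_rotor}. The hypothesis $x\in\RR_n$ does not supply this: an occupied site need not have emitted any particle at all ($u_n(x)$ can be $0$ or a single unit $\sim\delta_n^\beta$ at the tip of a thin tentacle), so $U_n(x)=S_{\kappa(n)}u_n(x)$ can be far too small to seed the argument.

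This is exactly the role of the paper's Lemmas \ref{lem:points_in_conf_around_ball_rotor} and \ref{lem:lower_bound_odometer_rotor}, which your proposal omits and which cannot be recovered from maximum-principle considerations. They are proved by a combinatorial particle-counting argument: since $\sigma_n\equiv 0$ near $x$, every particle settling in $B^n(x,r-1)$ must have entered through $\partial B^n(x,r)$, which forces $\mathsf{N}_r(x)\geq(1+1/m)\mathsf{N}_{r-1}(x)$ where $m$ bounds the local odometer; iterating and comparing the resulting exponential growth with the polynomial volume bound \eqref{property:volume_growth} shows that some $y$ within distance $\varepsilon/(4\delta_n)$ of $x$ has $u_n(y)\gtrsim\varepsilon\delta_n^{\beta-1}/\log(1/\delta_n)$. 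This quantity dominates $k^\beta\delta_n^\beta$ for fixed $k$, which is what lets the paper's Lemma \ref{lem:point_close_bigger_rotor} (their analogue of your maximum-principle step, run with a fixed large $k$ rather than $\kappa(n)$) take over and produce the contradiction with $S_k u_n<c\varepsilon^\beta$ on the larger ball. Without this anti-tentacle input, the outer estimate for rotor aggregation does not close; your concluding remark that the only obstacle is the balancing of the scales $\kappa(n)$, $k_n$, $\delta_n$ misidentifies where the real difficulty lies.
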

For the rest of this section, we fix again a ball $\domTriangle=B(0,2^l)\subset\SG$ of radius $2^l$ big enough, which contains all rotor clusters $\RR_n\subset\SG_n$ started with particle configurations $\sigma_n:\SG_n\to\N$, for all $n$, as well as all the divisible sandpile clusters $\DD_n\subset\SG_n$ started from initial density $S_{\kappa(n)}\sigma_n$ and its limit shape. Then on $\domTriangle\subset\SG$ we can write
\begin{align*}
    \tilde{D} =\{s>\gamma\}\cup\{\sigma\geq 1\}^\circ,
\end{align*}
where $\gamma:\SG\to\R$ is the obstacle associated with $\sigma$ on $\domTriangle$ as in Definition \ref{def:contSandpile} and $s$ its superharmonic majorant. We also assume that $\domTriangle$ is chosen large enough such that it contains any of the inflations of sets used in the upcoming proofs.

\begin{lem}\label{lem:points_in_conf_around_ball_rotor} 
		Take $x\notin  \widetilde{D}^{\varepsilon/2}$, and fix $\varepsilon>0$ and $n\geq N(\varepsilon/4)$. Denote by $\mathsf{N}_r(x)$ the number of sites in $\RR_n$ which are contained in the ball $B^n(x,r)$ of radius $r$ around $x$ in $\SG_n$ for $1\leq r \leq \varepsilon/(4\delta_n)$. If $m\geq \delta_n^{-\beta}u_n(x)$ for every $x\in B^n(x,r)$, then it holds:
		\begin{align*}
			\mathsf{N}_r(x)\geq (1+1/m)\mathsf{N}_{r-1}(x).
		\end{align*}
	\end{lem}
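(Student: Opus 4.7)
The plan is to run a mass-balance argument on the inner ball $A := B^n(x,r-1)$. First, the geometric hypotheses have to be converted into a vanishing-source condition: since $x\notin \widetilde{D}^{\varepsilon/2}$ and $r\delta_n\leq \varepsilon/4$, every $y\in B^n(x,r)$ satisfies $y\notin \{\sigma\geq 1\}^{\varepsilon/4}$, so \eqref{cond:RRconditionOdometer3} together with $n\geq N(\varepsilon/4)$ forces $\sigma_n(y)=0$ throughout $B^n(x,r)$. Thus no aggregating particle is born inside $B^n(x,r)$, and the cluster can only grow there by intrusion from outside.

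For each directed edge $(y,z)$ let $N(y,z)$ denote the number of rotor-steps performed along $(y,z)$, so that $\delta_n^{-\beta}u_n(y)=\sum_{z\sim y}N(y,z)$. Since no particle starts in $A$ but $\mathsf{N}_{r-1}(x)$ particles come to rest there, the net flux into $A$ equals exactly $\mathsf{N}_{r-1}(x)$. Consequently the gross entry count satisfies
\begin{align*}
\sum_{z\in \partial^{\mathrm{out}} A}\ \sum_{\substack{y\sim z\\ y\in A}} N(z,y)\ \geq\ \mathsf{N}_{r-1}(x),
\end{align*}
where $\partial^{\mathrm{out}} A\subseteq B^n(x,r)\setminus B^n(x,r-1)$ is the outer vertex boundary. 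Let $P$ be the subset of those $z\in \partial^{\mathrm{out}} A$ for which the inner sum is strictly positive.

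The step I expect to be the main obstacle is showing $P\subseteq \RR_n$: any rotor-step emanating from $z\in P$ is performed by a particle that previously walked to $z$ from a neighbor, since $\sigma_n(z)=0$ precludes $z$ being a starting site. By the rotor-aggregation rule, the first particle to visit an unoccupied site stops there, so an emission from $z$ can occur only after $z$ is already occupied; hence $z\in\RR_n$. This identification of $P$ as an occupied set (rather than merely a visited set) is the crucial point, and it uses the vanishing of $\sigma_n$ established in the first paragraph.

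Once $P\subseteq \RR_n$ is in hand, the hypothesis $\sum_{y\sim z} N(z,y)=\delta_n^{-\beta}u_n(z)\leq m$ for every $z\in B^n(x,r)$ bounds each summand in the previous display by $m$, giving $|P|\geq \mathsf{N}_{r-1}(x)/m$. Since $P\subseteq \RR_n\cap (B^n(x,r)\setminus B^n(x,r-1))$, we conclude
\begin{align*}
\mathsf{N}_r(x)-\mathsf{N}_{r-1}(x)\ \geq\ |P|\ \geq\ \frac{\mathsf{N}_{r-1}(x)}{m},
\end{align*}
which rearranges to the desired inequality $\mathsf{N}_r(x)\geq(1+1/m)\mathsf{N}_{r-1}(x)$.
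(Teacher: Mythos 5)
Your proof is correct and follows essentially the same route as the paper's: reduce to $\sigma_n\equiv 0$ on $B^n(x,r)$ via \eqref{cond:RRconditionOdometer3}, note that at least $\mathsf{N}_{r-1}(x)$ particles must enter $B^n(x,r-1)$ from outside, and bound the number of emitting sites by $\mathsf{N}_r(x)-\mathsf{N}_{r-1}(x)$ and each site's emission by $m$. The only difference is cosmetic: you spell out, via the edge-crossing counts $N(y,z)$ and the set $P$, why every emitting site must already be occupied (hence lies in $\RR_n$), a point the paper's proof uses implicitly when it counts the nonzero terms of $\sum_{y\in\partial B^n(x,r)}u_n(y)$.
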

	\begin{proof}
		It is first easy to deduce $\{\sigma\geq 1\}^{\varepsilon/4}\subseteq  \widetilde{D}^{\varepsilon/4}$ from  $\{\sigma\geq1\}=\overline{\{\sigma\geq1\}^\circ}$. Also $B^n(x,r)\cap  \widetilde{D}^{\varepsilon/4}=\emptyset$ which results in $\sigma_n(y)=0$ for all $y\in B^n(x,r)$. By the definition of $\mathsf{N}_r(x)$, at least $\mathsf{N}_{r-1}(x)$ particles have to enter $B^n(x,r-1)$, which implies that for every $n\in\N$:
		\begin{align*}
			\sum_{y\in\partial B^n(x,r)}u_n(y)\geq \delta_n^\beta\mathsf{N}_{r-1}(x).
		\end{align*}
		There are at most $\mathsf{N}_r(x)-\mathsf{N}_{r-1}(x)$ nonzero terms in the LHS of the sum above, each of them being not bigger than $\delta_n^\beta m$, thus
		\begin{align*}
			m(\mathsf{N}_r(x)-\mathsf{N}_{r-1}(x))\geq \mathsf{N}_{r-1}(x).
		\end{align*}
\end{proof}
\begin{lem}\label{lem:lower_bound_odometer_rotor}
		Let $\varepsilon>0$, then for $n$  large enough and any $x\in \RR_n$ with $x\notin \widetilde{D}^{\varepsilon/2}$ there exists $y\in B^n(x,\varepsilon/(4\delta_n))\cap \RR_n$ such that
		\begin{align*}
			u_n(y)\geq\frac{\varepsilon\delta_n^{\beta-1}}{16\log(C(\frac{\varepsilon}{4\delta_n})^\alpha)}.
		\end{align*}
	\end{lem}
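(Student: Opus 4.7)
The plan is to argue by contradiction using the discrete iteration provided by Lemma \ref{lem:points_in_conf_around_ball_rotor} together with the volume growth bound \eqref{property:volume_growth}. Set $R := \lfloor \varepsilon/(4\delta_n) \rfloor$ and suppose, aiming for a contradiction, that
\[
u_n(y) < \frac{\varepsilon\,\delta_n^{\beta-1}}{16\log(C(\tfrac{\varepsilon}{4\delta_n})^\alpha)} \quad \text{for every } y\in B^n(x,R).
\]
Defining $m := \frac{\varepsilon}{16\,\delta_n\,\log(C(\tfrac{\varepsilon}{4\delta_n})^\alpha)}$, this assumption is exactly $\delta_n^{-\beta}u_n(y) < m$ for all $y$ in the ball, so the hypothesis of Lemma \ref{lem:points_in_conf_around_ball_rotor} is satisfied simultaneously for every radius $1\leq r\leq R$.

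The heart of the argument is iteration. Since $x\in\RR_n$, we have $\mathsf{N}_0(x)=1$, and applying Lemma \ref{lem:points_in_conf_around_ball_rotor} successively for $r=1,2,\dots,R$ yields the exponential lower bound
\[
\mathsf{N}_R(x) \geq \Big(1+\tfrac{1}{m}\Big)^{R}.
\]
On the other hand, $\mathsf{N}_R(x)\leq |B^n(x,R)|\leq C R^{\alpha}$ by \eqref{property:volume_growth}. Combining these and taking logarithms gives $R\log(1+1/m) \leq \log(C R^{\alpha})$. The plan is then to use the elementary estimate $\log(1+1/m)\geq 1/(2m)$ (valid for $m\geq 1$; the case $m<1$ is an easy separate contradiction since then $\log(1+1/m)\geq \log 2$ and $R$ grows without bound in $n$) to deduce
\[
\frac{R}{2m} \leq \log\bigl(C R^{\alpha}\bigr),
\]
which, after substituting $R=\lfloor \varepsilon/(4\delta_n)\rfloor$ and using $R\geq \varepsilon/(8\delta_n)$ for $n$ large, forces $m \geq \frac{\varepsilon}{16\,\delta_n\,\log(C(\varepsilon/(4\delta_n))^\alpha)}$. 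This directly contradicts our choice of $m$, so some $y\in B^n(x,R)$ must violate the assumed upper bound, proving the claim. One then notes that such a $y$ automatically lies in $\RR_n$: if $u_n(y)>0$ then mass was emitted from $y$, so $y$ is occupied (alternatively, since $u_n$ attains its maximum over the ball at an interior site of $\RR_n$ by the support of the odometer).

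The step I expect to require the most care is verifying that Lemma \ref{lem:points_in_conf_around_ball_rotor} is genuinely applicable at every scale $r\leq R$. Concretely, one must check that $x\notin \widetilde{D}^{\varepsilon/2}$ implies $B^n(x,R)\cap\widetilde{D}^{\varepsilon/4}=\emptyset$ (so that $\sigma_n\equiv 0$ there for $n\geq N(\varepsilon/4)$, which is what drives the recursive flux argument in the preceding lemma), and that the single constant $m$ chosen above simultaneously dominates $\delta_n^{-\beta}u_n$ on the largest ball, hence on every sub-ball. Both points are straightforward once $R\leq \varepsilon/(4\delta_n)$ is imposed, but they must be stated explicitly for the iteration to telescope cleanly.
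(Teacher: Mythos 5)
Your overall strategy is exactly the paper's: apply Lemma \ref{lem:points_in_conf_around_ball_rotor} iteratively over $r=1,\dots,\lfloor\varepsilon/(4\delta_n)\rfloor$, compare the resulting exponential lower bound on $\mathsf{N}_R(x)$ with the polynomial upper bound $|B^n(x,R)|\leq CR^\alpha$ from \eqref{property:volume_growth}, and extract the lower bound on the odometer from the resulting inequality $R\log(1+1/m)\leq\log(CR^\alpha)$. Your side remarks (checking $B^n(x,R)\cap\widetilde D^{\varepsilon/4}=\emptyset$, treating $m<1$ separately, and noting $u_n(y)>0\Rightarrow y\in\RR_n$) are all correct and in fact slightly more careful than the paper.

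However, the contradiction as you have set it up does not close. You define $m$ to be exactly the target threshold $\tfrac{\varepsilon}{16\,\delta_n\log(C(\varepsilon/(4\delta_n))^\alpha)}$, and your chain of inequalities then "forces $m\geq \tfrac{\varepsilon}{16\,\delta_n\log(C(\varepsilon/(4\delta_n))^\alpha)}$" --- but that is just $m\geq m$, which is consistent with your hypothesis, not a contradiction. The strict inequality in your assumption cannot be propagated through Lemma \ref{lem:points_in_conf_around_ball_rotor}, whose conclusion is only a non-strict inequality. The fix is the one the paper uses: take $m:=\delta_n^{-\beta}\max_{z\in B^n(x,R)}u_n(z)$ (which under your contradiction hypothesis is \emph{strictly} below the threshold). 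This $m$ still dominates $\delta_n^{-\beta}u_n$ on the whole ball, so the same iteration and the same chain of estimates yield $m\geq \tfrac{\varepsilon}{16\,\delta_n\log(C(\varepsilon/(4\delta_n))^\alpha)}$, which now genuinely contradicts the assumed strict bound --- or, dispensing with contradiction altogether, directly gives $\max_{z\in B^n(x,R)}u_n(z)=\delta_n^\beta m\geq \tfrac{\varepsilon\delta_n^{\beta-1}}{16\log(C(\varepsilon/(4\delta_n))^\alpha)}$, which is the claim. With that one substitution your argument is complete and coincides with the paper's.
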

	\begin{proof}
Let $n\geq N(\varepsilon/4)$ be large enough and $B^n=  B^n(x,\varepsilon/(4\delta_n))$ giving $\sigma_n(z) = 0$ for all $z\in B^n$. Set $m = \delta_n^{-\beta}\max_{z\in B^n} u_n(z)$ and notice that $\mathsf{N}_0(z)=\mathds{1}_{\RR_n}(z)$ for all $z\in \SG_n$. Due to Lemma \ref{lem:points_in_conf_around_ball_rotor}, we can bound
		\begin{align*}
			\mathsf{N}_{\varepsilon/(4\delta_n)}(x)\geq (1+1/m)^{\lfloor\varepsilon/4\delta_n\rfloor}\mathsf{N}_0(x)\geq\exp\Big(\frac{\varepsilon}{16m\delta_n}\Big).
		\end{align*}
		Finally by (\ref{property:volume_growth}) we have $\mathsf{N}_{\varepsilon/(4\delta_n)}(x)\leq \big|B^n \big|\leq C(\frac{\varepsilon}{4\delta_n})^\alpha$, 
		which yields 
		$\frac{\varepsilon}{16m\delta_n}\leq\log\Big(C\Big(\frac{\varepsilon}{4\delta_n}\Big)^\alpha\Big)$.
\end{proof}	
The next result tells us that, whenever $x\in \RR_n$ but $x\notin \tilde{D}^\varepsilon$ with large $S_k u_n(x)$ there has to be a $y$ close to $x$ with even larger $S_k u_n (y)$.
	\begin{lem}\label{lem:point_close_bigger_rotor}
		Let $C_0$ be the constant from Lemma \ref{lem:Laplacian-of-smoothed-odometer} and  $0<\delta<1/2\alpha$ and  let $\varepsilon>0$ and $k$ large enough such that $C_0k^{-1/4+\delta\alpha/2}$<1/2. There exist $c, C_1>0$ such that for large $n$ and any $x\in \RR_n \backslash \widetilde{D}^{3\varepsilon/4}$ with $S_ku_n(x)>C_1k^\beta\delta_n^\beta$, there exists $y\in B^n(x,\varepsilon/(2\delta_n)+1)$ with
		\begin{align*}
			S_ku_n(y)\geq S_ku_n(x)+c\varepsilon^\beta.
		\end{align*}
	\end{lem}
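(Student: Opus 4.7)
I would adapt the maximum-principle argument of Lemma \ref{lem:sandpileConvergenceDomains:odometerbound} to the smoothed rotor odometer, using Lemma \ref{lem:Laplacian-of-smoothed-odometer} in place of the sandpile identity $\Delta_n u_n = \nu_n - \sigma_n$. Set $R:=\lfloor\varepsilon/(2\delta_n)\rfloor$ and $B := B^n(x,R)$. The first preliminary step is to verify that $S_k\sigma_n\equiv 0$ on $B$ for large $n$: since $x\notin\widetilde{D}^{3\varepsilon/4}$ and $\{\sigma\geq 1\}\subseteq\widetilde{D}$, every $z$ with $\mathbb{P}_y[Y^n_k=z]>0$ for some $y\in B$ satisfies $|x-z|\leq \varepsilon/2 + k\delta_n < 3\varepsilon/4 - \varepsilon/8$ once $k\delta_n<\varepsilon/8$. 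Condition \eqref{cond:RRconditionOdometer3} then gives $\sigma_n(z)=0$, and hence $S_k\sigma_n(y)=0$.

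Next I would exploit the hypothesis $S_ku_n(x)>C_1k^\beta\delta_n^\beta$ to place $x$ deep inside the rotor cluster, i.e., $x\in\RR_n^k$ so that $\mathbb{P}_x[Y^n_k\in\RR_n]=1$. By Lemma \ref{lem:smoothing_odometer_error_bound} the difference $|S_ku_n-u_n|$ is bounded by $\delta_n^\beta\bigl[(M+6)k/2 + C_0k^{3/4+\delta\alpha/2}\bigr]$, which is smaller than $\tfrac12 C_1 k^\beta\delta_n^\beta$ provided $C_1$ is chosen large enough (note $\beta > 3/4+\delta\alpha/2$ for small $\delta$). Hence $u_n(x)>\tfrac12 C_1 k^\beta\delta_n^\beta$, and the contrapositive of Lemma \ref{lem:growth_odometer_rotor} puts $x$ into $\RR_n^k$. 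The same reasoning shows that on the graph boundary of $\RR_n^k$ inside $B$, where necessarily $y\notin\RR_n^{k+1}$, one has $u_n(y)\leq c(M+6)(k+1)^\beta\delta_n^\beta$ and consequently $S_ku_n(y)\leq C_2 k^\beta\delta_n^\beta$ for some absolute constant $C_2$; choosing $C_1>2C_2$ makes $S_ku_n(y)<\tfrac12 S_ku_n(x)$ there.

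I would then introduce the expected-exit-time function $\phi_{x,R}$ from \eqref{eqref:phi-expec}, satisfying $\phi_{x,R}(x)=0$, $\Delta_n\phi_{x,R}=\delta_n^{-\beta}$ in the interior of $B$, and $\phi_{x,R}(y)\geq cR^\beta$ on $\partial B$ by \eqref{property:exit_time}, together with the comparison function
\[
w(y) := S_ku_n(y) - \tfrac14\delta_n^\beta\phi_{x,R}(y).
\]
On $B\cap\RR_n^k$, Step 1 and Lemma \ref{lem:Laplacian-of-smoothed-odometer} combine to give $\Delta_nS_ku_n\geq 1-C_0k^{-1/4+\delta\alpha/2}\geq\tfrac12$, so $\Delta_nw\geq\tfrac14>0$ and $w$ is strictly subharmonic there. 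Since $w(x)=S_ku_n(x)$ and, by the previous paragraph, $w\leq S_ku_n<S_ku_n(x)$ on the inner boundary $\partial\RR_n^k\cap B$, the maximum of $w$ on $\overline{B\cap\RR_n^k}$ must be attained at some $y^*\in\partial B$. From $w(y^*)\geq w(x)$ and $\phi_{x,R}(y^*)\geq cR^\beta$ one obtains
\[
S_ku_n(y^*)\geq S_ku_n(x)+\tfrac14\delta_n^\beta\cdot cR^\beta = S_ku_n(x)+c'\varepsilon^\beta,
\]
which gives the claim with $y=y^*\in B^n(x,R)\subseteq B^n(x,\varepsilon/(2\delta_n)+1)$.

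The main obstacle I expect is the accurate bookkeeping in the second step: because smoothing spreads mass from $\RR_n$ into its exterior, $S_ku_n$ is typically nonzero near $\partial\RR_n$, so $C_1$ must be calibrated against the smoothing leakage so that the inner-boundary contributions to $w$ remain strictly below $w(x)$; the elliptic Harnack inequality \eqref{property:EHI} may additionally be needed to propagate the lower bound on $\phi_{x,R}$ uniformly across $\partial B$, since the expected-exit-time bounds in \eqref{property:exit_time} only guarantee the inequality $\phi_{x,R}(y)\geq cR^\beta$ at specific boundary points rather than everywhere on $\partial B$.
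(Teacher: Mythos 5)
Your proof is correct and follows essentially the same route as the paper's: the smoothing-error bound plus the growth lemma to place $x$ in $\RR_n^{k}$, vanishing of $S_k\sigma_n$ on the ball for $k\delta_n<\varepsilon/8$ and $n>N(\varepsilon/8)$, and a maximum-principle argument for a comparison function built from the expected exit time (your $w$ differs from the paper's $f(z)=S_ku_n(z)+\tfrac{\delta_n^\beta}{2}\E_z[\tau^n_{\varepsilon/(2\delta_n)}(x)]$ only by an additive constant and the coefficient $1/4$ versus $1/2$). Your explicit bound on $S_ku_n$ along $\partial\RR_n^{k}$ is in fact slightly more careful than the paper's, which rules out a boundary maximum there by comparing only the unsmoothed odometer values.
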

	\begin{proof}
		Lemma \ref{lem:smoothing_odometer_error_bound} gives
		\begin{align*}
			u_n(x)\geq S_ku_n(x)-\delta_n^\beta(k(M+6)/2+C_0k^{3/4+\delta\alpha/2})>c(M+6)k^\beta\delta_n^\beta,
		\end{align*}
		for the choice $C_1=c(M+6)+(M+6)/2+C_0$. Hence  $x \in \RR_n^{k}$ by Lemma \ref{lem:growth_odometer_rotor}, and together with Lemma \ref{lem:Laplacian-of-smoothed-odometer}, we obtain for any $y\in \RR_n^{k}$
		\begin{align*}
			\Delta_n S_ku_n(y)\geq 1-S_k\sigma_n(y)-C_0 k^{-1/4+\delta\alpha/2}.
		\end{align*}
		By our choice of $k$ we have $C_0 k^{-1/4+\delta\alpha/2}$<1/2. Taking $n$ large enough such that $k\delta_n<\varepsilon/8$ and $n>N(\varepsilon/8)$, $S_k\sigma_n$ vanishes on $B^n(x,\varepsilon/(2\delta_n))$ and therefore 
		\begin{align*}
			f(z)=S_ku_n(z)+\frac{\delta_n^\beta}{2}\mathbb{E}_z\Big[\tau^n_{\varepsilon/(2\delta_n)}(x)\Big]
		\end{align*}
		is subharmonic on $\RR_n^{k}\cap B^n(x,\varepsilon/(2\delta_n))$ and attains its maximum on the boundary. For $z\in\partial \RR_n^{k}$, Lemma \ref{lem:growth_odometer_rotor} gives $u_n(z)\leq c(M+6)k^\beta\delta_n^\beta$. But $u_n(x)> c(M+6)k^\beta\delta_n^\beta$ so the maximum cannot be attained on $\partial \RR_n^{k}$ and instead has to be attained in $y\in\partial B^n(x,\varepsilon/(2\delta_n))$ which implies the desired inequality since $\mathbb{E}_y[\tau^n_{\varepsilon/(2\delta_n)}(x)] \leq c (\varepsilon/\delta_n)^\beta$ by \eqref{property:exit_time} .
	\end{proof} 
Now we can finally prove the scaling limit for the rotor aggregation cluster $\RR_n$.
\begin{proof}[Proof of Theorem \ref{thm:conv_domain_rotor}]
		Take $\varepsilon>0$ and recall that $u=s-\gamma$, $D=\{u>0\}$ and  finally $ \widetilde{D}=D\cup\{\sigma\geq 1\}^{\circ}$. Choose $\eta>0$ such that
$\widetilde{D}_\varepsilon\subseteq D_\eta\cup\{\sigma\geq 1\}_\eta$ and
 $m_\eta>0$ such that $u\geq m_\eta$ on $D_\eta$, which is possible since $\overline{D_\eta} \subsetneq D$. By Theorem \ref{thm:conv_odometer_rotor}, we get for $n$ large enough 
\begin{align*}
u_n(x)>u(x)-m_\eta/2>0 \text{ for all } x\in D_\eta\cap \mathsf{SG}_n.
\end{align*}
On the other hand, $\{\sigma\geq 1\}_\eta\cap \mathsf{SG}_n\subseteq \RR_n$ for large $n$ by \eqref{cond:RRconditionOdometer2}, which concludes the lower bound.
For the upper bound, we fix $x\in \mathsf{SG}_n$ with $x\notin \widetilde{D}^\varepsilon$. As an immediate consequence, $u=0$ on $B(x,\varepsilon)\subset\SG$. Again, by Theorem \ref{thm:conv_odometer_rotor} it holds for large {enough $n$}
	\begin{align*}
		u_n(z)<c\varepsilon^\beta/2 \text{ for all } z\in B(x,\varepsilon)\cap\mathsf{SG}_n, 
	\end{align*} where $c$ is the constant from Lemma \ref{lem:point_close_bigger_rotor}. Let $k\geq (2C_0)^{1/(1/4-\delta\alpha/2)}$ as in Lemma \ref{lem:point_close_bigger_rotor}, where $C_0$ is the constant from Lemma \ref{lem:Laplacian-of-smoothed-odometer}. Taking $n$ large enough such that $ |S_ku_n-u_n|\leq c\varepsilon^\beta/2$, which is possible in view of Lemma \ref{lem:smoothing_odometer_error_bound}, yields $S_ku_n<c\varepsilon^\beta$ on $B^n(x,\varepsilon/\delta_n)\cap\SG_n$. Lemma \ref{lem:point_close_bigger_rotor} shows that $S_ku_n\leq C_1k^\beta\delta_n^\beta$ on $B^n(x,\varepsilon/(4\delta_n))$ or $x\notin \mathcal{R}_n$. In the former case for some constant $C_2>0$
	\begin{align*}
		u_n(y)\leq \frac{S_ku_n(y)}{\mathbb{P}_y[Y^n_k=y]}<C_2k^{\beta+\alpha/\beta}\delta_n^\beta \text{ for all } y\in B^n(x,\varepsilon/(4\delta_n)),
	\end{align*} 
	where the first inequality holds in view of the definition of $S_k$ and the last inequality is due to Lemma \ref{lem:BoundTransitionProbabilities}\textit{(3)}.
	The RHS decreases faster than $\varepsilon\delta_n^{\beta-1}/(16\log(C\varepsilon^\alpha/(4\delta_n)^\alpha))$, so by Lemma \ref{lem:lower_bound_odometer_rotor}, $x\notin \RR_n$ and therefore $\RR_n\subset \tilde{D}^\varepsilon$.
\end{proof}

\section{Scaling limit for internal DLA}\label{sec:idla}

We investigate here internal DLA. While the proofs follow mostly the lines of \cite{LP-mult-idla} and \cite{idla_cati} and are based on the ideas introduced in \cite{idla_lawler}, we include some details here since we also have to take care of the fractal structure of the Sierpi\'nski gasket. We follow the usual approach and we split the proof into two parts. For the {\it inner estimate},
we show that for any $\varepsilon>0$ the internal DLA cluster $\II_n$ will eventually contain the $\epsilon$-deflation of the set $D=\{u>0\}$, where $u$ is the limit of the odometer for divisible sandpiles started from our smoothed densities. For the {\it
outer estimate} we show that the $\epsilon$-inflation of $D$ will eventually contain the internal DLA cluster. For the outer estimate it is important that the shells $D\backslash D_\varepsilon$ do not grow too fast, which is the case in $\SG$. 
We prove the following.
\begin{thm}\label{thm:IDLA}
Let $\sigma$ and $\sigma_n$ initial configurations on $\SG$ and $\SG_n$ respectively that fulfill \eqref{cond:RROdometerConvergence1} and \eqref{cond:RRconditionOdometer1}-\eqref{cond:RRconditionOdometer3} as well as \eqref{cond:Starting1}, \eqref{cond:Starting2} and \eqref{cond:Starting4}.  Then, for every $\varepsilon>0$ with probability $1$ we have
\begin{align*}
    \widetilde{D}_\varepsilon \cap \mathsf{SG}_n \subseteq\II_n\subseteq \widetilde{D}^\varepsilon
\end{align*}
for  $n$ large enough, where $\II_n$ is the internal DLA cluster started from configuration $\sigma_n$ on $\SG_n$ and
$\widetilde{D}=\{s>\gamma\}\cup\{\sigma\geq 1\}^\circ$.
\end{thm}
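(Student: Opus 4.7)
The plan is to follow the standard split into an inner and an outer estimate, using the divisible sandpile convergence (Theorem \ref{theo:convergenceSandpileDomains}) as a deterministic benchmark and martingale concentration in the spirit of \cite{idla_lawler,LP-mult-idla,idla_cati}. I first fix $\varepsilon>0$ and a double triangle $\domTriangle\subset\SG$ large enough to contain $\widetilde{D}^\varepsilon$ and, with probability one, every cluster $\II_n$; the latter follows by stochastic domination against the divisible sandpile cluster started with mass $M\mathds{1}_{\supp\sigma_n}$, combined with Lemma \ref{lem:ball_contains_cluster_sandpile}. Since the conclusion is almost sure, it suffices to bound the probability of each bad event at scale $n$ by a sequence that survives a union bound over the $O(\delta_n^{-\alpha})$ sites in $\domTriangle\cap\SG_n$ and is summable in $n$, and then invoke Borel--Cantelli.

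\textbf{Inner estimate.} For $x_0\in\widetilde{D}_\varepsilon\cap\SG_n$ I distinguish two cases. If $x_0\in\{\sigma\geq 1\}^\circ_{\varepsilon/2}$ then $\sigma_n(x_0)\geq 1$ for $n\geq N(\varepsilon/2)$ by \eqref{cond:RRconditionOdometer2}, so $x_0\in\II_n$ automatically. Otherwise $x_0\in D_\eta$ for some $\eta=\eta(\varepsilon)>0$, hence $u(x_0)\geq m_\eta>0$, and Lemma \ref{theo:convergenceSandpileOdometer} yields $u_n^{DS}(x_0)\geq m_\eta/2$ for all large $n$, where $u_n^{DS}$ denotes the rescaled divisible sandpile odometer on $\SG_n$. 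Introducing the IDLA counterpart
\begin{align*}
    u_n^{IDLA}(x_0):=\delta_n^\beta\cdot\#\{\text{visits to }x_0\text{ by IDLA particles}\},
\end{align*}
the classical Lawler martingale, built from increments of $\greenFunctionDiscrete^n_\domTriangle(X^n_t(i),y)$ for a test point $y$ stopped at the first exit time from the current cluster, writes $u_n^{IDLA}(x_0)-u_n^{DS}(x_0)$ as a sum of martingale differences with uniformly bounded increments (Lemma \ref{lem:connectionGreenFunctions} combined with continuity of $\greenFunction_\domTriangle$). Azuma--Hoeffding then gives a concentration bound on this difference small enough that a union bound over the $O(\delta_n^{-\alpha})$ vertices in $\widetilde{D}_\varepsilon\cap\SG_n$ remains summable in $n$, delivering the inner containment almost surely for all large $n$.

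\textbf{Outer estimate.} For $x_0\in\SG_n\setminus\widetilde{D}^\varepsilon$ the divisible sandpile odometer vanishes on an $\varepsilon/2$-neighbourhood of $x_0$ for large $n$, and the same martingale identity, combined with the near-boundary growth estimate of Lemma \ref{lem:sandpileConvergenceDomains:odometerbound} and the exit-time bound \eqref{property:exit_time}, forces $x_0\in\II_n$ to be a rare fluctuation, again controllable with a summable tail after union bound. The crucial structural input here is the null-set property $\mu(\partial\widetilde{D})=0$ proved in Appendix \ref{sec:appendixC}: without it the shell $\widetilde{D}^\varepsilon\setminus\widetilde{D}_\varepsilon$ could carry too many vertices in $\SG_n$ and defeat the probabilistic estimate.

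\textbf{Main obstacle.} The hard part, as I expect it, is producing the martingale concentration with constants sharp enough to beat the $O(\delta_n^{-\alpha})$ union bound while relying only on the stopped Green function $\greenFunctionDiscrete^n_\domTriangle$, since $\SG$ admits no globally defined Green function. This forces one to exploit the explicit harmonic-spline expansion \eqref{eq:green-fc-sg}, the elliptic Harnack inequality \eqref{property:EHI}, and the volume and exit-time estimates \eqref{property:volume_growth}--\eqref{property:exit_time} in order to control the $\delta_n$-scale regularity of $\greenFunctionDiscrete^n_\domTriangle(\cdot,y)$ uniformly on $\domTriangle$. A secondary difficulty, absent from the single-source setting of \cite{idla_cati}, is that the IDLA cluster has no self-similar symmetry, so the comparison with the sandpile odometer must be carried out pointwise over all of $\domTriangle\cap\SG_n$ rather than only at the origin.
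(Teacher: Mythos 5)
Your skeleton (inner plus outer estimate, Borel--Cantelli over the $O(\delta_n^{-\alpha})$ sites of $\domTriangle\cap\SG_n$, the divisible sandpile odometer as deterministic benchmark, and the role of $\mu(\partial\widetilde{D})=0$) matches the paper, but both of your core probabilistic steps have genuine gaps. For the inner estimate the paper does not run a martingale/Azuma argument on an ``IDLA odometer'': it uses the Lawler--Bramson--Griffeath decomposition, fixing $z\in D_{\varepsilon'}\cap\SG_n$ and comparing $\mathcal{M}_\eta$ (particles hitting $z$ before exiting $D_\eta$) with a dominating sum $\widetilde{L}_\eta$ of \emph{independent} indicators; the deterministic input is that $f^{n,\eta}(z)=\greenFunctionUnscaled^{n,\eta}(z,z)\,\E[\mathcal{M}_\eta-\widetilde{L}_\eta]$ solves the same Dirichlet problem as the unscaled sandpile odometer, so Lemma \ref{lem:f_lower_bound_idla} gives $\E[\mathcal{M}_\eta]-\E[\widetilde{L}_\eta]\geq b\,\delta_n^{-\beta}/(2\greenFunctionUnscaled^{n,\eta}(z,z))$, and the concentration comes from Chernoff-type bounds for sums of independent Bernoullis (Lemma 4 of \cite{idla_lawler}), not Azuma--Hoeffding. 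Your version leaves two things unjustified: (i) why a comparison of visit counts $u_n^{IDLA}(x_0)$ versus $u_n^{DS}(x_0)$ implies $x_0\in\II_n$ at all --- membership in the cluster is decided by the $\mathcal{M}_\eta>L_\eta$ mechanism, not by the size of an odometer-type quantity --- and (ii) why the increments of $\greenFunctionDiscrete^n_\domTriangle(X^n_t(i),y)$ over the $O(\delta_n^{-\beta})$ steps before exiting combine into a deviation bound strong enough to beat the union bound; on the gasket this is exactly the delicate point and ``uniformly bounded increments'' does not settle it.

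The larger gap is in the outer estimate. Lemma \ref{lem:sandpileConvergenceDomains:odometerbound} is a deterministic maximum-principle statement about the divisible sandpile odometer and has no analogue for the random IDLA aggregate, so ``the same martingale identity'' cannot force $x_0\notin\II_n$. What the paper actually does is: (1) prove the inner estimate for the \emph{stopped} cluster $\tilde{\II}_n$ (particles killed on exiting $\widetilde{D}$), which is why Lemma \ref{lem:inner_estimate_idla} is stated in that stronger form; (2) use $\mu(\partial\widetilde{D})=0$ and conservation of mass (Proposition \ref{prop:boundary_conservation_of_mass}) to deduce that, on the almost-sure event of the inner estimate, at most $K_0\varepsilon^{\alpha}\delta_n^{-\alpha}$ particles ever leave $\widetilde{D}$ before settling; and (3) show by an iterative absorption argument (Lemmas \ref{lem:exit_inflation_idla}--\ref{lem:few_particles_reach_outside_idla}: if at most $\rho r^{\alpha}$ particles start inside a set, a positive fraction is absorbed in each shell of width $r$, so the surviving number decays geometrically in the number of shells) that these few particles reach outside $\widetilde{D}^{\varepsilon}$ only with probability $c_0e^{-c_1\varepsilon/\delta_n}$. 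You correctly identified that $\mu(\partial\widetilde{D})=0$ is essential, but the step converting ``few escaping particles'' into ``no tentacle of length $\varepsilon/\delta_n$'' is a combinatorial thinning argument, not a fluctuation estimate, and it is missing from your proposal.
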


\subsubsection*{Inner estimate}
Since we are using the standard approach from \cite{idla_lawler} and \cite{LP-mult-idla}, we will not motivate the random variables below. We fix $ n\in\N$ and the initial configuration $\sigma_n$ on $\SG_n$. We label the particles that generate $\II_n$ by $1,\ldots,m_n$, with $m_n=\sum_{x\in\SG_n}\sigma_n(x)$ being the total number of particles. Denote by $x_i$ the starting position of particle $i$ such that 
$\#\{i|x_i=x\}=\sigma_n(x)$, and for $i=1,...,m_n$ let $(X_t^n(i))_{t\in\N}$ be a sequence of $m_n$ simple independent random walks in $\mathsf{SG}_n$ starting in $x_i$. For $z\in \mathsf{SG}_n$ and $\varepsilon>0$ consider the stopping times
\begin{align*}
    &\tau_z^i :=\inf\{t\geq 0:\ X_t^n(i)=z\},\\
    &\tau_\varepsilon^i:=\inf\{t\geq 0:\ X_t^n(i)\notin D_\varepsilon\cap \mathsf{SG}_n\}, \\
    &\nu^i:=\inf\{t\geq0:\ X_t^n(i)\notin\{X_{\nu^j}^n(j)\}_{j=1}^{i-1}\},
\end{align*}
and we also set $\nu^1:=0$ and we remark that all three above stopping times depend on the iteration $n$, but we omit $n$ and write only $\tau_z^i$ (resp. $\tau_{\varepsilon}^i$ and $\nu^i$) instead of $\tau_z^i(n)$ (resp. $\tau_{\varepsilon}^i(n)$ and $\nu^i(n)$). If $\II_n^j$ denotes the cluster after $j$ particles settled, then $\II_n^j=\II_{n}^{j-1}\cup\{X^n_{\nu^j}(j)\}$. Fix $z\in D_\varepsilon\cap \mathsf{SG}_n$ and consider the random variables
\begin{align*}
    \mathcal{M}_\varepsilon:=\sum_{i=1}^{m_n}\mathds{1}_{\{\tau_z^i<\tau_\varepsilon^i\}} \quad \text{and} \quad 
    L_\varepsilon:=\sum_{i=1}^{m_n}\mathds{1}_{\{\nu^i\leq \tau_z^i<\tau_\varepsilon^i\}}.
\end{align*}
If $L_\varepsilon<\mathcal{M}_\varepsilon$, then $z\in \II_n$. $L_\varepsilon$ being a sum of non-independent random variables, we use another random variable to upper bound it.
For each $y\in D_\varepsilon\cap \mathsf{SG}_n$, let $(Y_t^n(y))_{t\in \N}$ be a simple random walk on $\mathsf{SG}_n$ starting at $y$, with $Y^n_t(x)$ and $Y^n_t(y)$ independent if $x\neq y$, and
\begin{align*}
\widetilde{L}_\varepsilon:=\sum_{y\in D_\varepsilon\cap \mathsf{SG}_n}\mathds{1}_{\{\tau_z^y<\tau_\varepsilon^y\}}
\end{align*}
where
$\tau_z^y=\inf\{t\geq 0:\ Y_t^n(y)=z\}$ and $\tau_\varepsilon^y=\inf\{t\geq 0:\ Y_t^n(y)\notin D_\varepsilon\cap \mathsf{SG}_n\}$.
Then indeed $L_\varepsilon\leq\widetilde{L}_\varepsilon$, and $\widetilde{L}_\varepsilon$ is a sum of independent indicators. Define now 
\begin{align*}
 \greenFunctionUnscaled^{n,\varepsilon}(y,z):=\mathbb{E}\left[\#\{t<\tau_\varepsilon^y:\ Y_t^n(y)=z\}\right]
 \text{ and }
    f^{n,\varepsilon}(z):=\greenFunctionUnscaled^{n,\varepsilon}(z,z)\mathbb{E}[\mathcal{M}_\varepsilon-\widetilde{L}_\varepsilon].
\end{align*}
Be aware that $\greenFunctionUnscaled^{n,\varepsilon}$ is the unscaled version of the Green function  defined in
(\ref{eq:green-visits}), which is handier for internal DLA. By definition, $f^{n,\varepsilon}$ solves the Dirichlet problem
\begin{align*}
    \Delta_n f^{n,\varepsilon}&=\delta_n^{-\beta}(1-\sigma_n),\text{ on } D_\varepsilon\cap \mathsf{SG}_n ,\\
    f^{n,\varepsilon}&=0,\text{ on } D_\varepsilon^C\cap \mathsf{SG}_n,
\end{align*}
and note that the unscaled divisible sandpile odometer $\delta_n^{-\beta}u_n$ solves the same Dirichlet problem on the set $\DD_n=\{u_n>0\}$.

\begin{lem}\label{lem:odometer_smoothing_error_idla}
	Suppose $\sigma_n$ is bounded by $M$ and has finite support. Let $u_n, \tilde{u}_n$ be the odometer functions of the divisible sandpile with initial densities $\sigma_n, S_k \sigma_n$, respectively. Then
	\begin{align*}
		|u_n-\tilde{u}_n|\leq\frac{1}{2}kM\delta_n^\beta \text{ for all } k\in\N. 
	\end{align*}
\end{lem}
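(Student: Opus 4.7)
The plan is to exploit the least action principle for the divisible sandpile: $u_n$ (resp.\ $\tilde u_n$) is the smallest nonnegative function on $\SG_n$ satisfying $\sigma_n + \Delta_n u_n \leq 1$ (resp.\ $S_k\sigma_n + \Delta_n \tilde u_n \leq 1$). The comparison then reduces to producing a well-chosen test function against which the other odometer may be bounded.

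The key preliminary is to write $S_k\sigma_n - \sigma_n$ as a discrete Laplacian. Set $h := \sum_{j=0}^{k-1} S_j\sigma_n$. Since $0 \leq \sigma_n \leq M$ and the lazy-walk smoothing preserves these bounds, $0 \leq h \leq kM$ pointwise. Using $\Delta_n = 2\delta_n^{-\beta}(S_1 - S_0)$ together with the semigroup property of the smoothing, a telescoping computation yields
\begin{align*}
    \frac{\delta_n^\beta}{2}\,\Delta_n h \;=\; \sum_{j=0}^{k-1}\bigl(S_{j+1}\sigma_n - S_j\sigma_n\bigr) \;=\; S_k\sigma_n - \sigma_n.
\end{align*}

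For the bound $u_n \leq \tilde u_n + \tfrac{1}{2}kM\delta_n^\beta$, I would set $v := \tilde u_n + \tfrac{\delta_n^\beta}{2} h$; then $v \geq 0$ and
\begin{align*}
    \sigma_n + \Delta_n v \;=\; \sigma_n + \Delta_n \tilde u_n + (S_k\sigma_n - \sigma_n) \;=\; S_k\sigma_n + \Delta_n \tilde u_n \;\leq\; 1,
\end{align*}
so the least action principle for $u_n$ gives $u_n \leq v \leq \tilde u_n + \tfrac{1}{2}kM\delta_n^\beta$. For the reverse bound $\tilde u_n \leq u_n + \tfrac{1}{2}kM\delta_n^\beta$, I would instead use $v := u_n + \tfrac{\delta_n^\beta}{2}(kM - h)$, which is nonnegative because $h \leq kM$; the constant $kM$ is annihilated by $\Delta_n$, so $S_k\sigma_n + \Delta_n v = \sigma_n + \Delta_n u_n \leq 1$, and the least action principle for $\tilde u_n$ yields the desired inequality.

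The only real point to be careful about is that one of the test functions ($v = u_n + \tfrac{\delta_n^\beta}{2}(kM - h)$) fails to have finite support, since it is asymptotically the constant $\tfrac{\delta_n^\beta}{2}kM$ outside $\supp(\sigma_n)$. However, the least action principle applies to any nonnegative function satisfying the Laplacian constraint, regardless of support, so this poses no obstacle; otherwise the argument is entirely formal and mirrors the comparison arguments used in \cite{LP-mult-idla}.
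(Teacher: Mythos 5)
Your proposal is correct and follows essentially the same route as the paper: the same identity $S_k\sigma_n-\sigma_n=\tfrac{\delta_n^\beta}{2}\Delta_n\bigl(\sum_{j=0}^{k-1}S_j\sigma_n\bigr)$, the same bounds $0\leq h\leq kM$, and the same two applications of the least action principle with the test functions $\tilde u_n+\tfrac{\delta_n^\beta}{2}h$ and $u_n+\tfrac{\delta_n^\beta}{2}(kM-h)$ (the paper merely phrases the verification of $\sigma_n+\Delta_n v\leq 1$ via the final mass configurations $\nu_n,\tilde\nu_n\leq 1$). Your remark about the second test function lacking finite support is a fair point that the paper passes over silently, and your resolution is the right one.
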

\begin{proof}
In view of $S_1 f=f+\frac{\delta_n^\beta}{2}\Delta_n f$ and $S_{i+l}=S_i S_l$ we obtain by induction over $k$ that\begin{align*}
    S_k\sigma_n=\sigma_n+\Delta_n w_k, \quad \text{with} \quad
    w_k=\frac{\delta_n^\beta}{2}\sum_{j=0}^{k-1}S_j\sigma_n.
\end{align*}
We remark that $w_k$ is non-negative and bounded from above by $\frac{1}{2}Mk\delta_n^\beta$ due to the boundedness of $\sigma_n$ and thus also $S_j\sigma_n$ for all $j\in\N$.
Let us now denote by $\nu_n$ (resp. $\tilde{\nu}_n$) the limit mass configuration on $\mathsf{SG}_n$ for the divisible sandpile started from $\sigma_n$ (resp. $S_k\sigma_n$). The sum $w_k+\tilde{u}_n$, where $\tilde{u}_n$ is the odometer for divisible sandpile started from $S_k\sigma_n$, solves the inequality
\begin{align}\label{eq:sandpile_equation}
\sigma_n+\Delta_n v \leq 1\text{ and }v\geq 0
\end{align}
in view of
\begin{align*}
\tilde{\nu}_n=S_k\sigma_n+\Delta_n \tilde{u}_n=\sigma_n+\Delta_n(\tilde{u}_n+w_k),
\end{align*} 
and the fact that $\tilde{\nu}_n\leq 1$. Thus by the least action principle as introduced in Section \ref{sec:div-sand} (which states that the odometer function is the smallest solution to \eqref{eq:sandpile_equation}) we obtain $u_n\leq \tilde{u}_n+w_k$. In a similar manner we can see
\begin{align*}
    \nu_n&=\sigma_n+\Delta_n u_n
    = S_k\sigma_n+\Delta_n(u_n-w_k+\frac{1}{2}Mk\delta_n^\beta),
\end{align*}
where again $\nu_n\leq 1$ and $u_n-w_k+\frac{1}{2}Mk\delta_n^\beta\geq 0$, thus again by the least action principle we obtain $\tilde{u}_n\leq u_n-w_k+\frac{1}{2}Mk\delta_n^\beta$.
\end{proof}

\begin{lem}\label{lem:conv_sandpile_domain_idla}
	Suppose $\sigma,\sigma_n$ fulfill the assumptions in Theorem \ref{thm:IDLA} and let $u_n$ be the odometer function for the divisible sandpile on $\mathsf{SG}_n$ with initial density $\sigma_n$ and $\DD_n=\{u_n>0\}$. Then $$u_n^\doubleTriangle \rightarrow s-\gamma \text{ uniformly as }n\to\infty.$$
In particular, for all $\varepsilon>0$ it holds $D_\varepsilon\cap \mathsf{SG}_n\subseteq \DD_n$ for $n$ large enough.
\end{lem}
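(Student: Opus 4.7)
The plan is to reduce to the already-established scaling limit for the divisible sandpile (Lemma \ref{theo:convergenceSandpileOdometer}) by comparing $u_n$ to the odometer $\tilde u_n$ of the divisible sandpile on $\SG_n$ started from the \emph{smoothed} density $\tilde\sigma_n := S_{\kappa(n)}\sigma_n$. The point is that the hypotheses of Theorem \ref{thm:IDLA} give convergence of $\tilde\sigma_n^{\doubleTriangle}$ to $\sigma$ in the sense of \eqref{cond:Starting3}, while Lemma \ref{lem:odometer_smoothing_error_idla} controls how much smoothing moves the odometer.

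First I would verify that the smoothed densities $\tilde\sigma_n$ fulfill the hypotheses \eqref{cond:Starting1}--\eqref{cond:Starting4} of Lemma \ref{theo:convergenceSandpileOdometer}: boundedness is immediate since $S_{\kappa(n)}$ is a Markov averaging operator (so $0\le \tilde\sigma_n\le M$); the support grows by at most $\kappa(n)$ graph steps, which is at most $\kappa(n)\delta_n\to 0$ in Euclidean distance, so for $n$ large all $\tilde\sigma_n$ are supported in a common compact set; the pointwise convergence $\tilde\sigma_n^{\doubleTriangle}(x)\to\sigma(x)$ for $x\notin DC(\sigma)$ is exactly assumption \eqref{cond:RROdometerConvergence1}; and $\mu(DC(\sigma))=0$ is \eqref{cond:Starting2}. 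Thus Lemma \ref{theo:convergenceSandpileOdometer} applies to the sequence $\tilde\sigma_n$, giving
\begin{align*}
\tilde u_n^{\doubleTriangle}\longrightarrow s-\gamma \qquad\text{uniformly on $\domTriangle$ as }n\to\infty.
\end{align*}

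Next I apply Lemma \ref{lem:odometer_smoothing_error_idla} with $k=\kappa(n)$ to obtain
\begin{align*}
|u_n(x)-\tilde u_n(x)|\le \tfrac12\,\kappa(n)M\delta_n^{\beta}, \qquad x\in\SG_n.
\end{align*}
Since $\beta>1$ and $\kappa(n)\delta_n\to 0$ by hypothesis, we have $\kappa(n)\delta_n^{\beta}=(\kappa(n)\delta_n)\cdot\delta_n^{\beta-1}\to 0$, so the error tends to $0$ uniformly in $x$. Combining with the uniform convergence of $\tilde u_n^{\doubleTriangle}$ yields $u_n^{\doubleTriangle}\to s-\gamma$ uniformly on $\domTriangle$, and since both $u_n$ and $u$ vanish outside $\domTriangle$ by Lemma \ref{lem:ball_contains_cluster_sandpile} and Lemma \ref{lem:existence-of-global-sandpile}, this gives the first claim.

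For the inclusion $D_\varepsilon\cap\SG_n\subseteq\DD_n$, the argument is identical to the lower bound in the proof of Theorem \ref{theo:convergenceSandpileDomains}: the closure $\overline{D_\varepsilon}$ is compact and contained in $D$, so $u=s-\gamma\ge m_\varepsilon$ on $D_\varepsilon$ for some $m_\varepsilon>0$; the uniform convergence $u_n^{\doubleTriangle}\to u$ just established then yields $u_n>m_\varepsilon/2>0$ on $D_\varepsilon\cap\SG_n$ for $n$ large, so every such vertex is toppled and lies in $\DD_n=\{u_n>0\}$. The only slightly delicate point in this plan is the verification that $\tilde\sigma_n$ has compact support uniformly in $n$, but this follows immediately from $\kappa(n)\delta_n\to 0$; everything else is a bookkeeping application of the already-proved results.
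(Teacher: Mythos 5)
Your proposal is correct and follows essentially the same route as the paper: apply Lemma \ref{theo:convergenceSandpileOdometer} to the smoothed densities $S_{\kappa(n)}\sigma_n$, then use Lemma \ref{lem:odometer_smoothing_error_idla} with $k=\kappa(n)$ together with $\kappa(n)\delta_n^\beta\to 0$ to transfer the uniform convergence to $u_n$, and finish the inclusion via a positive lower bound for $u$ on the compact set $\overline{D_\varepsilon}\subset D$. The only difference is that you spell out the verification of hypotheses \eqref{cond:Starting1}--\eqref{cond:Starting4} for the smoothed densities, which the paper leaves implicit.
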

\begin{proof}
For the odometer function $\tilde{u}_n$ of the divisible sandpile with initial density $S_{\kappa(n)}\sigma_n$ as in (\ref{cond:RROdometerConvergence1}), in view of Theorem \ref{theo:convergenceSandpileOdometer} we have $\tilde{u}_n^\doubleTriangle \rightarrow u$ uniformly, as $n\to\infty$. Together with  Lemma \ref{lem:odometer_smoothing_error_idla} this gives $u^\doubleTriangle_n\rightarrow u$ uniformly as $n\to\infty$. Finally, let $b>0$ be the minimum value of $u$ on $\overline{D}_\varepsilon$. For $n$ large enough, we get $u_n>b/2$ on $D_\varepsilon\cap \mathsf{SG}_n$ which yields $D_\varepsilon\cap \mathsf{SG}_n\subseteq \DD_n$.
\end{proof}

\begin{lem}\label{lem:f_lower_bound_idla}
	For any $\varepsilon>0$ let $b=\min\{u(x):\ x\in \overline{D}_\varepsilon\}>0$. There exists $0<\eta<\varepsilon$ such that for all large enough $n$ and all $z \in D_\varepsilon\cap \SG_n$
	\begin{align*}
		f^{n,\eta}(z)\geq\frac{1}{2}b\delta_n^{-\beta}.
	\end{align*}
\end{lem}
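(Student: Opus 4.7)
The idea is to decompose $f^{n,\eta}$ as the rescaled divisible sandpile odometer $\delta_n^{-\beta}u_n$ minus a harmonic correction, and to control both pieces using Lemma~\ref{lem:conv_sandpile_domain_idla} together with the continuity of the limiting odometer $u=s-\gamma$ up to $\partial D$.

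Fix $\eta\in(0,\varepsilon)$ to be chosen below. By Lemma~\ref{lem:conv_sandpile_domain_idla}, for $n$ large enough one has $D_\eta\cap\SG_n\subseteq\DD_n$, so that $\Delta_n(\delta_n^{-\beta}u_n)=\delta_n^{-\beta}(1-\sigma_n)$ on $D_\eta\cap\SG_n$, the same equation satisfied by $f^{n,\eta}$ (see the discussion preceding the lemma). Consequently the difference $g^{n,\eta}:=\delta_n^{-\beta}u_n-f^{n,\eta}$ is $\Delta_n$-harmonic on $D_\eta\cap\SG_n$ and equals $\delta_n^{-\beta}u_n\ge 0$ on $D_\eta^{c}\cap\SG_n$ (since $f^{n,\eta}\equiv 0$ there). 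The discrete maximum/minimum principle on the finite set $D_\eta\cap\SG_n$ then yields
\begin{align*}
0\;\le\;g^{n,\eta}(z)\;\le\;\delta_n^{-\beta}\max_{y\in D_\eta^{c}\cap\SG_n}u_n(y),\qquad z\in D_\eta\cap\SG_n.
\end{align*}

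It remains to choose $\eta$ so that this upper bound is at most $(b/4)\delta_n^{-\beta}$. The function $u=s-\gamma$ is continuous on $\SG$ (both $\gamma$ and $s$ are continuous by the construction in Definition~\ref{def:contSandpile}), it vanishes on $\partial D$, and $\overline D$ is compact by Proposition~\ref{prop:obst1}. Uniform continuity then lets us pick $\eta\in(0,\varepsilon)$ so small that $u\le b/8$ on the shell $D\setminus D_\eta$, and therefore on the whole exterior $D_\eta^{c}$. Now take $n$ large enough that also $|u_n-u|<b/8$ uniformly on the double triangle $\domTriangle$ from Lemma~\ref{lem:ball_contains_cluster_sandpile} (this is Lemma~\ref{lem:conv_sandpile_domain_idla}). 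Then $u_n(z)\ge u(z)-b/8\ge 7b/8$ on $D_\varepsilon\cap\SG_n$ (using $u\ge b$ on $\overline{D}_\varepsilon$), while $u_n\le u+b/8\le b/4$ on $D_\eta^{c}\cap\SG_n$. Combining these bounds, for every $z\in D_\varepsilon\cap\SG_n$,
\begin{align*}
f^{n,\eta}(z)\;=\;\delta_n^{-\beta}u_n(z)-g^{n,\eta}(z)\;\ge\;\delta_n^{-\beta}\!\left(\tfrac{7b}{8}-\tfrac{b}{4}\right)\;\ge\;\tfrac{1}{2}b\,\delta_n^{-\beta},
\end{align*}
which is the claimed bound. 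The delicate step in this plan is the uniform decay of $u$ to zero at $\partial D$, but this is immediate from the continuity of $u$ on the compact set $\overline D$ granted by Definition~\ref{def:contSandpile}; all other steps are mechanical applications of the discrete maximum principle and the sandpile scaling limit in Lemma~\ref{lem:conv_sandpile_domain_idla}.
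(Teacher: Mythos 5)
Your proof is correct and follows essentially the same route as the paper: both compare $f^{n,\eta}$ with the divisible sandpile odometer via the harmonic difference on $D_\eta\cap\SG_n$, apply the discrete maximum/minimum principle using that $f^{n,\eta}$ vanishes outside $D_\eta$, choose $\eta$ by uniform continuity of $u$ near $\partial D$, and invoke the uniform convergence $u_n\to u$ from Lemma \ref{lem:conv_sandpile_domain_idla}. The only differences are cosmetic (sign of the harmonic difference and the constants $b/8$ versus $b/6$).
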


\begin{proof}
Let $\eta>0$ be chosen such that $u<b/6$ outside of $D_\eta$, which is possible since $u$ is uniformly continuous on $\overline{D}$. Then $\eta<\varepsilon$ since $u\geq b$ on $\overline{D}_\varepsilon$ by the definition of $b$ as the minimum of $u$ on $\overline{D}_\varepsilon$. Let $u_n$ be the odometer function for the divisible sandpile on $\mathsf{SG}_n$ started from initial density $\sigma_n$ and let $\DD_n$ be the corresponding divisible sandpile cluster.
We can now choose $n_0\in\N$ large enough such that for all $n>n_0$ we have $|u_n-u|<b/6$ on  $D_\eta\cap \SG_n$, and this is possible in view of the uniform convergence from Lemma \ref{lem:conv_sandpile_domain_idla}, hence $u_n\leq b/3$ on $\partial(D_\eta\cap \SG_n)$. Again by Lemma \ref{lem:conv_sandpile_domain_idla} we can adjust $n_0$ such that we also have $D_\eta\cap \SG_n\subseteq \DD_n$ for all $n>n_0$.
We thus obtain that the function $\delta_n^\beta f^{n,\eta}-u_n$ is harmonic on $D_\eta\cap \SG_n$ and it therefore attains its minimum on the outer boundary. However, the function $f^{n,\eta}$ vanishes outside of $D_\eta \cap \SG_n$ by definition, hence we obtain for any $z\in D_\eta\cap \SG_n$
\begin{align*}
\delta_n^\beta f^{n,\eta}(z)\geq u_n(z)-b/3\geq u(z)-b/2.
\end{align*}
By our choice of $b$, we have $u\geq b$ in $D_\varepsilon\cap \SG_n$, and since $\eta<\varepsilon$ the previous inequality gives $\delta_n^\beta f^{n,\eta}\geq b/2$ on $D_\varepsilon\cap \SG_n$.
\end{proof}
Let 
\begin{align*}
    \tilde{\II}_n:=\{X_{\nu^i}^n(i):\ \nu^i<\tilde{\tau}^i\}\subseteq \II_n
\quad \text{with} \quad 
    \tilde{\tau}^i:=\inf\{t\geq 0:\ X_t^n(i)\notin \widetilde{D}\cap \mathsf{SG}_n\}.
\end{align*}
The inner estimate  follows from the following lemma. Although it suffices to prove the inner estimate for $\II_n$ instead of $\tilde{\II}_n$, we use this stronger estimate  in the proof of the outer estimate.
\begin{lem}\label{lem:inner_estimate_idla}
For any $\varepsilon>0$, $\mathbb{P}\big[\widetilde{D}_\varepsilon\cap \mathsf{SG}_n\subseteq \tilde{\II}_n\text{ for all but finitely many }n\big]=1$.
\end{lem}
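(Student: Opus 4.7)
The approach is to run the classical Lawler--Bramson--Griffeath scheme on $\SG_n$, combining the mean lower bound of Lemma~\ref{lem:f_lower_bound_idla} with a Chernoff concentration step and a Borel--Cantelli argument, using the discrete and continuum Green functions of Lemma~\ref{lem:connectionGreenFunctions} in place of their Euclidean counterparts. Fix $\varepsilon>0$ and note that by \eqref{cond:RRconditionOdometer1} one can choose $\varepsilon'\in(0,\varepsilon]$ with $\widetilde D_\varepsilon\subseteq D_{\varepsilon'}\cup\{\sigma\ge 1\}_{\varepsilon'}$. The source piece $\{\sigma\ge 1\}_{\varepsilon'}\subseteq\widetilde D$ is immediate from \eqref{cond:RRconditionOdometer2}: for $n$ large $\sigma_n(x)\ge 1$ at every such $x$, so some particle starts at $x$ and, by the Abelian property together with the margin between $\{\sigma\ge 1\}_{\varepsilon'}$ and $\partial\widetilde D$, it settles inside $\widetilde D$, placing $x\in\tilde\II_n$. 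The bulk piece $D_{\varepsilon'}$ requires the real work.

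Apply Lemma~\ref{lem:f_lower_bound_idla} to obtain $0<\eta<\varepsilon'$ and $b>0$ with $f^{n,\eta}(z)\ge\tfrac{b}{2}\delta_n^{-\beta}$ for all $z\in D_{\varepsilon'}\cap\SG_n$ and $n$ large. Combining this with the identity $f^{n,\eta}(z)=\greenFunctionUnscaled^{n,\eta}(z,z)\,\E[\mathcal M_\eta-\widetilde L_\eta]$, the rescaling
\begin{equation*}
\greenFunctionUnscaled^{n,\eta}(z,z)=\delta_n^{-(\beta-\alpha)}\greenFunctionDiscrete^n_{D_\eta}(z,z)=\delta_n^{-(\beta-\alpha)}\greenFunction_{D_\eta}(z,z)
\end{equation*}
from Lemma~\ref{lem:connectionGreenFunctions}, and the uniform boundedness of the continuum Green function $\greenFunction_{D_\eta}$ on the diagonal, yields the key mean-gap estimate
\begin{equation*}
\E[\mathcal M_\eta-\widetilde L_\eta]\ge c_1\,\delta_n^{-\alpha}
\end{equation*}
for some $c_1>0$ independent of $z\in D_{\varepsilon'}\cap\SG_n$ and of $n$.

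Next, since $\mathcal M_\eta$ and $\widetilde L_\eta$ are sums of independent Bernoulli indicators with means at most the volume $|D_\eta\cap\SG_n|\le C\delta_n^{-\alpha}$ (by~\eqref{property:volume_growth}), split $\{\mathcal M_\eta\le \widetilde L_\eta\}$ into the two deviations $\{\mathcal M_\eta\le\E\mathcal M_\eta-\tfrac12 c_1\delta_n^{-\alpha}\}$ and $\{\widetilde L_\eta\ge\E\widetilde L_\eta+\tfrac12 c_1\delta_n^{-\alpha}\}$, and apply Bernstein's inequality to each to get
\begin{equation*}
\Prob[\mathcal M_\eta\le\widetilde L_\eta]\le 2\exp(-c_2\delta_n^{-\alpha})
\end{equation*}
for some $c_2>0$. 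A union bound over the $\le C\delta_n^{-\alpha}$ sites of $D_{\varepsilon'}\cap\SG_n$ produces a total probability at most $2C\delta_n^{-\alpha}\exp(-c_2\delta_n^{-\alpha})=2C\cdot 3^n\exp(-c_2\cdot 3^n)$, which is summable in $n$, so Borel--Cantelli gives almost surely the event $\mathcal M_\eta(z)>L_\eta(z)$ for every $z\in D_{\varepsilon'}\cap\SG_n$ for all but finitely many $n$.

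Finally, one deduces $z\in\tilde\II_n$ from $\mathcal M_\eta(z)>L_\eta(z)$ as follows. Since $D_\eta\subseteq\widetilde D$ implies $\tau_\eta^i\le\tilde\tau^i$, any walk $i$ contributing to $\mathcal M_\eta-L_\eta$ satisfies $\tau_z^i<\tilde\tau^i$ and $\nu^i>\tau_z^i$, so at time $\tau_z^i$ it sits at $z\in\widetilde D$ without having settled; this forces $z\in\II_n^{i-1}$, hence some earlier particle $j$ settled at $z$. Because $z\in D_{\varepsilon'}$ has positive distance to $\partial\widetilde D$, a strong-Markov/coupling argument using the Abelian property then ensures $\nu^j<\tilde\tau^j$, so $z\in\tilde\II_n$. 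The main obstacle is the Chernoff step: the exponent $c_2\cdot 3^n$ has to dominate the volume $3^n$ in the union bound, which works only because exponential beats polynomial prefactors. A secondary subtlety is the passage from $\II_n$ to $\tilde\II_n$, which is handled using the positive separation between $D_{\varepsilon'}$ and the boundary of $\widetilde D$.
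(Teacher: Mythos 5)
Your proposal is correct and follows essentially the same Lawler--Bramson--Griffeath scheme as the paper's proof: reduce to the mean-gap estimate $\E[\mathcal M_\eta-\widetilde{L}_\eta]=f^{n,\eta}(z)/\greenFunctionUnscaled^{n,\eta}(z,z)\geq c\,\delta_n^{-\alpha}$ via Lemma \ref{lem:f_lower_bound_idla} and a diagonal Green function bound, concentrate each of $\mathcal M_\eta$ and $\widetilde{L}_\eta$, union-bound over the $O(\delta_n^{-\alpha})$ sites of $D_{\varepsilon'}\cap\SG_n$, and apply Borel--Cantelli. The only minor differences are that the paper concentrates via Lawler's large-deviation lemma with tuned exponents $\gamma,\gamma'$ (yielding $\exp(-c\,\delta_n^{-\alpha/2})$) where you invoke Bernstein directly, and that the bound $\greenFunctionUnscaled^{n,\eta}(z,z)\leq C(R/\delta_n)^{\beta-\alpha}$ should be sourced from \cite[Lemma 2.10]{idla_cati} (or from monotonicity against the enclosing double triangle $\domTriangle$, where Lemma \ref{lem:connectionGreenFunctions} actually applies) rather than from a continuum Green function on $D_\eta$, which the paper never defines.
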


\begin{proof}
For $z\in \SG_n$, let $E_z(n)$ be the event that $z$ is not contained in the internal DLA cluster on $\SG_n$ with initial density $\sigma_n$. Using condition (\ref{cond:additionalStartingCondition3}), it suffices to prove that for $\varepsilon'>0$ 
\begin{align*}
    \sum_{n\geq1}\sum_{z\in D_{\varepsilon'}\cap \mathsf{SG}_n}\mathbb{P}[E_z(n)]<\infty.
\end{align*}
Let $b := \inf_{z\in D_{\varepsilon'}} u(z) >0 $ and $0 < \eta < \varepsilon'$ as in Lemma \ref{lem:f_lower_bound_idla} such that $ f^{n,\eta}(z)\geq b\delta_n^{-\beta}/2$ for $z \in D_{\varepsilon'}\cap \SG_n$ and sufficiently large $n$. We fix $z\in D_{\varepsilon'}\cap \mathsf{SG}_n$ and choose $R>0$ such that $D\subseteq B(z,R)$ giving 
\begin{align*}
    B(z,\varepsilon'-\eta)\subseteq D_\eta\subseteq B(z,R).
\end{align*} 
Notice that $R$ depends on the size of $\supp(\sigma)$ and the supremum over all values of $\sigma$. Since $L_\eta\leq \tilde{L}_\eta$ we have
\begin{align}
    \mathbb{P}[E_z(n)]&\leq\mathbb{P}[\mathcal{M}_\eta=L_\eta]
    \leq\mathbb{P}[\mathcal{M}_\eta\leq \tilde{L}_\eta]
    \leq \mathbb{P}[\mathcal{M}_\eta\leq a]+\mathbb{P}[\tilde{L}_\eta\geq a] \label{eq:proof:IDLAinner-Prob}
\end{align}
for any $a\in\mathbb{R}$. We need an $a$ such that both summands are small. By \cite[Lemma 4]{idla_lawler}
\begin{align}
    &\mathbb{P}\Big[\tilde{L}_\eta\geq \mathbb{E}[\tilde{L}_\eta] + \mathbb{E}[\tilde{L}_\eta]^{1/2+\gamma}\Big]<2 \exp\big(- \frac{1}{4} \mathbb{E}[\tilde{L}_\eta]^{2\gamma}\big), \label{eq:proof:proofIDLAinner-ProbBound1}\\
    &\mathbb{P}\Big[\mathcal{M}_\eta \leq \mathbb{E}[\mathcal{M}_\eta] - \mathbb{E}[\mathcal{M}_\eta]^{1/2+\gamma}\Big]<2 \exp\big(- \frac{1}{4} \mathbb{E}[M_\eta]^{2\gamma}\big),\label{eq:proof:proofIDLAinner-ProbBound2}
\end{align}
for any $0<\gamma<1/2$. So we find an appropriate $a$ if the interval $$I:=\big[\mathbb{E}[\tilde{L}_\eta] + \mathbb{E}[\tilde{L}_\eta]^{1/2+\gamma}, \mathbb{E}[\mathcal{M}_\eta] - \mathbb{E}[\mathcal{M}_\eta]^{1/2+\gamma}\big] \text{ is non-empty.}$$  
For this, note the lower bound
\begin{align*}
    \mathbb{E}[\mathcal{M}_\eta]-\mathbb{E}[\tilde{L}_\eta]=\frac{f^{n,\eta}(z)}{\greenFunctionUnscaled^{n,\eta}(z,z)}\geq\frac{b}{2\delta_n^\beta \greenFunctionUnscaled^{n,\eta}(z,z)},
\end{align*}
which immediately gives $\E[ M_\eta] \geq \E[ \tilde{L}_\eta]$. So, by choosing $\gamma,\gamma'>0$ such that 
\begin{align*}
	\E [M_\eta] ^{1/2+\gamma} &:= c\Big(\frac{b}{ R^{\beta-\alpha} \delta_n^\alpha}\Big)^{3/4} \leq \frac{b}{4 \delta_n^\beta \greenFunctionUnscaled^{n,\eta}(z,z)},\\
	\E [\tilde{L}_\eta] ^{1/2+\gamma'} &:= c\Big(\frac{b}{ R^{\beta-\alpha} \delta_n^\alpha}\Big)^{3/4} \leq \frac{b}{4 \delta_n^\beta \greenFunctionUnscaled^{n,\eta}(z,z)},
\end{align*}
the interval $I$ is non-empty, where the last inequality is due to \cite[Lemma 2.10]{idla_cati} which states that $\greenFunctionUnscaled^{n,\eta}(z,z) \leq C (R/\delta_n)^{\beta-\alpha}$.
It is not immediately clear that with this choice the condition on $\gamma, \gamma' < 1/2$ is satisfied. 
Standard theory for Markov chains gives 
\begin{align}\label{eq:id_for_L_eta}
\mathbb{E}[\tilde{L}_\eta]=\sum_{y\in D_\eta\cap \SG_n}\mathbb{P}[\tau_z^y < \tau_\eta^y]=\frac{1}{g^{n,\eta}(z,z)}\sum_{y\in D_\eta\cap \SG_n}g^{n,\eta}(z,y)=\frac{\mathbb{E}[\tau_\eta^z]}{g^{n,\eta}(z,z)}.
\end{align}
Applying the lower bounds for expected exit times \eqref{property:exit_time} and the upper bound for the stopped Green function \cite[Lemma 2.10]{idla_cati} yields that $\gamma,\gamma' < 1/2$ since
\begin{align*}
    \mathbb{E}[\tilde{L}_\eta]&=\frac{\mathbb{E}_z\big[ \tau_{D_\eta\cap \mathsf{SG}_n}\big]}{\greenFunctionUnscaled^{n,\eta}(z,z)}
    \geq C'\Big(\frac{\varepsilon'-\eta}{\delta_n}\Big)^\beta\frac{1}{(R/\delta_n)^{\beta-\alpha}}
    = C' \frac{(\varepsilon' - \eta)^\beta}{R^{\beta-\alpha}} \delta_n^{-\alpha}.
\end{align*}
Ultimately our choice of $\gamma$ and $\gamma'$ also gives that the probabilities in (\ref{eq:proof:proofIDLAinner-ProbBound1}) and (\ref{eq:proof:proofIDLAinner-ProbBound2}) are exponentially decreasing, because the random variable $M_\eta$ is bounded by $M\sum_{y\in D_\eta\cap \SG_n}\mathbb{P}[\tau_z^y < \tau_\eta^y]$ which gives $\mathbb{E}[\mathcal{M}_\eta]\leq MR^\beta/\delta_n^\beta \greenFunctionUnscaled^{n,\eta}(z,z)\leq M C R^\alpha \delta_n^{-\alpha}$. Therefore we have
\begin{align*}
	\mathbb{E}[\tilde{L}_\eta]^{2\gamma'} = c\Big(\frac{b}{ R^{\beta-\alpha} \delta_n^\alpha}\Big)^{6/4} \mathbb{E}[\tilde{L}_\eta]^{-1} \geq c' M \Big(\frac{b}{ R^{\beta-\alpha}}\Big)^{6/4} R^{-\alpha} \delta_n^{-\alpha/2}.
\end{align*}
Finally, we obtain 
\begin{align*}
    \sum_{n\geq1}\sum_{z\in D_{\varepsilon'}\cap \mathsf{SG}_n}\mathbb{P}[E_z(n)]\leq \sum_{n\geq 1}C'' R^\alpha \delta_n^{-\alpha}\cdot 4\exp\Big(-c' M b^{3/2} R^\frac{\alpha-\beta}{2} \delta_n^{-\alpha/2} \Big)<\infty,
\end{align*}
which together with the first Borel-Cantelli lemma proves the claim.
\end{proof}
\subsubsection*{Outer estimate}
For the outer estimate we rely on \cite{idla_cati}.
The idea is to argue based on the inner estimate and the fact that the boundary of $D$ is a null set, that not too many particles can leave  $D$. We will then show that the remaining particles will not be able to leave the $\varepsilon$-inflation
$D^{\varepsilon}$ of $D$.
For the remainder we use the notation for deflation and inflation for sets $A\subseteq \mathsf{SG}_n$:
\begin{align*}
    A_{r} = \{ x\in A :  B^n(x,r) \subseteq A\} \quad \text{and}\quad  A^{r} = \{ x\in \mathsf{SG}_n:\  A \cap B^n(x,r) \neq \emptyset \}.
\end{align*}
Recall \cite[Lemma 3.5, Lemma 3.6]{idla_cati}. We first prove the following.
	\begin{lem}\label{lem:exit_inflation_idla}
	Let $n\in \N$, then there exist $\rho,\eta \in (0,1]$ such that for any $D\subseteq \SG_n$ and $r>0$ the following holds. For all $S\subseteq D^r \backslash D$ with $|S|\leq \rho r^\alpha$ and all $x\in D$ we have 
	$$\mathbb{P}_x\big[\tau^n_{S\cup D} < \tau^n_{D^r}\big] \geq \eta.$$
	\end{lem}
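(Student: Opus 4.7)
The plan is to lower bound $p := \mathbb{P}_x[\tau^n_{S\cup D} < \tau^n_{D^r}]$ by a constant $\eta > 0$ independent of $D$, $x$, $r$, and $n$. Since the walk starts in $D \subseteq S \cup D \subseteq D^r$, this is exactly the probability that the walk hits $A := D^r \setminus (S \cup D)$ before leaving $D^r$, and I will bound this hitting probability from below.

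The key local input I would establish first is a ``density implies hitting'' estimate in the spirit of Lemma~3.5 and Lemma~3.6 of \cite{idla_cati}: there exists a constant $c_0>0$ depending only on the gasket such that for every $y\in\SG_n$, every $s\ge 1$, and every $F\subseteq B^n(y,s)$ with $|F|\ge c_0^{-1}s^\alpha$ one has $\mathbb{P}_y\big[\tau^n_F < \tau^n_{B^n(y,s)}\big]\ge c_0$. This can be derived from the volume growth \eqref{property:volume_growth}, the elliptic Harnack inequality \eqref{property:EHI}, and the expected exit time estimate \eqref{property:exit_time}, via the classical inequality $\mathbb{P}_y[\tau^n_F<\tau^n_B]\ge \mathbb{E}_y[\text{time in }F\text{ before }\tau^n_B]/\max_{z\in F}\mathbb{E}_z[\text{time in }F\text{ before }\tau^n_B]$, together with the upper bound of order $s^{\beta-\alpha}$ on the stopped Green function $\greenFunctionDiscrete^n_{B^n(y,s)}(z,z)$ at any vertex of the ball.

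Next I would apply this local estimate inside $B := B^n(x, r)$, which lies in $D^r$ because $x\in D$. If $D$ does not fill too much of $B$, say $|B\cap D|\le \tfrac12 |B|$, then for $\rho$ small enough we have $|B\cap A|\ge |B|-|B\cap D|-|S|\ge c_0^{-1} r^\alpha$ by volume growth, and the local estimate immediately gives $p\ge c_0$, using that the walk cannot exit $D^r$ without first exiting $B$.

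The main obstacle is the complementary regime where $D\cap B$ is almost all of $B$, in which case the walk can spend a very long time in $D$ before reaching $D^r\setminus D$. Here I would use the strong Markov property at the first hitting time $T=\inf\{t\ge 0:X_t^n\notin D\}$ of the complement of $D$: since every neighbor of $D$ in $\SG_n\setminus D$ lies in $D^1\setminus D\subseteq D^r\setminus D$, we have $X_T\in S\cup A$, so on $\{X_T\in A\}$ we are done, and on $\{X_T\in S\}$ I would restart the argument from $X_T$, applying the local estimate in an $r$-ball around $X_T$ to again reach $A$ or return to $D$ with uniformly positive probability. The bookkeeping will be to check that each iteration contributes a multiplicative factor bounded below by a universal constant and that $\rho$ can be chosen small enough relative to $c_0$ for the iterative scheme to close; this relies crucially on the scale invariance of \eqref{property:volume_growth}--\eqref{property:exit_time} and the universality of $c_0$.
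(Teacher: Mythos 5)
Your first two steps (the ``density implies hitting'' estimate in the spirit of \cite[Lemma 3.5]{idla_cati}, and the volume count $|B\setminus S|\ge |B|-\rho r^\alpha \gtrsim |B|$) are exactly the ingredients the paper uses. The problem is where you center the ball. By taking $B=B^n(x,r)$ around the starting point $x\in D$, the large set you can guarantee to hit is only $B\setminus S\subseteq D\cup A$, not $A$ itself, and this is what forces your case distinction and the iteration in the regime where $D$ fills most of $B$. That iteration does not close as described: in each round you only control the probability of ``reaching $A$ \emph{or} returning to $D$,'' with no lower bound on the probability of reaching $A$ itself relative to the probability of escaping $D^r$ through $S$. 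A renewal argument gives a success probability of roughly $a/(a+f)$, where $a$ is the per-round probability of hitting $A$ and $f$ that of exiting $D^r$; without a uniform lower bound on $a$ (or an upper bound on $f$ in terms of $a$) no universal $\eta$ comes out, and nothing in your setup supplies such a bound -- the walk could keep shuttling between $D$ and $S$ with $a$ arbitrarily small per round. This is a genuine gap, and it sits precisely in what you yourself identify as ``the main obstacle.''

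The paper's proof removes the obstacle by a different reduction rather than an iteration: since any trajectory started in $D$ must pass through $Y:=\partial D^{r/2}$ (points at distance $\lfloor r/2\rfloor+1$ from $D$) before it can exit $D^r$, and since an exit from $S\cup D$ occurring before reaching $Y$ automatically lands in $D^{r/2}\setminus(S\cup D)\subseteq A$, the strong Markov property reduces the claim to starting points $y\in Y$. For such $y$ the ball $B^n(y,r/3)$ is contained in $D^r\setminus D$, so $B^n(y,r/3)\setminus S$ is contained in the target set $A$ itself; the volume count with $\rho=4^{-\alpha}/C$ shows this set occupies a fixed fraction of the ball, and a single application of \cite[Lemma 3.5]{idla_cati} gives $\mathbb{P}_y[\tau^n_A<\tau^n_{r/3}(y)]\ge\eta$. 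No case distinction on the density of $D$ in the ball and no iteration are needed. If you want to salvage your write-up, replace the iteration by this one-step reduction to the mid-annulus.
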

	\begin{proof}
	Let us denote $Y := \partial D^{r/2}$ the set of all vertices of distance $\lfloor r/2 \rfloor +1$ from $D$. By the Markov property it is sufficient to prove the statement for starting points $y\in Y$, since every trajectory of a random walk started inside $D$ hits the set $Y$.
	Let $A=B^n(y,r/3)\backslash S$, then, in view of (\ref{property:volume_growth}) with corresponding constant $C\geq 1$, by setting $\rho=4^{-\alpha}/C\in(0,1]$ we obtain
	\begin{align*}
	    |A|\geq |B^n(y,r/3)|-|S|\geq \frac{1}{C}\Big(\frac{r}{3}\Big)^\alpha-\rho r^\alpha\geq\frac{1}{C^2}\Big(1-\Big(\frac{3}{4}\Big)^\alpha\Big)|B^n(y,r/3)|.
	\end{align*}
	The choice $\varepsilon = \frac{1}{C^2}(1-(\frac{3}{4})^\alpha)$ \cite[Lemma 3.5]{idla_cati} gives the existence of $\eta>0$ with $\mathbb{P}_y[\tau^n_A<\tau^n_{r/3}(y)]\geq\eta$.
	\end{proof}									

Next, for $\rho$  as in Lemma \ref{lem:exit_inflation_idla}  we estimate the number of particles leaving a given set during the internal DLA process.
\begin{lem}\label{lem:binomial_bound_idla}
For a bounded set $D\subset \SG_n$ and $r\in\N$, let $N$ be the number of particles that leave $D^r$ during the internal DLA process when starting with all sites in $D$ occupied and $k$ additional particles that start at sites $y_1,...,y_k \in D$. If $k\leq \rho r^\alpha$ where $\rho$ is as in Lemma \ref{lem:exit_inflation_idla}, then $N\leq \text{Binom}(k,p)$ for some constant $p<1$.
\end{lem}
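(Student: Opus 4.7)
\textbf{Proof proposal for Lemma \ref{lem:binomial_bound_idla}.} The plan is to release the $k$ particles one at a time, using the Abelian property of internal DLA so that $N$ has the same distribution regardless of the order, and to show that each particle has conditional probability at least $\eta$ of settling inside $D^r$ (and hence never leaving $D^r$), where $\eta$ is the constant from Lemma \ref{lem:exit_inflation_idla}. Let $S_i$ denote the set of particles settled inside $D^r\setminus D$ after the first $i$ particles have been processed, and let $T_i$ denote the set of particles settled outside $D^r$ after the first $i$ particles. Since at most one new site is added per particle, $|S_i|\leq i\leq k\leq\rho r^\alpha$ throughout.

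When the $(i+1)$-th particle starts its simple random walk at $y_{i+1}\in D$, the current cluster is $D\cup S_i\cup T_i$, and the particle settles at the first site outside this cluster. The key observation is that $T_i$ is disjoint from $D^r$: thus a trajectory that never leaves $D^r$ cannot visit $T_i$, and on the event $\{\tau^n_{D\cup S_i}<\tau^n_{D^r}\}$ the settling time equals $\tau^n_{D\cup S_i}$ and the particle settles inside $D^r\setminus(D\cup S_i)$. Consequently,
\begin{align*}
\mathbb{P}\bigl[\text{particle $i+1$ stays in $D^r$}\,\big|\,\mathcal{F}_i\bigr]\;=\;\mathbb{P}_{y_{i+1}}\bigl[\tau^n_{D\cup S_i}<\tau^n_{D^r}\bigr]\;\geq\;\eta,
\end{align*}
where $\mathcal{F}_i$ is the $\sigma$-algebra generated by the first $i$ walks and the inequality is Lemma \ref{lem:exit_inflation_idla} applied with $S=S_i$ (valid because $|S_i|\leq\rho r^\alpha$).

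Writing $Z_i=\mathds{1}_{\{\text{particle }i\text{ leaves }D^r\}}$, the above yields $\mathbb{P}[Z_{i+1}=1\mid\mathcal{F}_i]\leq 1-\eta$, so the $Z_i$ are stochastically dominated by i.i.d.\ Bernoulli$(1-\eta)$ variables in the usual sense; summing gives $N=\sum_{i=1}^k Z_i\preceq\mathrm{Binom}(k,p)$ with $p:=1-\eta<1$. The main technical point is the decoupling argument showing that particles already settled outside $D^r$ do not inflate the probability of exiting $D^r$ on the next step; this is handled by the disjointness of $T_i$ and $D^r$ together with the elementary fact that if a walk never reaches $\SG_n\setminus D^r$ before its virtual exit time from $D\cup S_i$, then the presence of $T_i$ cannot shorten the actual settling time. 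The verification that $|S_i|\leq\rho r^\alpha$ throughout relies solely on the hypothesis $k\leq\rho r^\alpha$, so Lemma \ref{lem:exit_inflation_idla} applies uniformly over all steps.
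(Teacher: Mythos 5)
Your proposal is correct and follows essentially the same route as the paper: condition on the cluster after $i$ particles, apply Lemma \ref{lem:exit_inflation_idla} with $S$ the at most $i\leq\rho r^\alpha$ newly occupied sites in $D^r\setminus D$, and couple the indicators with i.i.d.\ Bernoulli$(1-\eta)$ variables to get the binomial domination. The only difference is that you spell out why the particles already settled outside $D^r$ (your set $T_i$) cannot interfere with the event of settling inside $D^r$ — a detail the paper leaves implicit — which is a welcome clarification rather than a deviation.
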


\begin{proof}
Label the additional $k$ particles with $1,..,k$. By $\II^{j}_n$ we denote the cluster after the first $j$ particles have been settled. By the inductive way of building $\II^{j}_n$ it holds $|\II^{j}_n\backslash D|\leq j\leq k\leq \rho r^\alpha$. Denoting by $A_j$ the event that the $j$-th particle ever leaves $D^r$, in view of Lemma \ref{lem:exit_inflation_idla} it holds $\mathbb{P}[A_j|\II^{j-1}_n]\leq 1-\eta$,
for some $\eta\in(0,1)$. For $p=1-\eta$, by coupling the indicators $\mathds{1}_{A_j}$ with  i.i.d indicators $I_j\geq \mathds{1}_{A_j}$ of mean $p$ yields
$    N=\sum_{j=1}^k\mathds{1}_{A_j}\leq\sum_{j=1}^k I_j=\text{Binom}(k,p)$.
\end{proof}
The next lemma shows that if too few particles start inside the given set $D$, then only very few particles can ever reach the outside of $D^{2r}$.
\begin{lem}\label{lem:few_particles_reach_outside_idla}
For $n\in\N$, let $D$ be a bounded subset of $\mathsf{SG}_n$ and let $k,r\in \N$  with
$ k\leq \rho(1-p^{1/2\alpha})^\alpha r^\alpha$,
where $\rho, p<1$ as in Lemma \ref{lem:binomial_bound_idla}. Let $N$ be the number of particles that ever reach the outside of $D^{2r}$ when starting with all sites inside $D$ already occupied and $k$ additional particles that start at sites $y_1,...,y_k \in D$. Then, for constant $c_0,c_1>0$ we have
\begin{align*}
    \mathbb{P}[N>0]\leq c_0e^{-c_1 r}.
\end{align*}
\end{lem}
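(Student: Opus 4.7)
The plan is to iterate Lemma~\ref{lem:binomial_bound_idla} along a sequence of nested shells around $D$. Set $q = p^{1/(2\alpha)}$ and choose widths $w_0, w_1, \ldots$ whose partial sums $s_j := \sum_{i<j} w_i$ satisfy $s_L \geq 2r$ for some $L \in \N$. Let $\xi_j$ denote the number of the initial $k$ particles whose IDLA trajectories ever leave $D^{s_j}$, so that $\xi_0 = k$ and $N \leq \xi_L$.

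For the inductive step, by the Abelian property of IDLA I would process particles shell by shell, and by the monotonicity of IDLA with respect to enlargements of the initial occupied set, the number of the $\xi_j$ particles that further escape $D^{s_{j+1}}$ is stochastically dominated by the analogous count in an auxiliary process in which all of $D^{s_j}$ is fully occupied at the outset. Applying Lemma~\ref{lem:binomial_bound_idla} with base set $D^{s_j}$, outer radius $w_j$, and $\xi_j$ additional particles then yields $\xi_{j+1}\mid\xi_j \preceq \mathrm{Binom}(\xi_j, p)$, valid on the event $\{\xi_j \leq \rho w_j^\alpha\}$. Iterating this domination gives $\xi_L \preceq \mathrm{Binom}(k, p^L)$ on the good event that every shellwise capacity bound has held.

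The widths would be arranged in two regimes. A geometric regime with $w_j \asymp (1-q)r\cdot q^j$ exploits the identity $p = q^{2\alpha}$: the hypothesis $k \leq \rho(1-q)^\alpha r^\alpha$ is precisely the expected-value form of the constraint $\xi_j \leq \rho w_j^\alpha$ at every $j$, and this regime drives the iterated Binomial down to constant order while covering almost all of the annulus $D^{2r}\setminus D$. A subsequent unit-width regime of $\Theta(r)$ further shells of width $1$ would then provide $L = \Theta(r)$ iterations overall, so that Markov's inequality gives
\[
\Prob[N > 0] \leq \Prob[\xi_L \geq 1] \leq \E[\xi_L] \leq kp^L \leq c_0 e^{-c_1 r}.
\]

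The chief technical obstacle lies at the junction of the two regimes: for the unit-width iteration to preserve the Binomial domination, one must verify, with exponential-in-$r$ probability, that $\xi$ has been reduced below $\rho$ by the end of the geometric phase, since a single step in which $\xi_j > \rho w_j^\alpha$ destroys the Binomial coupling. I would handle this via a stopping-time coupling that aborts the Binomial domination at the first violation of a shellwise capacity bound and absorbs the probability of this abort into the final error term by carefully tracking the iterated Binomial tails, in the spirit of the analogous lemma in \cite{LP-mult-idla}.
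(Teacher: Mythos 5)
Your first half---geometrically shrinking shells of width $w_j \asymp (1-p^{1/2\alpha})\,r\,p^{j/2\alpha}$ and the iterated binomial thinning from Lemma~\ref{lem:binomial_bound_idla}---is exactly the paper's construction (it takes $r_j=(1-p^{j/2\alpha})r$ and gets $N_{j+1}\preceq\mathrm{Binom}(N_j,p)$ on the event $\{N_j\le\rho(r_{j+1}-r_j)^\alpha\}$). The gap is in how you close the argument. You propose to continue through $\Theta(r)$ additional unit-width shells so that $L=\Theta(r)$ and Markov's inequality gives $\mathbb{P}[N>0]\le k p^L$. But Lemma~\ref{lem:binomial_bound_idla} with shell width $w=1$ requires the number of incoming particles to be at most $\rho\cdot 1^\alpha=\rho<1$ (recall $\rho=4^{-\alpha}/C\le 1$ from Lemma~\ref{lem:exit_inflation_idla}), i.e.\ to be zero; the binomial domination is simply unavailable in the unit-width regime unless the process has already died out. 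This is not a rare failure you can absorb into the error term via a stopping-time coupling: it is the generic situation, because the geometric regime supplies only $O(\log r)$ usable iterations before the widths drop below $1$, after which the surviving count is of order $k\,p^{O(\log r)}$, at best polynomially small for $k\asymp r^\alpha$. Markov's inequality therefore yields only a polynomial bound, never $e^{-c_1 r}$. (There is also a sign slip: with $s_L\ge 2r$ one has $\xi_L\le N$ rather than $N\le\xi_L$; the shells must stay inside $D^{2r}$.)

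The missing idea is a deterministic endgame that replaces the unit-width phase entirely. The paper stops the iteration at the first $J$ with $k_J=p^{J/2}k<r$, so $J=O(\log(k/r))$; on the event $A_J=\{N_J\le k_J\}$ fewer than $r$ particles ever leave $D^{r_J}\subseteq D^{r}$. Since in internal DLA a particle settles at the first unoccupied site it meets, a particle can only reach the outside of $D^{2r}$ by traversing a fully occupied path of graph length at least $r$ through the annulus $D^{2r}\setminus D^{r}$, which requires at least $r$ previously settled escapees---and there are fewer than $r$ of them. Hence $N=0$ holds deterministically on $A_J$. The exponential rate then comes not from $p^L$ but from the Chernoff estimates $\mathbb{P}[A_{j+1}^{c}\mid A_j]\le 2e^{-ck_j}$ with $k_j\ge k_J\ge p^{1/2}r$ for every $j\le J$, summed over only logarithmically many steps. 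Without some substitute for this ``not enough particles to pave a path of length $r$'' observation, your scheme cannot produce the claimed decay.
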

	\begin{proof}
	For $j\in\N$ let $ r_j:=(1-p^{j/2\alpha})r$, $N_j$ the number of particles that ever leave $D^{r_j}$ and $k_j:=p^{j/2}k$. Taking $h_j:=r_{j+1}-r_{j}$, we have
	\begin{align*}
	    k_j& \leq p^{j/2}\rho(1-p^{1/2\alpha})^\alpha r^\alpha
	    =\rho h_j^\alpha.
	\end{align*}
	We now apply Lemma \ref{lem:binomial_bound_idla} which conditioned on $N_j$ implies
	$N_{j+1} \leq \text{Binom}(N_j,p)$. Letting $A_j:=\{N_j\leq k_j\}$ yields
	\begin{align*}
	    \mathbb{P}[A_{j+1}|A_j]&\geq\mathbb{P}[\text{Binom}(k_j,p)\leq k_{j+1}]
	    \geq 1-2e^{-ck_j},
	\end{align*}
	where the last  inequality follows from \cite[Lemma 5.8]{LP-mult-idla}. Let $J = \lfloor 2{(\log(k)-\log(r))}/{\log(1/p)} \rfloor$ be the minimal	 integer such that $p^{1/2} r \leq k_J < r$. On the event $A_J$, at most $r$ particles ever leave the set $D^r$, hence there are not enough particles left to ever leave $D^{2r}$. Thus $\mathbb{P}[N=0]\geq\mathbb{P}[A_J]$ and 
	\begin{align*}
	    \mathbb{P}[N=0]\geq\mathbb{P}[A_1]\mathbb{P}[A_2|A_1]\ldots \mathbb{P}[A_J|A_{J-1}]\geq 1-2J e^{-c r},
	\end{align*}
	and the right hand side is at least $1-c_0e^{-c_1 r}$ for suitable constants $c_0,c_1>0$.
	\end{proof}
\begin{proof}[Proof of Theorem \ref{thm:IDLA}]
	  Lemma \ref{lem:inner_estimate_idla} gives the inner estimate. For the outer estimate, let $\varepsilon > 0$ and
	\begin{align*}
	    N_n:=\big|\{1\leq i \leq m_n:\ \nu^i\geq \tilde{\tau}^i\}\big|=m_n-\big|\tilde{\II}_n\big|
	\end{align*}
	be the number of particles leaving $\widetilde{D}\cap \mathsf{SG}_n$ before aggregating to the cluster, where $m_n$ is the number of particles initially in $\mathsf{SG}_n$. 
	Let $p<1$ as in Lemma \ref{lem:binomial_bound_idla} and set $K_0:=\rho\left({(1-p^{1/2\alpha})}\right)^\alpha$. Since by Proposition \ref{prop:boundary_conservation_of_mass}, $\mu(\partial\widetilde{D})=0$, for sufficiently small $\eta>0$ we obtain
	\begin{align*}
	    \mu(\widetilde{D}\backslash\widetilde{D}_{2\eta})\leq \frac{1}{2}K_0\varepsilon^\alpha.
	\end{align*}
	Clearly, for large enough $n$ the construction gives
	\begin{align*}
	    \delta_n^\alpha\big|\widetilde{D}_\eta\cap \mathsf{SG}_n\big|=\mu((\widetilde{D}_\eta\cap \mathsf{SG}_n)^{\doubleTriangle})\geq \mu(\widetilde{D}_{2\eta})\geq \mu(\widetilde{D})-\frac{1}{2}K_0\varepsilon^\alpha,
	\end{align*}
	and Proposition \ref{prop:boundary_conservation_of_mass} implies  
	\begin{align*}
	    |\delta_n^\alpha m_n- \mu(\widetilde{D})| < \frac{1}{2}K_0\varepsilon^\alpha \text{ for $n$ large enough},
	\end{align*}
	such that, on the event  $\{\widetilde{D}_\eta\cap \mathsf{SG}_n\subseteq \tilde{\II}_n\}$, we have for $n$ large 
	\begin{align*}
	    N_n&\leq m_n-\big|\widetilde{D}_{\eta}\cap \mathsf{SG}_n\big|
	    \leq m_n-\delta_n^{-\alpha}\mu(\widetilde{D})+\frac{1}{2}K_0\varepsilon^\alpha\delta_n^{-\alpha}
	    \leq K_0 \varepsilon^\alpha\delta_n^{-\alpha}.
    \end{align*}	
    So for the event $A_n = \{ N_n \leq K_0 \varepsilon^\alpha\delta_n^{-\alpha}\}$ by Lemma \ref{lem:inner_estimate_idla} it clearly holds
	\begin{align*}
	    \mathbb{P}\big[\limsup_{n\rightarrow\infty} A_n\big]\geq\mathbb{P}\big[\widetilde{D}_\eta\cap \mathsf{SG}_n\subseteq\tilde{\II}_n\text{ for all but finitely many }\big] = 1.
	\end{align*}
	When considering the probability that $\II_n$ has vertices outside the inflation $ (\widetilde{D}\cap \mathsf{SG}_n)^{\varepsilon/\delta_n}$, conditioned on $A_n$, Lemma \ref{lem:few_particles_reach_outside_idla} then gives
	\begin{align*}
	    \mathbb{P}\big[\{\II_n\cap ((\widetilde{D}\cap \mathsf{SG}_n)^{\varepsilon/\delta_n})^C\neq\emptyset\}\cap A_n\big]\leq c_0e^{-c_1(\varepsilon/\delta_n)},
	\end{align*}
	and the claim follows from Borel-Cantelli lemma.
	\end{proof}
\begin{appendices}
	
	\section{Odometer of the continuous divisible sandpile on $\SG$}\label{sec:appendixB}
	
	
	We give below a recipe on how to calculate the odometer function of the continuous divisible sandpile on $\SG$ as introduced in Section \ref{sec:div-sand}, for a concrete choice of the initial configuration $\sigma:\SG\to[0,\infty)$. More precisely, we take $\sigma$
	to be constant  in the ball of radius $r=2^l$ for some $l\in\N$ and zero outside, that is for $N=2^L> r $ and $L\in \N$ we consider $$\sigma := \frac{\mu(B(0,N))}{\mu(B(0,r))} \mathds{1}_{B(0,r)} = 3^{L-l} \mathds{1}_{B(0,r)},$$
	where we further assume that $L>l$. We take $\domTriangle = B(0,2N)$ and calculate the obstacle $\gamma$, its superharmonic majorant $s$ and show that the corresponding noncoincidence set $D$ for this obstacle is ${D} = B(0,N) \subsetneq \domTriangle$.
	We summarize first the steps we take when calculating the odometer, in order to make the section easier to follow.
	\vspace{-0.25cm}
	\begin{enumerate}
		\setlength\itemsep{0em}
		\item First, we calculate functions $f_i:B(0,2^i)\rightarrow\R$ that have constant Laplacian $1$ i.e.~$\Delta f_i = 1$ and are zero on the boundary i.e.~$f|_{\partial B(0,2^i)}=0$.
		
		\item In the second step, we construct a function $\eta:\domTriangle\rightarrow\R$ with $\Delta \eta = \mathds{1}_{B(0,r)}$. In order to do so, we will take the function $f_l$ (a function with constant Laplacian $1$ on $B(0,r)$ and zero on the boundary) and construct a harmonic function  $h:B(0,2N)\backslash B(0,r)\rightarrow \R$ that matches the normal derivative of $f_l$ on the boundary of $B(0,r)$. By then gluing together $f_l$ and $h$, we obtain the desired function $\eta$.
		
		\item Next, we construct the obstacle $\gamma$ by taking a function $\xi$ with constant Laplacian $1$ on $B(0,2N)$ (we can take $\xi = f_{L+1}$) and setting $\gamma = 3^{L-l}\eta-\xi$.
		
		\item Finally, we show that the function $s$ defined by
		\begin{align*}
			s(x):=\begin{cases}\gamma(x),& x\in B(0,2N)\backslash B(0,N)\\ \gamma(y),& x\in B(0,N)\end{cases},
		\end{align*}
		where $y\in\partial B(0,N)$ and $x\in B(0,2N)$, is superharmonic and dominates $\gamma$. 
	\end{enumerate}
	This shows that for the odometer function $u$ we have $u\leq s-\gamma$. Since $\{s-\gamma>0\}=B(0,N)$ the conservation of mass in Proposition \ref{prop:boundary_conservation_of_mass} yields that $s-\gamma$ has the correct support. Since $s$ is constant on the support of $s-\gamma$, there is no smaller superharmonic function and we have $u=s-\gamma$.
Now we follow the steps in the above summary.
	
	\textbf{Construction of $f_i$.}\label{par:construct_constant_Laplacian}
	To construct $f_i$ we use that $3/4\cdot5^n\Delta_{\mathsf{SG_n}} \rightarrow \Delta$ as $n\to\infty$, in order to calculate $f_i$ on $\mathsf{SG}_n$. The generalized $\frac{1}{5}-\frac{2}{5}$ rule \cite[Theorem A.1]{Div-Sandpile-SG} gives  $f(0) = - {4/3\cdot} 5^i$, and then we can calculate the normal derivatives at the boundary of $B(0,2^i)$. For this, we calculate the values of $f_i$ at positions $a_j,b_j$ as shown in Figure \ref{fig:odometer-construction1}.
	\begin{figure}
		\centering
		\begin{tikzpicture}[scale=0.75]
			\node[right] at (8,0) {$a_0$};
			\node[left] at (0,0) {$b_0$};
			\node[above] at (4,4*1.7320508076) {$0$};
			\node[right] at (6,2*1.7320508076) {\footnotesize $a_1$};
			\node[right] at (5,6/2*1.7320508076) {\footnotesize $a_2$};
			\node[right] at (4.60,3.4*1.7320508076) {\footnotesize.};
			\node[right] at (4.55,3.45*1.7320508076) {\footnotesize.};
			\node[right] at (4.50,3.5*1.7320508076) {\footnotesize.};
			\node[right] at (4.25, 3.75*1.7320508076) {\footnotesize $a_j$};
			
			\begin{scope}[shift={(8,0)}]
				\node[left] at (-6,2*1.7320508076) {\footnotesize $b_1$};
				\node[left] at (-5,6/2*1.7320508076) {\footnotesize $b_2$};
				\node[left] at (-4.60,3.4*1.7320508076) {\footnotesize.};
				\node[left] at (-4.55,3.45*1.7320508076) {\footnotesize.};
				\node[left] at (-4.50,3.5*1.7320508076) {\footnotesize.};
				\node[left] at (-4.25, 3.75*1.7320508076) {\footnotesize $b_j$};
			\end{scope}
			
			\draw[thick, decorate,decoration={brace, mirror,amplitude=8pt}] (0,0) -- (8,0) node[below] at (4,-0.25) {$2^i$};
			\draw[] (0,0) -- (4,0) -- (2, 2*1.7320508076) -- cycle;
			\draw[] (4,0) -- (8,0) -- (6, 2*1.7320508076) -- cycle;
			\draw[] (2, 2*1.7320508076) -- (6, 2*1.7320508076) -- (5, 3*1.7320508076) -- (3, 3*1.7320508076) -- cycle;
			\draw[dotted] (3, 3*1.7320508076) -- (4, 4*1.7320508076) -- (5, 3*1.7320508076) -- cycle;
			\draw[] (3, 3*1.7320508076) -- (4, 2*1.7320508076) -- (5, 3*1.7320508076) -- cycle;
			\draw[] (3.75, 3.75*1.7320508076) -- (4.25, 3.75*1.7320508076) -- (4, 4*1.7320508076) -- cycle;
		\end{tikzpicture}
		\caption{The vertices used in the calculations of the normal derivative of $f_i$.}
		\label{fig:odometer-construction1}
	\end{figure}
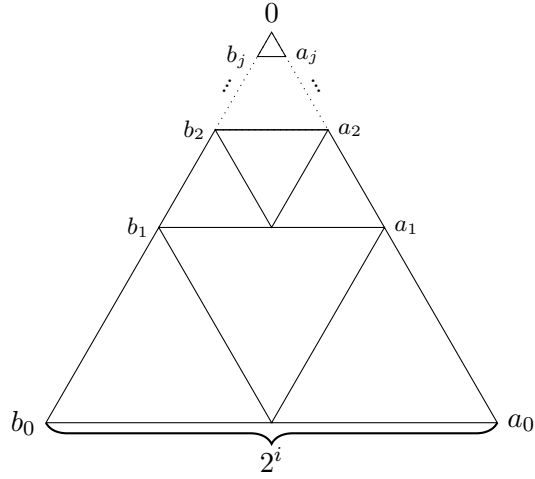
	Using once again the generalized $\frac{1}{5}-\frac{2}{5}$ rule, we obtain 
	\begin{align*}
		\begin{pmatrix}
			f(a_j) \\
			f(b_j)
		\end{pmatrix} 
		= \frac{1}{2} \begin{pmatrix}
			(\frac{3}{5})^j+(\frac{1}{5})^j & (\frac{3}{5})^j-(\frac{1}{5})^j \\
			(\frac{3}{5})^j-(\frac{1}{5})^j & (\frac{3}{5})^j+(\frac{1}{5})^j
		\end{pmatrix} \begin{pmatrix}
			f(a_0) \\
			f(b_0)
		\end{pmatrix} - {\frac{4}{3}} 5^{i-j} (3^j-1),
	\end{align*}
	which together with $f_i(a_0) = -{4/3\cdot} 5^i$ and $f_i(b_0) = 0$ gives 
	\begin{align*}
		&f(a_j) = -2 \Big(\frac{3}{5}\Big)^{j-i} 3^{i} + \frac{2}{3} \Big(\frac{1}{5}\Big)^j, &
		f(b_j) = -2 \Big(\frac{3}{5}\Big)^{j-i} 3^{i} + 2 \Big(\frac{1}{5}\Big)^j,
	\end{align*}
	which can be used to compute the normal derivative $\partial_n$ as below:
	\begin{align*}
		\partial_n f_i(x) = \lim_{j\rightarrow\infty} \Big(\frac{5}{3}\Big)^j \sum_{y\sim_j x} (f_i(x)-f_i(y)) = \lim_{j\rightarrow\infty} (4 \cdot 3^{i}- 8 \cdot 3^{-j-1}) = 4 \cdot 3^{i}.
	\end{align*}
	\textbf{Construction of $\eta$.}
	We construct the function $\eta:B(0,2N)\rightarrow\R$ with $\Delta \eta = \mathds{1}_{B(0,r)}$. For this we take $\eta|_{B(0,r)} = f_l + c$ for some constant $c\in\R$ to be determined. We extend $\eta$ to the ball $B(0,2N)$, such that it is harmonic on the annulus $B(0,2N)\backslash B(0,r)$. For this, we glue a harmonic function $h:B(0,2N)\backslash B(0,r)\rightarrow\R$ and $f_l$ at $\partial B(0,r)$, such that the resulting function $\eta$ is in $\Dom_{L^2}(\Delta)$. Following \cite[Section 5.1]{strichartz-differential-equations-fractals} this results in the two necessary and sufficient conditions
	\begin{align*}
		f_l|_{\partial B(0,r)} + c = h|_{\partial B(0,r)} \quad \text{and} \quad (\partial_n f)|_{\partial B(0,r)} = - (\partial_n h)|_{\partial B(0,r)}.
	\end{align*}
	So it remains to compute $(\partial_n h)|_{\partial B(0,r)}$ which depends on $h|_{\partial B(0,r)} = c$ and to set the constant $c$ accordingly. For the values of $h$ in  $u_i, v_i$ for $i=0,\ldots,L-l$ (see  Figure \ref{fig:odometer-construction2}) we have:
	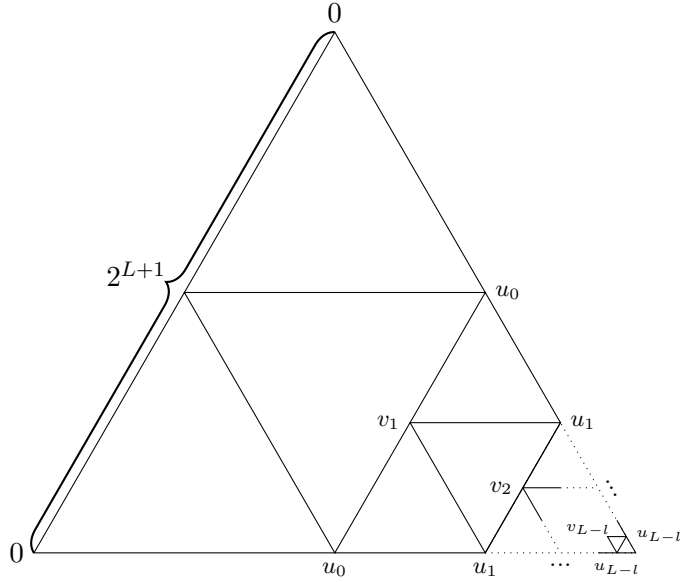
\begin{figure}
		\centering
		\begin{tikzpicture}
			\node[left] at (0,0) {$0$};
			\node[above] at (4,4*1.7320508076) {$0$};
			\node[right] at (6,2*1.7320508076) {\footnotesize $u_0$};
			\node[right] at (7,1*1.7320508076) {\footnotesize $u_1$};
			\node[right] at (7.45,0.55*1.7320508076) {\footnotesize.};
			\node[right] at (7.5,0.5*1.7320508076) {\footnotesize.};
			\node[right] at (7.55,0.45*1.7320508076) {\footnotesize.};
			
			\node[below] at (4,0) {\footnotesize $u_0$};
			\node[below] at (6,0) {\footnotesize $u_1$};
			\node[below] at (7.1,0*1.7320508076) {\footnotesize.};
			\node[below] at (7,0*1.7320508076) {\footnotesize.};
			\node[below] at (6.9,0*1.7320508076) {\footnotesize.};
			
			\node[left] at (5,1*1.7320508076) {\footnotesize $v_1$};
			\node[left] at (6.5,0.5*1.7320508076) {\footnotesize $v_2$};
			
			\draw[thick, decorate,decoration={brace,amplitude=8pt}] (0,0) -- (4,4*1.7320508076) node[left] at (2-0.1,2*1.7320508076+0.25) {$2^{L+1}$};
			\draw[] (0,0) -- (4,0) -- (2, 2*1.7320508076) -- cycle;
			\draw[] (4,0) -- (6,0) -- (7,1*1.7320508076) -- (6, 2*1.7320508076) -- cycle;
			\draw[] (2, 2*1.7320508076) -- (6, 2*1.7320508076) -- (4, 4*1.7320508076) -- cycle;
			\draw[] (5, 1*1.7320508076) -- (6, 0*1.7320508076) -- (7, 1*1.7320508076) -- cycle;
			
			\begin{scope}[shift={(0,-0)}]
				\draw[dotted] (6,0) -- (7,1*1.7320508076) -- (8, 0) -- cycle;
				
				\draw[] (6.5, 0.5*1.7320508076) -- (6.75, 0.25*1.7320508076) ;
				\draw[dotted] (6.75, 0.25*1.7320508076) -- (7, 0*1.7320508076);
				\draw[] (6.5, 0.5*1.7320508076) -- (7, 0.5*1.7320508076) ;
				\draw[dotted] (7, 0.5*1.7320508076) -- (7.5, 0.5*1.7320508076) ;
				\draw[] (7.625, 0.125*1.7320508076) -- (7.75, 0*1.7320508076) -- (7.875, 0.125*1.7320508076) -- cycle;
				
				\draw[] (7.5,0) -- (8,0) -- (7.75,0.25*1.7320508076);
				
				\node[right] at (7.875, 0.125*1.7320508076) {\tiny $u_{L-l}$};
				\node[below] at (7.75, 0*1.7320508076) {\tiny $u_{L-l}$};
				\node[left] at (7.8,0.175*1.7320508076) {\tiny $v_{L-l}$};
			\end{scope} 
		\end{tikzpicture}
		\caption{Vertices used in the calculation of the normal derivative of the  function $h$.}
		\label{fig:odometer-construction2}
	\end{figure}
	\begin{align*}
		h(u_j) = h(u_{L-l}) \frac{1-\left(\frac{3}{5}\right)^{j+1}}{1-\left(\frac{3}{5}\right)^{L-l+1}} \quad \text{and} \quad 
		h(v_j) = h(u_{L-l}) \frac{1-\frac{4}{5}\left(\frac{3}{5}\right)^{j}}{1-\left(\frac{3}{5}\right)^{L-l+1}}.
	\end{align*}
	The normal derivative of a harmonic function on a triangle of size $2^l$ is given by
	\begin{align*}
		\partial_n h(u_{L-l}) = \Big(\frac{3}{5}\Big)^l \Big(2 h(u_{L-l}) - h(u_{L-l+1}) - h(v_{L-l}) \Big) = h(u_{L-l}) \Big(\frac{3}{5}\Big)^l \frac{1}{\left(\frac{5}{3}\right)^{L-l+1}-1}.
	\end{align*}
	By the gluing condition we have $\partial_n h(u_{L-l}) = - \partial_n f_l(u_{L-l})$ and we choose
	\begin{align*}
		c = 4 \cdot 5^l \Big(\Big(\frac{5}{3}\Big)^{L-l+1}-1\Big).
	\end{align*}
	We define  $\eta \in \Dom_{L^2}(\Delta)$ as following:
	\begin{align*}
		\eta(x) := \begin{cases}
			f_l(x) + c, \text{ if } x\in B(0,r) \\
			h(x), \text{ if }  x\in B(0,2N)\backslash B(0,r).
		\end{cases}
	\end{align*}
	\textbf{Calculating the Odometer.}
	Define the obstacle $\gamma$ and its superharmonic majorant $s$ as 
	\begin{align*}
		&\gamma (x) = 3^{L-l} \eta(x) - f_{L+1}(x), &s(x) = \begin{cases}
			\gamma(x), &\text{ if } x\in B(0,2N)\backslash B(0,N) \\
			8/3\cdot 5^L, &\text{ if } x\in B(0,N).
		\end{cases}
	\end{align*}
	Notice that $s$ is constant on $B(0,N)$ taking the value ${8/3}\cdot 5^L$, which is the value of $\gamma$ on $\partial B(0,N)$. Clearly $s$ is superharmonic on $B(0,2N)$, since $\Delta s = \mathds{1}_{B(0,2N)\backslash B(0,N)}$. It remains to show that $s\geq \gamma$. For $x\in B(0,2N)\backslash B(0,N)$ this is obvious by definition, so it is sufficient to show $ {8/3}\cdot 5^L \geq \gamma(x)$ for all $x\in B(0,N)$. Since $\gamma$ is subharmonic on $B(0,r)$ it attains its maximum on the boundary and
	$$\gamma|_{\partial B(0,r)}= \frac{4}{3} 5^l (3^{L-l+1}-1) < \frac{8}{3} \cdot 5^L,$$ 
	it suffices to investigate the values of $\gamma$ in the annulus $B(0,N)\backslash B(0,r)$.
	The following result states that it actually suffices to consider only junction points approaching $\partial B(0,N)$.
\begin{lem}\label{lem:example_maximum_superharmonic}
Let $T^n=T^n(A,B,C)\subset \SG_n$, for some $n\in \N$ be the discrete triangle
with boundary points $A,B,C$ and $a,b,c$ be the midpoints opposing $A,B,C$, respectively. Furthermore let $f:T^n\rightarrow\R$ be a function with $\Delta_n f = -1$ and $f(A) \geq f(B) \geq f(C)$. If 
			$$f(A)\geq \max\{f(a),f(b),f(c)\} \text{ and } f(A)=\max\{f(x):\ x\in T^n(A,b,c)\},$$
			then $ \max_{x\in T(A,B,C)} f(x) = f(A)$.
		\end{lem}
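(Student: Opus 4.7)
The approach is to reduce the statement to a monotonicity comparison between the three sub-triangles $T^n(A,b,c)$, $T^n(B,a,c)$, $T^n(C,a,b)$, exploiting two key structural features. First, I would invoke the generalized $1/5$-$2/5$ rule for functions with constant discrete Laplacian: writing $f = h + g$, where $h$ is the harmonic function matching $f$ at $A,B,C$ and $g$ is the zero-boundary particular solution with $\Delta_n g = -1$, the symmetry of $g$ under corner permutations (a consequence of the dihedral symmetry of the gasket) implies $g(a) = g(b) = g(c)$. A direct computation using the harmonic $1/5$-$2/5$ rule for $h$ then gives
\[
f(a) - f(b) = -\tfrac{1}{5}\bigl(f(A) - f(B)\bigr),\qquad f(b) - f(c) = -\tfrac{1}{5}\bigl(f(B) - f(C)\bigr),
\]
both of which are $\leq 0$ by hypothesis, so the midpoint ordering is reversed from the corner ordering: $f(a) \leq f(b) \leq f(c)$.

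Second, I would introduce the maximum operator $M(v_1,v_2,v_3)$ returning the maximum, over a sub-triangle of side length $2^{m-1}$ (where $2^m$ is the side of $T^n(A,B,C)$), of the unique function solving $\Delta_n\cdot = -1$ with prescribed corner values $v_1,v_2,v_3$. By the superposition $f = \sum_i v_i p_i + \psi_0$, where the harmonic measures $p_i$ are non-negative and sum to one and $\psi_0$ is the zero-boundary bulge, $M$ is non-decreasing in each of its arguments. Furthermore, the three sub-triangles of $T^n(A,B,C)$ are mutually graph-isomorphic through the dihedral symmetries of $T^n(A,B,C)$, and these isomorphisms realize every permutation of the corners, so $M$ depends only on the unordered multiset $\{v_1,v_2,v_3\}$.

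With both facts in hand, the conclusion is immediate. The given hypothesis rewrites as $M(f(A),f(b),f(c)) = f(A)$. For $T^n(B,a,c)$, the triples satisfy $(f(B),f(a),f(c)) \leq (f(A),f(b),f(c))$ componentwise (using $f(B) \leq f(A)$, $f(a) \leq f(b)$, and equality in the last slot), so monotonicity yields $M(f(B),f(a),f(c)) \leq f(A)$. For $T^n(C,a,b)$, a matched pairing $(f(C),f(a),f(b)) \leq (f(A),f(b),f(c))$ gives $M(f(C),f(a),f(b)) \leq f(A)$. Combined with the hypothesis on $T^n(A,b,c)$, this shows the maximum over the whole triangle is $f(A)$.

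The only nontrivial ingredient is the midpoint ordering $f(a) \leq f(b) \leq f(c)$; once this is established, everything else follows by pure monotonicity considerations. The main subtlety I would be careful about is verifying that $M$ is genuinely a symmetric function of unordered triples, which requires checking that $\psi_0$ and the harmonic measures transform correctly under the gasket-isomorphisms identifying sub-triangles of equal size; this is a standard consequence of the self-similar structure of $\SG_n$ and the uniqueness of solutions of the corresponding discrete Dirichlet problem.
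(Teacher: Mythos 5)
Your proof is correct, and it begins from the same decomposition as the paper: $f = h + g$, with $h$ harmonic on $T^n$ matching $f$ at $A,B,C$ and $g$ the rotationally symmetric, zero-boundary solution of $\Delta_n g = -1$ (which the paper writes as the row sum $\sum_{y} \greenFunctionDiscrete^n_{T^n}(\cdot,y)$ of the stopped Green function). Where the two arguments genuinely part ways is in how the three sub-triangles are compared. The paper upgrades $h(A)\geq h(B)\geq h(C)$ to a \emph{pointwise} domination $h|_{T^n(A,b,c)} \geq h|_{T^n(a,B,c)}\circ\varphi_B \geq h|_{T^n(a,b,C)}\circ\varphi_C$ under symmetries of the triangle and then adds back the invariant bulge $g$; that step tacitly relies on an interior comparison of harmonic measures (the measure of the nearest corner dominating that of a farther one on the corresponding sub-cell), which the paper does not spell out. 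You instead compare only the corner data of the three sub-cells: the explicit $\tfrac15$--$\tfrac25$ computation, combined with $g(a)=g(b)=g(c)$, yields the reversed midpoint ordering $f(a)\leq f(b)\leq f(c)$, and the monotone, permutation-invariant operator $M$ (the maximum of the sub-cell Dirichlet solution as a function of its corner values) converts the componentwise inequalities $(f(B),f(a),f(c))\leq(f(A),f(b),f(c))$ and $(f(C),f(a),f(b))\leq(f(A),f(b),f(c))$ into the required bounds. Your route uses strictly less about harmonic functions — only positivity of harmonic measure, uniqueness of the discrete Dirichlet problem, and the graph automorphisms of a cell — at the price of the extra midpoint computation; it also makes visible that the hypothesis $f(A)\geq\max\{f(a),f(b),f(c)\}$ is already implied by the midpoint ordering together with the second hypothesis. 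Both arguments are sound; yours is the more self-contained of the two.
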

		\begin{proof}
Let $h$ be the unique harmonic function on $T^n$ taking boundary values $h|_{\partial T^n}= f|_{\partial T^n}$. Since $\Delta_n f = -1$, we have the decomposition
			$$ f(x) = h(x) + \sum_{y\in T^n} \greenFunctionDiscrete^n_{T^n}(x,y),$$
			where $ \greenFunctionDiscrete^n_{T^n}$ is the discrete Green function defined in \eqref{eq:green-visits}. Notice that $ s' = \sum_{y\in T^n} g_{T^n}^n(x,y)$ is rotationally symmetric, i.e.~$s'$ takes the same values on subtriangles $T^n(A,b,c),T^n(a,B,c)$, and $T^n(a,b,C)$.
			Since $h(A)\geq h(B) \geq h(C)$, after appropriate rotations $\varphi_B,\varphi_C$ we get $$ h|_{T^n(A,b,c)} \geq h|_{T^n(a,B,c)} \circ \varphi_B \geq h|_{T^n(a,b,C)} \circ \varphi_C, $$
			from which we can conclude $ f|_{T^n(A,b,c)} \geq f|_{T^n(a,B,c)} \circ \varphi_B \geq f|_{T^n(a,b,C)} \circ \varphi_C$.
		\end{proof}
	We finally  calculate $\gamma$ for all  $x_k^j,y_k^j,z_k^j$, for $k\in \N$ and $j=1,\ldots,L-l-1$, as in Figure \ref{fig:odometer-construction3}.
	\begin{figure}
		\centering
		\resizebox{\textwidth}{!}{\begin{tikzpicture}[scale=0.9]
				\node[right] at (6,2*1.7320508076) {\footnotesize $x_0^0$};
				\node[right] at (7,1*1.7320508076) {\footnotesize $x_0^1$};
				\node[right] at (7.45,0.55*1.7320508076) {\footnotesize.};
				\node[right] at (7.5,0.5*1.7320508076) {\footnotesize.};
				\node[right] at (7.55,0.45*1.7320508076) {\footnotesize.};
				
				\node[below] at (4,0) {\footnotesize $x_0^0$};
				\node[below] at (6,0) {\footnotesize $x_0^1$};
				\node[below] at (7.1,0*1.7320508076) {\footnotesize.};
				\node[below] at (7,0*1.7320508076) {\footnotesize.};
				\node[below] at (6.9,0*1.7320508076) {\footnotesize.};
				
				\node[left] at (5,1*1.7320508076) {\footnotesize $y_0^1$};
				\node[left] at (6.5,0.5*1.7320508076) {\footnotesize $y_0^2$};
				
				\draw[thick, decorate,decoration={brace,amplitude=8pt}] (0,0) -- (4,4*1.7320508076) node[left] at (2-0.1,2*1.7320508076+0.25) {$2^{L+1}$};
				\draw[] (0,0) -- (4,0) -- (2, 2*1.7320508076) -- cycle;
				\draw[] (4,0) -- (6,0) -- (7,1*1.7320508076) -- (6, 2*1.7320508076) -- cycle;
				\draw[] (2, 2*1.7320508076) -- (6, 2*1.7320508076) -- (4, 4*1.7320508076) -- cycle;
				\draw[] (5, 1*1.7320508076) -- (6, 0*1.7320508076) -- (7, 1*1.7320508076) -- cycle;
				\draw[] (7.625, 0.125*1.7320508076) -- (7.75, 0*1.7320508076) -- (7.875, 0.125*1.7320508076) -- cycle;
				
				\draw[dotted] (6,0) -- (7,1*1.7320508076) -- (8, 0) -- cycle;
				\draw[] (6.5, 0.5*1.7320508076) -- (6.75, 0.25*1.7320508076) ;
				\draw[dotted] (6.75, 0.25*1.7320508076) -- (7, 0*1.7320508076);
				\draw[] (6.5, 0.5*1.7320508076) -- (7, 0.5*1.7320508076) ;
				\draw[dotted] (7, 0.5*1.7320508076) -- (7.5, 0.5*1.7320508076) ;
				\draw[] (7.625, 0.125*1.7320508076) -- (7.75, 0*1.7320508076) -- (7.875, 0.125*1.7320508076) -- cycle;
				\draw[] (7.5,0) -- (8,0) -- (7.75,0.25*1.7320508076);
				
				\begin{scope}[shift={(9,1)},scale=0.8]
					\node[right] at (8,0) {\footnotesize $x_0^j$};
					\node[left] at (0,0) {\footnotesize $y_0^j$};
					\node[above] at (4,4*1.7320508076) {\footnotesize $x_0^{j-1}$};
					\node[right] at (6,2*1.7320508076) {\footnotesize $x_1^j$};
					\node[right] at (5,6/2*1.7320508076) {\footnotesize $x_2^j$};
					\node[right] at (4.60,3.4*1.7320508076) {\footnotesize.};
					\node[right] at (4.55,3.45*1.7320508076) {\footnotesize.};
					\node[right] at (4.50,3.5*1.7320508076) {\footnotesize.};
					\node[right] at (4.125, 3.875*1.7320508076) {\footnotesize $x_k^j$};
					
					\begin{scope}[shift={(8,0)}]
						\node[left] at (-6,2*1.7320508076) {\footnotesize $y_1^j$};
						\node[left] at (-5,6/2*1.7320508076) {\footnotesize $y_2^j$};
						\node[left] at (-4.60,3.4*1.7320508076) {\footnotesize.};
						\node[left] at (-4.55,3.45*1.7320508076) {\footnotesize.};
						\node[left] at (-4.50,3.5*1.7320508076) {\footnotesize.};
						\node[left] at (-4.125, 3.875*1.7320508076) {\footnotesize $y_k^j$};
					\end{scope}
					
					\node[below] at (4,2*1.7320508076) {\footnotesize $z_1^j$};
					\node[below] at (4,3*1.7320508076) {\footnotesize $z_2^j$};
					\node[below] at (4.05,3.875*1.7320508076) {\footnotesize $z_k^j$};
					
					\draw[thick, decorate,decoration={brace, mirror,amplitude=8pt}] (0,0) -- (8,0) node[below] at (4,-0.25) {$2^{L-1-j}$};
					
					\draw[] (3.75, 3.25*1.7320508076) -- (4, 3*1.7320508076) -- (4.25, 3.25*1.7320508076);
					\draw[dotted] (3.5, 3.5*1.7320508076) -- (3.75, 3.25*1.7320508076);
					\draw[dotted] (4.25, 3.25*1.7320508076) -- (4.5, 3.5*1.7320508076);
					\draw[] (3.875, 3.875*1.7320508076) -- (4, 3.75*1.7320508076) -- (4.125, 3.875*1.7320508076) -- cycle;
					\draw[] (0,0) -- (4,0) -- (2, 2*1.7320508076) -- cycle;
					\draw[] (4,0) -- (8,0) -- (6, 2*1.7320508076) -- cycle;
					\draw[] (2, 2*1.7320508076) -- (6, 2*1.7320508076) -- (5, 3*1.7320508076) -- (3, 3*1.7320508076) -- cycle;
					\draw[dotted] (3, 3*1.7320508076) -- (4, 4*1.7320508076) -- (5, 3*1.7320508076) -- cycle;
					\draw[] (3, 3*1.7320508076) -- (4, 2*1.7320508076) -- (5, 3*1.7320508076) -- cycle;
					\draw[] (3.75, 3.75*1.7320508076) -- (4, 4*1.7320508076) -- (4.25, 3.75*1.7320508076);
					
				\end{scope}
				\fill[opacity=0.25] (7.625, 0.125*1.7320508076) -- (7.75, 0.25*1.7320508076) -- (7.875, 0.125*1.7320508076) -- cycle;
				\draw [->] (7.75, 0.175*1.7320508076) to [out=-50,in=230] (12,0);
		\end{tikzpicture}}
		\caption{Points needed to show that $ {8/3} \cdot 5^L \geq \gamma(x)$ for all  $x\in B(0,N)\backslash B(0,n)$.}
		\label{fig:odometer-construction3}
	\end{figure}
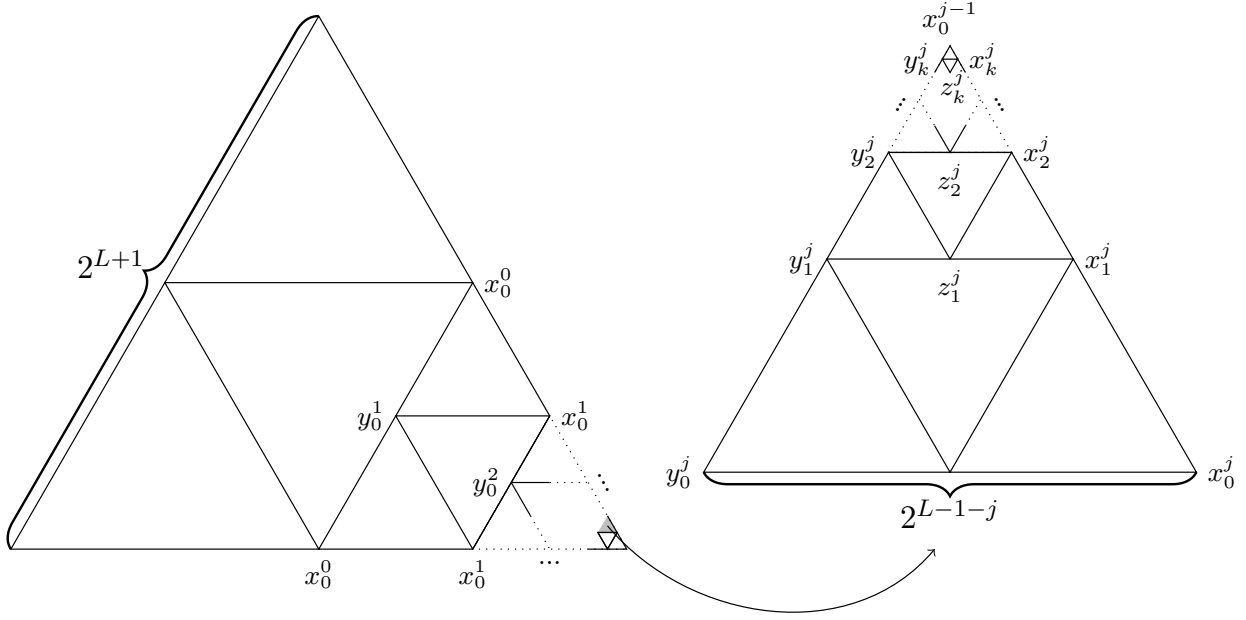
	\begin{align*}
		&\gamma(x_0^j) = \frac{4}{3} 5^{L-j} (3^{j+1}-1) \leq \gamma(x_0^{j-1}), &\gamma(y_0^j) = \frac{4}{3} 5^{L-j} (4\cdot 3^{j}-2) \leq \gamma(x_0^{j-1}) 
	\end{align*}
	and 
	\begin{align*}
		\gamma(x_k^j) &= \gamma(x_0^{j-1}) - \frac{2}{3} (3^{j+1}-9) \Big(\frac{3}{5} \Big)^k 5^{L-j} - \frac{2}{3}(3^j+1) \Big(\frac{1}{5} \Big)^k 5^{L-j}, \\
		\gamma(y_k^j) &= \gamma(x_0^{j-1}) - \frac{2}{3} (3^{j+1}-9) \Big(\frac{3}{5} \Big)^k 5^{L-j} + \frac{2}{3}(3^j-3) \Big(\frac{1}{5} \Big)^k 5^{L-j}, \\
		\gamma(z_k^j) &= \gamma(x_0^{j-1}) - \frac{8}{15} (3^{j+1}-9) \Big(\frac{3}{5} \Big)^k 5^{L-j} + \frac{8}{15} \Big(\frac{1}{5} \Big)^k 5^{L-j},
	\end{align*}
	which together with Lemma \ref{lem:example_maximum_superharmonic} implies that on all triangles $T^n(x_0^{j-1},x_0^j,y_0^j)\subseteq \mathsf{SG}_n$ the obstacle $\gamma$ attains its maximum in $x_0^{j-1}$ for all $n\in \N$. Since $\gamma(x_0^j)\leq \gamma(x_0^{j-1})$, $\gamma$ attains its maximum in $\big(B(0,N)\backslash B(0,r)\big)\cap \mathsf{SG}_n$ at $x_0^0$ for any $n\in\N$. Because $\gamma$ is continuous we get
	\begin{align*}
		\frac{8}{3} \cdot 5^L = \gamma(x_0^0) \geq \gamma(x) \text{ for all } x\in B(0,N)\backslash B(0,r).
	\end{align*}
	
	\section{Boundary Regularity}\label{sec:appendixC}
	
This final part focuses on properties of the boundary of the noncoincidence set $D\subset\SG$  obtained as the solution of the obstacle problem on the Sierpi\'nski gasket $\SG\subset \R^2$. We will use a generalization of Lebesgue's density theorem for $\SG$.
	
	We first consider the case when the density function $\sigma:\SG\to[0,\infty)$ is  continuous. Recall that by Lemma \ref{theo:convergenceSandpileOdometer} for $\sigma$, the odometer function of the obstacle problem is the limit of odometer functions $u_n$ for the divisible sandpile on $\mathsf{SG}_n$ started from initial densities $\sigma_n$ 
		that fulfill  \eqref{cond:RROdometerConvergence1} and \eqref{cond:RRconditionOdometer1}-\eqref{cond:RRconditionOdometer3} as well as the standard assumptions on initial configurations (\ref{cond:Starting1})-(\ref{cond:Starting4}). A suitable and canonical choice would be to choose $\sigma_n(y)$ as the average value of $\sigma$ in a double triangle around $y$ as in Theorem \ref{thm:uniform_scaling_limit}. We will always fix $\sigma_n$ to be defined this way in the following proofs. 
	\begin{prop}\label{prop:bndr-reg-cont}
		Let $\sigma:\SG\to[0,\infty)$ be continuous with compact support such that $\mu(\sigma^{-1}(1))=0$ and $D=\{u>0\}$, where $u$ is the odometer function of the obstacle problem for $\sigma$. Then
		\vspace{-0.25cm}
		\begin{enumerate}
			\setlength\itemsep{0cm}
			\item $\mu(\partial D)=0$,
			\item $\int_D \sigma d\mu=\mu(D)$.
		\end{enumerate}
	\end{prop}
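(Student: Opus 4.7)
Mass conservation gives statement (2) once (1) is in hand, so the main task is to prove $\mu(\partial D)=0$. The plan is to show $\partial D\subseteq \sigma^{-1}(1)$ and then invoke the assumption $\mu(\sigma^{-1}(1))=0$.

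\textbf{Plan for (1).} Suppose $x_0\in\partial D$ with $\sigma(x_0)\neq 1$. By the continuity of $\sigma$ there exists $r>0$ and $\delta>0$ such that either $\sigma\leq 1-\delta$ or $\sigma\geq 1+\delta$ on a double triangle $B(x_0,r)\subseteq\SG$. Recall $\Delta\gamma=\sigma-1$, so in the first case $\gamma$ is strictly superharmonic on $B(x_0,r)$, and in the second case strictly subharmonic. I would argue as follows in the strictly superharmonic case (the dual case giving $x_0\in D^\circ$ analogously). Form the candidate $\tilde s$ that agrees with $s$ outside $B(x_0,r/2)$ and is replaced by $\min(s,\widehat\gamma)$ inside, where $\widehat\gamma$ is the harmonic extension into $B(x_0,r/2)$ of $\gamma|_{\partial B(x_0,r/2)}$ suitably glued to $\gamma$ via the obstacle problem on the ball. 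The strict superharmonicity $\Delta\gamma\leq -\delta<0$ together with the fractal maximum principle and the elliptic Harnack inequality \eqref{property:EHI} shows $\tilde s$ remains superharmonic and still majorizes $\gamma$, and yet $\tilde s<s$ somewhere in $B(x_0,r/2)$ unless already $s=\gamma$ there. By the minimality of $s$, we must have had $s=\gamma$ on a neighborhood of $x_0$, i.e.\ $x_0$ lies in the interior of $D^c$, contradicting $x_0\in\partial D$. This gives $\sigma(x_0)=1$, proving $\partial D\subseteq \sigma^{-1}(1)$ and hence $\mu(\partial D)\leq \mu(\sigma^{-1}(1))=0$.

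\textbf{Plan for (2).} With (1) in place, write the limit mass as $\nu=\sigma+\Delta u$ interpreted weakly via Definition~2 in Section~\ref{sec:sierp-gas}. On the open set $D$, the defining property of the obstacle problem yields that $s=u+\gamma$ is harmonic, so $\Delta u=-\Delta\gamma=1-\sigma$ and thus $\nu\equiv 1$ on $D$. On the open set $(\SG\setminus\overline D)\subseteq D^c$, $u$ vanishes identically, so $\Delta u=0$ weakly there and $\nu=\sigma$. By Lemma~\ref{lem:ball_contains_cluster_sandpile}, $u$ has compact support, and testing $\Delta u$ against the constant function~$1$ (which has zero energy) gives $\int_\SG \Delta u\,d\mu=0$, i.e.\ $\int_\SG \nu\,d\mu=\int_\SG \sigma\,d\mu$. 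Splitting both integrals over $D$, $\partial D$, and $\SG\setminus\overline D$, and discarding $\partial D$ using (1), we obtain
\begin{align*}
\mu(D)+\int_{\SG\setminus\overline D}\sigma\,d\mu \;=\; \int_D\sigma\,d\mu + \int_{\SG\setminus\overline D}\sigma\,d\mu,
\end{align*}
which yields $\int_D\sigma\,d\mu=\mu(D)$.

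\textbf{Main obstacle.} The delicate step is the free-boundary argument in (1): on $\R^d$ one would invoke the $C^{1,1}$ regularity theory of Caffarelli to conclude $\sigma\equiv 1$ on $\partial D$, but this is unavailable on $\SG$. I expect to replace it by the local balayage/replacement construction above, using only the weak formulation of the obstacle problem together with the Harnack inequality and the Green function estimates of Section~\ref{sec:sierp-gas}; one must carefully ensure that the replaced function still lies in $\Dom(\En_\domTriangle)$ and remains superharmonic across the gluing boundary $\partial B(x_0,r/2)$, for which the probabilistic interpretation of $\gamma$ via the stopped Green operator $\greenOperator_\domTriangle$ is convenient. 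The generalised Lebesgue density theorem on $\SG$ (valid since $\mu$ is Ahlfors $\alpha$-regular by \eqref{property:volume_growth}) provides the backup that $\mu$-a.e.\ $x\in\partial D$ is a density point of $D^c$, which can be used to quantify how small $\mu(D\cap B(x,r))$ must be and to make the contradiction explicit in borderline cases.
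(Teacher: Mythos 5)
Your plan for (1) rests on the inclusion $\partial D\subseteq\sigma^{-1}(1)$, and that inclusion is false; the hypothesis $\mu(\sigma^{-1}(1))=0$ is there only to dispose of the level set $\{\sigma=1\}$ at the very end, not because the free boundary sits inside it. The paper's own Appendix \ref{sec:appendixB} is a counterexample: for $\sigma=3^{L-l}\mathds{1}_{B(0,r)}$ one gets $D=B(0,N)$ with $N=2^L>r$, so $\sigma\equiv 0$ on all of $\partial D$. This is the generic situation for obstacle problems: the noncoincidence set spreads well into the region where $\sigma<1$ (indeed where $\sigma=0$), and the free boundary lies there. Your balayage construction cannot repair this, and in fact it breaks down at the gluing step: where $\gamma$ is superharmonic on $B(x_0,r/2)$, the harmonic extension $\widehat\gamma$ of its boundary values satisfies $\widehat\gamma\leq\gamma$ inside the ball, so $\min(s,\widehat\gamma)$ fails to majorize $\gamma$ and is not an admissible competitor; minimality of $s$ therefore gives no contradiction, which is consistent with the fact that $s>\gamma$ genuinely does occur on open subsets of $\{\sigma<1\}$. (Caffarelli's $C^{1,1}$ theory on $\R^d$ likewise does not place $\partial D$ inside $\{\sigma=1\}$; it assumes $\Delta\gamma$ bounded away from $0$ near the free boundary, i.e.\ precisely $\sigma$ bounded away from $1$.) What the correct argument extracts from $\sigma(x)\leq\lambda<1$ at $x\in\partial D$ is quantitative, not topological: the paper passes to the discrete odometers $u_n$, compares $u_n$ with $\frac12(1-\lambda)\delta_n^\beta\,\E_y[\tau^n_{\varepsilon/3\delta_n}(x_n)]$ on $\DD_n\cap B^n(x_n,\varepsilon/3\delta_n)$ to force a point $y_n$ at distance $\approx\varepsilon$ from $x$ with $u_n(y_n)\gtrsim(1-\lambda)\varepsilon^\beta$, while the bound $u_n(y_n)\leq cM\,d(y_n,\partial\DD_n)^\beta\delta_n^\beta$ (as in Lemma \ref{lem:growth_odometer_rotor}) then forces a ball of radius $\gtrsim\varepsilon\delta_n^{-1}$ around $y_n$ inside $\DD_n$. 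Hence every ball $B(x,(1+C')\varepsilon)$ contains a definite fraction of $(\partial D)^{\mathsf{c}}$, and the Lebesgue density theorem gives $\mu(\partial D\cap\{\sigma\leq\lambda\})=0$; letting $\lambda\to 1$ and using $\mu(\sigma^{-1}(1))=0$ finishes. None of this is recoverable from your replacement scheme, so (1) has a genuine gap.

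Your argument for (2) is a legitimately different route from the paper's (the paper uses discrete mass conservation $|\DD_n|\leq\sum_{z}\sigma_n(z)\leq|\DD_n|+|\partial\DD_n|$ together with convergence of domains, whereas you work directly with the weak identity $\int_{\SG}\Delta u\,d\mu=0$ and $\nu=1$ on $D$, $\nu=\sigma$ off $\overline D$). It would be acceptable, but it needs (1) as input and additionally requires justifying, on $\SG$, that $u\in\Dom(\Delta_\domTriangle)$, that $s$ is harmonic on $D$, and that testing against the constant function (which is not in $\Dom_0(\En_\domTriangle)$) produces no boundary contribution --- the last point is fine because $u$ vanishes near $\partial\domTriangle$ by Lemma \ref{lem:ball_contains_cluster_sandpile}, but it should be said. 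As it stands the proposal does not prove the proposition.
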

	\begin{proof}
		\textit{(1)}. Fix $\lambda$ with $0<\lambda<1$, and let $x\in\partial D$ with $\sigma(x)\leq \lambda$. We can then find $\varepsilon>0$ such that $\sigma$ is less than $(1+\lambda)/2$ on $B(x,\varepsilon).$ 
		We choose $\sigma_n$ on $\mathsf{SG}_n$ converging to $\sigma$ as discussed above and denote by $u_n$ the odometer function of the divisible sandpile started from $\sigma_n$ and by $\DD_n$ the corresponding divisible sandpile cluster. By Theorem \ref{theo:convergenceSandpileOdometer}, $u_n$ converges to $u$ uniformly, so for $n$ large enough,  $\sigma_n(y)\leq (1+\lambda)/2$ for all $y\in B(x,\varepsilon)\cap \mathsf{SG}_n.$ Next we choose a point $x_n'\in\partial \DD_n$ that is at a distance of less than $\varepsilon/3$ from $x$ (which we can do for large enough $n$, since we know that the sets $\DD_n$ converge to $D$) and we then choose  $x_n\in \SG_n$ of distance less than $\varepsilon/3$ from $x_n'$, such that $x_n\in \DD_n$ (and hence $u_n(x_n)>0$). Define the function
		\begin{align*}
			w_n(y)=u_n(y)+\frac{1}{2}(1-\lambda)\delta_n^{\beta}\mathbb{E}_y\big[\tau^n_{\delta_n^{-1}\varepsilon/3}(x_n)\big],
		\end{align*}
		where  as before $\mathbb{E}_y\big[\tau^n_{\delta_n^{-1}\varepsilon/3}(x_n)\big]$
		denotes the expected exit time from the ball $B^n(x_n,\delta_n^{-1}\varepsilon/3)$ of a simple random walk on $\SG_n$ started at $y\in \SG_n$. Since 
		$w_n$ is subharmonic on $\DD_n\cap B^n(x_n,\delta_n^{-1}\varepsilon/3)$, it attains its maximum on the boundary, but for $y\in\partial \DD_n$, we also have $w_n(y)\leq w_n(x_n)$,
		hence the maximum is attained in $\partial B^n(x_n,\delta_n^{-1}\varepsilon/3)$. Thus there exists $y_n\in\partial B^n(x_n,\delta_n^{-1}\varepsilon/3)$ with
		\begin{align*}
			u_n(y_n)\geq \frac{1}{2}C(1-\lambda)\varepsilon^\beta,
		\end{align*}
		for some constant $C>0$. We remark that the Laplacian of $u_n$ is bounded by some constant $M>0$ and $u_n=0$ on  $\partial \DD_n$, hence  similar to the proof of Lemma \ref{lem:growth_odometer_rotor} we conclude
		\begin{align*}
			u_n(y_n)\leq cMd(y_n,\partial \DD_n)^\beta\delta_n^\beta, \text{ for some } c>0.
		\end{align*}
By the two previous inequalities and with $C':=\big(C\frac{1-\lambda}{2cM}\big)^{1/\beta}$ we obtain $d(y_n,\partial \DD_n)\geq C'\varepsilon\delta_n^{-1}$.    
	Thus $B^n(y_n,C'\varepsilon\delta_n^{-1})\subseteq \DD_n$, and for any $x\in\partial D\cap\{\sigma\leq\lambda\}$ and suitable $C''>0$ we get
		\begin{align*}
			\mu(B(x,(1+C')\varepsilon)\cap (\partial D)^C)\geq C''\varepsilon^\alpha.
		\end{align*}
		By the Lebesgue's density theorem \cite{federer2014geometric} for the Hausdorff measure on the gasket  we have
		\begin{align*}
			\mu(\partial D\cap \{\sigma\leq \lambda\})=0,
		\end{align*}
		and by letting $\lambda\rightarrow 1$ and using the fact that $\mu(\{\sigma=1\})=0$ we obtain the first claim.
		
		\textit{(2).} For the sandpile clusters $\DD_n\subset\SG_n$ with initial density $\sigma_n$ converging to $\sigma$ we have
		\begin{align*}
			|\DD_n|\leq\sum_{z\in \DD_n\cup \partial \DD_n}\sigma_n(z)\leq|\DD_n|+|\partial\DD_n|.
		\end{align*}
		For given $\varepsilon>0$ and $n$ large enough it holds
		\begin{align*}
			D_\varepsilon \cap \mathsf{SG}_n \subseteq \DD_n \subseteq \DD_n\cup \partial \DD_n \subseteq D^\varepsilon,
		\end{align*}
		hence
		\begin{align*}
			\int_{D_\varepsilon}\sigma d\mu\leq \liminf_{n\rightarrow\infty}\int_{D_\varepsilon}\sigma^\doubleTriangle_n d\mu\leq \mu((D_n\cup\partial D_n)^\doubleTriangle)\leq \mu(D^\varepsilon),
		\end{align*}
		and similarly
		\begin{align*}
			\int_{D^\varepsilon}\sigma d\mu\geq \mu(D_\varepsilon).
		\end{align*}
		Putting together the previous two inequalities we obtain
		\begin{align*}
			\int_{D}\sigma d\mu = \int_{D_\varepsilon} \sigma d\mu+\int_{D\backslash D_\varepsilon}\sigma d\mu\leq \mu(D)+\mu(D^\varepsilon\backslash D)+M\mu(D\backslash D^\varepsilon),
		\end{align*}
		and
		\begin{align*}
			\int_{D}\sigma d\mu \geq \mu(D)-\mu(D\backslash D^\varepsilon)-M\mu(D^\varepsilon\backslash D),
		\end{align*}
		where $M>0$ is as in \eqref{cond:Starting1}. Using $\mu(\partial D)=0$ and letting $\varepsilon\to 0$ gives the claim.
	\end{proof}
	We also consider densities $\sigma$ that are not necessarily continuous everywhere and we investigate the corresponding noncoincidence set $D$.
	
	\begin{prop}\label{prop:boundary_conservation_of_mass}
		Let $\sigma:\SG\to[0,\infty)$ be continuous almost everywhere and assume that there exists $\lambda>0$ such that for all $x\in \mathsf{SG}$ we have either $\sigma(x)\leq \lambda$ or $\sigma(x)\geq 1.$ If $D=\{u>0\}$ is the solution to the obstacle problem for $\sigma$ and $\widetilde{D}=D\cup\{\sigma\geq 1\}^\circ$ we then have
		\vspace{-0.25cm}
		\begin{enumerate}
			\item $\mu(\partial \widetilde{D})=0$,
			\item $\mu(\widetilde{D})=\int_{\widetilde{D}}\sigma d\mu$.
		\end{enumerate}
	\end{prop}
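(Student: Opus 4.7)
The plan is to mirror the proof of Proposition \ref{prop:bndr-reg-cont}, with the essential new ingredient being a structural reduction that exploits the dichotomy $\sigma\leq\lambda$ or $\sigma\geq 1$. Observing $\partial\widetilde{D}\subseteq\partial D\cup\partial\{\sigma\geq 1\}^\circ$, I will show that modulo the $\mu$-null set $DC(\sigma)$, every $x\in\partial\widetilde{D}$ lies in $\partial D\cap\{\sigma\leq\lambda\}$. Once this reduction is made, assertion (1) follows by replaying the density argument from Proposition \ref{prop:bndr-reg-cont}(1), while assertion (2) is obtained by passing to the limit in the discrete conservation of mass, using $\DD_n\to\widetilde{D}$ from Theorem \ref{theo:convergenceSandpileDomains} together with $\mu(\partial\widetilde{D})=0$.

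\textbf{Reduction and proof of (1).} Let $x\in\partial\widetilde{D}$ be a continuity point of $\sigma$. Picking $\delta>0$ with $|\sigma(y)-\sigma(x)|<1-\lambda$ on $B(x,\delta)$, the dichotomy forces $\sigma$ to lie entirely in $[0,\lambda]$ or entirely in $[1,\infty)$ throughout $B(x,\delta)$. The latter possibility would give $B(x,\delta)\subseteq\{\sigma\geq 1\}^\circ\subseteq\widetilde{D}^\circ$, contradicting $x\in\partial\widetilde{D}$; hence $\sigma\leq\lambda$ on $B(x,\delta)$, locally $\widetilde{D}=D$, and $x\in\partial D\cap\{\sigma\leq\lambda\}$. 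To bound the measure of the latter set, I replicate the argument from Proposition \ref{prop:bndr-reg-cont}(1): on a ball $B(x,\varepsilon)$ where $\sigma\leq\lambda$, the averaging \eqref{eq:sigma-n} gives $\sigma_n(y)=0$ whenever $\doubleTriangle^{n+1}(y)\subseteq B(x,\varepsilon)$, so $\Delta_n u_n\geq (1-\lambda)/2$ on $\DD_n\cap B^n(x_n,\varepsilon/(3\delta_n))$. The subharmonic function
\begin{align*}
w_n(y)\ =\ u_n(y)+\tfrac12(1-\lambda)\delta_n^\beta\,\E_y\bigl[\tau^n_{\varepsilon/(3\delta_n)}(x_n)\bigr]
\end{align*}
combined with the boundary growth bound $u_n\leq cM\,d(\cdot,\partial\DD_n)^\beta\delta_n^\beta$ (as in Proposition \ref{prop:bndr-reg-cont}(1)) then produces a ball of radius of order $\varepsilon$ contained in $\DD_n$. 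Passing to the limit via Theorem \ref{theo:convergenceSandpileDomains} yields $\mu(B(x,(1+C')\varepsilon)\cap\widetilde{D})\geq C''\varepsilon^\alpha$, and Lebesgue's density theorem for the $\alpha$-dimensional Hausdorff measure on $\SG$ concludes $\mu(\partial D\cap\{\sigma\leq\lambda\})=0$. Combining this with $\mu(DC(\sigma))=0$ gives $\mu(\partial\widetilde{D})=0$.

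\textbf{Proof of (2).} For the second assertion I use the discrete conservation of mass on $\SG_n$:
\begin{align*}
|\DD_n|\ \leq\ \sum_{z\in\DD_n\cup\partial\DD_n}\sigma_n(z)\ \leq\ |\DD_n|+|\partial\DD_n|.
\end{align*}
Multiplying by $\delta_n^\alpha$, sandwiching $\DD_n$ between $\widetilde{D}_\varepsilon\cap\SG_n$ and $\widetilde{D}^\varepsilon\cap\SG_n$ by Theorem \ref{theo:convergenceSandpileDomains}, and applying dominated convergence (using $\sigma_n^\doubleTriangle\to\sigma$ $\mu$-almost everywhere and $\sigma_n\leq M$) yields for each $\varepsilon>0$ the inequalities
\begin{align*}
\int_{\widetilde{D}_\varepsilon}\sigma\,d\mu\ \leq\ \mu(\widetilde{D}^\varepsilon)\quad\text{and}\quad\mu(\widetilde{D}_\varepsilon)\ \leq\ \int_{\widetilde{D}^\varepsilon}\sigma\,d\mu.
\end{align*}
Letting $\varepsilon\to 0$ and invoking $\mu(\partial\widetilde{D})=0$ from part (1) collapses both bounds to the desired identity $\mu(\widetilde{D})=\int_{\widetilde{D}}\sigma\,d\mu$.

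\textbf{Main obstacle.} The delicate step is the opening reduction $\partial\widetilde{D}\subseteq DC(\sigma)\cup(\partial D\cap\{\sigma\leq\lambda\})$: this is precisely where the dichotomy is used in an essential way, since it is needed to promote any continuity point $x$ with $\sigma(x)\geq 1$ into an interior point of $\{\sigma\geq 1\}^\circ\subseteq\widetilde{D}$. Without the dichotomy this reduction fails, and one could in principle encounter boundary points of $\widetilde{D}$ of positive $\mu$-measure arising from the interface $\{\sigma=1\}$; everything else is a close paraphrase of the continuous case already treated in Proposition \ref{prop:bndr-reg-cont}.
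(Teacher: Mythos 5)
Your proposal is correct and takes essentially the same route as the paper's proof: both use the dichotomy at a continuity point of $x\in\partial\widetilde{D}$ to show that $\sigma(x)\geq 1$ would force $x$ into $\{\sigma\geq 1\}^\circ\subseteq\widetilde{D}^\circ$, reduce to $\partial D\cap\{\sigma\leq\lambda\}$ handled by the subharmonic comparison and Lebesgue density argument of Proposition \ref{prop:bndr-reg-cont}(1), and derive (2) from the discrete conservation of mass sandwiched via Theorem \ref{theo:convergenceSandpileDomains}. The only differences are presentational (your explicit inclusion $\partial\widetilde{D}\subseteq DC(\sigma)\cup(\partial D\cap\{\sigma\leq\lambda\})$ and the observation that the floor in \eqref{eq:sigma-n} makes $\sigma_n$ vanish, rather than merely $\leq(1+\lambda)/2$, on the relevant ball).
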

	\begin{proof}
		\textit{(1).} We choose $x\in\partial \widetilde{D}$. Since the discontinuity points of $\sigma$ are a null set, we can assume that $\sigma$ is continuous at $x$. If $\sigma(x)\leq\lambda$, we can choose $\varepsilon>0$ small enough such that $\sigma\leq (1+\lambda)/2$ in $B(x,\varepsilon)$. If we define the discrete particle configurations $\sigma_n$ as in  \eqref{eq:sigma-n} by taking the average of $\sigma$ around a given point $y$, then $\sigma_n$ also eventually are less than $(1+\lambda)/2$ in $B(x,\varepsilon)$, and from here we  can then proceed as in the proof of  Proposition \ref{prop:bndr-reg-cont}\textit{(1)}. 
		If $\sigma(x)\geq 1$, since $\sigma$ is continuous at $x$ we can find an $\varepsilon > 0$ such that $\sigma\geq 1$ on $B(x,\varepsilon)$, but this would imply that $x\in\{\sigma\geq 1\}^\circ$, hence $x$ cannot be a point on the boundary, thus completing the proof of the first claim. The second claim works as in the proof of Proposition \ref{prop:bndr-reg-cont}\textit{(2)}. 
	\end{proof}
\end{appendices}
\textbf{Comments and other direction of work.} An interesting question to consider is the scaling limit of the Abelian sandpile model, where instead of a continuous amount of mass we only allow whole particles to be distributed among the vertices. In \cite{pegden-smart-abelian-scaling}, the authors prove that on $\Z^d$, when starting with $n$ particles at the origin and stabilizing until each site has less than $ 2d$ particles, as $n\to\infty$, the set of occupied sites
admits a weak* scaling limit on $\R^d$. The method can by no means be extended to fractal objects, and a completely different approach that makes use of the recursive structure of the gasket  should be used in order to prove that also the scaling limit for the Abelian sandpile model with multiple sources is the same as for the other three aggregation models.
The method used in the current work can be extended to other nested fractals, but due to the technicalities already appearing on the gasket, we have chosen not to work on the most general case of such self-similar sets.


\textbf{Acknowledgments.} The research of Robin Kaiser and Ecaterina Sava-Huss is supported by the Austrian Science Fund (FWF) P34129.  We are very grateful to the referee for a very careful reading of the paper and for the many valuable comments and suggestions, which substantially improved the paper.
\bibliography{idla}
\bibliographystyle{alpha}

\textsc{Uta Freiberg}, Department of Mathematics, Chemnitz University of Technology, Germany\\
\texttt{Uta.Freiberg@mathematik.tu-chemnitz.de}

\textsc{Nico Heizmann}, Department of Mathematics, Chemnitz University of Technology, Germany\\
\texttt{nico.heizmann@math.tu-chemnitz.de}

\textsc{Robin Kaiser}, Institut für Mathematik, Universität Innsbruck, Austria.\\
\texttt{Robin.Kaiser@uibk.ac.at}

\textsc{Ecaterina Sava-Huss}, Institut für Mathematik, Universität Innsbruck, Austria.\\
\texttt{Ecaterina.Sava-Huss@uibk.ac.at}

\end{document}